\definecolor{mygray}{gray}{0.92}
\theoremstyle{plain}
\newtheorem{theorem}{Theorem}[section]
\newtheorem{proposition}[theorem]{Proposition}
\newtheorem{lemma}[theorem]{Lemma}
\newtheorem{corollary}[theorem]{Corollary}
\newtheorem{algorithm}[theorem]{Algorithm}
\theoremstyle{definition}
\newtheorem{definition}[theorem]{Definition}
\newtheorem{problem}[theorem]{Problem}
\theoremstyle{remark}
\newtheorem{remark}[theorem]{Remark}
\newtheorem{example}[theorem]{Example}
\numberwithin{equation}{section}
\def\epsilon{\varepsilon}
\def\tilde{\widetilde}
\DeclareMathOperator{\Aut}{Aut}
\DeclareMathOperator{\im}{Im}
\DeclareMathOperator{\Jac}{Jac}
\def\C{\mathbb{C}}
\def\P{\mathbb{P}\,}
\def\xbar{\overline{x}}
\def\xbar{\overline{x}}
\def\QQbar{\overline{\mathbb{Q}}}
\newcommand{\Cr}[1]{\textcolor{red}{\sf CR:~[#1]}}
\newcommand{\Rl}[1]{\textcolor{blue}{\sf RL:~[#1]}}
\newcommand{\Cr}[1]{}
\newcommand{\Rl}[1]{}
\newcommand{\Prym}{\operatorname{Prym}}
\newcommand{
  \long\def\PICTURE # {}%
  \input{}
}[2]{
  \long\def\PICTURE ##1#2 {}%
  \input{#1}
}
\let\ENDPICTURE\endinput
\begin{document}

\title{Decomposing Jacobians via Galois covers}
\date{\today}

\begin{abstract}
  Let $\phi : X \to Y$ be a (possibly ramified) cover between two algebraic curves of positive genus. We develop tools that may identify the Prym variety of $\phi$, up to isogeny, as the Jacobian of a quotient curve $C$ in the Galois closure of the composition of $\phi$ with a well-chosen map $Y \to \P^1$. This method allows us to recover all previously obtained descriptions of a Prym variety in terms of a Jacobian that are known to us, besides yielding new applications. We also find algebraic equations for some of these new cases, including one where $X$ has genus $3$, $Y$ has genus $1$ and $\phi$ is a degree $3$ map totally ramified over $2$ points.
\end{abstract}

\thanks{We acknowledge support from the PICS JADERE (281036) from the French CNRS. The fourth author was moreover supported by a Juniorprofessurenprogramm of the Science Ministry of Baden-Württemberg. Part of the work for this paper was carried out at the workshop ``Arithmetic of Curves'', held in Baskerville Hall in August 2018. We would like to thank the organisers Alexander Betts, Tim Dokchitser, Vladimir Dokchitser, Céline Maistret and Beryl Stapleton, as well as the Baskerville Hall staff, for providing a great opportunity to concentrate on this project.}


\author[Lombardo]{Davide Lombardo}
\address{%
Davide Lombardo,
Dipartimento di Matematica,
Università di Pisa,
Largo Bruno Pontecorvo 5,
56127 Pisa,
Italia
}
\email{davide.lombardo@unipi.it}

\author[Lorenzo]{Elisa Lorenzo Garc\'ia}
\address{%
	Elisa Lorenzo Garc\'ia,
  Univ Rennes, CNRS, IRMAR - UMR 6625, F-35000
 Rennes, %
  France. %
}
\email{elisa.lorenzogarcia@univ-rennes1.fr}

\author[Ritzenthaler]{Christophe Ritzenthaler}
\address{%
	Christophe Ritzenthaler,
  Univ Rennes, CNRS, IRMAR - UMR 6625, F-35000
 Rennes, %
  France. %
}
\email{christophe.ritzenthaler@univ-rennes1.fr}

\author[Sijsling]{Jeroen Sijsling}
\address{
 Jeroen Sijsling,
 Institut für Reine Mathematik,
 Universität Ulm,
 Helmholtzstrasse 18,
 89081 Ulm,
 Germany
}
\email{jeroen.sijsling@uni-ulm.de}



\subjclass[2010]{14K20, 14K25, 14J15, 11F46, 14L24}
\keywords{Siegel modular forms, plane quartics, invariants, generators, explicit}

\maketitle


\section*{Introduction}

Let $k$ be an algebraically closed field of characteristic $0$, and let $X/k$ be a smooth projective irreducible curve of genus $g>0$. Already in the 19th century, complex geometers were interested in understanding the periods of $X$ in terms of periods of curves of smaller genera. In modern terms, one would like to decompose the Jacobian $\Jac (X)$ of $X$ as a product (up to isogeny) of powers of non-trivial simple abelian sub-varieties $A_i$, and then interpret the $A_i$ as Jacobians of suitable curves $C_i$. This is not possible for every curve $X$: Jacobians are generically simple, and even in those cases when $\Jac(X)$ does decompose there is no reason for the simple factors $A_i$ to be isogenous to Jacobians of curves. Indeed, while using a suitable isogeny allows us to assume that the $A_i$ are principally polarized, such abelian varieties are generically not Jacobians if $\dim (A_i) \ge 4$.

When the automorphism group of $X$ is non-trivial, one can often find some curves $C_i$ as above as quotients of $X$ by well-chosen subgroups of $G$, and in certain cases one even gets all of the $A_i$ in this way. This strategy has been employed many times, frequently in combination with the Kani-Rosen formula \cite{kanirosen}, to get (more or less explicit) examples of Jacobians whose isogeny factors are again Jacobians. For instance, when $g=2$ and $\# G>2$, this strategy always gives the full decomposition of $\Jac X$ \cite{gaudryschost}, and more examples have been worked out in \cite{paulhus1,paulhus2,jimenez}. In Section~\ref{sec:Galois} we give analogous results for $g=3$, both for hyperelliptic and non-hyperelliptic curves $X$. We show that the decomposition of the Jacobian of a generic curve with a given non-trivial automorphism group can indeed be obtained in terms of quotients of $X$ in all cases, except when
\begin{equation}\label{eq:excep}
  \text{$g (X) = 3$, $X$ non-hyperelliptic, $\Aut (X) \cong C_2$ or $\Aut (X) \cong C_6$.} \tag{$\star$}
\end{equation}
In this scenario, the quotient $X/G$ is an explicit genus $1$ curve $Y$, but (even up to isogeny) the complementary abelian surface $A$ cannot be interpreted as the Jacobian of a quotient of $X$. However, equations for a curve $C$ such that $A \sim \Jac(C)$ have been worked out also in this case, by a technique that we now briefly recall.

The aforementioned complementary abelian surface $A$ is an example of a \emph{Prym variety}. Recall that the (generalized) Prym variety of a cover $\pi : X \to Y$ is defined as the identity component $P:=\Prym(X/Y)$ of the kernel $\ker (\pi_{*})$ of the pushforward map $\pi_* : \Jac X \to \Jac Y$ induced by $\pi$. There is a significant body of literature dedicated to the description of $P$ in many special situations. The case when $\pi$ is an unramified cover of degree $2$ is especially beautiful and well-understood \cite{mumfordprym}. In this case $P$ inherits a principal polarization from $\Jac X$, and in some circumstances it has been described as the Jacobian of an explicit curve (see \cite[p.346]{mumfordprym}, \cite{levin} or \cite{bruin}).  In the situation \eqref{eq:excep}, the morphism $X \to X/C_2$ is \emph{not} \'etale, but it was shown in \cite{RiRo} that one could realize this map as the degeneration of a family of \'etale covers between curves of genera $5$ and $3$, and using the previously mentioned work \cite{bruin} it was possible to give an explicit equation for a curve $C$ such that $P\sim \Jac(C)$. This result is recalled in Section~\ref{sec:RiRoRedone}. It is however unclear how this idea may be extended to other types of covers.

The present paper was motivated by the desire to find an alternative proof of the previous result which could lead to generalizations. Our main sources of inspiration were Donagi's work on Prym varieties \cite{donagi}, based on Galois-theoretic considerations, and a specific construction by \cite{dalaljan} in the setting when $X \to Y$ is a cover of degree $2$ of a hyperelliptic curve branched at 2 points $Q_1$ and $Q_2$ (see also \cite[Th.4.1]{levin}). In this latter work, $P$ is realized as the Jacobian of a curve $C$ obtained as a quotient of the Galois closure $Z$ of $X\to Y \to \P^1$ by a well-chosen subgroup of the automorphism group of $Z$. Here, the cover from $Y$ to $\P^1$ is the hyperelliptic quotient, and generically $Q_1$ and $Q_2$ are not sent to the same point of $\P^1$. In the case \eqref{eq:excep} however, the cover $X \to Y$ has $4$ branched points, and one has to carefully choose a map from $Y \to \P^1$ which collapses $2$ of them. With this particular choice, we show in Section~\ref{sec:RiRoRedone} that $P$ is again realized up to isogeny as the Jacobian of an explicit quotient of the Galois closure $Z$ of the composed morphism $X \to Y \to \P^1$.

We then set out to see to what extent this somewhat miraculous construction could yield results for other classes of (not necessarily Galois) morphisms $X \to Y$. There are two main difficulties in carrying out this program. One is fundamental: given a cover $\pi_{X/Y} : X\to Y$, there is no a priori reason for the Prym variety to appear as the Jacobian of a quotient of a related Galois cover. In fact the general principally polarized abelian variety of dimension $4$ is known to be a Prym variety, and by \cite{tsimerman-no-jac} we know that there are $4$-dimensional abelian varieties over $\QQbar$ that are not $\QQbar$-isogenous to a Jacobian. The second difficulty lies in the choice of the morphism $Y \to \P^1$: the Galois closure $Z$ of the composition $X \to Y \to \P^1$ depends very strongly on this choice, and we did not find a general principle to guide us. In fact, as already mentioned, there is little previous work concerning Prym varieties when the degree of $\pi_{X/Y}$ is greater than $2$, and the present project is mainly exploratory.

The existing literature seems to have focused on two main cases: (1) the consideration of Galois or \'etale covers $X \to Y$ \cite{ortega,ortega2} and (2) a top-down approach, which starts with a curve $Z$ with large automorphism group $G$ and decomposes of $\Jac (Z)$ in terms of Prym varieties of subcovers $Z / H \to Z / G$ \cite{recillas-rodriguez} (see also the more complete  arXiv version \cite{recillas2003Prym}). Our approach combines aspects of these previous methods, in that it starts with a completely general map $X \to Y$ and finds candidates for the Prym variety in terms of the Galois closure of a suitable composition $X \to Y \to \P^1$. Finding examples for which this approach yields results is by no means straightforward: it is hard to get explicit equations for the curves, and the Galois groups of the composed maps rapidly attain prohibitive size.

In order to be able to analyze complicated situations, we use powerful tools from monodromy theory, which we implemented in the computer algebra system \textsc{Magma} \cite{Magma} (be aware that we use version 2.25-3 and that our programs do not always work with version 2.24-5). Specifying covers $X \to Y \to \P^1$ by their ramification structure, we can describe all possible monodromy types for the branched cover $X \to \P^1$, which in turn yields complete Galois-theoretic information on the Galois closure $Z \to \P^1$ of this map. The enumeration of possible monodromy types is a classical problem, often used in the setting of Galois covers \cite{breuer,paulhus3}. In Section~\ref{sec:AlgoTheory} we recall the relevant theory and show how to adapt it in our cases, when the covers $X \to Y$ and $X \to \P^1$ need not be Galois. A crucial tool in our applications is a beautiful result by Chevalley and Weil \cite{chevalleyweil, weil2}, by means of which one can identify, for a given Galois quotient $\pi_{Z/C} : Z \to C$, the image of $H^0(C,\Omega^1_C)$ by $\pi_{Z/C}^*$ inside $H^0(Z,\Omega^1_Z)$. Since we can similarly describe the images under pullback of $H^0(X,\Omega^1_X)$ and $H^0(Y,\Omega^1_Y)$, intersecting these subspaces allows us to decide if $\Jac C$ is isogenous to $\Prym(X/Y)$. Implementation details are given in Section~\ref{sec:implem} and the code can be found at \cite{code-decomp}.

By using this approach, we could (up to the limitations imposed by keeping the running time of our programs acceptable) recover all situations previously known in the literature, see Section~\ref{sec:results}. We also found some new cases. For example, consider a cover $X \to Y$ of degree $d$, where $Y$ is an elliptic curve, totally ramified over $2$ points $Q_1,Q_2$ of $Y$, and compose it with the map $Y \to \P^1$ which identifies $Q_1$ and $Q_2$. For $d = 3, 4$, and for some cases with $d = 5$ (see Table~\ref{tab:results-total}), we have been able to check within reasonable time that the abelian variety $\Prym(X/Y)$ is indeed isogenous to the Jacobian of a quotient $C$ of the Galois closure $Z$ of $X \to Y \to \P^1$. When $d=3$ and $X \to Y$ is a non-Galois cover, it turns out that $X$ and the corresponding Galois closure $Z$ of genus $5$ are hyperelliptic. Using this information, we were able to write down equations for $X$ and $Y$, and to find an explicit equation for a curve $C$ for which $\Jac (C) \sim \Prym(X/Y)$, see Section~\ref{sec:Families}. In another direction, we were able to generalize the example of \cite{bruin} (a genus $5$ \'etale cover of degree $2$ of a genus $3$ curve) to genus $g_X=2g+1$ \'etale covers of degree $2$ of genus $g_Y=g$ curves with $g=4,5$ or $6$, where $Y$ is a generic trigonal curve. In particular, for $g_Y=4$ we cover all generic non-hyperelliptic cases in this way.

Finally, we remark that our construction can also be seen through the lens of Belyi's theorem. Indeed, Belyi's result guarantees that all algebraic curves arise as covers of $\mathbb{P}^1$ ramified over just 3 points, and our approach usually consists in finding a morphism $Y \to \mathbb{P}^1$ such that the branch locus of $X \to Y \to \mathbb{P}^1$ is smaller than it would be for a generic choice of the map $Y \to \mathbb{P}^1$. In fact, this suggests that we are only scratching the surface, handling just the easiest of cases, and it might even be possible to \textit{always} recover the curves $C_i$, if they exist, by choosing a suitable morphism $Y \to \mathbb{P}^1$. The problem of decomposing the Prym variety of $X \to Y$ then becomes that of choosing an appropriate rational function on $Y$, a point of view which seems to be genuinely new. We think that these experiments and theoretical motivations are sufficiently intriguing for the relations between Prym varieties and Galois constructions to merit further study.


\section{Decomposing Jacobians of curves of genus $3$ with non-trivial automorphism group}\label{sec:Galois}

Let $k$ be an algebraically closed field of characteristic $0$. We will implicitly assume that $k = \C$ throughout this article, especially in Section \ref{sec:AlgoTheory} when using the theory of covers. We do note that the results thus obtained will still be valid when the characteristic of $k$ does not divide the order of the intervening Galois groups --- for example, the results in the current section continue to apply when $k$ has finite characteristic strictly larger than $7$.

In this section we consider the decomposition of Jacobians of curves of genus $3$ induced by the action of their automorphism group. Most of these results are folklore. Note that this approach does not always yield the full decomposition of the Jacobian, nor can it guarantee that the higher-dimensional factors found in this way are irreducible.

\subsection{Hyperelliptic case}\label{sec:ExtraAutoHyp}
There is a stratification of the moduli space of hyperelliptic curves of genus 3 by their automorphism group (see \cite{bouw-thesis, LR11} and the references in the latter). The inclusions between the different strata are summarized in the following diagram, where we write $C_n$ for the cyclic group of order $n$, $D_n$ for the dihedral group of order $2n$, $S_n$ for the symmetric group over $n$ elements and $U_6$ and $U_8$ for certain groups with respectively $24$ and $32$ elements:

\begin{equation*}\label{eq:HyperellipticStrata}
\xymatrix{
  & & & C_2 \ar@/_3pc/@{-}[ddddlll] \ar@{-}[d] \ar@{-}[ddll] & & & & \dim = 5 \\
  & & & C_2^2 \ar@{-}[dd] \ar@{-}[lldd] \ar@{-}[dr] & & & & \dim = 3\\
  & C_4  \ar@{-}[d] & & & C_2^3 \ar@{-}[dr] &  &  & \dim = 2\\
  & C_2 \times C_4 \ar@{-}[drr] \ar@{-}[d ]& & D_6 \ar@{-}[dll] \ar@{-}[drr] &  & C_2 \times D_4 \ar@{-}[dll] \ar@{-}[d]   & & \dim = 1 \\
  C_{14} & U_6 &  & U_8 &  & C_2 \times S_4 & & \dim = 0\\
}
\end{equation*}

\begin{proposition}\label{GaloisHyper}
  Suppose $X$ is a hyperelliptic curve of genus 3 whose automorphism group contains a group $G$ appearing in the previous diagram. Then the Jacobian of $X$ decomposes up to isogeny as in Table \ref{table:Hyperelliptic}.
\end{proposition}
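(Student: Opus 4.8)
The plan is to exploit the standard dictionary between subgroups $H \le G$ acting on $X$ and isogeny factors of $\Jac(X)$, together with the idempotent relations of Kani--Rosen \cite{kanirosen} in the rational group algebra $\Q[G]$. Concretely, for each subgroup $H \le G$ the quotient map $X \to X/H$ realizes, up to isogeny, $\Jac(X/H)$ as the image $\varepsilon_H \Jac(X)$ of the idempotent $\varepsilon_H = \tfrac{1}{|H|}\sum_{h \in H} h \in \Q[G]$. Any relation $\sum_i a_i \varepsilon_{H_i} = \sum_j b_j \varepsilon_{H_j'}$ among such idempotents then translates into an isogeny $\prod_i \Jac(X/H_i)^{a_i} \sim \prod_j \Jac(X/H_j')^{b_j}$. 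Since $\dim \Jac(X) = 3$, the admissible decompositions are severely constrained, and it suffices to exhibit, for each $G$ in the diagram, one such relation whose non-trivial factors account for all of $\Jac(X)$.

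First I would record two structural features common to every stratum. The hyperelliptic involution $\iota$ is central in $G$ and satisfies $X/\langle \iota \rangle \cong \P^1$, so that the factor it contributes is trivial; and the reduced group $\overline{G} := G/\langle \iota \rangle$ acts faithfully on $\P^1 = X/\langle\iota\rangle$ as a finite subgroup of $\PGL_2$, permuting the eight Weierstrass points of $X$. This reduces the genus computations to planar data: the conjugacy class of $\overline{G}$ in $\PGL_2$ and the orbit structure of the branch locus under $\overline{G}$.

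Next, for each $G$ I would compute the genus of every relevant quotient $X/H$ by Riemann--Hurwitz. The ramification of $X \to X/H$ is governed by the fixed points on $X$ of the elements of $H$, which are read off from the action of the image of $H$ in $\PGL_2$ and from whether the corresponding fixed points on $\P^1$ lie in the Weierstrass locus. Carrying this out stratum by stratum yields all genera in Table~\ref{table:Hyperelliptic}. For the elementary abelian case $G \cong C_2^2$, writing $H_1,H_2,H_3$ for its three order-$2$ subgroups (one of which is $\langle\iota\rangle$), the Kani--Rosen relation $\varepsilon_{H_1}+\varepsilon_{H_2}+\varepsilon_{H_3} = \id + 2\,\varepsilon_{G}$ combined with $X/\langle\iota\rangle \cong X/G \cong \P^1$ gives $\Jac(X) \sim \Jac(X/H_2)\times\Jac(X/H_3)$, and an analogous, standard relation handles each remaining $G$ (the cyclic relations for $C_4, C_{14}$, and the dihedral relations for $D_6$, $C_2\times D_4$, and so on).

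The main obstacle will be the careful, case-by-case bookkeeping for the larger groups $C_2^3$, $U_6$, $U_8$, $C_2 \times S_4$ and $C_{14}$: here one must pin down the precise $\PGL_2$-orbit structure of the eight Weierstrass points (which forces the special equations defining these strata), correctly tabulate the fixed-point contributions, and select a spanning idempotent relation that expresses $\Jac(X)$ exactly through the quotients listed in the table. Two points require care but are not serious difficulties: the hypothesis only asks that $G \subseteq \Aut(X)$, so the decomposition is asserted for every $X$ on the stratum (not merely the generic member), which is automatic since the idempotent relations hold in $\Q[G]$ independently of $X$; and, as the statement warns, no claim of simplicity is made for the higher-dimensional factors, so it is enough to match the isogeny type recorded in Table~\ref{table:Hyperelliptic} rather than to decompose further.
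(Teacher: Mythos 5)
The paper offers no written proof of this proposition --- it is explicitly labelled folklore --- but the approach it alludes to (quotient curves plus the Kani--Rosen idempotent relations \cite{kanirosen}, as in the works cited in the introduction) is exactly the one you propose, and for the strata where Table~\ref{table:Hyperelliptic} actually records a decomposition your argument is correct: the hyperelliptic involution $\iota$ is central, $\overline{G}=G/\langle\iota\rangle$ embeds in $\mathrm{PGL}_2$ acting on the eight Weierstrass points, Riemann--Hurwitz gives the quotient genera, and a spanning idempotent relation (your $C_2^2$ relation $\varepsilon_{H_1}+\varepsilon_{H_2}+\varepsilon_{H_3}=\mathrm{id}+2\varepsilon_G$ is the right model) yields the stated isogenies. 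Your remark that the relations hold in $\Q[G]$ uniformly over the stratum is also the correct way to get the statement for every curve with $G\subseteq\Aut(X)$, not just the generic one.

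The gap is in the rows $C_2$, $C_4$ and $C_{14}$. There the table does not assert a decomposition; it asserts that $\Jac(X)$ is (generically) simple, with endomorphism algebra $\Q(i)$, resp.\ $\Q(\zeta_7)$. Your claim that ``the cyclic relations for $C_4$, $C_{14}$'' handle these cases would fail: for those groups every proper quotient $X/H$ with $\langle\iota\rangle\subseteq H$ is $\P^1$, so the Kani--Rosen relations collapse to trivialities and produce no isogeny factor at all. Establishing those table entries requires a different argument: one computes the analytic representation of the order-$4$ (resp.\ order-$7$ or $14$) automorphism on $H^0(X,\Omega^1_X)$ to identify the subalgebra $\Q(i)$ (resp.\ $\Q(\zeta_7)$) of $\End^0(\Jac X)$, and then a genericity argument --- e.g.\ that the locus in the stratum where extra endomorphisms appear is a countable union of proper closed subvarieties, together with the exhibition of one member where no decomposition compatible with that CM-type exists --- to conclude generic simplicity. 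Idempotents alone cannot prove simplicity, so as written your proof would establish only the decomposing rows of Table~\ref{table:Hyperelliptic}.
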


{\tiny
\begin{center}
\begin{table}[ht]
\begin{tabular}{|c|c|c|c|}
\hline
$G$  & $X : y^2=f(x)$ & $\Jac(X) \sim \prod_{i \in I}  \Jac(C_i)$ & Curves $C_i$  \\ \hline
$C_2$ & & & $\Jac(X)$ generically simple \\
$C_2^2$ &  $x^8+a x^6+b x^4+ c x^2+1$ & $I=[1,2]$ & $\begin{cases}  C_1: & y^2=x^4+a x^3+b x^2+cx+1, \\  C_2: & y^2=x (x^4+a x^3+b x^2+cx+1) \end{cases}$ \\
$C_2^3$ & $x^8+a x^6+ b x^4+ a x^2+1$ & $I=[1,2,3]$ &  $\begin{cases}  C_1 :& y^2=x^4+ a x^3+bx^2+ax +1, \\ C_2:&  y^2=x^4+(a-4) x^2-2a+b+2 \\  C_3: & y^2=x^4+(a+4)x^2+2a+b+2 \end{cases}$ \\
$C_4$ & $x(x^2-1)(x^4+ax^2+b)$ & & $\begin{array}{c} \Jac(X) \text{ generically simple} \\ \text{with endomorphism algebra } \mathbb{Q}(i) \end{array}$ \\
$C_2 \times C_4$ &  $x^8+ax^6-ax^2-1=(x^4-1)(x^4+ax^2+1)$ & $I=[1,2]$ &   $\begin{cases}  C_1 :& y^2 = (x^2-1)(x^2+ax+1) \\ C_2 : & y^2 = x(x^2-1)(x^2+ax+1) \end{cases}$  \\
$D_6$ & $x(x^6+ax^3+1)$ &  $I=[1,2,2]$ &   $\begin{cases}  C_1 :& y^2=x(x^2+ax+1) \\ C_2 :& y^2=x^3-3x+a  \end{cases}$  \\
$C_2 \times D_4$ & $x^8+ax^4+1$ & $I=[1,2,2]$ & $\begin{cases}  C_1 :& y^2=x^4+a x^2+1 \\ C_2 :& y^2=x^4-4x^2+(a+2)  \end{cases}$  \\
$C_{14}$ & $x^7-1$ & &  $\Jac(X)$ simple with endomorphism algebra $\mathbb{Q}(\zeta_7)$ \\
$U_6$ & $x(x^6+1)$ &  $I=[1,1,1]$ & $C_1 : y^2=x^3+x$, i.e., $j=1728$ \\
$C_2 \times S_4$ & $x^8+14x^4+1$ & $I=[1,1,1]$ & $C_1 : y^2 = x^3 + x^2 - 4x - 4$, i.e., $j=2^4\cdot 3^{-2}\cdot 13^3$ \\
$U_8$ & $x^8+1$ &  $I=[1,2,2]$ &  $\begin{cases}  C_1 :&  y^2=x^4+1,\text{ i.e., }j=1728 \\ C_2 : & y^2=x^4-4x^2+2,\text{ i.e., }j=2^6\cdot5^3 \end{cases}$ \\
\hline
\end{tabular}
\caption{Decomposition of Jacobian: hyperelliptic case}
\label{table:Hyperelliptic}
\end{table}
\end{center}
}

\subsection{Non-hyperelliptic case}

A similar analysis can be carried out in the non-hyperelliptic case, with the notable exception of the group $C_2$ and its specialization $C_6$ which shall be reviewed in Section~\ref{sec:RiRoRedone}. There is a stratification of the moduli space of non-hyperelliptic genus 3 curves according to their automorphism group (see \cite[2.88]{henn},~\cite[p.62]{vermeulen},~\cite{MSSV02},~\cite{bars}
and~\cite{dolgacag}; the groups $G_i$ are certain groups of order $i$). The inclusions between the different strata are summarized in the following diagram:
\begin{equation*}\label{eq:Non-HyperellipticStrata}
\xymatrix{
  & & & \{\operatorname{id}\} \ar@{-}[d] \ar@{-}[dddll] & & &  \dim = 6\\
  & & & C_2 \ar@{-}[dddll] \ar@{-}[d] \ar@{-}[ddrr] & & &  \dim = 4\\
  & &  & C_2^2 \ar@{-}[d] & & &  \dim = 3\\
  & C_3 \ar@{-}[d] \ar@{-}[ddl] & & D_4 \ar@{-}[ld] \ar@{-}[rd] & & S_3 \ar@{-}[ld] &  \dim = 2 \\
  & C_6\ar@{-}[d] &    G_{16} \ar@{-}[ld] \ar@{-}[rd] & & S_4 \ar@{-}[ld] \ar@{-}[rd]  & & \dim = 1\\
  C_9 & G_{48} & & G_{96} & & G_{168} &  \dim = 0
}
\end{equation*}

\begin{proposition}\label{GaloisNon-Hyper}
  Suppose $X$ is a non-hyperelliptic curve of genus 3 whose automorphism group contains a group $G$ appearing in the previous diagram. Then the Jacobian of $X$ decomposes up to isogeny as in Table \ref{table:NonHyper}.
\end{proposition}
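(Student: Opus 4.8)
The plan is to imitate the proof of Proposition~\ref{GaloisHyper}, running through the strata of the diagram one group $G$ at a time. For each stratum I would fix a normal form for a generic smooth plane quartic $X$ carrying a faithful linear action of $G$ on the coordinates of $\mathbb{P}^2$; such models are tabulated in the classification references \cite{henn,vermeulen,MSSV02,dolgacag} already cited. The factors $C_i$ of Table~\ref{table:NonHyper} are then produced as quotients $C_i = X/H_i$ for suitable subgroups $H_i \le G$, and the claimed isogenies are deduced from the Kani--Rosen formula \cite{kanirosen}. The strata $C_2$ and its specialisation $C_6$ are set aside on purpose: as announced in the introduction and treated in Section~\ref{sec:RiRoRedone}, there the complement of the elliptic quotient $\Jac(X/C_2)$ is an abelian surface that is generically \emph{not} isogenous to the Jacobian of any quotient of $X$, which is exactly the difficulty motivating the rest of the paper.

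The computational engine in each case is representation theory, organised around the isotypic decomposition $\Jac(X) \sim \prod_W A_W$ indexed by the $\Q$-irreducible representations $W$ of $G$. Since $X$ is canonically embedded, $H^0(X,\Omega^1_X)$ is the three-dimensional space of linear forms on $\mathbb{P}^2$ with its natural linear $G$-action, and for any subgroup $H \le G$ one has $H^0(X/H,\Omega^1_{X/H}) = H^0(X,\Omega^1_X)^H$; thus the genus of $X/H$ is the dimension of the $H$-invariants, computed as a multiplicity of the trivial character. A quotient $X/H$ sees precisely those $A_W$ for which $W$ has nonzero $H$-fixed vectors, and choosing subgroups $H_1,\dots,H_r$ that realise an idempotent relation $\sum_i n_i\,\varepsilon_{H_i} = \sum_j m_j\,\varepsilon_{G_j}$ in $\Q[G]$, with $\varepsilon_H = \tfrac{1}{|H|}\sum_{h\in H} h$, the Kani--Rosen formula converts it into the isogeny $\prod_i \Jac(X/H_i)^{n_i} \sim \prod_j \Jac(X/G_j)^{m_j}$; discarding the genus-zero quotients then yields the table entry. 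In the two strata $C_9$ and $G_{168} \cong \PSL_2(\F_7)$ the whole of $\Jac(X)$ is a single nontrivial isotypic component $A_W$ of dimension $3$, and its internal structure is governed by $\End^0(A_W)$, which I would read off as the commutant of the image of $\Q[G]$. When the field of character values of $W$ has degree $6$ the component is a simple abelian threefold with that complex multiplication, as for $C_9$ with $\Q(\zeta_9)$; when it is a smaller imaginary quadratic field the component is a power of a lower-dimensional factor that is itself a quotient of $X$, as for the Klein quartic, where an involution yields an elliptic curve $E = \Jac(X/C_2)$ with complex multiplication by $\Q(\sqrt{-7})$ and $\Jac(X) \sim E^3$.

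It then remains to write down the equations recorded in the table. For the cyclic and elementary-abelian strata this is direct: passing to coordinates adapted to the invariants of $H_i$ exhibits $X/H_i$ as an explicit plane or hyperelliptic curve, which one normalises. For the remaining, non-abelian strata the quotients that arise have small genus --- generically $0$ or $1$ --- so the main task is to pin down the isomorphism class of each elliptic factor by computing its $j$-invariant from the ring of invariants, exactly as in the corresponding column of Table~\ref{table:Hyperelliptic}. I expect the main obstacle to be organisational rather than a single hard computation: one must certify that the chosen subgroups furnish a \emph{complete} idempotent relation, so that no factor is dropped and no spurious multiplicity appears, and one must verify that the elliptic factors are correctly grouped according to the multiplicities $I=[\dots]$ listed. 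These points are most safely settled by cross-checking against the Chevalley--Weil decomposition of the space of differentials computed by the \Magma programs of Section~\ref{sec:implem}, which independently return both the quotient genera and the multiplicities of the isogeny factorisation.
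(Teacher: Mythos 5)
The paper gives no proof of this proposition: it is stated as folklore, with the decompositions recorded in Table~\ref{table:NonHyper} and only the strategy (quotients of $X$ by subgroups of $\Aut(X)$ combined with the Kani--Rosen formula \cite{kanirosen}) indicated in the introduction. Your outline is exactly that strategy, correctly executed in its key points (deferral of $C_2$ and $C_6$ to Section~\ref{sec:RiRoRedone}, isotypic decomposition via the linear action on $H^0(X,\Omega^1_X)$, CM identification for $C_9$ and $G_{168}$), so it matches the paper's intended argument.
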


{\tiny
\begin{center}
\begin{table}[h]
\begin{tabular}{|c|c|c|c|}
\hline
$G$  & $X : F(x,y,z)=0$ & $\Jac(X) \sim \prod_{i \in I}  \Jac(C_i)$ & \text{Curves} $C_i$  \\ \hline
$C_2$ & & & see Section~\ref{sec:RiRoRedone}  \\
$C_2^2$ &  $x^4+y^4+z^4+r x^2 y^2+s y^2 z^2+t z^2 x^2$ & $I=[1,2,3]$ & $\begin{cases}  C_1: & y^2=(1/4r^2-1) x^4 + (1/2rs-t) x^2 + (1/4s^2-1) , \\  C_2: &  y^2=(1/4s^2-1) x^4 + (1/2st-r) x^2 + (1/4t^2-1), \\ C_3 &  y^2=(1/4t^2-1) x^4 + (1/2tr-s) x^2 + (1/4r^2-1), \end{cases}$ \\
$C_3$ & $x^3 z +y(y-z)(y-r z)(y-sz)$  &  &   $\begin{array}{c} \Jac(X) \text{ generically simple} \\ \text{with endomorphism algebra } \mathbb{Q}(\sqrt{-3}) \end{array}$  \\
$D_4$ & $x^4+y^4+z^4+r x^2yz+s y^2 z^2$ & $I=[1,2,2]$ & $\begin{cases}  C_1: & y^2= x^4 +(r^2/4-s) x^2 +1, \\  C_2 : & y^2 = (-s-2+r^2/4) x^4-2r x^2-s+2 \end{cases}$ \\
$S_3$ & $x(y^3+z^3)+y^2z^2+r x^2 yz+s x^4$ & $I=[1,1,2]$ &   $\begin{cases}  C_1 :& y^2 = -x^3+9/4x^2-3/2rx+r^2/4-s \\ C_2 :& y^2=x^4+2 r x^3+(r^2-4s)x^2-s x \end{cases}$  \\
$C_6$ & $x^3 z+y^4+r y^2 z^2+z^4$ &  $I=[1,2]$ & See Section \ref{sec:RiRoRedone}
\\
$G_{16}$ & $x^4+y^4+z^4+r y^2 z^2$ & $I=[1,2,2]$ & $\begin{cases}  C_1 :& y^2=x^4 -r x^2+1 \\ C_2 :&  y^2=(-r-2) x^4-r+2  \end{cases}$  \\
$S_4$ & $x^4+y^4+z^4+r(x^2y^2+y^2z^2+z^2x^2)$ & $I=[1,1,1]$ & $C_1:  y^2=(1/4r^2-1) x^4 + (1/2r^2-r) x^2 + (1/4r^2-1) $ \\
$C_9$&  $x^3y+y^3z+z^4$ &  & $\Jac(X)$ simple with endomorphism algebra $\mathbb{Q}(\zeta_9)$ \\
$G_{48}$ & $x^4+(y^3-z^3)z$ & $I=[1,2,2]$ & $\begin{cases}  C_1 :& y^2=x^3+1 \\ C_2 :& y^2=x^3+x   \end{cases}$  \\
$G_{96}$ & $x^4+y^4+z^4$ &  $I=[1,1,1]$ &  $C_1 : y^2=x^3+x$ \\
$G_{168}$ & $x^3 y+ y^3 z +z^3 x$ & $I=[1,1,1]$ & $\begin{array}{c} C_1 : y^2 + xy + y = x^3 - x^2 - 2680x + 66322, \\
\text{i.e.,} j=-3375 \end{array}$ \\
\hline
\end{tabular}
\caption{Decomposition of the Jacobian: non-hyperelliptic case}
\label{table:NonHyper}
\end{table}
\end{center}
}


\section{Plane quartics with automorphism group $C_2$ or $C_6$}\label{sec:RiRoRedone}

Let $X/k$ be a non-hyperelliptic curve of genus 3 with automorphism group $C_2$. The action of the automorphism induces a map $\pi : X \to Y$ of degree $2$, where $Y$ is an elliptic curve. Hence we know that $\Jac X \sim Y \times A$, but $A$ is not the Jacobian of a subcover of $X$. Indeed, the Riemann-Hurwitz formula shows that any morphism $X \to C$ with $g(C)=2$ must be of degree 2, hence should come from another involution of $X$. The problem of describing $A$ up to isogeny as the Jacobian of an explicit curve $C$ of genus $2$ was solved in \cite{RiRo} by relying on a suitable deformation of $\pi$ to an \'etale cover between curves of genera 5 and 3. This result is recalled below. Since every non-hyperelliptic genus 3 curve with an involution can be written in the form \eqref{eq:PlaneQuarticWithInvolution}, this completes the tables of Section~\ref{sec:Galois}.

\begin{proposition}[Ritzenthaler-Romagny \cite{RiRo}]\label{prop:RiRo}
Let $X$ be a smooth, non-hyperelliptic genus $3$ curve defined by
\begin{equation}\label{eq:PlaneQuarticWithInvolution}
  X : y^4 - h(x,z) \, y^2 + f(x,z) \,  g(x,z)=0
\end{equation}
in $\P^2_k$, where $$f=f_2 x^2+ f_1 x z + f_0 z^2, \quad g=p_2 x^2+g_1 x z + g_0 z^2, \quad h = h_2 x^2 + h_1 xz +h_0 z^2$$ are homogeneous polynomials of degree $2$ over a field $k$ of characteristic different from $2$.  The involution $(x:y:z) \mapsto (x:-y:z)$ induces a cover $\pi$ of degree $2$ of the genus $1$ curve  $$Y : y^2 - h(x,z) \, y + f(x,z) \, g(x,z)=0$$ in the weighted projective space $\P{(1,2,1)}$.
Let $$M = \begin{bmatrix} f_2 & f_1 & f_0 \\ h_2 & h_1 & h_0 \\ p_2 & g_1 & g_0 \end{bmatrix}$$
and assume that $M$ is invertible. Let $$M^{-1} = \begin{bmatrix} a_1 & b_1 & c_1 \\ a_2 & b_2 & c_2 \\ a_3 & b_3 & c_3 \end{bmatrix}.$$ Then  $\Jac(X) \sim \Jac(Y) \times \Jac(C)$ with $C : y^2 = b \cdot (b^2-ac)$ in $\P{(1,3,1)}$ where
$$a=a_1+2 a_2 x+ a_3 x^2, \quad b=b_1 + 2 b_2 x + b_3 x^2, \quad c=c_1+2 c_2 x+ c_3 x^2.$$
\end{proposition}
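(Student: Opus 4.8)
The plan is to realize the complementary surface $P := \Prym(X/Y)$ directly as an isogeny factor of the Jacobian of the Galois closure of a cleverly chosen composition, in the spirit of the introduction. Since $\pi$ is a degree-$2$ cover of the elliptic curve $Y$, pushforward and pullback give $\Jac(X) \sim \Jac(Y) \times P$ with $P$ an abelian surface of dimension $g(X)-g(Y)=2$, so everything reduces to identifying $P$, up to isogeny, with $\Jac(C)$. First I would record the branch locus of $\pi$: setting $w = y^2$ exhibits $\pi$ as $y \mapsto w$, ramified exactly over the four points of $Y$ with $w=0$, i.e.\ the two zeros of $f$ and the two zeros of $g$. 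The decisive step is the choice of the degree-$2$ map $\psi : Y \to \P^1$: following the introduction, it must be the one that \emph{collapses} one zero of $f$ with one zero of $g$, and this is precisely the data encoded by $M$ (whose rows are the coefficient vectors of $f$, $h$, $g$) and its inverse. I would check that invertibility of $M$ means $f,h,g$ form a basis of the quadrics, so that the rows of $M^{-1}$ produce the quadratics $a,b,c$ and hence the pencil cutting out $\psi$.

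Next I would form the Galois closure $Z$ of the composite $X \to Y \xrightarrow{\psi} \P^1$ and pin down its Galois group $G := \Gal(Z/\P^1)$. Because this is a tower of two degree-$2$ maps, the quartic $Z \to \P^1$ is of biquadratic type, so $G$ embeds into the dihedral group of order $8$; I would confirm $G \cong D_4$ for the generic member by computing the local monodromy from the branch data (the collapsed point contributing an element of cycle type $(2,2)$, the two surviving branch points of $\pi$ each a transposition, and the branch points of $\psi$ further $(2,2)$-elements), and along the way read off the genera of all intermediate quotients via Riemann--Hurwitz. Here $X$ and $Y$ arise as quotients $Z/H_X$ and $Z/H_Y$ for subgroups of order $2$ and $4$ respectively.

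With the action in hand I would decompose the Jacobians isotypically. The group algebra of $D_4$ has four linear characters and one $2$-dimensional irreducible $\rho$, so by Kani--Rosen \cite{kanirosen} one has $\Jac(Z) \sim A_\rho^{2} \times \prod_{\chi} A_\chi$, and for any subgroup $H$ the multiplicity of $A_\rho$ (resp.\ $A_\chi$) in $\Jac(Z/H)$ equals $\dim V_\rho^{H}$ (resp.\ $\dim V_\chi^{H}$), the dimension of the $H$-fixed subspace of the corresponding irreducible; these invariants are exactly what the Chevalley--Weil formula supplies from the ramification of $Z \to \P^1$. A direct count then gives $\Jac(Y) \sim A_\chi$ for a single linear character and $\Jac(X) \sim \Jac(Y) \times A_\rho$, whence $P \sim A_\rho$ with $\dim A_\rho = 2$. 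The curve $C$ should be the quotient $Z/H_C$ by a reflection subgroup for which $\dim V_\rho^{H_C}=1$, and I expect that the whole point of collapsing two branch points is that it forces the elliptic curve $A_\chi$ which would otherwise accompany $A_\rho$ in this quotient to become rational; then $C$ has genus $2$ and $\Jac(C) \sim A_\rho \sim P$, the isogeny being induced by the natural correspondences between $Z$, $X$ and $C$.

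The remaining, genuinely laborious, step is to turn this abstract quotient into the stated Weierstrass model. I would use the rows of $M^{-1}$ to write explicit generators of $k(C)$ as rational functions on $Z$ and then eliminate to reach $y^2 = b\,(b^2-ac)$, noting that $b(b^2-ac)$ has degree $6$, consistent with $g(C)=2$. The main obstacle is precisely this explicit identification, together with the verification that, once $\psi$ is taken to be the collapsing map, the Galois group, the genera of the quotients, and the isotypic matching all come out exactly as claimed. Confirming that the stray elliptic factor really degenerates --- equivalently, that the pencil extracted from $M^{-1}$ is the correct one --- is the crux on which the entire argument turns.
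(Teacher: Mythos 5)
Your representation-theoretic skeleton is exactly the route the paper itself takes in Section~\ref{sec:RiRoRedone}: form the Galois closure $Z$ of $X \to Y \to \mathbb{P}^1$ for a degree-$2$ map $Y \to \mathbb{P}^1$ collapsing two of the four branch points of $\pi$, check that $\Gal(Z/\mathbb{P}^1) \cong D_4$ with the monodromy types you list (your Riemann--Hurwitz bookkeeping and the count $g_Z = 7$ are right), decompose $H^0(Z,\Omega^1_Z)$ into $D_4$-isotypic pieces, and conclude $\Prym(X/Y) \sim \Jac(Z/\langle sr\rangle)$ because the $2$-dimensional irreducible carries the whole Prym while $\Jac(Y)$ sits in a linear character. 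Your guess that the collapsing is what kills the extra linear character in $H^0(Z,\Omega^1_Z)^{\langle sr\rangle}$ is confirmed by Lemma~\ref{lemma:ActionD4RegularDifferentials} ($e_3 = 0$). The paper computes the multiplicities from the genera of explicit quotients rather than from Chevalley--Weil applied to the monodromy datum, but these are interchangeable.

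The genuine gap is that this argument does not prove the proposition as stated: the content of Proposition~\ref{prop:RiRo} beyond the abstract decomposition is the specific model $C : y^2 = b\,(b^2 - ac)$ with $a,b,c$ read off from $M^{-1}$, and you never derive it --- you correctly identify this as ``the crux'' and then stop. Two specific points would derail the completion as you have set it up. First, your claim that the rows of $M^{-1}$ cut out the pencil defining $\psi : Y \to \mathbb{P}^1$ is asserted without justification and is not how $M^{-1}$ enters in \cite{RiRo}; in the paper's Galois derivation the map $\psi$ is the quotient of $Y$ by $P \mapsto Q_1 + Q_2 - P$, and the resulting explicit model is \eqref{eq:ModelGenus2Curve}, written in entirely different coordinates ($\beta$, $\gamma$, $p_1$, $p_2$), with no reconciliation against $y^2 = b(b^2-ac)$ ever carried out. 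Second, you take $\psi$ to collapse one zero of $f$ with one zero of $g$, whereas the paper's own reinterpretation (see the discussion after Theorem~\ref{thm:RiRoRe}) is that the factorization $fg$ corresponds to the partition of the four branch points into the zero loci of $f$ and of $g$, the contracted pair being one of these two loci; collapsing a mixed pair would attach you to a different factorization and hence, per the proposition, to a different curve $C$. Note finally that the paper does not prove this statement by the Galois route either: it is quoted from \cite{RiRo}, where the proof degenerates a family of \'etale double covers of genus-$3$ curves by genus-$5$ curves and invokes the explicit formulas of \cite{bruin}; the $D_4$ construction is presented as an a posteriori explanation, not as a derivation of the displayed equation.
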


In the special case when the automorphism group of $X$ is $C_6$, it can be realized as a plane quartic
\begin{equation*}
  X : x^3z+y^4+ry^2z^2+z^4 = 0
\end{equation*}
for some $r \in k$, and we find $\Jac(X) \sim C_1 \times \Jac(C_2)$ with
\begin{equation*}
  \begin{cases}  C_1 :& y^2 = -x^3 +r^2/4-1 \\ C_2 : & y^2=(x^2 - 2x - 2) (x^4 - 4x^3 + (-2 r^2 + 8)x  - r^2 + 4) \end{cases}
\end{equation*}

In the next subsection we explain a different approach to handle the case of non-hyperelliptic curves with automorphism group $C_2$. This will serve as motivation for the generalization discussed in Section~\ref{sec:AlgoTheory}.

\subsection{A Galois approach}
Let $X$ be as in the previous section, that is, a non-hyperelliptic genus $3$ curve with an involution. The corresponding quotient is a curve $Y$ of genus $1$, and the morphism $\pi_{X/Y} = \pi : X \to Y$ of degree $2$ is branched over $4$ distinct points $Q_1,Q_2, Q_3$ and $Q_4$. Let us consider a morphism $\pi_{Y/\mathbb{P}^1} : Y \to \P^1$ which maps $Q_1$ and $Q_2$ to the same point $[\beta,1] \in \P^1$ with $\beta \ne 0$. Choosing an origin on $Y$, and thereby giving it the structure of an elliptic curve, this morphism can be constructed by taking the quotient of the elliptic curve $Y$ by the involution $P \mapsto Q_1+Q_2-P$. Composing with an automorphism of $\mathbb{P}^1$ if necessary, we can and will assume that $\pi_{Y/\mathbb{P}^1}(Q_3)=[0,1]$. Additionally, we write $\pi_{Y/\mathbb{P}^1}(Q_4)=[\gamma,1]$.

\begin{remark}
  Consider the special case $\gamma=0$, that is, the morphism $Y \to \mathbb{P}^1$ identifies $Q_1$ with $Q_2$, as well as $Q_3$ with $Q_4$. The methods developed later on in Section \ref{sec:AlgoTheory} will enable us to show that this happens if and only if the composite map $X \to \mathbb{P}^1$ is Galois, with Galois group $C_2^2$ (see Table \ref{tab:results-g3}). In this case $\Aut(X)$ contains a copy of the Klein $4$-group $C_2^2$, and we see from Table \ref{table:NonHyper} that $\Jac(X)$ decomposes as the product of three elliptic curves, each of which is a quotient of $X$.
\end{remark}

From now on we  restrict to the case $\gamma \neq 0$. By \cite{enolskii,levin} we have the following equations: we may write $Y : y^2=f(t) =(t-\alpha_1)(t-\alpha_2)(t-\alpha_3)$ and
\begin{equation*}
  X = \begin{cases} y^2=& f(t) \\ x^2=& (t-\beta) (p_2(t) + y)\end{cases}
\end{equation*}
where $p_2$ is a polynomial of degree $2$ such that $p_2(t)^2- f(t)= t(t-\gamma) p_1(t)^2$ with $p_1(t)$ a polynomial of degree $1$.

Let $Z \to \mathbb{P}^1$ be the Galois closure of $X \to \mathbb{P}^1$. The Galois group of $Z/\mathbb{P}^1$ is isomorphic to $D_4$, the dihedral group on $4$ elements. We write $D_4=\langle r,s \bigm\vert r^4=s^2=1, sr=r^3s\rangle$, assuming (as we may) that $s$ to is the non-central element of order 2 such that $Z/\langle s \rangle \cong X$. Let $\xbar$ be any root of  $\xbar^2= (t-\beta)(p_2(t)-y)$. We then see that $Z$ is the smooth projective curve an affine part of which is given by
\begin{equation*}
    \begin{cases}
    y^2 =& f(t), \\
    x^2=& (t-\beta) (p_2(t)+y), \\
    \xbar^2= & (t-\beta) (p_2(t)-y).
  \end{cases}
\end{equation*}
Since $X$ corresponds to the quotient of $Z$ by $s$, we know that $s$ sends $(x, \xbar, y)$ to $(x, -\xbar, y)$. We can choose $r$ to be $(x,\xbar,y) \mapsto (\xbar, -x, -y)$. Direct inspection of the subgroup lattice of $D_4$ implies that the maps $Z \to X \to Y \to \mathbb{P}^1$ fit into a larger diagram of maps of degree $2$:

\begin{equation}\label{eq:Degree2GaloisDiagram}
  \xymatrix{
    & Z \ar[ld]_{\pi_1} \ar[d] \ar[rd]^{\pi_2}
    & \\
    X=Z/\langle s \rangle \ar[d]
    & Z/\langle r^2 \ar[ld] \ar[d] \ar[rd] \rangle & C= Z/\langle sr \rangle \ar[d]
    \\
    Y = Z / \langle r^2, s \rangle \ar[rd] & Z/\langle r \rangle \ar[d] & Z/\langle r^2, sr \rangle \ar[ld] \\
    & \mathbb{P}^1 = Z/D_4
  }
  \quad
  \quad
  \quad
  \xymatrix{
    & g=7 \ar[ld] \ar[d] \ar[rd]
    & \\
    g=3 \ar[d]
    & g=3 \ar[ld] \ar[d] \ar[rd] & g=2 \ar[d]
    \\
    g=1 \ar[rd] & g=2 \ar[d] & g=0 \ar[ld] \\
    & g=0
  }
\end{equation}

Knowing the action of $s$ and $r$ explicitly allows us to work out equations for the various quotients in the previous diagram and to compute their genera using the Riemann-Hurwitz formula. We will be mainly interested in $C=Z/\langle sr \rangle$. Consider the $sr$-invariant functions $v=x+ \xbar$ and $\displaystyle w= \frac{x \xbar}{(t-\beta) p_1(t)}$. Note that the invariant function $z:=y(x-\xbar)$ also lies in the function field $k(v,w,t)$, since $vz=y(x^2-\xbar^2)=2f(t)(t-\beta)$. The relations between $v$, $w$ and $t$ describe the quotient curve
\begin{equation}\label{eq:Y}
  C = \begin{cases} v^2 =&  2 (t-\beta) (p_2(t) +w p_1(t)),\\ w^2 =& t (t- \gamma) \end{cases}
\end{equation}
which is indeed a cover of $\P^1$ of degree $4$, as it is a cover of degree $2$ of a conic that is in turn a cover of degree $2$ of $\P^1$.

The second equation in \eqref{eq:Y} describes a conic with a rational point, which may be parametrized as $(t,w)=\left( \frac{\gamma}{1-u^2}, \frac{\gamma u}{1-u^2} \right)$. Replacing this parametrization in the first equation and setting $s := (1-u^2)^3v$ we then get the hyperelliptic model
\begin{equation}\label{eq:ModelGenus2Curve}
  s^2 = 2 \left( \gamma - \beta(1-u^2) \right) \left( (1-u^2)^2 p_2 (\frac{\gamma}{1-u^2}) + \gamma u (1-u^2) p_1 (\frac{\gamma}{1-u^2}) \right).
\end{equation}

\begin{remark}
  The model \eqref{eq:ModelGenus2Curve} is smooth and defines a curve of genus 2. Indeed, one may check that (under our assumptions $\beta \neq 0, \gamma \neq 0, \beta \neq \gamma$) the irreducible factors of the discriminant of the polynomial on the right hand side are also factors of either $\operatorname{disc}(f)$ or $\operatorname{Res}_t (f(t), (t-\beta)p_2(t))$. This shows that \eqref{eq:ModelGenus2Curve} is smooth, because $\operatorname{disc}(f)=0$ (resp. $\operatorname{Res}_t (f(t), (t-\beta)p_2(t))=0$) would imply that $Y$ (resp.\ $X$) is not smooth.
\end{remark}

We now aim to show that the Prym variety of the cover $X \to Y$ is isogenous to $\Jac(C)$ (Theorem \ref{thm:RiRoRe}). In order to do so, we begin by investigating the action of $D_4 \subset \Aut(Z)$ on the space of regular differentials $H^0(Z,\Omega^1_Z)$. We will freely use some results that will be discussed in general in Section \ref{sec:AlgoTheory}, see in particular Theorem \ref{thm:PusforwardPullbackDifferentials}. Recall that the character table of $D_4$ is as follows:

\begin{center}
  \begin{tabular}{c|c|c|c|c|c}
    & $\{\operatorname{id}\}$ & $\{r^2\}$ & $\{s,sr^2\}$ & $\{sr, sr^3\}$ & $\{r, r^3\}$ \\
    \hline

    $(1)$   & 1 & 1 & 1 & 1 & 1 \\
    $V_1$   & 1 & 1 &$-1$ &$-1$ & 1 \\
    $V_2$   & 1 & 1 & 1 &$-1$ &$-1$ \\
    $V_3$   & 1 & 1 &$-1$ & 1 &$-1$\\
    $(2)$ & 2 & $-2$ & 0 & 0 & 0
  \end{tabular}
\end{center}

Note in particular that $r^2$ acts trivially on the 1-dimensional representations $V_1$, $V_2$, $V_3$ and as $-1$ on $(2)$, while the fixed subspace in $(2)$ of each of the symmetries $s,sr,sr^2,sr^3$ is 1-dimensional.

\begin{lemma}\label{lemma:ActionD4RegularDifferentials}
  We have $H^0(Z,\Omega^1_Z) \cong V_1^{\oplus 2} \oplus V_2 \oplus (2)^{\oplus 2}$ as representations of $D_4$.
\end{lemma}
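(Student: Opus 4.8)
The plan is to determine the five multiplicities in the isotypic decomposition
$$H^0(Z, \Omega^1_Z) \cong (1)^{\oplus a} \oplus V_1^{\oplus b} \oplus V_2^{\oplus c} \oplus V_3^{\oplus d} \oplus (2)^{\oplus e}$$
purely from dimension counts, without ever writing down an explicit differential. The one external input is the identification, valid in characteristic zero, of the pullback $\pi_{Z/W}^* \colon H^0(W, \Omega^1_W) \xrightarrow{\sim} H^0(Z, \Omega^1_Z)^H$ for any intermediate quotient $W = Z/H$; this is (a special case of) Theorem~\ref{thm:PusforwardPullbackDifferentials}. Consequently $\dim H^0(Z, \Omega^1_Z)^H = g(Z/H)$, and the genera $g(Z/H)$ for every subgroup $H \le D_4$ are already recorded in the right-hand diagram of \eqref{eq:Degree2GaloisDiagram}.

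First I would compute, for each subgroup $H$ and each irreducible $\rho$, the dimension $\tfrac{1}{|H|}\sum_{h \in H} \chi_\rho(h)$ of the $H$-fixed subspace of $\rho$, reading the character values off the table preceding the lemma and taking care to place each generator in its correct conjugacy class (for instance $s \in \{s, sr^2\}$ and $sr \in \{sr, sr^3\}$, while $r^2$ is central). Summing over the isotypic components then expresses each $g(Z/H)$ as a linear form in $a,b,c,d,e$. The subgroups $D_4$, $\langle r \rangle$, $\langle r^2, s \rangle$, $\langle r^2, sr \rangle$ and $\langle s \rangle$ yield respectively $a = 0$, $a+b = 2$, $a+c = 1$, $a+d = 0$ and $a+c+e = 3$, an essentially triangular system that is immediately solved by $a = d = 0$, $b = 2$, $c = 1$, $e = 2$, giving $H^0(Z, \Omega^1_Z) \cong V_1^{\oplus 2} \oplus V_2 \oplus (2)^{\oplus 2}$.

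Finally I would verify consistency with the remaining genera: the total dimension $a+b+c+d+2e = 7 = g(Z)$, the count $a+b+c+d = 3 = g(Z/\langle r^2 \rangle)$ coming from the central involution, and $a+d+e = 2 = g(C)$ from $\langle sr \rangle$, all of which hold. I do not expect a genuine obstacle: the argument is a short piece of integer linear algebra, and its only delicate points are the correct assignment of conjugacy classes in the character averages and the invocation of Theorem~\ref{thm:PusforwardPullbackDifferentials} to convert invariant dimensions into genera. The one conceptual remark worth making is that the five chosen subgroups already over-determine the system, so the matching of the leftover genera is a genuine sanity check rather than an extra hypothesis.
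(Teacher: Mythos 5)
Your proof is correct and follows essentially the same route as the paper's: both identify $\dim H^0(Z,\Omega^1_Z)^H$ with $g(Z/H)$ via Theorem \ref{thm:PusforwardPullbackDifferentials} and solve the resulting linear system in the isotypic multiplicities. The only difference is cosmetic --- you use the subgroups $D_4$, $\langle r\rangle$, $\langle r^2,s\rangle$, $\langle r^2,sr\rangle$, $\langle s\rangle$ while the paper uses $G$, $\langle r^2\rangle$, $\langle s\rangle$, $\langle sr\rangle$ together with $\dim H^0(Z,\Omega^1_Z)=7$ --- and your character-average computations and the resulting system $a=0$, $a+b=2$, $a+c=1$, $a+d=0$, $a+c+e=3$ all check out.
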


\begin{proof}
  Write $H^0(Z,\Omega^1_Z) \cong (1)^{\oplus e_0} \oplus V_1^{\oplus e_1} \oplus V_2^{\oplus e_2} \oplus V_3^{\oplus e_3} \oplus (2)^{\oplus e_4}$ as representations of $D_4$. Let $H$ be any subgroup of $G$. One has
  \begin{equation}
    H^0(Z,\Omega_Z)^H \cong H^0(Z/H,\Omega_{Z/H}),
  \end{equation}
  which implies that the dimension of the subspace of $H^0(Z,\Omega^1_Z)$ fixed by $H$ is the genus of $Z/H$. Applying this to $H=G$, and observing that $Z/G \cong \mathbb{P}^1$ has genus $0$, we obtain that $H^0(Z,\Omega^1_Z)$ does not contain any copy of the trivial representation, i.e., $e_0=0$. Applying the same argument with $H=\langle r^2\rangle$ one obtains $g(Z/H)=3 = \dim H^0(Z,\Omega^1_Z)^H$, and since $r^2$ acts trivially on $V_1, V_2, V_3$ and without fixed points on $(2)$ this implies $3=e_1+e_2+e_3$. We also have the condition $e_1+e_2+e_3+2e_4=\dim H^0(Z,\Omega^1_Z)=7$, so -- combining the last two equations -- we obtain $e_4=2$. Finally, the conditions
  \begin{equation}
    3 = g(Z/\langle s \rangle) = \dim H^0(Z,\Omega^1_Z)^{\langle s \rangle} = e_2+e_4
  \end{equation}
  and
  \begin{equation}
    2 = g(Z/\langle sr \rangle) = \dim H^0(Z,\Omega^1_Z)^{\langle sr \rangle} = e_3 + e_4
  \end{equation}
  imply $e_2=1, e_3=0$ and therefore $e_1=2$.
\end{proof}

\begin{lemma}\label{lem:genus2Correspondence}
  The correspondence
  \begin{equation}
    \xymatrix{
      & Z \ar[ld]_{\pi_1} \ar[rd]^{\pi_2} & \\
      X=Z/\langle s \rangle &  & C = Z/\langle sr \rangle
      }
      \quad\quad
    \xymatrix{
      & g=7 \ar[ld] \ar[rd] & \\
      g=3 & & g=2 \\
    }
  \end{equation}
  induces a homomorphism of abelian varieties $\Jac(Z/\langle sr \rangle) \to \Jac(X)$ with finite kernel. In particular, $\Jac(Z/\langle sr \rangle)$ is a factor of $\Jac(X)$ in the category of abelian varieties up to isogeny.
\end{lemma}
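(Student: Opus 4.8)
The plan is to exhibit the correspondence as a concrete homomorphism $\phi \colon \Jac(C) \to \Jac(X)$ and to prove that $\phi$ has finite kernel by checking that the dual map on cotangent spaces is surjective. I would set $\phi := \pi_{1,*} \circ \pi_2^*$, the composition of the pullback $\pi_2^* \colon \Jac(C) \to \Jac(Z)$ with the pushforward $\pi_{1,*} \colon \Jac(Z) \to \Jac(X)$. Since the kernel of a homomorphism of abelian varieties is finite if and only if the induced map on tangent spaces at the origin is injective, and this tangent map is dual to the map on cotangent spaces, it suffices to show that the induced map $\phi^* \colon H^0(X,\Omega^1_X) \to H^0(C,\Omega^1_C)$ is surjective. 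Using the identification of the cotangent space of a Jacobian at the origin with its space of regular differentials, together with the compatibility of pullback and pushforward on Jacobians with, respectively, the trace and the pullback on differentials (Theorem~\ref{thm:PusforwardPullbackDifferentials}), this map is identified with the composite $\pi_{2,*} \circ \pi_1^* \colon H^0(X,\Omega^1_X) \to H^0(Z,\Omega^1_Z) \to H^0(C,\Omega^1_C)$.

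The surjectivity is then a computation with the $D_4$-action. By Theorem~\ref{thm:PusforwardPullbackDifferentials} the image $\pi_1^* H^0(X,\Omega^1_X)$ equals the $\langle s\rangle$-fixed subspace $H^0(Z,\Omega^1_Z)^{\langle s\rangle}$, while the trace $\pi_{2,*}$ annihilates the $(-1)$-eigenspace of $sr$ and restricts to an isomorphism from the $(+1)$-eigenspace of $sr$ onto $H^0(C,\Omega^1_C) \cong H^0(Z,\Omega^1_Z)^{\langle sr\rangle}$. By Lemma~\ref{lemma:ActionD4RegularDifferentials} we have $H^0(Z,\Omega^1_Z) \cong V_1^{\oplus 2} \oplus V_2 \oplus (2)^{\oplus 2}$. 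Reading off the character table, both $s$ and $sr$ act as $-1$ on $V_1$, while on $V_2$ one has $s = +1$ but $sr = -1$; hence $V_1$ contributes nothing to either fixed space, and the copy of $V_2$ lies in $\ker \pi_{2,*}$. Consequently both $H^0(X,\Omega^1_X)$ and $H^0(C,\Omega^1_C)$ contribute only through the $(2)$-isotypic component $U \cong (2)\otimes M$ with $\dim M = 2$, and it is enough to prove that $\pi_{2,*}$ maps $U^{s=+1}$ onto $U^{sr=+1} = H^0(C,\Omega^1_C)$.

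For this I would argue on a single irreducible factor $(2)$: since $s$ and $sr$ are distinct reflections in $D_4$ (each acting with eigenvalues $+1,-1$, as noted after the character table), their eigenlines are in general position, so the $(+1)$-eigenline of $s$ differs from the $(-1)$-eigenline of $sr$ and therefore projects isomorphically onto the $(+1)$-eigenline of $sr$ along the latter. Tensoring with the multiplicity space $M$ shows that the projection of $U^{s=+1}$ onto $U^{sr=+1}$ along $U^{sr=-1}$ is an isomorphism; since $\pi_{2,*}$ kills $U^{sr=-1}$, factors through this projection, and is an isomorphism on $U^{sr=+1}$, we conclude $\pi_{2,*}(U^{s=+1}) = U^{sr=+1} = H^0(C,\Omega^1_C)$. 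This yields the surjectivity of $\phi^*$, hence $\ker\phi$ is finite and $\Jac(C) = \Jac(Z/\langle sr\rangle)$ is a factor of $\Jac(X)$ in the category of abelian varieties up to isogeny.

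The main obstacle is the bookkeeping of the last paragraph rather than any deep ingredient: one must correctly identify the cotangent map of $\phi$ with $\pi_{2,*}\circ\pi_1^*$ and, crucially, verify that the trace $\pi_{2,*}$ does not annihilate the image of $\pi_1^*$. This last point is precisely the statement that the $s$-fixed line and the $sr$-fixed line inside the two-dimensional representation are in general position, and it is exactly here that the fact that $s$ and $sr$ are genuinely different symmetries of $Z$ enters essentially; were these lines to coincide, the image would land in $\ker\pi_{2,*}$ and the argument would collapse. Everything else is a formal consequence of Lemma~\ref{lemma:ActionD4RegularDifferentials} and the functoriality of pullback and pushforward recorded in Theorem~\ref{thm:PusforwardPullbackDifferentials}.
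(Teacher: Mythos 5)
Your proof is correct and follows essentially the same route as the paper: both reduce the statement to a computation with the $D_4$-module structure of $H^0(Z,\Omega^1_Z)$ from Lemma~\ref{lemma:ActionD4RegularDifferentials}, and the key point in each case is that the relevant eigenlines of the two distinct reflections $s$ and $sr$ inside the irreducible representation $(2)$ do not coincide. The only (immaterial) difference is one of direction: you prove surjectivity of the cotangent map $\pi_{2*}\pi_1^*$, whereas the paper proves injectivity of the transposed map $\pi_{1*}\pi_2^*$ on regular differentials.
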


\begin{proof}
  We consider the action of this correspondence on regular differentials and determine the image of
  \begin{equation}
    \pi_{1*} \pi_2^* : H^0(C, \Omega^1_C) \to H^0(X, \Omega^1_X).
  \end{equation}
  The image of $\pi_2^*$ is the $sr$-invariant subspace of $H^0(Z,\Omega^1_Z)$; given our description of $H^0(Z,\Omega^1_Z)$ as a $D_4$-representation, we see that this is precisely the $sr$-invariant subspace in $(2)^{\oplus 2}$. Identifying $H^0(X,\Omega^1_X)$ with $H^0(Z,\Omega_Z^1)^{\langle s \rangle}$, the map
  \begin{equation}
    \pi_{1*} : H^0(Z,\Omega_Z^1) \to H^0(X,\Omega_X^1) \cong H^0(Z,\Omega_Z^1)^{\langle s \rangle}
  \end{equation}
  is given by $\omega \mapsto \omega + s^*\omega$ . Since the structure of the $2$-dimensional representation $(2)$ shows that the map $(1+s)$ is injective on its $sr$-invariant subspace, we obtain that $\pi_{1*}$ is injective on the image of $\pi_2^*$. This implies that the image of $\pi_{1*} \pi_2^*$ is 2-dimensional, which in turn means that the image of $\Jac(Z/\langle sr \rangle) \to \Jac(X)$ is 2-dimensional as claimed.
\end{proof}

\begin{theorem}\label{thm:RiRoRe}
  The Jacobian of $X$ decomposes up to isogeny as
  \begin{equation}
    \Jac(X) \sim Y \times \Jac(Z/\langle sr \rangle).
  \end{equation}
  As a consequence, $\Jac(Z/\langle sr \rangle)$ is isogenous to the Prym variety of $\pi:X \to Y$, and a nontrivial map $\Jac(Z/\langle sr \rangle) \to \Jac(X)$ is induced by the correspondence $Z$ in \eqref{eq:Correspondence}.
\end{theorem}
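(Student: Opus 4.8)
The plan is to play the known Prym decomposition off against the factor produced in Lemma~\ref{lem:genus2Correspondence}. Since $g(X)=3$ and $g(Y)=1$, Riemann--Hurwitz gives that the Prym variety $A:=\Prym(X/Y)=(\ker\pi_*)^0$ has dimension $2$, and Poincaré reducibility yields $\Jac(X)\sim Y\times A$. On the other hand, the correspondence of Lemma~\ref{lem:genus2Correspondence} induces a map $\Psi:\Jac(X)\to\Jac(C)$ whose cotangent map at the origin is exactly the injection $\pi_{1*}\pi_2^*:H^0(C,\Omega^1_C)\hookrightarrow H^0(X,\Omega^1_X)$ computed there; as this injection has $2$-dimensional source equal to $\dim\Jac(C)$, the map $\Psi$ is surjective (dually, this is the finite-kernel map $\Jac(C)\to\Jac(X)$ appearing in the theorem). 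Setting $E:=(\ker\Psi)^0$, an elliptic curve, we obtain $\Jac(X)\sim E\times\Jac(C)$, so the whole theorem reduces to identifying $E$ with $Y$ up to isogeny, i.e. to showing $\Psi\circ\pi^*=0$, where $\pi^*:\Jac(Y)\to\Jac(X)$.

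First I would pass to regular differentials, using that in characteristic $0$ a homomorphism of abelian varieties vanishes as soon as its cotangent map at the origin does. Under the identifications $H^0(X,\Omega^1_X)=H^0(Z,\Omega^1_Z)^{\langle s\rangle}$ and $H^0(Y,\Omega^1_Y)=H^0(Z,\Omega^1_Z)^{\langle r^2,s\rangle}$, the map $\pi:X\to Y$ is the quotient by the involution $\overline{r^2}$ induced by $r^2$ (which commutes with $s$), so its pushforward on differentials is the averaging operator $\pi_*:\omega\mapsto\omega+(r^2)^*\omega$. The one input I will import from the proof of Lemma~\ref{lem:genus2Correspondence} is that the image of $\pi_{1*}\pi_2^*$ is precisely the two-dimensional $(2)$-isotypic component of $H^0(X,\Omega^1_X)$; conceptually this is the cotangent space of $A$, since $A$ is the connected anti-invariant part of $\overline{r^2}$ and, by the character table, $r^2$ acts by $+1$ on $V_1,V_2,V_3$ and by $-1$ on $(2)$.

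The vanishing is then immediate. The cotangent map of $\Psi\circ\pi^*:\Jac(Y)\to\Jac(C)$ is the composite $H^0(C,\Omega^1_C)\xrightarrow{\pi_{1*}\pi_2^*}H^0(X,\Omega^1_X)\xrightarrow{\pi_*}H^0(Y,\Omega^1_Y)$. Its first arrow has image the $(2)$-isotypic part, and on that part $r^2$ acts by $-1$, so $\pi_*=1+(r^2)^*$ annihilates it; hence the composite is zero and $\Psi\circ\pi^*=0$. Therefore $\pi^*\Jac(Y)\subseteq\ker\Psi$, and since both $\pi^*\Jac(Y)$ and $E$ are connected of dimension $1$ we conclude $\pi^*\Jac(Y)=E$, so $E\sim Y$. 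This gives $\Jac(X)\sim Y\times\Jac(C)$; comparing with $\Jac(X)\sim Y\times A$ yields $\Jac(C)\sim A=\Prym(X/Y)$, and the nontrivial map $\Jac(C)\to\Jac(X)$ is the one furnished by Lemma~\ref{lem:genus2Correspondence}. As a consistency check one may instead run the dual computation, showing that the composite $\Jac(C)\to\Jac(X)\xrightarrow{\pi_*}\Jac(Y)$ vanishes because $sr$ acts by $-1$ on the $V_2$-component $\pi^*H^0(Y,\Omega^1_Y)$.

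The step requiring the most care is the bookkeeping linking operations on the Jacobians (Picard pullback versus Albanese pushforward, and the passage to the dual isogeny) with the operations they induce on $H^0(Z,\Omega^1_Z)$, so that $\pi_{1*}\pi_2^*$ is correctly identified as the cotangent map of the surjection $\Psi$ and $\pi_*$ as the averaging operator $1+(r^2)^*$. Once these identifications are pinned down there is essentially no computation left: the argument collapses to the single observation, read off the $D_4$ character table, that the elliptic factor $Y$ and the abelian surface $A$ occupy the two distinct isotypic components $V_2$ and $(2)$ of $H^0(X,\Omega^1_X)$, and that the correspondence to $C$ detects only the $(2)$-component.
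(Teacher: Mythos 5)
Your proposal is correct and rests on exactly the same ingredients as the paper's proof: Lemma~\ref{lem:genus2Correspondence}, the $D_4$-decomposition $H^0(Z,\Omega^1_Z) \cong V_1^{\oplus 2}\oplus V_2\oplus (2)^{\oplus 2}$ from Lemma~\ref{lemma:ActionD4RegularDifferentials}, and the observation that $\pi_{X/Y}^*H^0(Y,\Omega^1_Y)$ sits in $V_2$ while the image of the correspondence sits in $(2)^{\oplus 2}$. The only difference is packaging --- you show the composite $\Psi\circ\pi^*$ vanishes and identify $(\ker\Psi)^0$ with $\pi^*\Jac(Y)$, whereas the paper shows the two corresponding subspaces of $H^0(X,\Omega^1_X)$ intersect trivially and hence span; since $V_2$ and $(2)^{\oplus 2}$ are the $+1$ and $-1$ eigenspaces of $r^2$, these are the same computation.
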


\begin{proof}
  In the light of Lemma \ref{lem:genus2Correspondence} it suffices to prove that the subspaces $\pi_{X/Y}^* H^0(Y,\Omega^1_Y)$ and $\pi_{1*} \pi_2^* H^0(C, \Omega^1_{C})$ of $H^0(X, \Omega^1_X)$ generate this vector space, or equivalently (by dimension considerations) that they intersect trivially. Since $\pi_1^* : H^0(X,\Omega^1_X) \to H^0(Z,\Omega^1_Z)$ is injective, it suffices to prove that they intersect trivially after pullback to $H^0(Z,\Omega^1_Z)$. One can describe the subspaces $\pi_1^* \pi_{X/Y}^* H^0(Y,\Omega^1_Y)$ and $\pi_1^*\pi_{1*}\pi_2^*H^0(C, \Omega^1_C)$ in terms of the action of $D_4$: according to Diagram \eqref{eq:Degree2GaloisDiagram} and Lemma \ref{lemma:ActionD4RegularDifferentials}, $\pi_1^* \pi_{X/Y}^* H^0(Y,\Omega^1_Y) = H^0(Z,\Omega^1_Z)^{\langle r^2,s \rangle} = V_2$, while
  \begin{equation*}
    \pi_1^*\pi_{1*}\pi_2^*H^0(C, \Omega^1_C) = (1+s) H^0(Z,\Omega^1_Z)^{\langle sr \rangle}.
  \end{equation*}
  It now suffices to note that $sr$ has no nonzero fixed points in $V_1^{\oplus 2} \oplus V_2$, so $H^0(Z,\Omega^1_Z)^{\langle sr \rangle}$ is contained in $(2)^{\oplus 2}$. Since $(2)^{\oplus 2}$ is a subrepresentation of $H^0(Z,\Omega^1_Z)$ it follows that also $(1+s)H^0(Z,\Omega^1_Z)^{\langle sr \rangle}$ is contained in $(2)^{\oplus 2}$, hence it does not intersect $H^0(Z,\Omega^1_Z)^{\langle r^2,s \rangle}$ as claimed.
  We conclude as desired that $\pi_1^* \pi_{X/Y}^* H^0(Y,\Omega^1_Y)$ and $\pi_2^*(C, \Omega^1_C)$ together generate $H^0(Z,\Omega^1_Z)^{\langle s \rangle}=H^0(X,\Omega^1_X)$.
\end{proof}

Theorem \ref{thm:RiRoRe} recovers Proposition \ref{prop:RiRo} and also clarifies the nature of a correspondence between $C$ and $X$. In addition, notice that the curve $C$ described in Proposition \ref{prop:RiRo} depends on the choice of a factorization $f(x,z)g(x,z)$ of a certain polynomial of degree $4$ as the product of two quadratics. Note that the zero locus of $f(x,z)g(x,z)$ on $Y$ describes precisely the branch locus of $X \to Y$. In our new approach, the choice of factorization can be reinterpreted as the choice of the two points $Q_1, Q_2$ that are contracted by the morphism $\pi_{Y/\mathbb{P}^1}$.

\begin{remark}
  In \cite{RiRo}, the aforementioned choice of a partition of $4$ points into $2$ pairs is clearly symmetric in the pairs. By contrast, in this new approach the choice is highly asymmetric since $2$ points are contracted and the other $2$ are not.
\end{remark}


\newcommand{\minus}{-}

\section{An algorithmic approach via group theory}\label{sec:AlgoTheory}

Our purpose in this section is to generalize the previous discussion to more complicated cases, for which explicit equations are not available. The proof of Lemma \ref{lemma:ActionD4RegularDifferentials} relied strongly on the fact that we could compute the genus of any quotient of $Z$ by direct inspection of the equations of the curves and of the action of automorphisms. In general it is more difficult to get such information explicitly, so in this section we explain how we may reverse the process: we first describe the action of $\Aut(Z \to \mathbb{P}^1)$ on $H^0(Z,\Omega^1_Z)$ (Paragraph \ref{subsec:GModuleStructure}), and subsequently rely on this information to completely describe the morphisms between Jacobians of curves obtained as quotients of $Z$ (Paragraph \ref{sec:MapsJacobians}). The method has its roots in the theory of monodromy actions for branched covers of curves. While developing the main notions of this theory below, we show how it can be combined with the description of the aforementioned action, and also give some explicit references for useful statements in this context, in particular Theorem \ref{thm:GaloisClosure}.

\subsection{Preliminaries on ramification and monodromy}

In this section we fix our notation and conventions for describing the ramification of a morphism between smooth projective curves over $\mathbb{C}$. We will freely use without further mention the fact that the category of such curves is equivalent to the category of Riemann surfaces, and assume that all our curves are connected.
We will find it useful to introduce the following definition:

\begin{definition}
  Let $\varphi : X \to Y$ be a morphism of smooth projective curves over $\mathbb{C}$ and let $B=(b_1,\ldots,b_n)$ be a fixed ordered subset of $Y$ which contains the branch locus of $\varphi$. For $b \in Y$, let $\varphi^{-1}(b)=\{a_1,\ldots,a_k\}$ be the fiber of $\varphi$ above $b$ and suppose that this set contains $m_i$ points of ramification index $e_i$, with the $e_i$ distinct and with $i$ running from $1$ to $r$, say. Then the \emph{ramification structure of $\varphi$ at $b$} is the set $R_b := \{ (e_1,m_1), \ldots, (e_r, m_r) \}$. The \emph{ramification structure of $\varphi$} is the ordered vector $R := (R_{b_i} : i=1,\ldots,n)$.
\end{definition}

\begin{remark}
  The ramification structure $R$ depends on $B$ and on the ordering of the points in $B$ --- even though this is not emphasized by our notation, the choice of $b_1,\ldots,b_n$ should always be clear from the context. Note furthermore that the definition above allows one to include the ramification structure at $b$ for points in the complement of the branch locus. In this case, the ramification structure at $b$ is $R_b = \{(1, \deg \varphi)\}$: all $\deg \varphi$ points in the fiber over $b$ have ramification index $1$.
\end{remark}

\begin{remark}
  We will connect ramification structures with the cycle type of certain permutations. We therefore agree to also write cycle types in the previous way: if the permutation $\sigma$ contains $m_i$ cycles of length $e_i$, with the $e_i$ distinct and with $i$ running from $1$ to $r$, say, then we write its cycle type as $\{ (e_1,m_1), \ldots, (e_r,m_r) \}$.
\end{remark}

\begin{example}
  Let $X$ be a smooth projective curve of genus $3$, $Y$ be an elliptic curve, and $\varphi : X \to Y$ be a morphism of degree 2. The Riemann-Hurwitz formula immediately implies that $\varphi$ is ramified at exactly 4 points, each with ramification index 2. If we take $B$ to be the branch locus of $\varphi$ (consisting of 4 points, ordered arbitrarily), then the ramification structure of $\varphi$ is $(\{(2,1)\},\{(2,1)\},\{(2,1)\},\{(2,1)\})$.
\end{example}

We now recall some basic facts about monodromy. Consider a morphism $\varphi : X \to Y$ between smooth projective curves over $\mathbb{C}$. Let $B=(b_1,\ldots,b_n)$ be a finite ordered subset of $Y$ which contains the branch locus of $\varphi$, and fix a base point $q \in Y \minus B$. Also fix loops $\gamma_1,\ldots,\gamma_n$, based at $q$, with the property that $\gamma_i$ is nontrivial in $\pi_1(Y \minus B, q)$ but trivial in $\pi_1(Y \minus (B \minus \{ b_i \}), q)$, and that winds precisely once in the counter-clockwise direction around $b_i$. We will call such a loop a \emph{small loop} based at $q$ around $b_i$. The classes $[\gamma_1], \ldots, [\gamma_n]$ then generate the fundamental group of $Y \minus B$. One can classify all maps $\varphi$ with branch locus contained in $B$ and of fixed degree in terms of representations of the fundamental group $\pi_1(Y \minus B,q)$. More precisely, we have

\begin{theorem}[{\cite[Proposition 4.9]{Miranda}}]\label{thm:BranchedCoversGeneral}
Let $Y$ be a compact Riemann surface, $B$ be a finite subset of $Y$, and let $q$ be a base point of $Y \minus B$. There is a bijection
\begin{equation*}
\left\{ \begin{array}{c}
\text{isomorphism classes of} \\
\text{holomorphic maps } \varphi : X \to Y \\
\text{of degree }d \\
\text{whose branch points} \\
\text{lie in }B
\end{array} \right\} \leftrightarrow \left\{
\begin{array}{c}
\text{group homomorphisms} \\
\rho : \pi_1(Y \minus B, q) \to S_d \\
\text{with transitive image}\\
\text{up to conjugacy in }S_d
\end{array}
\right\}
\end{equation*}
denoted by $\varphi_{\rho} \leftrightarrow \rho$ and $\varphi \leftrightarrow \rho_\varphi$.
If $\gamma_i$ is a small loop based at $q$ around $b_i \in B$, the ramification structure of $\varphi_\rho$ at $b_i$ is the cycle type of $\sigma_i:=\rho([\gamma_i])$.
\end{theorem}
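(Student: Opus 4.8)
The plan is to reduce the statement to two classical ingredients: the equivalence between finite covering spaces and permutation representations of the fundamental group, together with the Riemann existence theorem for filling in punctures. First I would pass to the unramified locus. Given a holomorphic map $\varphi : X \to Y$ of degree $d$ whose branch points lie in $B$, set $Y^\circ = Y \setminus B$ and $X^\circ = \varphi^{-1}(Y^\circ)$. Since a nonconstant holomorphic map of curves ramifies only at finitely many points, all of which lie over $B$ by hypothesis, the restriction $\varphi : X^\circ \to Y^\circ$ is a genuine (unramified) topological covering of degree $d$. Choosing a bijection $\varphi^{-1}(q) \cong \{1,\ldots,d\}$, the monodromy action of $\pi_1(Y^\circ, q)$ on the fiber produces a homomorphism $\rho_\varphi : \pi_1(Y^\circ, q) \to S_d$; its image is transitive precisely because $X^\circ$ is connected, which follows from our standing assumption that $X$ is connected, as removing the finitely many ramification points cannot disconnect a Riemann surface. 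A different labelling of the fiber replaces $\rho_\varphi$ by an $S_d$-conjugate, so its conjugacy class is a well-defined invariant of the isomorphism class of $\varphi$.

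The core of the correspondence is then the standard equivalence between finite covering spaces of the (connected, locally path-connected, semilocally simply connected) space $Y^\circ$ and finite $\pi_1(Y^\circ, q)$-sets, under which \emph{connected} covers correspond to \emph{transitive} actions. Concretely, the functor sending a cover to its fiber over $q$ equipped with the monodromy action is an equivalence of categories, so isomorphism classes of connected degree-$d$ covers of $Y^\circ$ biject with transitive actions of $\pi_1(Y^\circ, q)$ on a $d$-element set, that is, with transitive homomorphisms $\rho : \pi_1(Y^\circ, q) \to S_d$ up to conjugacy. This already yields the claimed bijection at the level of the punctured surface, and it remains to transport it across the compactification and to identify the local invariants.

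To pass from covers of $Y^\circ$ to branched covers of $Y$, I would fill in the punctures. The cover $X^\circ \to Y^\circ$ inherits a unique complex structure making $\varphi$ holomorphic, since it is a local biholomorphism there. Around each $b_i$ choose a small coordinate disk $\Delta$ so that $\pi_1(\Delta \setminus \{b_i\})$ is generated, via the chosen path to $q$, by the small loop $\gamma_i$. The restriction of the cover to $\Delta \setminus \{b_i\}$ then splits into connected components indexed by the cycles of $\sigma_i := \rho([\gamma_i])$, and a cycle of length $e$ gives a connected cover of the punctured disk isomorphic to $w \mapsto w^e$, which extends across the puncture by adjoining a single point at which $\varphi$ has the local form $w \mapsto w^e$. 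Performing this over all $b_i$ produces a compact Riemann surface $X$ together with a holomorphic extension $\varphi_\rho : X \to Y$; Riemann's removable-singularity theorem guarantees that this holomorphic structure is the unique one compatible with $X^\circ$, so the compactification is canonical and the two constructions $\varphi \mapsto \rho_\varphi$ and $\rho \mapsto \varphi_\rho$ are mutually inverse up to the stated equivalences. The local normal form $w \mapsto w^e$ finally shows that each length-$e$ cycle of $\sigma_i$ contributes exactly one point above $b_i$ of ramification index $e$, so the ramification structure of $\varphi_\rho$ at $b_i$ is precisely the cycle type of $\sigma_i$, as asserted.

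The main obstacle is the compactification step: the only genuinely analytic input is the assertion that a purely topological covering of $Y^\circ$ admits a (necessarily unique) extension to a compact Riemann surface holomorphically covering $Y$, which is exactly the content of the Riemann existence theorem. Everything else is either standard covering-space theory or routine combinatorial bookkeeping, but I would take care at this final point to verify that the three identifications ``connected component of the local cover over $\Delta \setminus \{b_i\}$'' $\leftrightarrow$ ``cycle of $\sigma_i$'' $\leftrightarrow$ ``single ramified point of index equal to the cycle length'' are genuinely consistent, since this is where the bijection and the ramification statement are simultaneously pinned down.
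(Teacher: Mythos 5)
The paper gives no proof of this statement: it is quoted directly from Miranda (Proposition 4.9), and your argument is precisely the standard one found there --- restriction to the unramified locus, the equivalence between connected degree-$d$ covers of $Y \setminus B$ and transitive permutation representations of $\pi_1(Y\setminus B,q)$ up to conjugacy, and the Riemann existence / puncture-filling step with local model $w \mapsto w^e$ identifying cycle lengths with ramification indices. Your sketch is correct and complete at the level of detail appropriate for a cited classical result.
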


As an immediate consequence of the previous theorem we have:
\begin{corollary}\label{cor:RamificationFromRepresentation}
With the same notation as in the theorem, the ramification structure of $\varphi_\rho: X \to Y$ is determined by the conjugacy classes in $S_d$ of $\rho([\gamma_i])$ for $i=1,\ldots,n$.
\end{corollary}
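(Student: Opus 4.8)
The plan is to deduce the statement directly from Theorem~\ref{thm:BranchedCoversGeneral} together with the classical description of conjugacy classes in the symmetric group. The theorem already asserts that for each $i$, the ramification structure $R_{b_i}$ of $\varphi_\rho$ at $b_i$ is the cycle type of the permutation $\sigma_i := \rho([\gamma_i])$. Since the full ramification structure is by definition the ordered vector $R = (R_{b_i} : i = 1, \ldots, n)$, it suffices to show that each $R_{b_i}$ is determined by the conjugacy class of $\sigma_i$ in $S_d$, and then to assemble these over $i$.

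The key step is the standard fact that the conjugacy class of a permutation in $S_d$ is completely determined by---indeed, is the same datum as---its cycle type: two elements of $S_d$ are conjugate if and only if they have the same cycle type. In view of the convention recorded in the remark preceding the theorem, whereby a cycle type is written in the form $\{(e_1, m_1), \ldots, (e_r, m_r)\}$, this means precisely that $R_{b_i}$, being by definition the cycle type of $\sigma_i$, is literally the same combinatorial object as the conjugacy class of $\sigma_i$. Hence $R_{b_i}$ depends only on the conjugacy class of $\sigma_i$, and collecting these over $i = 1, \ldots, n$ shows that the ordered vector $R$ is determined by the ordered tuple of conjugacy classes of the $\sigma_i$.

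There is no substantive obstacle here: once the cycle-type interpretation furnished by Theorem~\ref{thm:BranchedCoversGeneral} is in hand, the corollary is immediate from the cycle-type/conjugacy-class dictionary in $S_d$. The only point worth being careful about is that the ordering of the branch points is preserved throughout, so that it is the \emph{ordered} tuple of conjugacy classes, matching the fixed ordering $b_1, \ldots, b_n$ of $B$, that recovers the ordered ramification structure $R$.
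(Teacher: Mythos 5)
Your proposal is correct and matches the paper's (implicit) argument: the paper simply states the corollary as an immediate consequence of Theorem~\ref{thm:BranchedCoversGeneral}, relying exactly on the identification of cycle types with conjugacy classes in $S_d$ that you spell out. Nothing is missing.
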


\begin{definition}
In the situation of the previous theorem, we will call the vector $\Sigma = (\sigma_1, \ldots, \sigma_n) = ( \rho([\gamma_1]), \ldots, \rho([\gamma_n]) )$ the \emph{monodromy datum} associated with $\varphi$.
\end{definition}

\begin{remark}\label{rmk:ModuliSpaceDimension}
  While the monodromy datum $\Sigma$ alone does not uniquely identify a map $\varphi: X \to Y$ (even up to isomorphism), because one also needs to specify the ordered set of points $( b_1,\ldots,b_n )$, any choice of such an ordered set will lead to a map $\varphi$ with the same ramification structure. Recall from \cite[\S 3]{MSSV02} that the dimension of the moduli space of covers of $\mathbb{P}^1$ branced over $n$ points has dimension $n-3$, so by letting the branch locus vary we get $(n-3)$-dimensional families of curves with fixed monodromy.
\end{remark}

We now specialize this discussion to the case $Y=\mathbb{P}^1$. The fundamental group of $\mathbb{P}^1 \minus B$ is generated by $[\gamma_1], \ldots, [\gamma_n]$, subject to the only relation $\prod_{i=1}^n [\gamma_i]=1$. Thus, given $\sigma_1,\ldots,\sigma_n \in S_d$ that satisfy $\prod_{i=1}^n \sigma_i=1$, we can define a homomorphism
\begin{equation*}
  \rho : \pi_1(\mathbb{P}^1 \minus B, q) \to S_d
\end{equation*}
by sending $[\gamma_i]$ to $\sigma_i$, and every homomorphism arises in this way for some $(\sigma_1,\ldots,\sigma_n)$. Thus we obtain the following special important case of Theorem \ref{thm:BranchedCoversGeneral}:

\begin{theorem}[{\cite[Corollary 4.10]{Miranda}}]\label{thm:BranchedCoversP1}
There is a bijective correspondence
\begin{equation*}
\left\{ \begin{array}{c}
\text{isomorphism classes of} \\
\text{holomorphic maps } \varphi : C \to \mathbb{P}^1 \\
\text{of degree }d \\
\text{whose branch points} \\
\text{lie in }B
\end{array} \right\} \leftrightarrow \left\{
\begin{array}{c}
\text{conjugacy classes of $n$-tuples} \\
(\sigma_1,\ldots,\sigma_n) \text{ of permutations in }S_d \\
\text{such that } \sigma_1\cdots\sigma_n=1 \\
\text{and the subgroup generated by the } \sigma_i \\
\text{is transitive}
\end{array}
\right\}
\end{equation*}
which enjoys the following additional property: the ramification structure at $b_i$ of the map $\varphi$ corresponding to $(\sigma_1,\ldots,\sigma_n)$ is the cycle type of $\sigma_i$.
\end{theorem}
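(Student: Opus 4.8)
The plan is to derive this statement directly from Theorem \ref{thm:BranchedCoversGeneral} by exploiting the explicit presentation of the fundamental group of a punctured sphere. The general theorem already furnishes a bijection between isomorphism classes of degree $d$ maps with branch locus contained in $B$ and conjugacy classes of transitive homomorphisms $\rho : \pi_1(\mathbb{P}^1 \minus B, q) \to S_d$, so the entire task is to re-express the target of that bijection in concrete terms suited to the case $Y = \mathbb{P}^1$.

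First I would recall, as already noted in the discussion preceding the statement, that $\pi_1(\mathbb{P}^1 \minus B, q)$ admits the finite presentation $\langle [\gamma_1], \ldots, [\gamma_n] \mid \prod_{i=1}^n [\gamma_i] = 1 \rangle$, where the $\gamma_i$ are the chosen small loops around the points $b_i$. This is the only place where the specific topology of $\mathbb{P}^1$, as opposed to that of a general compact Riemann surface $Y$, enters, and it is precisely the content of the specialization: the sphere carries no handles, so there are no extra generators coming from a positive genus and the single product relation is the full set of relations.

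Next, invoking the universal property of a group defined by generators and relations, a homomorphism $\rho$ out of $\pi_1(\mathbb{P}^1 \minus B, q)$ is the same datum as a choice of images $\sigma_i := \rho([\gamma_i]) \in S_d$ subject to the sole constraint $\sigma_1 \cdots \sigma_n = 1$; this gives a bijection between homomorphisms $\rho$ and such $n$-tuples $(\sigma_1, \ldots, \sigma_n)$. Under this correspondence the image of $\rho$ equals the subgroup $\langle \sigma_1, \ldots, \sigma_n \rangle$, so transitivity of the image matches transitivity of the generated subgroup; moreover, conjugating $\rho$ by an element of $S_d$ amounts to simultaneously conjugating all the $\sigma_i$, so the two notions of ``up to conjugacy'' coincide. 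Composing with the bijection of Theorem \ref{thm:BranchedCoversGeneral} then yields the stated correspondence, and the assertion about ramification structure is inherited \emph{verbatim}, since there the ramification at $b_i$ is already identified with the cycle type of $\sigma_i = \rho([\gamma_i])$.

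The only step carrying genuine content is the presentation of the fundamental group of the punctured sphere; everything else is a formal manipulation of the universal property of presented groups together with an unwinding of the word ``transitive''. I therefore expect the sole obstacle to be the careful justification that $\prod_{i=1}^n [\gamma_i] = 1$ is the \textbf{complete} list of relations, which one establishes either by a van Kampen argument or by citing the standard computation; in the present setting it suffices to appeal to the fact stated just before the theorem.
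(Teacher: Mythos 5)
Your proposal is correct and matches the paper's own derivation: the text immediately preceding the statement obtains it as a special case of Theorem \ref{thm:BranchedCoversGeneral} by using the presentation $\langle [\gamma_1],\ldots,[\gamma_n] \mid \prod_i [\gamma_i]=1\rangle$ of $\pi_1(\mathbb{P}^1 \setminus B, q)$, exactly as you do. Your write-up is simply a more detailed unwinding of the same argument.
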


\subsection{Galois closure of a morphism of curves}

Given a non-constant morphism $\varphi : X \to Y$ of smooth projective curves over $\mathbb{C}$, it makes sense to consider the corresponding (finite, separable) field extension $\varphi^*\mathbb{C}(Y) \subseteq \mathbb{C}(X)$. As with any such extension, we can then consider the Galois closure of $\mathbb{C}(X)$ over $\varphi^*\mathbb{C}(Y)$, which by the equivalence between smooth projective curves over $\mathbb{C}$ and extensions of $\mathbb{C}$ of transcendence degree 1 corresponds to some curve $\tilde{C}$ equipped with a canonical morphism $\tilde{C} \to X$. We call $\tilde{C}$ (equipped with its maps $\tilde{C} \to X \to Y$) the Galois closure of $X \to Y$, and we say that $\tilde{C}/Y$ has Galois group $G$ if this is true for the corresponding extension of function fields. There is a natural action of $G$ on $\tilde{C}$, and for a subgroup $H$ of $G$ we write $\tilde{C}/H$ for the curve corresponding to the subfield of $\mathbb{C}(\tilde{C})$ fixed by $H$.

We now recall a description of the Galois closure in terms of the monodromy datum. Suppose the map $\varphi : X \to Y$ corresponds, as in Theorem \ref{thm:BranchedCoversGeneral}, to $B=(b_1,\ldots,b_n)$ and to the representation $\rho$. As in the statement of the theorem, let $\gamma_i$ be a small loop based at $q$ around $b_i$. Finally, let $\sigma_i= \rho([\gamma_i])$. Then we have the following description of the Galois closure of $\varphi$:

\begin{theorem}\label{thm:GaloisClosure} Let $\tilde{\varphi} : \tilde{C} \to X \to Y$ be the Galois closure of $\varphi : X \to Y$. Then:
\begin{enumerate}
\item the Galois group of $\tilde{C}/Y$ is the subgroup $G$ of $S_d$ generated by the $\sigma_i$, and the degree of $\tilde{\varphi}$ is $|G|$;
\item the branch locus of $\tilde{\varphi}$ is contained in $B$;
\item the corresponding representation $\rho_{\tilde{\varphi}}$ is obtained as follows: identifying $S_{|G|}$ with the group of permutations of the elements of $G$, the class $[\gamma_i]$ is sent to the permutation of $G$ induced by left-multiplication by $\sigma_i$.
\end{enumerate}
\end{theorem}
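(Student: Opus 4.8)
The plan is to reduce the entire statement to the correspondence of Theorem~\ref{thm:BranchedCoversGeneral}, by matching the Galois closure $\tilde{\varphi} : \tilde{C} \to Y$ with the branched cover attached to the \emph{regular} representation of $G$. Write $\Gamma = \pi_1(Y \setminus B, q)$ and let $\rho : \Gamma \to S_d$ be the representation attached to $\varphi$, so that $G = \rho(\Gamma)$ is generated by the $\sigma_i = \rho([\gamma_i])$ precisely because the classes $[\gamma_i]$ generate $\Gamma$. Over $Y \setminus B$ the map $\varphi$ is an honest unramified covering, and under the Galois correspondence for covering spaces it is classified by the conjugacy class of the point-stabiliser $H = \Gamma_{x_0} \le \Gamma$ of a chosen point $x_0$ in the fibre $F = \varphi^{-1}(q)$. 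Note that $\ker\rho = \bigcap_{x \in F} \Gamma_x$ is the kernel of the $\Gamma$-action on $F$, hence the normal core of $H$ in $\Gamma$.

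First I would identify $\tilde{C}$, restricted over $Y \setminus B$, with the connected regular cover corresponding to the $\Gamma$-set $G = \Gamma/\ker\rho$. On the side of function fields, the Galois closure of $\mathbb{C}(X)/\varphi^*\mathbb{C}(Y)$ is by definition the smallest field extension of $\varphi^*\mathbb{C}(Y)$ that is Galois and contains $\mathbb{C}(X)$; under the equivalence of categories this is the smallest connected Galois cover of $Y \setminus B$ dominating $\varphi^{-1}(Y \setminus B)$, which corresponds to the largest normal subgroup of $\Gamma$ contained in $H$ --- namely the normal core $\ker\rho$ (here one must keep in mind that a smaller field corresponds to a larger subgroup). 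Thus $\tilde{C}$ is the regular cover with deck group $\Gamma/\ker\rho \cong G$ and degree $|G|$, which is exactly claim~(i). The faithful left-multiplication action of $G$ on itself shows that the $\Gamma$-set underlying this cover is $G$ with $\Gamma$ acting through $\rho$ by left multiplication.

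The remaining two claims then follow readily. For~(ii), since $\varphi$ is unramified over $Y \setminus B$, so is the covering underlying $\tilde{C}$; hence by Theorem~\ref{thm:BranchedCoversGeneral} the associated branched holomorphic cover $\tilde{\varphi}$ has branch locus contained in $B$. For~(iii), it remains only to read off $\rho_{\tilde\varphi}$: identifying the fibre $\tilde{\varphi}^{-1}(q)$ with $G$ as above, the monodromy action of $[\gamma_i]$ is the permutation $g \mapsto \sigma_i g$ of $G$, i.e.\ left multiplication by $\sigma_i = \rho([\gamma_i])$, which is precisely the asserted description once $S_{|G|}$ is identified with the symmetric group on the set $G$.

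The main obstacle is the bookkeeping in the second paragraph: one must pass carefully between the three equivalent pictures --- Galois closures of function fields, regular closures of topological covers, and normal cores of point-stabilisers --- and in particular keep straight that the \emph{smallest} Galois field extension containing $\mathbb{C}(X)$ corresponds to the \emph{largest} normal subgroup contained in $H$, so that $\ker\rho$ is what governs $\tilde{C}$. Once this dictionary is set up, the identification of $\ker\rho$ as the normal core of $H$ and of $G$ with the image of the regular representation is routine, and Theorem~\ref{thm:BranchedCoversGeneral} transports everything from $Y \setminus B$ back to the branched cover of $Y$.
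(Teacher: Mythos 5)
Your argument is correct in substance, but it is organized quite differently from the paper's proof, which is essentially a pointer to the literature: part (i) is cited from Bertin's notes, part (iii) from the theory of $G$-sets, and part (ii) is proved via function fields using the fact that a compositum of unramified extensions of local fields is unramified. You instead give a single self-contained covering-space argument: identify $\tilde{C}$ over $Y \setminus B$ with the regular cover attached to the normal core $\ker\rho$ of the point stabilizer $H \le \Gamma = \pi_1(Y\setminus B, q)$, so that the deck group is $\Gamma/\ker\rho \cong G$ and the fibre is $G$ with $[\gamma_i]$ acting by left multiplication by $\sigma_i$. This buys a unified derivation of (i) and (iii) from one identification, at the cost of having to set up the dictionary between function-field Galois closures and normal cores, which the paper sidesteps by citation.

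One step deserves more care. You deduce (ii) from the claim that the (algebraically defined) Galois closure restricts over $Y\setminus B$ to a genuine covering space; but the assertion that the Galois closure of $\mathbb{C}(X)/\varphi^*\mathbb{C}(Y)$ lives in the subcategory of extensions unramified outside $B$ is precisely the content of (ii), so as written the argument is mildly circular --- this is exactly where the paper's compositum-of-unramified-extensions lemma does its work. The cleanest repair within your framework is to reverse the direction: compactify the regular cover of $\ker\rho$ to a curve $\widehat{X} \to Y$ via Theorem~\ref{thm:BranchedCoversGeneral}, observe that $\mathbb{C}(\widehat{X})/\mathbb{C}(Y)$ is Galois with group $G$ and contains $\mathbb{C}(X)$ as the fixed field of the image $\bar{H}$ of $H$, and note that the Galois closure of $\mathbb{C}(X)$ is then the fixed field of the normal core of $\bar{H}$ in $G$, which is trivial since the core of $H$ in $\Gamma$ is $\ker\rho$. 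Hence $\tilde{C} = \widehat{X}$, which is unramified outside $B$ by construction, and (i), (ii), (iii) all follow. With that adjustment your proof is complete.
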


This is all explained in \cite{bertin-algstacks}, which, however, does not contain a separate statement that comprises all three items above. We therefore include a short proof with more detailed references:

\begin{proof}
  Part (i) is in \cite[§4.3.1]{bertin-algstacks}. Part (ii) follows from the equivalence between curves and function fields: a point $b \in Y$ is a branch point for $\varphi : X \to Y$ precisely when the corresponding place of $\mathbb{C}(Y)$ ramifies in $\mathbb{C}(X)$. Moreover, because a compositum of unramified extensions of local fields is unramified \cite[II.7.3]{neukirch-ant}, the branched places of the extension $\mathbb{C}(X)/\varphi^*\mathbb{C}(Y)$ coincide with those of its Galois closure. Finally, (iii) is part of the theory of $G$-sets \cite[Chapter V]{massey-at}, \cite[Chapter 1]{lenstra-galois}. More generally, if $H$ is any subgroup of $G$, then the fiber of $\tilde{C}/H \to \tilde{C}/G=Y$ is identified with $G/H$, and the monodromy action is the natural multiplication action of $G$ on $G/H$. Applying this to $H=\{1\}$ yields the result.
\end{proof}

\begin{remark}
If $\varphi : X \to Y$ corresponds to the monodromy datum $\Sigma=(\sigma_1,\ldots,\sigma_n)$, we will denote by $\tilde{\sigma}_i$ the permutation $\rho_{\tilde{\varphi}}([\gamma_i])$ and by
$\tilde{\Sigma}$ the vector $(\tilde{\sigma}_1,\ldots,\tilde{\sigma}_n)$.
\end{remark}

\subsection{Statement of the problem}

We begin by describing the objects of interest:
\begin{definition}\label{def:DiagramOfGivenType}
Consider a 5-tuple $(g_X, g_Y, d_X, d_Y, R)$, where $g_X, g_Y$ are non-negative integers, $d_X, d_Y$ are positive integers, and $R=(R_1,\ldots,R_n)$ is a ramification structure, that is, a collection of pairs $R_i=(e_i, m_i)$ of positive integers. A \emph{diagram of type $(g_X, g_Y, d_X, d_Y, R)$} is a diagram of maps of smooth projective curves \begin{equation}\label{eq:Diagram1}
\xymatrix{
Z \ar[r] &
X \ar[r]^{\pi_{X/Y}} &
Y \ar[r]^{\pi_{Y/\mathbb{P}^1}} &
\mathbb{P}^1
}
\end{equation}
that satisfies the following properties:
\begin{enumerate}
\item the genera of $X$ and $Y$ are $g_X, g_Y$ respectively;
\item $\pi_{X/Y}$ is of degree $d_X$ and $\pi_{Y/\mathbb{P}^1}$ is of degree $d_Y$;
\item the branch locus of $X \to \mathbb{P}^1$ is contained in an ordered set $B= (b_1,\ldots,b_n)$ with $n$ elements;
\item the ramification structure of $X \to \mathbb{P}^1$, computed with respect to $B$, is equal to $R$;
\item $Z \to \mathbb{P}^1$ is the Galois closure of $X \to \mathbb{P}^1$.
\end{enumerate}
\end{definition}

\begin{remark}
Note that the number of branching points of $X \to \mathbb{P}^1$ is precisely $n$ if and only if none of the $R_i$ is equal to $\{(1,d_Xd_Y)\}$. Indeed, this ramification structure denotes a point whose fiber contains $d_Xd_Y$ points, none of which is ramified.
\end{remark}

The map $X \to \mathbb{P}^1$ corresponds to a monodromy datum $\Sigma$ as in Theorem \ref{thm:BranchedCoversP1}.
Let $G$ be the Galois group of $Z/\mathbb{P}^1$, and let $\tilde{\Sigma}$ be the corresponding monodromy datum. On the function fields side we have corresponding inclusions $\mathbb{C}(\mathbb{P}^1) \subseteq \mathbb{C}(Y) \subseteq \mathbb{C}(X) \subseteq \mathbb{C}(Z)$, and by Galois correspondence we obtain subgroups $H_X, H_Y$ of $G$ with the property that $Z/H_X=X$ and $Z/H_Y=Y$. In what follows we will be interested in 4-tuples $(G,H_X,H_Y,\Sigma)$ that arise from this construction.

\begin{remark}\label{rmk:Stab1}
  Let $d = d_X d_Y$. The construction of $Z \to \P^1$ as the Galois closure of $X \to \P^1$ amounts to fixing a distinguished embedding of the Galois group $G$ into $S_d$, for which $H_X$ is conjugate to the stabilizer of $1$. This leads to a corresponding notion of isomorphism, which is that of simultaneous conjugation of the 4-tuple $(G,H_X,H_Y,\Sigma)$ in $S_d$. That is, if $g$ is any element of $S_d$, and $\Sigma=(\sigma_1,\ldots,\sigma_n)$, then we write $g\Sigma g^{-1}$ for the vector $(g\sigma_ig^{-1})_{i=1,\ldots,n}$ and say that the 4-tuples $(G,H_X,H_Y,\Sigma)$ and $(gGg^{-1},gH_Xg^{-1},gH_Yg^{-1},g\Sigma g^{-1})$ are isomorphic.
\end{remark}

The problem we will solve is the following. Fix a 5-tuple $(g_X,g_Y,d_X,d_Y,R)$ as in Definition \ref{def:DiagramOfGivenType} and let $X \to Y \to \mathbb{P}^1$ be a diagram of type $(g_X, g_Y, d_X, d_Y, R)$. Let $(G, H_X, H_Y, \Sigma)$ be the corresponding 4-tuple constructed above.
Up to isomorphism there are only finitely many possibilities for $(G,H_X,H_Y,\Sigma)$, and our first algorithmic task is the following:

\begin{problem}\label{problem:FirstTask} Given $(g_X,g_Y,d_X,d_Y,R)$ as in Definition \ref{def:DiagramOfGivenType}, output a list $\mathcal{L}(g_X,g_Y,d_X,d_Y,R)$ of all isomorphism classes of 4-tuples $(G,H_X,H_Y,\Sigma)$ that can be obtained from a diagram $X \to Y \to \mathbb{P}^1$ of type $(g_X,g_Y,d_X,d_Y,R)$.
\end{problem}

Note that a list $\mathcal{L}(g_X,g_Y,d_X,d_Y,R)$ as in the statement of Problem \ref{problem:FirstTask} gives a complete set of representatives of isomorphism classes of diagrams of type $(g_X,g_Y,d_X,d_Y,R)$, in the following precise sense. Suppose we have a diagram of type $(g_X,g_Y,d_X,d_Y,R)$: then one of the 4-tuples $(G,H_X,H_Y,\Sigma)$ in $\mathcal{L}(g_X,g_Y,d_X,d_Y,R)$ enjoys the following properties. Consider the unique (up to isomorphism) cover $X'$ of $\mathbb{P}^1$ of degree $d_Xd_Y$, branched at most over $B$, and corresponding to the monodromy datum $\Sigma$. Also let $Z'$ be the Galois closure of $X' \to \mathbb{P}^1$. The following holds:
\begin{enumerate}
\item we have $\operatorname{Aut}(Z/\mathbb{P}^1) \cong \operatorname{Aut}(Z'/\mathbb{P}^1) \cong G$;
\item there is a canonical identification $X' = Z'/H_X$;
\item the map $Z' \to \mathbb{P}^1$ is isomorphic to $Z \to \mathbb{P}^1$ as a $G$-cover;
\item the $G$-isomorphism $Z' \cong Z$ can be chosen in such a way that $Z'/H_X$ is carried to $X$ and $Z'/H_Y$ is carried to $Y$;
\item in particular, the monodromy datum attached to $X \to \mathbb{P}^1$ is equivalent to $\Sigma$ (up to conjugacy in the symmetric group).
\end{enumerate}

\begin{remark}\label{rmk:EveryDiagramArisesFromAType}
Informally, this means that a diagram of type $(g_X,g_Y,d_X,d_Y,R)$ arises from one of the monodromy data $\Sigma$ found in $\mathcal{L}(g_X,g_Y,d_X,d_Y,R)$, the only information missing being the ordered set of branch points.
\end{remark}

In addition, for each $(G,H_X,H_Y,\Sigma)$ we would like to extract some additional information:

\begin{problem}\label{problem:SecondTask}
 Given $(G,H_X,H_Y,\Sigma)$, determine:
\begin{enumerate}
  \item for every pair of subgroups $H_1 < H_2 < G$, the degree and ramification structure of the corresponding map $Z/H_1 \to Z/H_2$;
  \item for every subgroup $H$ of $G$, the genus of the curve $Z/H$;
  \item the action of $G$ on the vector space $H^0(Z,\Omega^1_Z)$ induced by the natural action of $G$ on $Z$;
  \item for every pair of subgroups $H_1, H_2$ of $G$, the dimension of the image of the map on Jacobians $\Jac(Z/H_1) \to \Jac(Z/H_2)$ induced by the correspondence
    \begin{equation}\label{eq:Correspondence}
      \xymatrix{
      & Z \ar[ld]_{\pi_1} \ar[rd]^{\pi_2} & \\
      Z/H_1 &  & Z/H_2
      }
    \end{equation}
\end{enumerate}
\end{problem}

\subsection{Theory}\label{subsec:AlgoTheory}
We now review the theoretical tools necessary to solve Problem \ref{problem:SecondTask}. Our input data is a 4-tuple $(G,H_X,H_Y,\Sigma)$, corresponding to a diagram of type $(g_X,g_Y,d_X,d_Y,R)$.

\subsubsection{Degree and ramification structure of $Z/H_1 \to Z/H_2$.} Galois theory immediately shows that the degree of the natural projection $Z/H_1 \to Z/H_2$ is equal to $[H_2 : H_1]$.

As for the ramification structure, we begin with the special case $H_2=G$ and $H_1$ arbitrary. The quotient $Z/H_2$ is therefore equal to $\mathbb{P}^1$, the curve $Z/H_1$ is a branched cover of it, and we may rely on Theorem \ref{thm:BranchedCoversP1} to describe its ramification. In fact, the theorem shows that it suffices to understand the monodromy representation corresponding to $\pi : Z/H_1 \to \mathbb{P}^1$. Let $B$ be the set (containing the branch locus) that defines the cover $Z \to \mathbb{P}^1$ and let $B_Z$ (resp.\ $B_{Z/H_1}$) be the inverse images of $B$ in $Z$ (resp.\ $Z/H_1$). Let $Z^0 := Z\minus B_Z$ and observe that $Z^0/H_1$ coincides with $Z/H_1 \minus B_{Z/H_1}$.
We have a diagram of étale maps
\begin{equation*}
  Z^0 \to Z^0/H_1 \to \mathbb{P}^1 \minus B
\end{equation*}
which we may study via the usual topological interpretation of coverings as $\pi_1$-sets. In particular, fixing a base point $q \in \mathbb{P}^1 \minus B$, one may identify the fiber of $Z^0$ over $q$ with $G$ and the fiber of $(Z/H_1)^0$ with $G/H$. In this language, the monodromy datum $\Sigma$ gives rise to a representation
\begin{equation*}
  \rho: \pi_1 ( \mathbb{P}^1 \minus B, q ) \to G:
\end{equation*}
the $\pi_1$-structure of $G$ is then $\gamma \cdot g := \rho(\gamma)g$ for $\gamma \in \pi_1( \mathbb{P}^1 \minus B, q )$.
The $\pi_1$-set corresponding to $Z^0/H_1$ is then the set $G/H_1$, equipped with the action $\gamma \cdot gH_1 := \rho(\gamma)gH_1$. We can now translate back to the language of monodromy datum: for each $i=1,\ldots,n$ we have a permutation of the set $G/H_1$, defined by left-multiplication by the element $\sigma_i$. We may then use Theorem \ref{thm:BranchedCoversP1} to describe the ramification structure of $Z/H_1 \to \mathbb{P}^1$, and we obtain:

\begin{proposition}\label{prop:RamificationOverP1}
  Let $H_1$ be a subgroup of $G$. The branched cover $Z/H_1 \to \mathbb{P}^1$ is ramified at most over the points in $B=(b_1,\ldots,b_n)$. The ramification over $b_i$ can be determined as follows: consider the left multiplication of $\sigma_i$ on the quotient set $G/H_1$. This induces a permutation of $G/H_1$, with cycle type $((e_1, m_1),\ldots,(e_k, m_k))$. Then for all $1 \leq j\leq k$, the fiber over $b_i$ contains exactly $m_j$ points with multiplicity $e_j$, and no other points beyond these.
\end{proposition}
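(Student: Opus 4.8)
The plan is to apply Theorem~\ref{thm:BranchedCoversP1} directly to the branched cover $Z/H_1 \to \mathbb{P}^1$, so that everything reduces to identifying its monodromy representation and reading off the cycle types of the images of the small loops $\gamma_i$. Since the topological discussion preceding the statement already carries out most of this identification, the proof consists mainly in assembling those observations into the required conclusion.

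First I would record that $Z/H_1 \to \mathbb{P}^1$ is unramified away from $B$, which gives the assertion that its branch locus is contained in $B=(b_1,\ldots,b_n)$. This is immediate from the diagram of \'etale maps $Z^0 \to Z^0/H_1 \to \mathbb{P}^1 \setminus B$: over the complement of $B$ the cover $Z/H_1 \to \mathbb{P}^1$ is the quotient of the \'etale cover $Z^0 \to \mathbb{P}^1 \setminus B$ by the free action of $H_1$, hence is itself \'etale there.

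Next I would pin down the monodromy permutation attached to each $b_i$. This is the one step requiring genuine care, because of the bookkeeping of left versus right actions. The monodromy $\rho$ acts on the fibre of $Z^0$ over $q$, identified with $G$, on the left by $\gamma \cdot g = \rho(\gamma)g$, whereas $G = \Aut(Z/\mathbb{P}^1)$ acts by deck transformations on the right; the two actions commute, so the right $H_1$-action descends the fibre $G$ to the coset space $G/H_1$ and the residual monodromy action is $\gamma \cdot gH_1 = \rho(\gamma)gH_1$. In particular the small loop $\gamma_i$, for which $\rho([\gamma_i]) = \sigma_i$, acts on $G/H_1$ by the permutation $gH_1 \mapsto \sigma_i gH_1$, that is, by left multiplication by $\sigma_i$. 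I expect this left/right convention to be the only delicate point; once it is fixed the remainder is a transcription of the correspondence.

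Finally I would invoke Theorem~\ref{thm:BranchedCoversP1}: since the monodromy representation of $Z/H_1 \to \mathbb{P}^1$ sends $[\gamma_i]$ to left multiplication by $\sigma_i$ on $G/H_1$, the ramification structure over $b_i$ equals the cycle type of this permutation. Writing that cycle type as $((e_1,m_1),\ldots,(e_k,m_k))$ and unwinding the definition of ramification structure, the fibre over $b_i$ contains exactly $m_j$ points of ramification index $e_j$ for each $j$, and no others: the cycles of the permutation exhaust $G/H_1$, so that $\sum_j e_j m_j = [G:H_1] = \deg(Z/H_1 \to \mathbb{P}^1)$ accounts for the entire fibre. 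This is precisely the claimed description.
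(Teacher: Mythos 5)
Your argument is correct and follows essentially the same route as the paper: the paper also establishes this proposition by identifying the fibre of $Z^0$ over $q$ with $G$, descending the monodromy action $\gamma \cdot g = \rho(\gamma)g$ to left multiplication by $\sigma_i$ on the coset space $G/H_1$, and then invoking Theorem~\ref{thm:BranchedCoversP1} to read off the ramification over $b_i$ from the cycle type. Your extra care about the commuting left monodromy and right deck-transformation actions, and the degree check $\sum_j e_j m_j = [G:H_1]$, are welcome but do not change the argument.
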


\begin{remark}\label{rmk:fiberOverBranchPoint}
The last statement in the previous proposition shows that the fiber of $Z/H_1 \to \mathbb{P}^1$ over a point $b_i \in B$ is in natural bijection with the double coset space $ \langle \sigma_i \rangle \backslash G/H_1 $.
\end{remark}

We will also need the following straightforward generalization of Proposition \ref{prop:RamificationOverP1}, which follows upon replacing Theorem \ref{thm:BranchedCoversP1} with Theorem \ref{thm:BranchedCoversGeneral}:
\begin{proposition}\label{prop:RamificationOverGaloisQuotient}
  Let $Z$ be a smooth projective curve over $\mathbb{C}$ with an action of a group $G$, and let $H$ be a subgroup of $G$. Let $B=(b_1,\ldots,b_n) \subseteq Z/H$ be a finite ordered subset containing the branch locus of $\pi_H : Z \to Z/H$ and let $\rho$ be the corresponding representation $\pi_1(Z/G \minus B, q) \to S_d$ as in Theorem \ref{thm:BranchedCoversGeneral}. Finally, let $\gamma_i$ be small loops based at $q$ around each $b_i$ and let $\sigma_i=\rho([\gamma_i])$. Recall from Theorem \ref{thm:GaloisClosure} that $G$ is identified with the subgroup of $S_d$ generated by the $\sigma_i$. The ramification of $\pi_H$ over $b_i$ can be determined as follows. Consider the left multiplication by $\sigma_i$ on the quotient set $G/H$: it induces a permutation of the set $G/H$, with cycle type $(e_1,\ldots,e_k)$. The fiber over $b_i$ consists of $k$ points, of multiplicities $e_1,\ldots,e_k$.
\end{proposition}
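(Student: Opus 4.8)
The plan is to run the argument preceding Proposition~\ref{prop:RamificationOverP1} essentially verbatim, the only change being that the base of the Galois cover is now an arbitrary compact Riemann surface $Z/G$ rather than $\mathbb{P}^1$; this is exactly what forces us to invoke Theorem~\ref{thm:BranchedCoversGeneral} in place of Theorem~\ref{thm:BranchedCoversP1}. First I would observe that $Z \to Z/G$ is branched at most over $B$ by hypothesis, and since this map factors as $Z \to Z/H \to Z/G$, the intermediate cover $\pi : Z/H \to Z/G$ is also branched at most over $B$ (a branch point of an intermediate cover is a branch point of the total cover). Restricting to the complement $(Z/G)\setminus B$ therefore yields an \'etale cover, which we may analyze through the usual dictionary between finite covers of $(Z/G)\setminus B$ and finite $\pi_1((Z/G)\setminus B, q)$-sets.

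The key step is to identify the $\pi_1$-set attached to $\pi$. By hypothesis $Z \to Z/G$ corresponds, under Theorem~\ref{thm:BranchedCoversGeneral}, to the monodromy representation $\rho : \pi_1((Z/G)\setminus B, q) \to \mathrm{Sym}(G) = S_d$ in which $G$ is embedded via its left regular action and $\rho([\gamma_i]) = \sigma_i$; here $d = |G|$, exactly as in Theorem~\ref{thm:GaloisClosure}. As recorded in the proof of that theorem (the remark that applies to an arbitrary subgroup $H$, not only to $H = \{1\}$), the fiber of $Z/H \to Z/G$ over the base point is then canonically identified with $G/H$, with $\pi_1$-action given by the natural left-multiplication action of $G$. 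In particular the monodromy of $\pi$ around $b_i$ is the permutation of $G/H$ given by left multiplication by $\sigma_i$.

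It then remains only to translate this monodromy into ramification data. Applying Theorem~\ref{thm:BranchedCoversGeneral} to $\pi : Z/H \to Z/G$ shows that the ramification structure at $b_i$ is precisely the cycle type of the permutation of $G/H$ given by left multiplication by $\sigma_i$: under the correspondence of that theorem, each cycle of the monodromy permutation over $b_i$ determines a single point in the fiber, and the ramification index at that point equals the length of the cycle. Hence if this permutation has cycle type $(e_1,\dots,e_k)$, the fiber over $b_i$ consists of $k$ points with ramification indices $e_1,\dots,e_k$, as claimed.

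I do not expect a genuine obstacle here: the statement is a direct transcription of Proposition~\ref{prop:RamificationOverP1} to a non-rational base. The only point that needs care is the identification of the fiber of $Z/H \to Z/G$ with the $G$-set $G/H$ together with its left-multiplication monodromy action, which is supplied by the theory of $G$-sets already invoked in Theorem~\ref{thm:GaloisClosure}. Once this identification is in hand, Theorem~\ref{thm:BranchedCoversGeneral} applies without modification, since it is stated for a general compact Riemann surface $Y$ and no special feature of $\mathbb{P}^1$ enters the argument.
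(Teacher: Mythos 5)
Your proposal is correct and follows essentially the same route as the paper: the authors state that the proposition ``follows upon replacing Theorem~\ref{thm:BranchedCoversP1} with Theorem~\ref{thm:BranchedCoversGeneral}'' in the argument preceding Proposition~\ref{prop:RamificationOverP1}, and your write-up simply spells out that argument --- identify the fiber of $Z/H \to Z/G$ with the $\pi_1$-set $G/H$ carrying the left-multiplication monodromy action, then read off the ramification at $b_i$ from the cycle type of $\sigma_i$ via Theorem~\ref{thm:BranchedCoversGeneral}. No gaps; your added remark that branch points of the intermediate cover are branch points of the total cover is the only point the paper leaves implicit.
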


Second, we treat the case of a Galois cover $\pi_H : Z \to Z/H$. This is discussed for example in \cite[Proposition 2.2.2]{bertin-romagny-champshurwitz} and in \cite[§4]{MSSV02}, so we only recall the result. Let as before $Z^0$ be the complement in $Z$ of the inverse image of $B$, and observe that we have a tower of topological covers
\begin{equation*}
Z^0 \xrightarrow{\pi_H} Z^0/H \xrightarrow{\varphi} \mathbb{P}^1 \minus B:
\end{equation*}
in particular, $\pi_H$ is unramified outside of the inverse image of $B$ in $Z/H$. Thus the branch locus of $\pi_H$ is contained in $\varphi^{-1}(B)$, and we have a description of this set by the special case we treated above: by Remark \ref{rmk:fiberOverBranchPoint}, the set $\varphi^{-1}(B)$ can be parametrized by pairs $(b_i,\langle\sigma_i\rangle g H)$, where the second coordinate is an element in the double coset space $\langle \sigma_i \rangle \backslash G / H$.
The monodromy operator given by a small loop around the point corresponding to $\langle \sigma_i \rangle g H$ is obtained as follows: letting $m_{g,i}$ be the smallest positive integer for which $g^{-1} \sigma_i^{m_{g,i}} g \in H$, the monodromy operator is precisely $g^{-1} \sigma_i^{m_{g,i}} g$.

The case of a general intermediate cover $\pi : Z/H_1 \to Z/H_2$ follows upon combining the previous two special cases: we first obtain the monodromy datum of $Z \to Z/H_2$ in the way just described, and then deduce that of $Z/H_1 \to Z/H_2$ by applying Proposition \ref{prop:RamificationOverGaloisQuotient}. This leads to the following algorithmic procedure to express the monodromy of $Z/H_1 \to Z/H_2$ in terms of $(G,H_1,H_2,\Sigma)$:

\begin{algorithm}\label{algo:ComputeRamification}
Input: $(G,H_1,H_2,\Sigma)$ with $H_1 < H_2$ and $\Sigma=(\sigma_1,\ldots,\sigma_n)$.

Output: the ramification structure of $Z/H_1 \to Z/H_2$.

Procedure:
\begin{enumerate}
\item for every $i=1,\ldots,n$:
\begin{enumerate}
\item compute representatives $\langle \sigma_i\rangle g_{ij} H_2$ for the double coset space $\langle \sigma_i \rangle \backslash G/ H_2$.
\item for each $g_{ij}$:
\begin{enumerate}
\item let $m_{ij}$ be the least positive integer for which $g_{ij}^{-1} \sigma_i^{m_{ij}} g_{ij}$ lies in $H_2$. Set $\sigma_{ij} = g_{ij}^{-1} \sigma_i^{m_{ij}} g_{ij}$.
\item compute the permutation of $G/H_1$ induced by left multiplication by $\sigma_{ij}$. Let $R_{ij}$ be the cycle type of this permutation.
\end{enumerate}
\end{enumerate}
\item The ramification structure of $Z/H_1 \to Z/H_2$ is the vector $(R_{ij} \bigm\vert i=1,\ldots,n, \; \langle \sigma_i \rangle g_{ij}H_2 \in \langle \sigma_i \rangle \backslash G / H_2  )$.
\end{enumerate}

\end{algorithm}

\subsubsection{The genera of the curves $Z/H$.}\label{sec:genus} By the previous paragraph we know how to read off our data the ramification structure of the map $\varphi : Z/H \to Z/G=\mathbb{P}^1$. In particular, we know the multiplicity of each ramification point $y_i \in Z/H$, and since we also know $\deg \varphi = [G:H]$ we can simply apply the Riemann-Hurwitz formula to obtain
\begin{equation}\label{eq:genform}
g(Z/H) = \frac{1}{2} \left( 2-2[G:H] + \sum_{y \in Z/H} (e(y)-1) \right).
\end{equation}

\subsubsection{$G$-module structure of $H^0(Z,\Omega^1_Z)$.}\label{subsec:GModuleStructure}
To extract this information from $(G,H_X,H_Y,\Sigma)$ we use a beautiful theorem due to Chevalley and Weil \cite{chevalleyweil, weil2} that we now recall.

We need some preliminary notation. Let $B=(b_1,\ldots,b_n)$ be the ordered branch locus of $Z \to \mathbb{P}^1$ and consider one of the branch points $b_i \in B$. As part of our data we have access to a permutation $\sigma_i \in G$ corresponding to the branch point $b_i$. Let $e_i$ be the order of the permutation $\sigma_i$, or equivalently (by Theorem \ref{thm:GaloisClosure}) the ramification index of any point of $Z$ lying over $b_i$. Fix once and for all a primitive $|G|$-th root of unity $\zeta \in \mathbb{C}$, and, for any divisor $e$ of $|G|$, denote by $\zeta_e$ the complex number $\zeta^{|G|/e}$.

Observe that $V:=H^0(Z,\Omega^1_Z)$ is a $\mathbb{C}[G]$-module in a natural way, and it is automatically semisimple since $\mathbb{C}$ is of characteristic $0$. In order to describe the $\mathbb{C}[G]$-module structure of $V$, therefore, it suffices to give the multiplicity of each irreducible representation of $G$ in $V$.
For a fixed linear representation $\tau$ of $G$, denote by $N_{i,\alpha} = N_{i,\alpha} (\tau)$ the multiplicity of $\zeta_{e_i}^{\alpha}$ as eigenvalue of $\tau(\tilde{\sigma}_i)$, where $\tilde{\sigma}_i$ is the monodromy operator corresponding to the cover $Z \to \mathbb{P}^1$ and the point $b_i$. With this notation, and in the special case of covers of $\mathbb{P}^1$, the Chevalley-Weil formula reads as follows:

\begin{theorem}[Chevalley-Weil]\label{thm:Chevalley-Weil}
Let $\varphi : Z \to \mathbb{P}^1$ be a branched Galois cover of smooth projective complex algebraic curves, let $B$ be its branch locus, and let $G$ be the corresponding Galois group. Let $\tau_\chi$ be an irreducible linear complex representation of $G$ with character $\chi : G \to \mathbb{C}$ and define $e_i$ and $N_{i,\alpha}:=N_{i,\alpha}(\tau_\chi)$ as above. The multiplicity $\nu_\chi$ of $\tau_\chi$ in the $G$-representation $H^0(Z, \Omega^1_{Z})$ is given by
  \begin{equation}
    \nu_\chi = -d_\chi+\sum_{i=1}^p\sum_{\alpha=0}^{e_i-1} N_{i,\alpha} \left\langle -\frac{\alpha}{e_i} \right\rangle + \sigma,
  \end{equation}
where $d_{\chi}=\chi(1)$ is the dimension of $\tau_\chi$ and
\begin{equation*}
\sigma = \begin{cases}
1 \text{ if } \chi \text{ is the trivial character}\\
0 \text{ otherwise}.
\end{cases}
\end{equation*}
Finally, $\left\langle x \right\rangle=x-\lfloor x \rfloor \in [0,1)$ denotes the fractional part of the real number $x$.
\end{theorem}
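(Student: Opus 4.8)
The object to compute is the multiplicity $\nu_\chi = \dim_{\C}\Hom_{\C[G]}(\tau_\chi, V)$ of $\tau_\chi$ in $V := H^0(Z,\Omega^1_Z)$. Since $\C$ has characteristic $0$, the module $V$ is semisimple and this multiplicity equals the inner product of characters,
\begin{equation*}
  \nu_\chi = \frac{1}{|G|}\sum_{g\in G}\overline{\chi(g)}\,\Phi(g), \qquad \Phi(g) := \tr\!\big(g \mid V\big).
\end{equation*}
My plan is therefore to compute the character $\Phi$ of $V$ completely---separately for $g=1$ and for $g\neq 1$---and then to evaluate the weighted average above, showing that everything collapses to the stated closed form.

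For the identity I have $\Phi(1) = \dim V = g(Z)$, and since $Z\to\mathbb{P}^1$ is Galois with every point over $b_i$ of ramification index $e_i$ (and $|G|/e_i$ such points), the Riemann--Hurwitz formula \eqref{eq:genform} gives $g(Z) = 1 - |G| + \tfrac{|G|}{2}\sum_{i}\big(1 - \tfrac{1}{e_i}\big)$. For $g\neq 1$ I would invoke the holomorphic Lefschetz fixed point formula for the structure sheaf together with Serre duality $H^1(Z,\mathcal O_Z)\cong V^{\vee}$, which expresses $\Phi$ on nontrivial elements as $1 - L(g)$, where $L(g) = \sum_{P}(1-\alpha_P)^{-1}$ runs over the points $P$ fixed by $g$ and $\alpha_P$ is the local rotation number of $g$ at $P$. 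The fixed points of $g$ are exactly the ramification points lying over those $b_i$ whose inertia group contains $g$; by Theorem \ref{thm:GaloisClosure} these inertia groups are the conjugates of the cyclic subgroups $\langle\sigma_i\rangle$, the fixed points over $b_i$ are indexed by the cosets in $G/\langle\sigma_i\rangle$, and the rotation numbers $\alpha_P$ are powers of $\zeta_{e_i}$.

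Assembling the average is then a matter of bookkeeping. The constant ``$1$'' in $\Phi(g)=1-L(g)$ contributes $\tfrac{1}{|G|}\sum_{g\neq 1}\overline{\chi(g)} = \sigma - \tfrac{d_\chi}{|G|}$ by character orthogonality, and the $-\tfrac{d_\chi}{|G|}$ cancels the $+\tfrac{d_\chi}{|G|}$ coming from the ``$1-|G|$'' in $g(Z)$. Regrouping the fixed-point sums by branch point and using that $\overline{\chi}$ is a class function turns them into
\begin{equation*}
  \sum_{i}\frac{|G|}{e_i}\sum_{k=1}^{e_i-1}\overline{\chi(\sigma_i^{\,k})}\,\frac{1}{1-\zeta_{e_i}^{\,k}},
\end{equation*}
and expanding $\overline{\chi(\sigma_i^{\,k})} = \sum_{\alpha}N_{i,\alpha}\,\zeta_{e_i}^{-\alpha k}$ in terms of the eigenvalue multiplicities of $\tau_\chi(\tilde\sigma_i)$ reduces everything to the elementary identity
\begin{equation*}
  \sum_{k=1}^{e-1}\frac{\zeta_{e}^{\,k\alpha}}{1-\zeta_{e}^{\,k}} = \frac{e-1}{2} - e\left\langle -\frac{\alpha}{e}\right\rangle \qquad (0\le \alpha < e),
\end{equation*}
which one proves in one line by noting that the left-hand side $S(\alpha)$ satisfies $S(\alpha)-S(\alpha-1) = -\sum_{k=1}^{e-1}\zeta_e^{k(\alpha-1)}$. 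Feeding this back in, the $\tfrac{e_i-1}{2}$ parts combine with $\sum_\alpha N_{i,\alpha} = d_\chi$ to produce exactly $-\tfrac{d_\chi}{2}\sum_i(1-\tfrac{1}{e_i})$, which cancels the remaining ramification term in $g(Z)$, and what survives is precisely $-d_\chi + \sigma + \sum_i\sum_\alpha N_{i,\alpha}\langle -\alpha/e_i\rangle$.

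The main obstacle I anticipate is not the root-of-unity identity itself but the sign and orientation bookkeeping around it: one must track carefully whether the local contribution involves $\overline{\chi(\sigma_i^{\,k})}$ or $\chi(\sigma_i^{\,k})$ and whether the denominator carries $\zeta_{e_i}^{\,k}$ or $\zeta_{e_i}^{-k}$, since this is exactly what fixes the sign inside the fractional part $\langle-\alpha/e_i\rangle$ and depends on the conventions relating the monodromy generator $\tilde\sigma_i$ to its action on the (co)tangent line and on the passage $g\mapsto g^{-1}$ forced by Serre duality. An alternative route that sidesteps the fixed-point formula is to decompose $\pi_*\Omega^1_Z$ into isotypic subbundles on $\mathbb{P}^1$ and compute $\nu_\chi$ by Riemann--Roch there; the same fractional parts then arise from the inertia eigenvalues on the local generators $z^{j}\,dz$, and the obstacle migrates to the computation of the local degrees and the vanishing of the relevant $h^1$.
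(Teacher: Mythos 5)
The paper does not prove this statement: it quotes the Chevalley--Weil formula as a classical result, citing \cite{chevalleyweil, weil2}, so there is no in-paper proof to compare against. Judged on its own, your sketch is the standard proof and it is essentially correct. The skeleton is sound: $\nu_\chi = \frac{1}{|G|}\sum_g \overline{\chi(g)}\,\Phi(g)$, with $\Phi(1)=g(Z)$ supplied by Riemann--Hurwitz and $\Phi(g)$ for $g\neq 1$ supplied by the holomorphic Lefschetz (Eichler) trace formula; the grouping of fixed points over $b_i$ by cosets of $G/\langle\sigma_i\rangle$, the use of $\chi$ being a class function, and the root-of-unity identity $\sum_{k=1}^{e-1}\zeta_e^{k\alpha}/(1-\zeta_e^k)=\tfrac{e-1}{2}-e\langle -\alpha/e\rangle$ (which I checked, including your one-line difference argument) all combine exactly as you say to cancel the $1-|G|$ and the ramification term of $g(Z)$ and leave $-d_\chi+\sigma+\sum_{i,\alpha}N_{i,\alpha}\langle-\alpha/e_i\rangle$.

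Two small points of hygiene. First, your displayed regrouped sum is missing the overall factor $-\tfrac{1}{|G|}$ coming from $\Phi(g)=1-L(g)$ and from the averaging; without it the $\tfrac{e_i-1}{2}$ terms do not cancel $+\tfrac{d_\chi}{2}\sum_i(1-\tfrac{1}{e_i})$ with the right sign, so make that prefactor explicit. Second, the sign issue you flag is real but is forced rather than free: Serre duality gives $\operatorname{tr}(g\mid H^1(Z,\mathcal O_Z))=\overline{\Phi(g)}$, so after conjugating the Lefschetz identity the $\alpha_P$ in your $L(g)$ must be the eigenvalue of $g$ on the \emph{cotangent} line at $P$ (equivalently the inverse of the tangent rotation number), and the paper's normalization of $N_{i,\alpha}$ as the multiplicity of $\zeta_{e_i}^{\alpha}$ as an eigenvalue of $\tau_\chi(\tilde\sigma_i)$, with $\tilde\sigma_i$ acting on a local coordinate by $z\mapsto \zeta_{e_i}z$, then pins down $\langle-\alpha/e_i\rangle$ rather than $\langle\alpha/e_i\rangle$. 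Writing out that one local computation would close the only genuine gap in the sketch.
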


Note that the multiplicity $\nu_\chi$ is determined by $(G, H_X, H_Y, \Sigma)$ and $\chi$: we have already observed that $e_i$ is the order of $\sigma_i$, and explained how to obtain the monodromy datum $\tilde{\Sigma}$ (see Theorem \ref{thm:GaloisClosure}). Finally, the number $N_{i,\alpha}$ is the multiplicity of $\zeta_{e_i}$ (which is a known complex number) as an eigenvalue of $\tau_\chi(\tilde{\sigma}_i)$, and a basic result in representation theory shows that $\tau_{\chi}$ is in turn determined by $\chi$, so that $\nu_\chi$ is indeed determined by $(G, H_X, H_Y, \Sigma)$. The upshot of this discussion is that we have an isomorphism of $\mathbb{C}[G]$-modules $ V \cong \bigoplus_{\chi} \tau_\chi^{\oplus \nu_\chi}$, where all the objects on the right hand side are determined by $(G,H_X,H_Y,\Sigma)$ as desired.

\subsubsection{The maps $\Jac(Z/H_1) \to \Jac(Z/H_2)$.}\label{sec:MapsJacobians}
Our last objective is to understand the image of the maps $\Jac(Z/H_1) \to \Jac(Z/H_2)$ induced by the correspondence \eqref{eq:Correspondence}.
Note that the complex vector space $V=H^0(Z,\Omega^1_Z)$ provides the natural analytic uniformization of $\Jac(Z)$, and that the maps $\Jac(Z/H_i) \to \Jac(Z)$ are induced by the pullback $\pi_i^* : H^0(Z/H_i,\Omega^1_{Z/H_i}) \to H^0(Z,\Omega^1_Z)$. Thus it suffices to study the map $\pi_{2*} \circ \pi_1^* : H^0(Z/H_1, \Omega^1_{Z/H_1}) \to H^0(Z/H_2, \Omega^1_{Z/H_2})$. Note that the pushforward $\pi_{2*}$ makes sense since $Z \to Z/H_2$ is a finite (albeit ramified) cover.
We will need a result from representation theory:
\begin{theorem}[{\cite[Théorème 2.6.8]{serre-repgroupesfinis}}]\label{thm:RepTheory}
  Let $\tau : G \to \operatorname{GL}(V)$ be a finite-dimensional linear complex representation of the finite group $G$ and let $H$ be a subgroup of $G$. Define $p_H := \frac{1}{\#H} \sum_{h \in H} \tau(h) \in \operatorname{End}(V)$. Then $p_H$ is a projector, that is, $p_H^2=p_H$, and its image is precisely the $H$-invariant subspace of $V$.
\end{theorem}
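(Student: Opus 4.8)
The plan is to run the standard averaging argument for representations over a field in which $\#H$ is invertible—here $\mathbb{C}$, so division by $\#H$ is harmless. Every step reduces to the single observation that, for a fixed $h_0 \in H$, the map $h \mapsto h_0 h$ is a bijection of $H$ onto itself, so that reindexing a sum over $H$ by left translation leaves it unchanged. I would first verify idempotency. Expanding $p_H^2 = \frac{1}{(\#H)^2} \sum_{h,h' \in H} \tau(h)\tau(h') = \frac{1}{(\#H)^2} \sum_{h,h' \in H} \tau(hh')$ and fixing $h$, the inner sum over $h'$ runs over $\tau(g)$ with $g = hh'$ ranging exactly once over $H$, contributing $\#H \cdot p_H$; summing over the $\#H$ choices of $h$ then gives $p_H^2 = p_H$.

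Next I would identify the image with $V^H$. For the inclusion $\operatorname{im}(p_H) \subseteq V^H$, I would compute, for every $h_0 \in H$, that $\tau(h_0) p_H = \frac{1}{\#H} \sum_{h \in H} \tau(h_0 h) = p_H$, again via the bijection $h \mapsto h_0 h$; hence every vector $p_H v$ is fixed by all of $H$. For the reverse inclusion, if $v \in V^H$ then each summand $\tau(h)v$ equals $v$, so $p_H v = \frac{1}{\#H}(\#H)\,v = v$, which exhibits $v$ as lying in the image. Finally, since $p_H$ is idempotent, its image coincides with its fixed-point set $\{v : p_H v = v\}$ (if $v = p_H w$ then $p_H v = p_H^2 w = p_H w = v$, and the converse is trivial), and the two computations just made show that this set is exactly $V^H$.

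I do not anticipate any genuine obstacle: the argument is entirely formal. The only points requiring a little care are the bookkeeping of the reindexing under left translation and the observation that, for an idempotent endomorphism, the image and the fixed-point subspace coincide—this is what lets the two inclusions $\operatorname{im}(p_H) \subseteq V^H$ and $V^H \subseteq \operatorname{im}(p_H)$ combine into the stated equality. The hypothesis that we work over $\mathbb{C}$ (or more generally any field whose characteristic does not divide $\#H$) enters only through the meaningfulness of the factor $\frac{1}{\#H}$.
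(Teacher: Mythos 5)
Your argument is correct and complete: the paper does not prove this statement itself but cites Serre, and your averaging argument (idempotency via reindexing by left translation, then the two inclusions $\operatorname{im}(p_H)\subseteq V^H$ and $V^H\subseteq\operatorname{im}(p_H)$) is exactly the standard proof given in that reference.
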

\begin{remark}
We will only work with the representation of $G$ afforded by $V=H^0(Z,\Omega^1_Z)$, so, for the sake of simplicity, given a subgroup $H$ of $G$ we will simply write $p_H = \frac{1}{\#H} \sum_{h \in H} h$, omitting the representation $\tau$.
\end{remark}

In order to connect the maps $\pi_{i*}$ and $\pi_i^*$ with representation theory we will make use of the following result:
\begin{theorem}\label{thm:PusforwardPullbackDifferentials}
  Let $H$ be a subgroup of $G$ and let $\pi : Z \to Z/H$ be the corresponding quotient map. Then:
  \begin{enumerate}
    \item $\pi^* : H^0(Z/H, \Omega^1_{Z/H}) \to H^0(Z,\Omega^1_Z)$ is injective, and its image is the $H$-invariant subspace of $H^0(Z,\Omega^1_Z)$;
    \item $\pi^*\pi_* : H^0(Z,\Omega^1_Z) \to H^0(Z,\Omega^1_Z)$ coincides with the operator $\#H \cdot p_{H}$.
  \end{enumerate}
\end{theorem}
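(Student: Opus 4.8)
The plan is to prove the two parts in turn, with part (ii) built on the trace description of $\pi_*$ that part (i) makes transparent. Throughout I write $V = H^0(Z,\Omega^1_Z)$, as in the Chevalley--Weil discussion. For part (i), I would first dispose of injectivity cheaply: $\pi$ is a finite, hence dominant, morphism, so a nonzero regular $1$-form $\omega$ on $Z/H$ already restricts to a nonzero rational $1$-form on the dense open locus where $\pi$ is \'etale, and there $\pi^*$ is a local isomorphism; thus $\pi^*\omega \neq 0$. Next, the relation $\pi \circ h = \pi$ for $h \in H$ gives $h^*\pi^* = \pi^*$, so $\operatorname{im}(\pi^*) \subseteq V^H$. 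The substance of (i) is the reverse inclusion, which I would establish by descent: an $H$-invariant regular form $\eta$ is the pullback of a rational form $\omega$ on $Z/H$, and $\omega$ is regular away from the finitely many branch points because there $\pi$ is \'etale and invariant forms descend freely. It remains to check regularity of $\omega$ at a branch point, which is a purely local computation.

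For that local check, I would choose $x \in \pi^{-1}(y)$ with stabilizer $H_x$ cyclic of order $e$ (the ramification index) and a local coordinate $z$ at $x$ on which a generator of $H_x$ acts by $z \mapsto \zeta_e z$. Writing $\eta = g(z)\,dz$ with $g$ holomorphic, invariance forces $g(\zeta_e z)\,\zeta_e = g(z)$, so only the monomials $z^{e-1}, z^{2e-1}, \ldots$ survive and $g(z) = z^{e-1} h(z^e)$ for some holomorphic $h$. Since $w = z^e$ is a local coordinate at $y$ on $Z/H$, this rewrites $\eta = \tfrac{1}{e}\, h(w)\,dw$, exhibiting $\omega = \tfrac{1}{e}\, h(w)\,dw$ as regular at $y$. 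Hence $\omega \in H^0(Z/H,\Omega^1_{Z/H})$ with $\pi^*\omega = \eta$, and $\operatorname{im}(\pi^*) = V^H$.

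For part (ii), I would use the algebraic description of $\pi_*$ as a trace. Setting $L = \mathbb{C}(Z)$ and $K = \mathbb{C}(Z/H) = L^H$, separability gives a canonical isomorphism $\Omega_{L/\mathbb{C}} \cong L \otimes_K \Omega_{K/\mathbb{C}}$, under which $\pi^*$ is $\alpha \mapsto 1 \otimes \alpha$ and $\pi_*$ is $\operatorname{Tr}_{L/K} \otimes \operatorname{id}$, i.e. $\ell \otimes \alpha \mapsto \operatorname{Tr}_{L/K}(\ell)\,\alpha$. Composing, $\pi^*\pi_*(\ell \otimes \alpha) = \operatorname{Tr}_{L/K}(\ell) \otimes \alpha$. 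On the other hand $h \in H$ fixes $K$, so it acts by $\ell \otimes \alpha \mapsto h(\ell) \otimes \alpha$, whence $\sum_{h \in H} h^*(\ell \otimes \alpha) = \bigl(\sum_{h \in H} h(\ell)\bigr) \otimes \alpha = \operatorname{Tr}_{L/K}(\ell) \otimes \alpha$. Therefore $\pi^*\pi_* = \sum_{h \in H} h^*$ as operators, which is exactly $\#H \cdot p_H$ by the definition of $p_H$; restricting to the regular forms $V \subset \Omega_{L/\mathbb{C}}$ gives the claim.

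The main obstacle I anticipate is bookkeeping rather than conceptual. In part (i) one must justify that descent respects regularity precisely at the ramification locus; the eigenvalue computation above is the crux, and it is where the hypothesis $\operatorname{char}\mathbb{C} \nmid \#H$ enters silently, so that $e$ is invertible and $\zeta_e$ exists. In part (ii) the only delicate point is pinning down the normalization of $\pi_*$ so that the geometric pushforward on $H^0$ used elsewhere in the paper agrees with $\operatorname{Tr}_{L/K}\otimes\operatorname{id}$, and checking that the identification $\Omega_{L/\mathbb{C}} \cong L \otimes_K \Omega_{K/\mathbb{C}}$ is the one compatible with $\pi^*$ and the $G$-action; one should also confirm that $\pi_*\eta$ is regular for regular $\eta$, which follows once $\pi^*\pi_*\eta = \sum_{h} h^*\eta$ is regular and $\pi^*$ reflects regularity via the same local model as in part (i).
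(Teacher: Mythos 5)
Your proposal is correct, but it takes a genuinely different route from the paper, most visibly in part (ii). The paper offers no proof of (i) at all (it is dismissed as ``well-known''); your descent argument with the local eigenvalue computation $g(\zeta_e z)\,\zeta_e = g(z)$ forcing $g(z) = z^{e-1}h(z^e)$ is the standard one and is sound, so you are supplying content the paper omits. For (ii), the paper argues at the level of divisors: since a curve and its Jacobian share the same space of regular differentials, it suffices to prove the identity on divisor groups, where it is immediate --- for $D = \sum_i n_i P_i$ one has $\pi_* D = \sum_i n_i \pi(P_i)$, and the fiber of $\pi$ over $\pi(P_i)$, as a divisor, is exactly $\sum_{h \in H} h \cdot P_i$ (the stabilizer order accounts for the ramification multiplicities), giving $\pi^*\pi_* D = \#H \cdot p_H(D)$ in one line. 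Your route instead realizes $\pi_*$ as $\operatorname{Tr}_{L/K} \otimes \operatorname{id}$ on $\Omega_{L/\mathbb{C}} \cong L \otimes_K \Omega_{K/\mathbb{C}}$ and uses $\operatorname{Tr}_{L/K} = \sum_{h \in H} h$. This is valid, but it carries the extra compatibility burden you yourself flag: one must identify the geometric pushforward (the map induced on $H^0$ by the norm map on Jacobians, which is how $\pi_*$ is used throughout the paper) with the trace map on K\"ahler differentials, and separately verify that $\pi_*$ preserves regularity. The divisor-theoretic argument sidesteps both normalization questions entirely. In short: the paper's proof of (ii) is shorter and avoids the function-field dictionary; yours is more self-contained overall because it also proves (i), at the cost of one standard but unproved compatibility in (ii).
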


Part (i) is well-known; we include a short proof of (ii):
\begin{proof}[Proof of (ii)]
Since a curve and its Jacobian share the same space of regular differentials, it suffices to prove the same statement with $Z, Z/H$ replaced by their Jacobians. We prove the stronger statement that the required relation is true for the divisor groups. Let $D= \sum_i n_i P_i$ be a divisor on $Z$. By definition, $\pi_*D = \sum_{i} n_i \pi(P_i)$. Since the fiber over $\pi(P_i)$ is the divisor given by the sum of all the points that map to $\pi(P_i)$, namely, $\sum_{h \in H} h \cdot P_i$, we obtain $\pi^*\pi_*D = \sum_{i} n_i \sum_{h \in H} h \cdot P_i = \#H \cdot p_H(D)$ as desired.
\end{proof}

We wish to determine the dimension of $\operatorname{Im} \left( \Jac(Z/H_1) \to \Jac(Z/H_2) \right)$, or equivalently the dimension of $\pi_{2*} \circ \pi_1^*(H^0(Z/H_1, \Omega^1_{Z/H_1}))$. Since $\pi_{2}^*$ is injective, we may as well study the dimension of $\pi_{2}^* \circ \pi_{2*} \circ \pi_1^*(H^0(Z/H_1, \Omega^1_{Z/H_1}))$. By Theorem \ref{thm:PusforwardPullbackDifferentials}, $\pi_1^*(H^0(Z/H_1, \Omega^1_{Z/H_1}))$ is precisely the $H_1$-invariant subspace of $V$, hence (by Theorem \ref{thm:RepTheory}) it is the image of $p_{H_1}$. We may easily identify this subspace, because we have already shown how to write down a representation isomorphic to $V$. It follows that \begin{equation*}
\pi_{2}^* \circ \pi_{2*} \circ \pi_1^*(H^0(Z/H_1, \Omega^1_{Z/H_1})) = \#H_2 \cdot p_{H_2} \pi_1^* \left( H^0(Z/H_1, \Omega^1_{Z/H_1}) \right) = \#H_2 \cdot p_{H_2} \cdot p_{H_1} (V)
\end{equation*}
has dimension equal to the rank of the operator $p_{H_2} \cdot p_{H_1}$.
We have obtained:
\begin{proposition}\label{prop:projector}
  The dimension of the image of the map $\Jac(Z/H_1) \to \Jac(Z/H_2)$ induced by the correspondence $Z$ is equal to the rank of
  \begin{equation}
    \left( \sum_{h_2 \in H_2} h_2 \right)\left( \sum_{h_1 \in H_1} h_1 \right) : V \to V.
  \end{equation}
\end{proposition}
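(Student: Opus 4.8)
The plan is to carry out the whole computation on the level of regular differentials. Recall that $V=H^0(Z,\Omega^1_Z)$ (and likewise $H^0(Z/H_i,\Omega^1_{Z/H_i})$) serves as the analytic uniformization of the corresponding Jacobian, and that the dimension of the image of a homomorphism of complex abelian varieties is faithfully detected on these spaces of differentials: it equals the rank of the induced linear map. I would therefore reduce the statement to computing the rank of the map on differentials induced by the correspondence \eqref{eq:Correspondence}, which sends $H^0(Z/H_1,\Omega^1_{Z/H_1})$ to $H^0(Z/H_2,\Omega^1_{Z/H_2})$ via $\pi_{2*}\circ\pi_1^*$. The pushforward $\pi_{2*}$ is legitimate because $Z\to Z/H_2$ is a finite (possibly ramified) cover.

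Next I would remove the pushforward by post-composing with $\pi_2^*$, which is injective by Theorem \ref{thm:PusforwardPullbackDifferentials}(i); since post-composition with an injective map preserves the dimension of the image, it suffices to compute the rank of $\pi_2^*\circ\pi_{2*}\circ\pi_1^*$ as an endomorphism of $V$. I would then translate each factor into representation theory. By Theorem \ref{thm:PusforwardPullbackDifferentials}(i) combined with Theorem \ref{thm:RepTheory}, the image of $\pi_1^*$ is precisely the $H_1$-invariant subspace of $V$, namely $p_{H_1}(V)$; and by Theorem \ref{thm:PusforwardPullbackDifferentials}(ii) the composite $\pi_2^*\circ\pi_{2*}$ equals $\#H_2\cdot p_{H_2}$. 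Combining the two gives
\[
\pi_2^*\circ\pi_{2*}\circ\pi_1^*\bigl(H^0(Z/H_1,\Omega^1_{Z/H_1})\bigr)=\#H_2\cdot p_{H_2}\,p_{H_1}(V).
\]

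Finally I would note that a nonzero scalar changes neither the image nor the rank, so this space has dimension equal to the rank of $p_{H_2}p_{H_1}$; and since $p_{H_i}=\frac{1}{\#H_i}\sum_{h\in H_i}h$ differs from $\sum_{h\in H_i}h$ only by the invertible scalar $1/\#H_i$, the rank of $p_{H_2}p_{H_1}$ coincides with the rank of $\left(\sum_{h_2\in H_2}h_2\right)\left(\sum_{h_1\in H_1}h_1\right)$, as claimed. Given the two structural results of Theorem \ref{thm:PusforwardPullbackDifferentials} and Theorem \ref{thm:RepTheory}, the argument is essentially bookkeeping; the only point requiring care is the initial reduction, where one must correctly match the correspondence \eqref{eq:Correspondence} with the composite $\pi_{2*}\circ\pi_1^*$ on differentials (with pullback and pushforward in these specific directions) and invoke the faithful detection of image dimension on the tangent space at the origin.
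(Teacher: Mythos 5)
Your proposal is correct and follows essentially the same route as the paper: reduce to the induced map $\pi_{2*}\circ\pi_1^*$ on regular differentials, post-compose with the injective $\pi_2^*$, identify the image of $\pi_1^*$ as $p_{H_1}(V)$ via Theorem \ref{thm:PusforwardPullbackDifferentials}(i) and Theorem \ref{thm:RepTheory}, replace $\pi_2^*\circ\pi_{2*}$ by $\#H_2\cdot p_{H_2}$ via part (ii), and discard the irrelevant nonzero scalars. No substantive differences from the paper's argument.
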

Since we have already shown that the action of $G$ on $V$ is completely determined by the monodromy datum $\Sigma$, this allows us to express the dimension of $\operatorname{Im} \left( \Jac(Z/H_1) \to \Jac(Z/H_2) \right)$ in terms of $(G, H_X, H_Y, \Sigma)$.

Note furthermore that the same machinery allows us to also answer a slightly different question: for example, in our application we consider diagrams of curves of the form
\begin{equation*}
\xymatrix{
& Z \ar[dl]_{\pi_2} \ar[dr]^{\pi_1} \\
X \ar[d]_{\pi_3} & & C \\
Y
}
\end{equation*}
and we need to understand whether or not the image of the map $\Jac(C) \to \Jac(X)$ induced by the correspondence $Z$ intersects the image of the map $\Jac(Y) \to \Jac(X)$ induced by pulling back divisors from $Y$ to $X$.
Passing to analytic uniformizations, the question is whether the subspaces $\pi_{2*} \circ \pi_1^* (H^0(C, \Omega^1_c))$ and $\pi_3^* (H^0(Y, \Omega^1_Y))$ of $H^0(X,\Omega^1_X)$ intersect nontrivially. However, since $\pi_1^*, \pi_2^*, \pi_3^*$ are all injective, it suffices to know whether
\begin{equation*}
\pi_2^* \circ \pi_{2*} \circ \pi_1^* (H^0(C, \Omega^1_C)) \text{ and } \pi_2^* \circ \pi_3^* (H^0(Y, \Omega^1_Y)) = (\pi_3 \circ \pi_2)^*(H^0(Y, \Omega^1_Y))
\end{equation*}
intersect nontrivially inside $V$. Proceeding as above, and letting $H_X, H_Y, H_C$ be the subgroups of $G$ corresponding via Galois theory to $X, Y, C$ respectively, we conclude that the image of the map $\Jac(C) \to \Jac(X)$ induced by $Z$ intersects the image of $\Jac(Y) \to \Jac(X)$ if and only if the operator $p_{H_Y} p_{H_C}$ is nonzero.

\subsubsection{Conclusion.} Putting together the results of the previous paragraphs we obtain:
\begin{proposition}\label{prop:Theory}
  Let $Z \to \mathbb{P}^1$ be a Galois branched cover with group $G$. The monodromy datum of $Z \to \mathbb{P}^1$ determines the following (in an effectively computable way): For every subgroup $H < G$, the genus of $Z/H$, and for every pair of subgroups $H_1, H_2$ of $G$, the dimension of the image of the induced map $\Jac(Z/H_1) \to \Jac(Z/H_2)$.
\end{proposition}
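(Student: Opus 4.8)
The plan is to assemble the three ingredients developed in the preceding paragraphs, observing at each stage that the relevant object is effectively computable from the monodromy datum $\Sigma = (\sigma_1, \ldots, \sigma_n)$ of $Z \to \mathbb{P}^1$. Since the statement is a summary of the foregoing theory, the work consists in tracing the dependencies rather than in new arguments.

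First I would treat the genus of $Z/H$. Applying the ramification analysis in the special case $H_2 = G$, Proposition \ref{prop:RamificationOverP1} reads off the ramification structure of $Z/H \to Z/G = \mathbb{P}^1$ over each branch point $b_i$ from the cycle type of the permutation of $G/H$ induced by left multiplication by $\sigma_i$; this is a purely group-theoretic computation. Knowing $\deg(Z/H \to \mathbb{P}^1) = [G:H]$ together with the multiplicities $e(y)$ of all ramification points, the Riemann--Hurwitz formula \eqref{eq:genform} then yields $g(Z/H)$.

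Next I would recover the $\mathbb{C}[G]$-module structure of $V := H^0(Z,\Omega^1_Z)$, which is the bridge to the statement about Jacobians. The monodromy datum $\tilde{\Sigma}$ of the Galois cover $Z \to \mathbb{P}^1$ is obtained from $\Sigma$ via Theorem \ref{thm:GaloisClosure}, and for each irreducible character $\chi$ of $G$ the Chevalley--Weil formula (Theorem \ref{thm:Chevalley-Weil}) expresses the multiplicity $\nu_\chi$ of $\tau_\chi$ in $V$ in terms of the eigenvalue multiplicities $N_{i,\alpha}$ of $\tau_\chi(\tilde{\sigma}_i)$. This produces an explicit isomorphism $V \cong \bigoplus_\chi \tau_\chi^{\oplus \nu_\chi}$. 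Finally, for the dimension of the image of $\Jac(Z/H_1) \to \Jac(Z/H_2)$, Proposition \ref{prop:projector} identifies this dimension with the rank of the operator $\left( \sum_{h_2 \in H_2} h_2 \right)\left( \sum_{h_1 \in H_1} h_1 \right)$ acting on $V$; once $V$ is presented as an explicit $\mathbb{C}[G]$-module by the previous step, this rank is a finite linear-algebra computation.

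There is no serious obstacle here, as the mathematical content resides in the earlier results; the only point deserving care is the verification of effectivity at each stage, and in particular that the eigenvalue data $N_{i,\alpha}$ feeding into Chevalley--Weil can be extracted from character values alone. This is legitimate because a finite-dimensional complex representation of a finite group is determined up to isomorphism by its character, so the root-of-unity spectrum of $\tau_\chi(\tilde{\sigma}_i)$ is recoverable from the values $\chi(\tilde{\sigma}_i^{\,j})$. With this observed, assembling the three steps completes the proof.
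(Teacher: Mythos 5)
Your proposal is correct and follows exactly the route the paper takes: the paper offers no separate argument for this proposition, simply stating that it follows by ``putting together the results of the previous paragraphs,'' namely Proposition \ref{prop:RamificationOverP1} with Riemann--Hurwitz \eqref{eq:genform} for the genera, Theorem \ref{thm:Chevalley-Weil} for the $G$-module structure of $H^0(Z,\Omega^1_Z)$, and Proposition \ref{prop:projector} for the image dimensions. Your additional remark that the eigenvalue multiplicities $N_{i,\alpha}$ are recoverable from character values alone matches the discussion in Section \ref{sec:implem}.
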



\section{Implementation} \label{sec:implem}

We now turn to details and optimizations concerning the implementation of Problems \ref{problem:FirstTask} and \ref{problem:SecondTask} in practice. Since our solution to Problem \ref{problem:FirstTask} actually relies on being able to handle Part (i) of Problem \ref{problem:SecondTask}, we begin with the latter.

\subsection{Solving Problem \ref{problem:SecondTask}}

This is a direct application of the theory explained in Section \ref{sec:AlgoTheory}:

\begin{algorithm}\label{algo:SecondTask}
  Input: a 4-tuple $(G,H_X,H_Y,\Sigma)$ as in Problem \ref{problem:SecondTask}.

  Output: the structure of $H^0(Z,\Omega^1_Z)$ as a $G$-representation; for each pair of subgroups $H_1 < H_2 < G$, the ramification structure of $Z/H_1 \to Z/H_2$, the genus of $Z/H_1$, and the dimension of the image of the map $\Jac(Z/H_1) \to \Jac(Z/H_2)$ induced by $Z$ as in \eqref{eq:Correspondence}.

  Procedure:
  \begin{enumerate}
    \item Compute the genera of $Z \to Z/H_1$ and the intermediate ramification of $Z/H_1 \to Z/H_2$ using \eqref{eq:genform} and Algorithm \ref{algo:ComputeRamification}.
    \item Compute the $G$-module structure of $H^0(Z,\Omega^1_Z)$ as explained in Section \ref{subsec:GModuleStructure}.
    \item Compute the dimension of $\im (\Jac(Z/H_1) \to \Jac(Z/H_2))$ using Theorem \ref{thm:Chevalley-Weil} and Proposition \ref{prop:Theory}.
  \end{enumerate}
\end{algorithm}

The theory behind each of these steps has been laid out in Section \ref{subsec:AlgoTheory}. These computations do not require the computation of curve equations and depend only on the specified ramification structure $\Sigma$ over the branch locus $B$ of $X \to \mathbb{P}^1$, and not on this branch locus itself. This independence of $B$ implies that all our calculations may be performed abstractly, and will be valid for any choice of $B$. This means that we actually consider families of examples of dimension $-3+r$, where $r$ is the number of branch points of $X \to \mathbb{P}^1$, see Remark \ref{rmk:ModuliSpaceDimension}.

Finally, note that as long as the degree of the composed map $X \to \P^1$ is small, the computations involved in \ref{algo:SecondTask}(i), which are described in Algorithm \ref{algo:ComputeRamification} and Section \ref{sec:genus}, take place in a symmetric group on a small set, and therefore terminate quickly. We discuss speedups for Parts (ii) and (iii) in the next section.

\subsection{Solving Problem \ref{problem:FirstTask}}

Let $(g_X, g_Y, d_X, d_Y, R)$ be given. We want to find the corresponding tuples $(G, H_X, H_Y, \Sigma)$. Recall that for $d = d_X d_Y$, the group $G$ is the subgroup of $S_d$ generated by the monodromy $\Sigma$. Moreover, we want that the corresponding Galois cover is the Galois closure of the cover of degree $d$ corresponding to $H_X$. We can ensure this by fixing an embedding of $G$ into $S_d$ and letting $H_X$ be the stabilizer of $1$. In other words, given $G$, determining the possible pairs $(G, H_X)$ comes down to realizing $G$ as a conjugacy class of subgroups of $S_d$. Moreover, when the group $G$ is not specified, we can find all pairs $(G, H_X)$ up to equivalence by running through the conjugacy classes of subgroups of $S_d$. For this latter problem, efficient algorithms exist when $d$ is small.

\begin{remark}
  Note that at the very least $G$ has to act transitively to correspond to a connected cover of $\P^1$. Moreover, we may restrict to subgroups $H_X$ with the property that $H_X$ has a normal subgroup of index at most $(d_Y - 1)!$, since only such subgroups can give rise to a diagram \eqref{eq:Diagram1} with the requested properties. (Indeed, the core of $H_X$ in $H_Y$ is a normal subgroup of $H_Y$ that is contained in $H_X$ and that is of index at most $d_Y!$ in $H_Y$.)
\end{remark}

It now remains to find all the possible extensions of a given pair $(G, H_X)$ to quadruples $(G, H_X, H_Y, \Sigma)$. Once $(G, H_X)$, or alternatively (by the above) an embedding of $G$ into $S_d$, is given, the remaining isomorphisms on the level of covers translate into conjugation by the normalizer $N_G$ of $G$ in $S_d$. We accordingly determine the subgroups $K_Y$ of $G$ of index $d_Y$ up to conjugacy by $N_G$. Having found this, we find representatives for triples $(G, H_X, H_Y)$ as follows:

\begin{proposition}\label{prop:doublecoset}
  The simultaneous $N_G$-conjugacy classes of triples $(G, K_X, K_Y)$ such that $K_X$ (resp.\ $K_Y)$ is $N_G$-conjugate to a given subgroup $H_X$ (resp.\ $H_Y$) of $G$ are in bijection with the double coset space $N_Y \backslash N_G / N_X$. Here $N_Y$ (resp.\ $N_X$) is the normalizer of $H_Y$ (resp.\ $H_X$) in $N_G$, and to a double coset $N_Y g N_X$ there corresponds the triple $(G, H_X, g^{-1} H_Y g)$.
\end{proposition}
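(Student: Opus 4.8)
The plan is to run a standard orbit-counting argument. Observe first that since $N_G$ normalizes $G$, conjugation by an element of $N_G$ fixes the first coordinate $G$ of the triple; hence the action on triples $(G,K_X,K_Y)$ is really an action by simultaneous conjugation on the pairs of subgroups $(K_X,K_Y)$, and counting orbits of triples is the same as counting orbits of such pairs.

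First I would normalize the first coordinate. Since $K_X$ ranges over the $N_G$-conjugates of $H_X$, every triple can be moved by a suitable element of $N_G$ to one of the form $(G,H_X,K_Y)$, so every orbit contains such a representative. Two representatives $(G,H_X,K_Y)$ and $(G,H_X,K_Y')$ lie in the same orbit precisely when there is $h \in N_G$ with $hH_Xh^{-1}=H_X$ and $hK_Yh^{-1}=K_Y'$. The first equality says exactly that $h \in N_X$, so the set of $N_G$-orbits of triples is in bijection with the set of $N_X$-orbits (under conjugation) on the collection of $N_G$-conjugates of $H_Y$.

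Next I would parametrize this collection. The $N_G$-conjugates of $H_Y$ form a transitive $N_G$-set whose stabilizer of $H_Y$ is by definition $N_Y$; thus $gH_Yg^{-1} \leftrightarrow gN_Y$ identifies them with the coset space $N_G/N_Y$ as $N_G$-sets, and $N_X$ then acts by left multiplication. Consequently the $N_X$-orbits correspond to the double cosets $N_X \backslash N_G / N_Y$. Finally, the inversion map $g \mapsto g^{-1}$ gives a bijection $N_X \backslash N_G / N_Y \xrightarrow{\sim} N_Y \backslash N_G / N_X$, and tracing through the identifications one checks that the double coset $N_Y g N_X$ has as preimage the $N_X$-orbit of $g^{-1}N_Y$, i.e. the conjugate $g^{-1}H_Yg$; this yields the triple $(G,H_X,g^{-1}H_Yg)$, exactly as claimed.

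The argument involves no genuine obstacle; the only delicate point is bookkeeping the conventions, namely keeping track of whether $N_X$ acts on the left or the right and in which direction conjugation moves cosets, so as to land on the stated form $N_Y \backslash N_G / N_X$ with representative $g^{-1}H_Yg$ rather than its opposite. I would verify this compatibility explicitly at the very end, since it is the place where a sign or side convention could otherwise slip.
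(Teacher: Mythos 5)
Your proof is correct and follows essentially the same route as the paper's: normalize the first coordinate to $H_X$ by an $N_G$-conjugation, observe that the residual ambiguity is conjugation by $N_X$, and identify the resulting $N_X$-orbits on the conjugates of $H_Y$ with a double coset space. The paper phrases this as a direct check that the stated map is well-defined, surjective, and injective, while you assemble the bijection from orbit identifications plus the inversion $N_X \backslash N_G / N_Y \cong N_Y \backslash N_G / N_X$, but the content is the same.
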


\begin{proof}
  The indicated map is well-defined, and it is surjective since after conjugating by a suitable element of $N_G$ if necessary we may assume that $K_X = H_X$. Conversely, if two pairs $(G, H_X, n_1^{-1} H_Y n_1)$ and $(G, H_X, n_2^{-1} H_Y n_2)$ are simultaneously $N_G$-conjugate, then $n_1^{-1} H_Y n_1 = g n_2^{-1} H_Y n_2 g$ for some element $g$ of $N_X$, which implies that $n_2 g = h n_1$ for $h \in N_Y$.
\end{proof}

Applying Proposition \ref{prop:doublecoset}, we find the possible triples $(G, H_X, H_Y)$ such that moreover $H_X < H_Y < G$, all up to simultaneous conjugacy by $N_G$. If so desired, we can impose that $H_X$ be maximal in $H_Y$, to reflect that the corresponding map $X \to Y$ is indecomposable, and a similar remark applies to $H_Y$ and $G$.

It then remains to find the possible monodromy data $\Sigma$ starting from $(G, H_X, H_Y)$. For this, we have used fast and efficient code by Paulhus \cite{paulhus3} based on work of Breuer \cite{breuer}. This finds the possible $\Sigma$ up to conjugation by elements of $G$ once conjugacy classes in $G$ are given. While we do not have these conjugacy classes at our disposal, we do have imposed ramification data $R$, which above any point determines the cycle structure of the corresponding conjugacy classes (recall that our data furnish a conjugacy class of embeddings of $G$ into $S_d$, so that this is well-defined). This gives a finite number of explicit possibilities for the conjugacy classes above a given point. Combining the outcomes of Breuer's algorithms for all possible choices, we obtain the possible covers $\Sigma$. If so desired, we can still reduce the set of possible $\Sigma$ further under common $N_G$-conjugacy to prevent duplicates. While we usually do this, it can occasionally cost some time if there are lots of covers involved, in which case our algorithms allow this step to be skipped.

Finally, given an element $\Sigma$, we append it to one of the triples obtained before to obtain a quadruple $(G, H_X, H_Y, \Sigma)$. If this quadruple has the correct ramification, as can be checked using Algorithm \ref{algo:SecondTask}(i), then we retain this quadruple.

The above discussion motivates the following algorithm:

\begin{algorithm}\label{algo:FirstTask}
Input: $(g_X, g_Y, d_X, d_Y, R)$ as in Definition \ref{def:DiagramOfGivenType}.

Output: a list of 4-tuples $(G, H_X, H_Y, \Sigma)$.

Procedure:
\begin{enumerate}
  \item Initialize $d:=d_Xd_Y$ and let $\mathcal{L}_1$ and $\mathcal{L}_2$ be the empty lists.

  \item Loop over representatives $G$ of conjugacy classes of subgroups of $S_d$. For each representative do:
    \begin{enumerate}
      \item If $G$ is not transitive, discard $G$ and continue with the next subgroup;
      \item Set $H_X$ to be the stabilizer of $1$ in $G$;
      \item Append to $\mathcal{L}_1$ all triples $(G,H_X,H_Y)$ obtained using Proposition \ref{prop:doublecoset}.
    \end{enumerate}

  \item Using Breuer's algorithm as implemented by Paulhus, find all possible isomorphism classes of monodromy data $\Sigma$, up to $N_G$-conjugacy if desired. Loop over these $\Sigma$, and for a fixed such element do:
  \begin{enumerate}
    \item Loop over the triples $(G,H_X,H_Y)$ in $\mathcal{L}_1$;
    \item Using Algorithm \ref{algo:SecondTask}, compute the genera of $Z/H_X$ and of $Z/H_Y$. If $g(Z/H_X) \neq g_X$ or $g(Z/H_Y) \neq g_Y$, return to the beginning of the loop;
    \item Using Algorithm \ref{algo:SecondTask}, compute the ramification structure of $X \to \mathbb{P}^1$. If it is different from $R$, return to the beginning of the loop;
    \item Add $(G,H_X,H_Y,\Sigma)$ to $\mathcal{L}_2$.
  \end{enumerate}

  \item Return $\mathcal{L}_2$.
\end{enumerate}
\end{algorithm}

\subsubsection{Action of $G$ on $H^0(Z,\Omega^1_Z)$ and calculation of image dimensions}

Given a finite group $G$, one can compute its character table, for example by using the Dixon–Schneider algorithm, or the LLL-based induce/reduce algorithm of Unger \cite{unger}. Once the character table of $G$ is known, in order to fully describe the $G$-representation $V$ we simply need to determine the multiplicity with which each character $\chi$ of $G$ appears in $V$. Such multiplicities can be obtained by applying Theorem \ref{thm:Chevalley-Weil} to the map $Z \to \mathbb{P}^1$. Indeed, given a character $\chi$ corresponding to a representation $\tau_\chi$, the only information we need to determine $\nu_\chi$ are the numbers $e_i$ and $N_{i,\alpha}$. We have already observed that $e_i$ is the order of $\sigma_i$. Furthermore, by definition, $N_{i, \alpha}$ is the multiplicity of $\zeta_{e_i}$ as an eigenvalue of $\tau_{\chi}(\sigma_i)$. This multiplicity can be read off the characteristic polynomial of $\tau_\chi(\sigma_i)$, whose coefficients are the elementary symmetric functions in the eigenvalues of $\tau_\chi(\sigma_i)$. As we are in characteristic zero, the symmetric functions of $\lambda_1,\ldots,\lambda_k$ are determined by the Newton sums
\begin{equation*}
  \sum_{i=1}^k \lambda_i = \operatorname{tr} \tau_\chi(\sigma_i)= \chi(\sigma_i),\;  \sum_{i=1}^k \lambda_i^2 = \operatorname{tr} \tau_\chi(\sigma_i^2)= \chi(\sigma_i^2), \; \ldots, \; \sum_{i=1}^k \lambda_i^k = \operatorname{tr} \tau_\chi(\sigma_i^k)= \chi(\sigma_i^k).
\end{equation*}
This shows that the knowledge of the character $\chi$ is enough to determine the characteristic polynomial of $\tau_\chi(\sigma_i)$, hence we may compute the numbers $N_{i,\alpha}$ from the knowledge of $\chi$ without even having to describe the $G$-module $\tau_\chi$. This solves Part (ii) of Algorithm \ref{algo:SecondTask}. Part (iii) can then be obtained by calculating the relevant irreducible representations $\tau_\chi$ explicitly (there is functionality available to this end in our computer algebra system of choice \textsc{Magma}) and summing the dimensions of the images of the maps obtained in Proposition \ref{prop:projector}, multiplied by the relevant multiplicities. In our application, we often use the following more specific procedure:

\begin{algorithm}\label{algo:subroutine}
  Input: a 4-tuple $(G,H_X,H_Y,\Sigma)$ as in Problem \ref{problem:SecondTask}.

  Output: A subgroup $H_C$ of $G$ (if it exists) for which the corresponding curve $C = Z / H_C$ has the following properties:
  \begin{itemize}
    \item $0 < g_C \le g_X - g_Y$;
    \item The map $\Jac (C) \to \Jac (X)$ induced by $X \leftarrow Z \rightarrow C$ is injective;
    \item The image of $\Jac (C) \to \Jac (X)$ does not intersect the image of $\Jac (Y) \to \Jac (X)$.
  \end{itemize}

  Procedure:
  \begin{enumerate}
    \item Run through the subgroups of $H$ of $G$;
    \item Compute the genus $g_C$ of the curve $C:=Z/H$. If we do not have that $0 < g_C \le g_X - g_Y$, then move on to the next $H$, otherwise proceed to (iii);
    \item Compute the dimension of the image of the induced map $\Jac(C) \to \Jac(Y)$ using the $G$-module $H^0(Z,\Omega^1_Z)$ and Proposition \ref{prop:projector}. If it is non-zero, then move on to the next $H$, otherwise proceed to (iv);
    \item Using similar methods, compute the dimension of the image of the induced map $\Jac(C) \to \Jac(X)$. If its dimension does not equal $g_C$, then move on to the next $H$, otherwise return $H$.
  \end{enumerate}
\end{algorithm}

\begin{remark}
  Algorithm \ref{algo:subroutine} insists on the injectivity of the map $\Jac (C) \to \Jac (X)$ because otherwise we would have to deal with another decomposition problem in order to describe the part of the Prym variety thus obtained as a Jacobian.
\end{remark}

If the algorithm returns a group $H_C$ for which moreover $g_C = g_X - g_Y$, then $\Jac(X) \sim \Jac(C) \times \Jac(Y)$, so that (up to isogeny) we have realized the Prym variety of the cover $X \to Y$ as the Jacobian of the curve $C$. If for all $(G,H_X,H_Y,\Sigma)$ that we consider we can find a group $H_C$ and a corresponding curve $C$ as above, then we know that for every diagram $X \to Y \to \mathbb{P}^1$ of type $(g_X,g_Y,d_X,d_Y,R)$, the abelian variety $\operatorname{Prym}(X \to Y)$ is isogenous to the Jacobian of a quotient $C$ of the Galois closure of $X \to \mathbb{P}^1$. Even if this does not happen, it is still possible that we are successful for, say, all quadruples for which $G$ is in a certain specified isomorphism class. To our surprise, we have discovered several types $(g_X,g_Y,d_X,d_Y,R)$ for which this construction gives non-trivial information on the Prym variety, and we report on these findings in Section \ref{sec:Families} below.

Even if there is no single quotient $C$ in Algorithm \ref{algo:subroutine} such that $\Jac (C)$ is isogenous $to \operatorname{Prym}(X \to Y)$, it may still happen that the latter Prym variety is isogenous to a product of Jacobians obtained in this fashion, as can be ascertained by determining the sum of the corresponding subspaces in $\Jac (X)$. An example of this is given in the entry \texttt{rr-spec} of Table \ref{tab:results-g3}, as explained in Section \ref{sec:results}.

\subsubsection{Some fine print and speedups}

This final section contains a smattering of more detailed remarks on our implementation, calculations, and results. To start, we note that the calculation in Algorithm \ref{algo:subroutine}(ii) is possible from the knowledge of the modules $\tau_{\chi}$ and their multiplicities $n_{\chi}$, which we need only calculate once given $G$ and $\Sigma$. As we run through the possible $\Sigma$, we store the different intervening representations $\tau_{\chi}$ so that we do not have to recalculate them later for different $\Sigma$. (We do have to calculate new multiplicities $n_{\chi}$, but this is fortunately far less laborious.) This is worthwhile because our implementation works with the Chevalley-Weil decomposition throughout: All dimension calculations involving Proposition \ref{prop:projector} are done for the irreducible representation $\tau_{\chi}$, after which the corresponding results are summed with the relevant multiplicities $n_{\chi}$.

When looking for a single curve $C$ to furnish the complement of $\Jac (Y)$ in $\Jac (X)$, we can in fact do better than running over all possible $H$. Indeed, we still have the ambient isomorphism group $N_G$ to consider, and reasoning as in Proposition \ref{prop:doublecoset} shows that it suffices to consider candidate subgroups $K$ up to conjugacy at first. Whether the condition in Algorithm \ref{algo:subroutine}(ii) holds depends only on the $G$-conjugacy class of $H$. Given a representative $K$ of such a conjugacy class, the argument from Proposition \ref{prop:doublecoset} then shows that we only need consider the possibilities $(G, H_X, H_Y, \Sigma, n^{-1} K n)$ where $n$ runs through the double coset $N_C \backslash G / (N_X \cap N_Y)$ for $N_C$ (resp. $N_X$, $N_Y$) the normalizer of $K$ (resp.\ $H_X$, $H_Y$) in $G$. Since the pairs $(H_X, H_Y)$ in our quadruples $(G, H_X, H_Y, \Sigma)$ stem from a fixed list, we can store these double coset representatives on the fly so as not to have to recalculate them.

This same uniformity then ensures that only a relatively small number of triples $(H_X, H_Y, H)$ is encountered for a fixed group $G$, albeit for many different $\Sigma$ and with many different multiplicities. This makes it very worthwhile to store all the ranks and projectors in Proposition \ref{prop:projector} that are calculated when working with a fixed representation $\tau_{\chi}$ in a hash table, as considerable time is gained when using a lookup instead of a recalculation. In fact, in practice our calculations show that most time is spent constructing the explicit projectors in Proposition \ref{prop:projector} on the larger irreducible subrepresentations of $H^0(Z,\Omega^1_Z)$. Similarly, we can ensure that the rank of composition of these projectors does not need be calculated for $\tau_{\chi}$ when we encounter a previously stored triple $(H_X, H_Y, H)$, which is very often the case in practice.


\section{Results}\label{sec:results}

\subsection{Presentation of the tables}
The tables in the appendix describe results obtained by running our algorithms. We recover \emph{all} classical results from the literature (up to time limitations of our codes), as we will discuss later on. First we explain how to read an entry in these tables, illustrated by the concrete case \texttt{total4}:

\begin{itemize}
  \item[``$g_X,g_Y,d_X$''] For the case \texttt{total4}, the genus $g_X$ of $X$ equals 4, the genus $g_Y$ of $Y$ equals $1$ and the degree $d_X$ of the cover $X \to Y$ equals $4$.
  \item[``Ramification''] This describes the ramification structure of the composition $X \to Y \to \P^1$.  The degree of $Y \to \P^1$ usually equals $2$. If not, the name of the case starts with the degree $\deg(Y/\P^1)$ (for instance \texttt{3-orig} in Table~\ref{tab:results-bruin}). A thin line represents an unramified point. A thick line without a number on its side a totally ramified point; for a thick line with a number on its side, this number specifies the ramification index. For all ramification types thus displayed, the number over them represents the number of copies in the total ramification structure. In the case \texttt{total4}, we see that the map $Y \to \P^1$ has $4$ ramification points, all of which split totally in the cover $X \to Y$. Moreover, the $2$ total ramification points of $X \to Y$ are merged under the map $X \to \P^1$.
  \item[``$\# G,g_Z$''] This lists the different possible pairs $\# G, g_Z$, where $\# G = \deg (Z \to \P^1)$ is the cardinality of the monodromy group $G$ and where $g_Z$ is the genus of $Z$. In the case \texttt{total4} there turns out to be only one such pair.
  \item[``$X$ nhyp/hyp''] Running through the possible cases from the previous item, we consider the isomorphism classes of curves $X$ for which an automorphism of the Galois closure induces a hyperelliptic involution. Given such a class, we use our algorithms to check whether a piece of the Prym variety of $X\to Y$ is given by the Jacobian of a quotient of $Z$. The number of curves for which this happens (resp.\ does not happen) is the second bracket entry of the case listed in this column. The first entry does the same, but instead for those isomorphism class of curves $X$ for which no hyperelliptic involution is induced by the Galois closure. In the case \texttt{total4}, we obtain 48 possibly non-hyperelliptic and 16 hyperelliptic curves in this way for the single possible pair $\# G, g_Z$, for all of which we can indeed generate a piece of the Prym variety as the Jacobian of a quotient of $Z$.
  \item[``Prym dims''] For the entries above, we give the dimensions of the disjoint pieces of the Prym variety that we found as Jacobians of quotients of $Z$, separated between non-hyperelliptic and hyperelliptic case (if one, or both, of these cases never yields a piece of the Prym, it does simply not appear). In the case \texttt{total4}, we always find a curve $C$ of genus $3$ such that $\Jac C \sim P(X/Y)$ in the non-hyperelliptic case. By contrast, in the hyperelliptic case, we find two curves $C_1$ and $C_2$ of genus $1$ and $2$ such that $\Jac C_1 \times \Jac C_2 \sim P(X/Y)$. It is possible that there are multiple cases with different resulting dimensions. This is illustrated in the case \texttt{total5}.
  \item[``$\deg Z \to C_i$''] The last column gives the degrees of the maps $Z \to C_i$ obtained in the previous entry, separated into the non-hyperelliptic and hyperelliptic case as before.
\end{itemize}

\begin{remark}
  Our implementation allows the determination of more information, like the ramification of intermediate covers.
\end{remark}

\begin{remark}
  Given a certain ramification structure, our programs can equally well calculate results for ``specializations'' of it, for instance those obtained by collapsing two ramification points of the cover $Y \to \P^1$. We did not try to do this systematically, but we did often observe that if one recovers the Prym as the Jacobian of a quotient of $Z$ in the generic initial case, this continues to hold for the specializations. A notable example of this is furnished by Table \ref{tab:results-bruin}.
\end{remark}

\subsection{Comments on the tables}
Let us start with examples that already appear in the existing literature and that we could recover and extend.
\begin{itemize}
  \item Table~\ref{tab:results-g3} recovers the \cite{RiRo} case we looked at in Section~\ref{sec:RiRoRedone} and which was the starting point of this article.
  \item Table~\ref{tab:results-g2} gathers covers of genus $2$ of curves genus $1$ by a map of degree $d_X$ with $2 \leq d_X \leq 7$. In all these cases the Prym (which is a curve of genus $1$) appears as a quotient of the Galois closure $Z$. Note that when $2 \leq d_X \leq 11$, there are direct construction of the Prym as an explicit curve of genus $1$: The case $d_X=2$ goes back to the work of Jacobi on abelian integrals, (see the references in \cite[p.395]{baker} or \cite{leprevost}), $d_X=3$ (see \cite{goursat}, \cite{kuhn} or the appendix of \cite{BHLS15}), $d_X=4$ (see \cite{bolzan4} and \cite{bruinn4}), $d_X=5$ (see \cite{MSV09}) and more generally when $d_X \leq 11$ (see \cite{kumar}).
  \item  Table~\ref{tab:results-dalaljan} gathers degree $2$-covers of hyperelliptic curves ramified  over exactly $2$ points. We recover the results of \cite{dalaljan} and \cite[Th.4.1]{levin}.
  \item Table~\ref{tab:results-bruin} gathers \'etale covers of degree $2$ of curves of genus $g_Y \geq 3$. Bruin's result \cite{bruin} for $g_X=5$ is the first configuration and we see that it seems to generalize well to higher genus. In this case, we have not determined all covers, as there are tens of millions of these, but have instead taken a sample of several hundreds of such covers by generating these randomly. Note that here, the map to $\P^1$ is of degree $3$, which for $g_Y=3$ and $g_Y=4$ is actually the smallest degree that is generically possible.
  \end{itemize}
  Here are some new ramification patterns:
  \begin{itemize}
  \item Table~\ref{tab:results-etg2} gathers \'etale covers of curves of genus $2$ by maps of degree $3,4$ and $5$. The situation is more chequered, since certain cases give positive results and other do not, even for the same ramification structure.
  \item Table~\ref{tab:results-total} is a new situation which does not appear in the literature. One sees that when the degree of the cover from $X$ to the curve $Y$ of genus $1$ is $3$ or $4$ we only get positive cases (that is, cases in which our strategy can describe the Prym as a Jacobian up to isogeny), but as soon as the degree is $5$, we only get very few favorable situations.
  \item Table~\ref{tab:results-g3deg4} gathers some miscellaneous cases.
\end{itemize}


\subsection{Some explicit equations} \label{sec:Families}

As an application of our programs, we consider the first case of Table~\ref{tab:results-total}: $g_X=3$, $g_Y=1$, $d_X=3$ and $d_Y=2$ with the ramification data $$R=(\{(3,2)\},\{(2,3)\},\{(2,3)\},\{(2,3)\},\{(2,3)\}).$$
The cover $X \to \P^1$ may be Galois with $G \simeq S_3$: in this case, $X$ is non-hyperelliptic, and the result already appears in Table~\ref{table:NonHyper}. We therefore concentrate on the second case where $X$ is hyperelliptic. The programs show in the same way that $Z$ is also hyperelliptic, is equipped with an action of $C_2 \times S_3$, and admits both $X$ (of genus 3) and $C$ (of genus 2) as quotients by involutions. Since the action of $S_3$ commutes with that of the hyperelliptic involution, we may assume that the automorphism group is generated by $\sigma:\,(x,y)\mapsto(\zeta_3 x,y)$, $\tau:\,(x,y)\mapsto(1/x,y/x^6)$ and $\iota:\,(x,y)\mapsto(x,-y)$. This means that the hyperelliptic curve $Z$, of genus 5, is given by $$y^2=x^{12}+ax^9+bx^6+ax^3+1,$$ with discriminant $3^{12}\left(a^2-4 b+8\right)^6 \left((b+2)^2-4 a^2\right)^3\neq0$.
The quotient $Z/\langle \iota\tau \rangle$ is the genus 3 curve $X$, and by taking the fixed functions $u=x+1/x$ and $v=y(x-1/x)$ we obtain:
$$
X:\,v^2=(u^6-6u^4+au^3+9u^2-3au+(b-2))(u^2-4),
$$
with discriminant $-2^4\cdot 3^6 \cdot \left(a^2-4 b+8\right)^3 \left(4 a^2-(b+2)^2\right)^3\neq0$. The map $X \to Y$ can then be recovered by considering the quotient $Z/\langle \iota\tau, \sigma \rangle$: we then obtain $Y: t^2=(s^2-4)(s^2+as+(b-2))$ and a 3-to-1 map
\[
\begin{array}{ccc}
X & \to & Y \\
(u,v) & \mapsto & (u^3-3u, v(u^2-1)).
\end{array}
\]
The Prym variety of $X \to Y$ is isogenous to the Jacobian of $C=Y/\langle\tau \rangle$. By taking the fixed functions $u=x+1/x$ and $v'=y/x^3$ we obtain:
$$
C:\,v'^2=(u^6-6u^4+au^3+9u^2-3au+(b-2)).
$$
The discriminant is $-729 \left(a^2-4 b+8\right)^3 \left(4 a^2-(b+2)^2\right)\neq0$, so this is indeed a smooth curve of genus $2$.\\

We remark that once this example, and similar ones in higher genus, were brought to our attention by the output of our programs, we were able to spot a generalisation, which allowed us to recover some of the hyperelliptic cases in Table~\ref{tab:results-total}. Fix an integer $k \geq 2$ and consider the hyperelliptic curve $Z:\,y^2=f(x)$, where $f(x)=x^{4k}+ax^{3k}+bx^{2k}+ax^{k}+1$ for generic $a$ and $b$. Factor $f (x) =(x^k-\alpha_1^k)(x^k-\alpha_2^k)(x^k-\frac{1}{\alpha_1^k})(x^k-\frac{1}{\alpha_2^k})$. The automorphism group of $Z$ contains the hyperelliptic involution $\iota(x,y)=(x,-y)$ and the elements $\sigma(x,y)=(\zeta_k x,y)$ and $\tau(x,y)=(\frac{1}{x},\frac{y}{x^{2k}})$. The quotient $\pi_{Z/X}:\,Z\rightarrow X:=Z/\langle\iota\tau\rangle$ can be described thanks to the invariant functions $u=x+\frac{1}{x}$ and $v=\frac{y}{x^k}(x-\frac{1}{x})$, which lead to the equation
\[
X :\,v^2=(u^2-4)(g^2(u)+ag(u)+b-2).
\]
The polynomial $(u^2-4)(g^2(u)+ag(u)+b-2)$ has $2k+2$ different roots, and $X$ is a smooth hyperelliptic curve of genus $k$. Similarly, we consider  the quotient $\pi_{Z/C}:\,Z\rightarrow C:=Z/\langle\tau\rangle$, and to get an equation for $C$ we consider the invariant functions $u=x+\frac{1}{x}$ and $w=\frac{y}{x^k}$. We write first $f(x)=x^{2k}((x^k+\frac{1}{x^k})^2+a(x^k+\frac{1}{x^k})+b-2)$, and note that there exists a polynomial $g$ of degree $k$ such that $g(x+\frac{1}{x})=x^k+\frac{1}{x^k}$. We get then $C:\,w^2=g^2(u)+ag(u)+b-2$. The roots of $g^2(u)+ag(u)+b-2$ are $\zeta_k^i\alpha_j+\frac{1}{\zeta_k^i\alpha_j}$ with $i=0,1,..,k-1$ and $j=1,2$. This yields a smooth hyperelliptic curve $C$ of genus $g(C)=k-1$. Furthermore, we consider the quotient $\pi_{X/Y}:\,X\rightarrow Y=Z/\langle\iota\tau, \sigma \rangle$: we take the invariant functions $U=x^k+\frac{1}{x^k}=g(u)$ and $V=\frac{y}{x^k}(x^k-\frac{1}{x^k})=vh(u)$, where $h$ is a degree $k-1$ polynomial such that $h(x+\frac{1}{x})=\frac{x^k-\frac{1}{x^k}}{x-\frac{1}{x}}$. We get then the equation $Y:\,V^2=(U^2-4)(U^2+aU+b-2)$. The roots of $(U^2-4)(U^2+aU+b-2)$ are all different, and $Y$ is a smooth curve genus $1$. Computing the pullbacks to $Z$ of the regular differentials of $X$, $Y$ and $C$ leads to the following proposition:
\begin{proposition}\label{th:cased3g3}
  With the notation above, the Prym variety $\operatorname{Prym}(X/Y)$ is isogenous to the Jacobian of the curve $C$.
\end{proposition}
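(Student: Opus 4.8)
The plan is to follow the differential-theoretic strategy of Theorem~\ref{thm:RiRoRe}: make the group $G = \langle \iota, \sigma, \tau\rangle \cong C_2 \times D_k$ act explicitly on $V := H^0(Z,\Omega^1_Z)$, identify the subspaces pulled back from $X$, $Y$ and $C$, and compare their $G$-module structures. Since $Z : y^2 = f(x)$ is hyperelliptic of genus $2k-1$, a convenient basis of $V$ is $\omega_i = x^i\,dx/y$ for $i = 0, \dots, 2k-2$, and one checks by direct substitution that $\iota^*\omega_i = -\omega_i$, that $\sigma^*\omega_i = \zeta_k^{\,i+1}\omega_i$, and that $\tau^*\omega_i = -\omega_{2k-2-i}$. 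First I would record these formulas and, using Theorem~\ref{thm:PusforwardPullbackDifferentials}(i), identify $\pi_{Z/X}^* H^0(X,\Omega^1_X) = V^{\langle\iota\tau\rangle}$, the pullback of $H^0(Y,\Omega^1_Y)$ with $V^{\langle\iota\tau,\sigma\rangle}$, and $\pi_{Z/C}^* H^0(C,\Omega^1_C) = V^{\langle\tau\rangle}$.

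A preliminary remark guides the whole approach: in contrast with Theorem~\ref{thm:RiRoRe}, the naive correspondence $X \leftarrow Z \to C$ is useless here. Because $\iota$ is the hyperelliptic involution it acts as $-1$ on all of $V$, so $V^{\langle\iota\tau\rangle}$ and $V^{\langle\tau\rangle}$ are the $(-1)$- and $(+1)$-eigenspaces of $\tau^*$, hence complementary; the induced map $\Jac(C)\to\Jac(X)$ therefore vanishes. For this reason I would argue instead through the isotypic decomposition of $\Jac(Z)$. Decomposing $V$ into $G$-isotypic components amounts to the Chevalley--Weil computation of Section~\ref{subsec:GModuleStructure} done by hand: only irreducibles on which $\iota$ acts by $-1$ occur, each pair $\{\omega_j,\omega_{2k-2-j}\}$ with $\zeta_k^{\,2(j+1)}\neq 1$ spans a copy of a (sign-twisted) two-dimensional representation of $D_k$, while $\omega_{k-1}$ spans a one-dimensional piece. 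This yields the multiplicities $\nu_\chi$ with which each irreducible $\chi$ appears in $V$.

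The heart of the matter is then a multiplicity identity. By the group-representation decomposition of Jacobians underlying Proposition~\ref{prop:Theory}, writing $\Jac(Z) \sim \prod_\chi B_\chi^{\nu_\chi}$ one has $\Jac(Z/H) \sim \prod_\chi B_\chi^{\dim V_\chi^{H}}$ for every subgroup $H$, where $V_\chi$ denotes the representation space of $\chi$. Since $\iota^* = -1$, the space fixed by $\langle\iota\tau\rangle$ is the $(-1)$-eigenspace of $\tau^*$ and that fixed by $\langle\tau\rangle$ its $(+1)$-eigenspace, so for each irreducible $\chi$ occurring in $V$ I would verify
\begin{equation*}
  \dim V_\chi^{\langle\tau\rangle} \;=\; \dim V_\chi^{\langle\iota\tau\rangle} \;-\; \dim V_\chi^{\langle\iota\tau,\sigma\rangle}.
\end{equation*}
For the two-dimensional constituents this reads $1 = 1 - 0$, since $\sigma$ there has no nonzero fixed vector; for the one-dimensional constituent carried by $\omega_{k-1}$ it reads $0 = 1 - 1$. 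Granting the identity for every $\chi$, the isotypic multiplicities of $\Jac(C)$ coincide with those of the complement of $\Jac(Y)$ inside $\Jac(X)$, namely $\Prym(X/Y)$, whence $\Jac(C)\sim\Prym(X/Y)$.

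The main obstacle I anticipate is the uniform verification of this identity across all irreducible constituents of $V$. This forces one to pin down exactly which representations of $D_k$ occur and with which $\sigma$-eigenvalues, and that hinges on the elementary but delicate bookkeeping combining $\sigma^*\omega_i = \zeta_k^{\,i+1}\omega_i$ with the involution $i \leftrightarrow 2k-2-i$ induced by $\tau$; in particular one must treat carefully the constituents on which $\sigma$ acts as a scalar. Everything else --- the genus count $g(Z)=2k-1$, the identification of the three invariant subspaces, and the final isogeny conclusion --- is then routine given the machinery of Section~\ref{sec:AlgoTheory}.
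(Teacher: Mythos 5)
Your reconstruction is essentially the right reading of the paper's one-line justification (``computing the pullbacks to $Z$ of the regular differentials of $X$, $Y$ and $C$''), and it is considerably more explicit than that sentence. The formulas $\iota^*\omega_i=-\omega_i$, $\sigma^*\omega_i=\zeta_k^{\,i+1}\omega_i$, $\tau^*\omega_i=-\omega_{2k-2-i}$ are correct, the three invariant subspaces are correctly identified, and your preliminary remark is an important point the paper never addresses: since $\iota^*=-1$, the subspaces $V^{\langle\iota\tau\rangle}$ and $V^{\langle\tau\rangle}$ are the complementary eigenspaces of $\tau^*$, so the correspondence $X\leftarrow Z\rightarrow C$ induces the \emph{zero} map on Jacobians and the mechanism of Theorem~\ref{thm:RiRoRe} is unavailable. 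The detour through the isotypic decomposition and the multiplicity identity $\dim V_\chi^{\langle\tau\rangle}=\dim V_\chi^{\langle\iota\tau\rangle}-\dim V_\chi^{\langle\iota\tau,\sigma\rangle}$ is therefore exactly what is needed, and for odd $k$ your verification is complete: the constituents are $(k-1)/2$ two-dimensional irreducibles (each of multiplicity $2$) together with the line $\langle\omega_{k-1}\rangle$, and the identity reads $1=1-0$, resp.\ $0=1-1$, as you say.

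The gap is precisely the case you flag at the end --- ``the constituents on which $\sigma$ acts as a scalar'' --- and it is not a removable bookkeeping issue: for even $k$ the pair with $i+1=k/2$ has $\zeta_k^{2(i+1)}=1$, so $\mathrm{span}(\omega_{k/2-1},\omega_{3k/2-1})$ splits into two \emph{non-isomorphic} one-dimensional characters $\chi_{\pm}$ (with $\iota\mapsto-1$, $\sigma\mapsto-1$, $\tau\mapsto\pm1$), each of multiplicity one in $V$. For $\chi_+$ your identity reads $1=0-0$ and for $\chi_-$ it reads $0=1-0$; both fail. Concretely, the associated elliptic building block $B_{\chi_+}$ occurs in $\Jac(C)$ but not in $\Jac(X)$ at all, while $B_{\chi_-}$ occurs in $\operatorname{Prym}(X/Y)$ but not in $\Jac(C)$, so the argument closes only if $B_{\chi_+}\sim B_{\chi_-}$, which there is no reason to expect. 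Indeed for $k=2$ the statement reduces to the assertion that the two elliptic curves $y^2=x^4+(a\mp4)x^2+(b\mp 2a+2)$ from the $C_2^3$ row of Table~\ref{table:Hyperelliptic} are isogenous, which is false for generic $a,b$ (their $j$-invariants already differ). So your proof establishes the Proposition only for odd $k$ --- which happens to cover the cases $d_X=3,5$ of Table~\ref{tab:results-total} actually recovered by this family, whereas $k=4$ yields a Galois group of order $16$ that does not even occur in the \texttt{total-4} row --- and for even $k$ the statement as written appears to need either a restriction or a genuinely different argument. A smaller point: the ``group-representation decomposition of Jacobians'' you invoke (with $\Jac(Z/H)\sim\prod_\chi B_\chi^{\dim V_\chi^H}$) is not what Proposition~\ref{prop:Theory} states and would need a citation such as Lange--Recillas, though it is standard.
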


\begin{appendix}
\begin{table}[htbp]
  \centering
  \begin{tiny}
    \begin{tabular}{|c|c|c|c|c|c|c|c|c|}
      \hline
      Case & $g_X,g_Y,d_X$ & Ramification & $\# G,g_Z$  & $X$ nhyp/hyp & Prym dims & $\deg Z \to C_i$ \\
      \hline
      \texttt{rr-spec} & $3,1,2$ &
      \begin{tabular}{c} $4$ \\ 
  \long\def\PICTURE #pictures.texram22 {}%
  \PICTURE ram10
\begin{tikzpicture}[line cap=round,line join=round,>=triangle 45,x=1cm,y=1cm,xscale=0.7,yscale=0.7]

\draw [line width=2pt] (0,.7) -- (0,2);
\draw [line width=2pt] (0,0) -- (0,.7);

\end{tikzpicture}

\ENDPICTURE

\PICTURE ram22thin
\begin{tikzpicture}[line cap=round,line join=round,>=triangle 45,x=1cm,y=1cm,xscale=0.7,yscale=0.7]

\draw [-] (0,.7) -- (-0.3,2);
\draw [-] (0,.7) -- (0.3,2);
\draw [-] (0,0)-- (0,.7);

\end{tikzpicture}
\ENDPICTURE

\PICTURE ram22thick
\begin{tikzpicture}[line cap=round,line join=round,>=triangle 45,x=1cm,y=1cm,xscale=0.7,yscale=0.7]

\draw [-] (0,.7) -- (-0.3,2);
\draw [-] (0,.7) -- (0.3,2);
\draw [line width=2pt] (0,0)-- (0,.7);
\draw (0.6,0.3) node {$2$};

\end{tikzpicture}
\ENDPICTURE

\PICTURE ram22
\begin{tikzpicture}[line cap=round,line join=round,>=triangle 45,x=1cm,y=1cm,xscale=0.7,yscale=0.7]

\draw [-] (0,.7) -- (-0.3,2);
\draw [-] (0,.7) -- (0.3,2);
\draw [line width=2pt] (0,0)-- (0,.7);

\end{tikzpicture}
\ENDPICTURE

\PICTURE ram23
\begin{tikzpicture}[line cap=round,line join=round,>=triangle 45,x=1cm,y=1cm,xscale=0.7,yscale=0.7]

\draw [-] (0,.7) -- (-0.4,2);
\draw [-] (0,.7) -- (0,2);
\draw [-] (0,.7) -- (0.4,2);
\draw [line width=2pt] (0,0)-- (0,.7);

\end{tikzpicture}
\ENDPICTURE

\PICTURE ram24
\begin{tikzpicture}[line cap=round,line join=round,>=triangle 45,x=1cm,y=1cm,xscale=0.7,yscale=0.7]

\draw [-] (0,.7) -- (-0.6,2);
\draw [-] (0,.7) -- (-0.2,2);
\draw [-] (0,.7) -- (0.2,2);
\draw [-] (0,.7) -- (.6,2);
\draw [line width=2pt] (0,0)-- (0,.7);

\end{tikzpicture}
\ENDPICTURE

\PICTURE ram25
\begin{tikzpicture}[line cap=round,line join=round,>=triangle 45,x=1cm,y=1cm,xscale=0.7,yscale=0.7]

\draw [-] (0,.7) -- (-0.6,2);
\draw [-] (0,.7) -- (-0.3,2);
\draw [-] (0,.7) -- (0,2);
\draw [-] (0,.7) -- (0.3,2);
\draw [-] (0,.7) -- (.6,2);
\draw [line width=2pt] (0,0)-- (0,.7);

\end{tikzpicture}
\ENDPICTURE

\PICTURE ramd
\begin{tikzpicture}[line cap=round,line join=round,>=triangle 45,x=1cm,y=1cm,xscale=0.7,yscale=0.7]

\draw [-] (0,0) -- (-0.3,0.7);
\draw [-] (0,0) -- (0.3,0.7);
\draw [line width=2pt] (-0.3,0.7)-- (-0.3,2);
\draw [line width=2pt] (0.3,0.7)-- (0.3,2);

\end{tikzpicture}
\ENDPICTURE

\PICTURE ramp2
\begin{tikzpicture}[line cap=round,line join=round,>=triangle 45,x=1cm,y=1cm,xscale=0.7,yscale=0.7]

\draw [-] (0,0) -- (-0.3,0.7);
\draw [-] (0,0) -- (0.3,0.7);
\draw [-] (-0.3,0.7)-- (-0.3,2);
\draw [-] (-0.3,0.7)-- (-0.2,2);
\draw [-] (-0.3,0.7)-- (-0.1,2);
\draw [line width=2pt] (-0.3,0.7)-- (-0.5,2);
\draw [line width=2pt] (0.3,0.7)-- (0.5,2);
\draw [-] (0.3,0.7)-- (0.1,2);
\draw [-] (0.3,0.7)-- (0.2,2);
\draw [-] (0.3,0.7)-- (0.3,2);
\draw (-0.8,1.5) node {$2$};
\draw (0.8,1.5) node {$2$};
\end{tikzpicture}
\ENDPICTURE

\PICTURE ramp2d
\begin{tikzpicture}[line cap=round,line join=round,>=triangle 45,x=1cm,y=1cm,xscale=0.7,yscale=0.7]

\draw [-] (0,0) -- (-0.3,0.7);
\draw [-] (0,0) -- (0.3,0.7);
\draw [line width=2pt] (-0.3,0.7)-- (-0.3,2);
\draw [line width=2pt] (0.3,0.7)-- (0.5,2);
\draw [-] (0.3,0.7)-- (0.0,2);
\draw [-] (0.3,0.7)-- (0.2,2);
\draw (-0.8,1.5) node {$4$};
\draw (0.8,1.5) node {$2$};
\end{tikzpicture}
\ENDPICTURE

\PICTURE ramp3
\begin{tikzpicture}[line cap=round,line join=round,>=triangle 45,x=1cm,y=1cm,xscale=0.7,yscale=0.7]

\draw [-] (0,0) -- (-0.3,0.7);
\draw [-] (0,0) -- (0.3,0.7);
\draw [-] (-0.3,0.7)-- (-0.2,2);
\draw [line width=2pt] (-0.3,0.7)-- (-0.4,2);
\draw [line width=2pt] (0.3,0.7)-- (0.4,2);
\draw [-] (0.3,0.7)-- (0.2,2);
\draw (-0.6,1.5) node {$3$};
\draw (0.6,1.5) node {$3$};
\end{tikzpicture}
\ENDPICTURE

\PICTURE ramp4
\begin{tikzpicture}[line cap=round,line join=round,>=triangle 45,x=1cm,y=1cm,xscale=0.7,yscale=0.7]

\draw [-] (0,0) -- (-0.3,0.7);
\draw [-] (0,0) -- (0.3,0.7);
\draw [-] (-0.3,0.7)-- (-0.2,2);
\draw [line width=2pt] (-0.3,0.7)-- (-0.4,2);
\draw [line width=2pt] (0.3,0.7)-- (0.4,2);
\draw [-] (0.3,0.7)-- (0.2,2);
\draw (-0.6,1.5) node {$4$};
\draw (0.6,1.5) node {$4$};
\end{tikzpicture}
\ENDPICTURE

\PICTURE rambruin
\begin{tikzpicture}[line cap=round,line join=round,>=triangle 45,x=1cm,y=1cm,xscale=0.7,yscale=0.7]

\draw [-] (0,0) -- (-0.3,0.7);
\draw [line width=2pt] (0,0) -- (0.3,0.7);
\draw (0.6,0.4) node {$2$};
\draw [-] (-0.3,0.7)-- (-0.2,2);
\draw [-] (-0.3,0.7)-- (-0.4,2);
\draw [-] (0.3,0.7)-- (0.4,2);
\draw [-] (0.3,0.7)-- (0.2,2);
\end{tikzpicture}
\ENDPICTURE

\PICTURE ram211
\begin{tikzpicture}[line cap=round,line join=round,>=triangle 45,x=1cm,y=1cm,xscale=0.7,yscale=0.7]

\draw [-] (0,0) -- (-0.3,0.7);
\draw [-] (0,0) -- (0.3,0.7);
\draw [line width=2pt] (-0.3,0.7) -- (-0.3,2);
\draw [-] (0.3,0.7) -- (0.2,2);
\draw [-] (0.3,0.7) -- (0.4,2);
\end{tikzpicture}
\ENDPICTURE

\PICTURE ram311
\begin{tikzpicture}[line cap=round,line join=round,>=triangle 45,x=1cm,y=1cm,xscale=0.7,yscale=0.7]

\draw [-] (0,0) -- (-0.3,0.7);
\draw [-] (0,0) -- (0.3,0.7);
\draw [-] (-0.3,0.7)-- (-0.1,2);
\draw [-] (-0.3,0.7)-- (-0.2,2);
\draw [line width=2pt] (-0.3,0.7)-- (-0.4,2);
\draw [line width=2pt] (0.3,0.7)-- (0.4,2);
\draw [-] (0.3,0.7)-- (0.2,2);
\draw [-] (0.3,0.7)-- (0.1,2);
\draw (-0.6,1.5) node {$3$};
\draw (0.6,1.5) node {$3$};
\end{tikzpicture}
\ENDPICTURE

\PICTURE ram511111
\begin{tikzpicture}[line cap=round,line join=round,>=triangle 45,x=1cm,y=1cm,xscale=0.7,yscale=0.7]

\draw [-] (0,0) -- (-0.3,0.7);
\draw [-] (0,0) -- (0.3,0.7);
\draw [line width=2pt] (-0.3,0.7) -- (-0.3,2);
\draw [-] (0.3,0.7) -- (0.2,2);
\draw [-] (0.3,0.7) -- (0.3,2);
\draw [-] (0.3,0.7) -- (0.4,2);
\draw [-] (0.3,0.7) -- (0.5,2);
\draw [-] (0.3,0.7) -- (0.6,2);
\end{tikzpicture}
\ENDPICTURE
 \end{tabular}
      \begin{tabular}{c} $2$ \\ 
  \long\def\PICTURE #pictures.texramd {}%
  \PICTURE ram10
\begin{tikzpicture}[line cap=round,line join=round,>=triangle 45,x=1cm,y=1cm,xscale=0.7,yscale=0.7]

\draw [line width=2pt] (0,.7) -- (0,2);
\draw [line width=2pt] (0,0) -- (0,.7);

\end{tikzpicture}

\ENDPICTURE

\PICTURE ram22thin
\begin{tikzpicture}[line cap=round,line join=round,>=triangle 45,x=1cm,y=1cm,xscale=0.7,yscale=0.7]

\draw [-] (0,.7) -- (-0.3,2);
\draw [-] (0,.7) -- (0.3,2);
\draw [-] (0,0)-- (0,.7);

\end{tikzpicture}
\ENDPICTURE

\PICTURE ram22thick
\begin{tikzpicture}[line cap=round,line join=round,>=triangle 45,x=1cm,y=1cm,xscale=0.7,yscale=0.7]

\draw [-] (0,.7) -- (-0.3,2);
\draw [-] (0,.7) -- (0.3,2);
\draw [line width=2pt] (0,0)-- (0,.7);
\draw (0.6,0.3) node {$2$};

\end{tikzpicture}
\ENDPICTURE

\PICTURE ram22
\begin{tikzpicture}[line cap=round,line join=round,>=triangle 45,x=1cm,y=1cm,xscale=0.7,yscale=0.7]

\draw [-] (0,.7) -- (-0.3,2);
\draw [-] (0,.7) -- (0.3,2);
\draw [line width=2pt] (0,0)-- (0,.7);

\end{tikzpicture}
\ENDPICTURE

\PICTURE ram23
\begin{tikzpicture}[line cap=round,line join=round,>=triangle 45,x=1cm,y=1cm,xscale=0.7,yscale=0.7]

\draw [-] (0,.7) -- (-0.4,2);
\draw [-] (0,.7) -- (0,2);
\draw [-] (0,.7) -- (0.4,2);
\draw [line width=2pt] (0,0)-- (0,.7);

\end{tikzpicture}
\ENDPICTURE

\PICTURE ram24
\begin{tikzpicture}[line cap=round,line join=round,>=triangle 45,x=1cm,y=1cm,xscale=0.7,yscale=0.7]

\draw [-] (0,.7) -- (-0.6,2);
\draw [-] (0,.7) -- (-0.2,2);
\draw [-] (0,.7) -- (0.2,2);
\draw [-] (0,.7) -- (.6,2);
\draw [line width=2pt] (0,0)-- (0,.7);

\end{tikzpicture}
\ENDPICTURE

\PICTURE ram25
\begin{tikzpicture}[line cap=round,line join=round,>=triangle 45,x=1cm,y=1cm,xscale=0.7,yscale=0.7]

\draw [-] (0,.7) -- (-0.6,2);
\draw [-] (0,.7) -- (-0.3,2);
\draw [-] (0,.7) -- (0,2);
\draw [-] (0,.7) -- (0.3,2);
\draw [-] (0,.7) -- (.6,2);
\draw [line width=2pt] (0,0)-- (0,.7);

\end{tikzpicture}
\ENDPICTURE

\PICTURE ramd
\begin{tikzpicture}[line cap=round,line join=round,>=triangle 45,x=1cm,y=1cm,xscale=0.7,yscale=0.7]

\draw [-] (0,0) -- (-0.3,0.7);
\draw [-] (0,0) -- (0.3,0.7);
\draw [line width=2pt] (-0.3,0.7)-- (-0.3,2);
\draw [line width=2pt] (0.3,0.7)-- (0.3,2);

\end{tikzpicture}
\ENDPICTURE

\PICTURE ramp2
\begin{tikzpicture}[line cap=round,line join=round,>=triangle 45,x=1cm,y=1cm,xscale=0.7,yscale=0.7]

\draw [-] (0,0) -- (-0.3,0.7);
\draw [-] (0,0) -- (0.3,0.7);
\draw [-] (-0.3,0.7)-- (-0.3,2);
\draw [-] (-0.3,0.7)-- (-0.2,2);
\draw [-] (-0.3,0.7)-- (-0.1,2);
\draw [line width=2pt] (-0.3,0.7)-- (-0.5,2);
\draw [line width=2pt] (0.3,0.7)-- (0.5,2);
\draw [-] (0.3,0.7)-- (0.1,2);
\draw [-] (0.3,0.7)-- (0.2,2);
\draw [-] (0.3,0.7)-- (0.3,2);
\draw (-0.8,1.5) node {$2$};
\draw (0.8,1.5) node {$2$};
\end{tikzpicture}
\ENDPICTURE

\PICTURE ramp2d
\begin{tikzpicture}[line cap=round,line join=round,>=triangle 45,x=1cm,y=1cm,xscale=0.7,yscale=0.7]

\draw [-] (0,0) -- (-0.3,0.7);
\draw [-] (0,0) -- (0.3,0.7);
\draw [line width=2pt] (-0.3,0.7)-- (-0.3,2);
\draw [line width=2pt] (0.3,0.7)-- (0.5,2);
\draw [-] (0.3,0.7)-- (0.0,2);
\draw [-] (0.3,0.7)-- (0.2,2);
\draw (-0.8,1.5) node {$4$};
\draw (0.8,1.5) node {$2$};
\end{tikzpicture}
\ENDPICTURE

\PICTURE ramp3
\begin{tikzpicture}[line cap=round,line join=round,>=triangle 45,x=1cm,y=1cm,xscale=0.7,yscale=0.7]

\draw [-] (0,0) -- (-0.3,0.7);
\draw [-] (0,0) -- (0.3,0.7);
\draw [-] (-0.3,0.7)-- (-0.2,2);
\draw [line width=2pt] (-0.3,0.7)-- (-0.4,2);
\draw [line width=2pt] (0.3,0.7)-- (0.4,2);
\draw [-] (0.3,0.7)-- (0.2,2);
\draw (-0.6,1.5) node {$3$};
\draw (0.6,1.5) node {$3$};
\end{tikzpicture}
\ENDPICTURE

\PICTURE ramp4
\begin{tikzpicture}[line cap=round,line join=round,>=triangle 45,x=1cm,y=1cm,xscale=0.7,yscale=0.7]

\draw [-] (0,0) -- (-0.3,0.7);
\draw [-] (0,0) -- (0.3,0.7);
\draw [-] (-0.3,0.7)-- (-0.2,2);
\draw [line width=2pt] (-0.3,0.7)-- (-0.4,2);
\draw [line width=2pt] (0.3,0.7)-- (0.4,2);
\draw [-] (0.3,0.7)-- (0.2,2);
\draw (-0.6,1.5) node {$4$};
\draw (0.6,1.5) node {$4$};
\end{tikzpicture}
\ENDPICTURE

\PICTURE rambruin
\begin{tikzpicture}[line cap=round,line join=round,>=triangle 45,x=1cm,y=1cm,xscale=0.7,yscale=0.7]

\draw [-] (0,0) -- (-0.3,0.7);
\draw [line width=2pt] (0,0) -- (0.3,0.7);
\draw (0.6,0.4) node {$2$};
\draw [-] (-0.3,0.7)-- (-0.2,2);
\draw [-] (-0.3,0.7)-- (-0.4,2);
\draw [-] (0.3,0.7)-- (0.4,2);
\draw [-] (0.3,0.7)-- (0.2,2);
\end{tikzpicture}
\ENDPICTURE

\PICTURE ram211
\begin{tikzpicture}[line cap=round,line join=round,>=triangle 45,x=1cm,y=1cm,xscale=0.7,yscale=0.7]

\draw [-] (0,0) -- (-0.3,0.7);
\draw [-] (0,0) -- (0.3,0.7);
\draw [line width=2pt] (-0.3,0.7) -- (-0.3,2);
\draw [-] (0.3,0.7) -- (0.2,2);
\draw [-] (0.3,0.7) -- (0.4,2);
\end{tikzpicture}
\ENDPICTURE

\PICTURE ram311
\begin{tikzpicture}[line cap=round,line join=round,>=triangle 45,x=1cm,y=1cm,xscale=0.7,yscale=0.7]

\draw [-] (0,0) -- (-0.3,0.7);
\draw [-] (0,0) -- (0.3,0.7);
\draw [-] (-0.3,0.7)-- (-0.1,2);
\draw [-] (-0.3,0.7)-- (-0.2,2);
\draw [line width=2pt] (-0.3,0.7)-- (-0.4,2);
\draw [line width=2pt] (0.3,0.7)-- (0.4,2);
\draw [-] (0.3,0.7)-- (0.2,2);
\draw [-] (0.3,0.7)-- (0.1,2);
\draw (-0.6,1.5) node {$3$};
\draw (0.6,1.5) node {$3$};
\end{tikzpicture}
\ENDPICTURE

\PICTURE ram511111
\begin{tikzpicture}[line cap=round,line join=round,>=triangle 45,x=1cm,y=1cm,xscale=0.7,yscale=0.7]

\draw [-] (0,0) -- (-0.3,0.7);
\draw [-] (0,0) -- (0.3,0.7);
\draw [line width=2pt] (-0.3,0.7) -- (-0.3,2);
\draw [-] (0.3,0.7) -- (0.2,2);
\draw [-] (0.3,0.7) -- (0.3,2);
\draw [-] (0.3,0.7) -- (0.4,2);
\draw [-] (0.3,0.7) -- (0.5,2);
\draw [-] (0.3,0.7) -- (0.6,2);
\end{tikzpicture}
\ENDPICTURE
 \end{tabular} &
      $4,3$ &
      $[3,0],[0,0]$ &
      $[1,1]$ &
      $[2]$ \\
      \hline
      \texttt{rr-gen} & $3,1,2$ &
      \begin{tabular}{c} $4$ \\ 
  \long\def\PICTURE #pictures.texram22 {}%
  \PICTURE ram10
\begin{tikzpicture}[line cap=round,line join=round,>=triangle 45,x=1cm,y=1cm,xscale=0.7,yscale=0.7]

\draw [line width=2pt] (0,.7) -- (0,2);
\draw [line width=2pt] (0,0) -- (0,.7);

\end{tikzpicture}

\ENDPICTURE

\PICTURE ram22thin
\begin{tikzpicture}[line cap=round,line join=round,>=triangle 45,x=1cm,y=1cm,xscale=0.7,yscale=0.7]

\draw [-] (0,.7) -- (-0.3,2);
\draw [-] (0,.7) -- (0.3,2);
\draw [-] (0,0)-- (0,.7);

\end{tikzpicture}
\ENDPICTURE

\PICTURE ram22thick
\begin{tikzpicture}[line cap=round,line join=round,>=triangle 45,x=1cm,y=1cm,xscale=0.7,yscale=0.7]

\draw [-] (0,.7) -- (-0.3,2);
\draw [-] (0,.7) -- (0.3,2);
\draw [line width=2pt] (0,0)-- (0,.7);
\draw (0.6,0.3) node {$2$};

\end{tikzpicture}
\ENDPICTURE

\PICTURE ram22
\begin{tikzpicture}[line cap=round,line join=round,>=triangle 45,x=1cm,y=1cm,xscale=0.7,yscale=0.7]

\draw [-] (0,.7) -- (-0.3,2);
\draw [-] (0,.7) -- (0.3,2);
\draw [line width=2pt] (0,0)-- (0,.7);

\end{tikzpicture}
\ENDPICTURE

\PICTURE ram23
\begin{tikzpicture}[line cap=round,line join=round,>=triangle 45,x=1cm,y=1cm,xscale=0.7,yscale=0.7]

\draw [-] (0,.7) -- (-0.4,2);
\draw [-] (0,.7) -- (0,2);
\draw [-] (0,.7) -- (0.4,2);
\draw [line width=2pt] (0,0)-- (0,.7);

\end{tikzpicture}
\ENDPICTURE

\PICTURE ram24
\begin{tikzpicture}[line cap=round,line join=round,>=triangle 45,x=1cm,y=1cm,xscale=0.7,yscale=0.7]

\draw [-] (0,.7) -- (-0.6,2);
\draw [-] (0,.7) -- (-0.2,2);
\draw [-] (0,.7) -- (0.2,2);
\draw [-] (0,.7) -- (.6,2);
\draw [line width=2pt] (0,0)-- (0,.7);

\end{tikzpicture}
\ENDPICTURE

\PICTURE ram25
\begin{tikzpicture}[line cap=round,line join=round,>=triangle 45,x=1cm,y=1cm,xscale=0.7,yscale=0.7]

\draw [-] (0,.7) -- (-0.6,2);
\draw [-] (0,.7) -- (-0.3,2);
\draw [-] (0,.7) -- (0,2);
\draw [-] (0,.7) -- (0.3,2);
\draw [-] (0,.7) -- (.6,2);
\draw [line width=2pt] (0,0)-- (0,.7);

\end{tikzpicture}
\ENDPICTURE

\PICTURE ramd
\begin{tikzpicture}[line cap=round,line join=round,>=triangle 45,x=1cm,y=1cm,xscale=0.7,yscale=0.7]

\draw [-] (0,0) -- (-0.3,0.7);
\draw [-] (0,0) -- (0.3,0.7);
\draw [line width=2pt] (-0.3,0.7)-- (-0.3,2);
\draw [line width=2pt] (0.3,0.7)-- (0.3,2);

\end{tikzpicture}
\ENDPICTURE

\PICTURE ramp2
\begin{tikzpicture}[line cap=round,line join=round,>=triangle 45,x=1cm,y=1cm,xscale=0.7,yscale=0.7]

\draw [-] (0,0) -- (-0.3,0.7);
\draw [-] (0,0) -- (0.3,0.7);
\draw [-] (-0.3,0.7)-- (-0.3,2);
\draw [-] (-0.3,0.7)-- (-0.2,2);
\draw [-] (-0.3,0.7)-- (-0.1,2);
\draw [line width=2pt] (-0.3,0.7)-- (-0.5,2);
\draw [line width=2pt] (0.3,0.7)-- (0.5,2);
\draw [-] (0.3,0.7)-- (0.1,2);
\draw [-] (0.3,0.7)-- (0.2,2);
\draw [-] (0.3,0.7)-- (0.3,2);
\draw (-0.8,1.5) node {$2$};
\draw (0.8,1.5) node {$2$};
\end{tikzpicture}
\ENDPICTURE

\PICTURE ramp2d
\begin{tikzpicture}[line cap=round,line join=round,>=triangle 45,x=1cm,y=1cm,xscale=0.7,yscale=0.7]

\draw [-] (0,0) -- (-0.3,0.7);
\draw [-] (0,0) -- (0.3,0.7);
\draw [line width=2pt] (-0.3,0.7)-- (-0.3,2);
\draw [line width=2pt] (0.3,0.7)-- (0.5,2);
\draw [-] (0.3,0.7)-- (0.0,2);
\draw [-] (0.3,0.7)-- (0.2,2);
\draw (-0.8,1.5) node {$4$};
\draw (0.8,1.5) node {$2$};
\end{tikzpicture}
\ENDPICTURE

\PICTURE ramp3
\begin{tikzpicture}[line cap=round,line join=round,>=triangle 45,x=1cm,y=1cm,xscale=0.7,yscale=0.7]

\draw [-] (0,0) -- (-0.3,0.7);
\draw [-] (0,0) -- (0.3,0.7);
\draw [-] (-0.3,0.7)-- (-0.2,2);
\draw [line width=2pt] (-0.3,0.7)-- (-0.4,2);
\draw [line width=2pt] (0.3,0.7)-- (0.4,2);
\draw [-] (0.3,0.7)-- (0.2,2);
\draw (-0.6,1.5) node {$3$};
\draw (0.6,1.5) node {$3$};
\end{tikzpicture}
\ENDPICTURE

\PICTURE ramp4
\begin{tikzpicture}[line cap=round,line join=round,>=triangle 45,x=1cm,y=1cm,xscale=0.7,yscale=0.7]

\draw [-] (0,0) -- (-0.3,0.7);
\draw [-] (0,0) -- (0.3,0.7);
\draw [-] (-0.3,0.7)-- (-0.2,2);
\draw [line width=2pt] (-0.3,0.7)-- (-0.4,2);
\draw [line width=2pt] (0.3,0.7)-- (0.4,2);
\draw [-] (0.3,0.7)-- (0.2,2);
\draw (-0.6,1.5) node {$4$};
\draw (0.6,1.5) node {$4$};
\end{tikzpicture}
\ENDPICTURE

\PICTURE rambruin
\begin{tikzpicture}[line cap=round,line join=round,>=triangle 45,x=1cm,y=1cm,xscale=0.7,yscale=0.7]

\draw [-] (0,0) -- (-0.3,0.7);
\draw [line width=2pt] (0,0) -- (0.3,0.7);
\draw (0.6,0.4) node {$2$};
\draw [-] (-0.3,0.7)-- (-0.2,2);
\draw [-] (-0.3,0.7)-- (-0.4,2);
\draw [-] (0.3,0.7)-- (0.4,2);
\draw [-] (0.3,0.7)-- (0.2,2);
\end{tikzpicture}
\ENDPICTURE

\PICTURE ram211
\begin{tikzpicture}[line cap=round,line join=round,>=triangle 45,x=1cm,y=1cm,xscale=0.7,yscale=0.7]

\draw [-] (0,0) -- (-0.3,0.7);
\draw [-] (0,0) -- (0.3,0.7);
\draw [line width=2pt] (-0.3,0.7) -- (-0.3,2);
\draw [-] (0.3,0.7) -- (0.2,2);
\draw [-] (0.3,0.7) -- (0.4,2);
\end{tikzpicture}
\ENDPICTURE

\PICTURE ram311
\begin{tikzpicture}[line cap=round,line join=round,>=triangle 45,x=1cm,y=1cm,xscale=0.7,yscale=0.7]

\draw [-] (0,0) -- (-0.3,0.7);
\draw [-] (0,0) -- (0.3,0.7);
\draw [-] (-0.3,0.7)-- (-0.1,2);
\draw [-] (-0.3,0.7)-- (-0.2,2);
\draw [line width=2pt] (-0.3,0.7)-- (-0.4,2);
\draw [line width=2pt] (0.3,0.7)-- (0.4,2);
\draw [-] (0.3,0.7)-- (0.2,2);
\draw [-] (0.3,0.7)-- (0.1,2);
\draw (-0.6,1.5) node {$3$};
\draw (0.6,1.5) node {$3$};
\end{tikzpicture}
\ENDPICTURE

\PICTURE ram511111
\begin{tikzpicture}[line cap=round,line join=round,>=triangle 45,x=1cm,y=1cm,xscale=0.7,yscale=0.7]

\draw [-] (0,0) -- (-0.3,0.7);
\draw [-] (0,0) -- (0.3,0.7);
\draw [line width=2pt] (-0.3,0.7) -- (-0.3,2);
\draw [-] (0.3,0.7) -- (0.2,2);
\draw [-] (0.3,0.7) -- (0.3,2);
\draw [-] (0.3,0.7) -- (0.4,2);
\draw [-] (0.3,0.7) -- (0.5,2);
\draw [-] (0.3,0.7) -- (0.6,2);
\end{tikzpicture}
\ENDPICTURE
 \end{tabular}
      \begin{tabular}{c} $2$ \\ 
  \long\def\PICTURE #pictures.texram211 {}%
  \PICTURE ram10
\begin{tikzpicture}[line cap=round,line join=round,>=triangle 45,x=1cm,y=1cm,xscale=0.7,yscale=0.7]

\draw [line width=2pt] (0,.7) -- (0,2);
\draw [line width=2pt] (0,0) -- (0,.7);

\end{tikzpicture}

\ENDPICTURE

\PICTURE ram22thin
\begin{tikzpicture}[line cap=round,line join=round,>=triangle 45,x=1cm,y=1cm,xscale=0.7,yscale=0.7]

\draw [-] (0,.7) -- (-0.3,2);
\draw [-] (0,.7) -- (0.3,2);
\draw [-] (0,0)-- (0,.7);

\end{tikzpicture}
\ENDPICTURE

\PICTURE ram22thick
\begin{tikzpicture}[line cap=round,line join=round,>=triangle 45,x=1cm,y=1cm,xscale=0.7,yscale=0.7]

\draw [-] (0,.7) -- (-0.3,2);
\draw [-] (0,.7) -- (0.3,2);
\draw [line width=2pt] (0,0)-- (0,.7);
\draw (0.6,0.3) node {$2$};

\end{tikzpicture}
\ENDPICTURE

\PICTURE ram22
\begin{tikzpicture}[line cap=round,line join=round,>=triangle 45,x=1cm,y=1cm,xscale=0.7,yscale=0.7]

\draw [-] (0,.7) -- (-0.3,2);
\draw [-] (0,.7) -- (0.3,2);
\draw [line width=2pt] (0,0)-- (0,.7);

\end{tikzpicture}
\ENDPICTURE

\PICTURE ram23
\begin{tikzpicture}[line cap=round,line join=round,>=triangle 45,x=1cm,y=1cm,xscale=0.7,yscale=0.7]

\draw [-] (0,.7) -- (-0.4,2);
\draw [-] (0,.7) -- (0,2);
\draw [-] (0,.7) -- (0.4,2);
\draw [line width=2pt] (0,0)-- (0,.7);

\end{tikzpicture}
\ENDPICTURE

\PICTURE ram24
\begin{tikzpicture}[line cap=round,line join=round,>=triangle 45,x=1cm,y=1cm,xscale=0.7,yscale=0.7]

\draw [-] (0,.7) -- (-0.6,2);
\draw [-] (0,.7) -- (-0.2,2);
\draw [-] (0,.7) -- (0.2,2);
\draw [-] (0,.7) -- (.6,2);
\draw [line width=2pt] (0,0)-- (0,.7);

\end{tikzpicture}
\ENDPICTURE

\PICTURE ram25
\begin{tikzpicture}[line cap=round,line join=round,>=triangle 45,x=1cm,y=1cm,xscale=0.7,yscale=0.7]

\draw [-] (0,.7) -- (-0.6,2);
\draw [-] (0,.7) -- (-0.3,2);
\draw [-] (0,.7) -- (0,2);
\draw [-] (0,.7) -- (0.3,2);
\draw [-] (0,.7) -- (.6,2);
\draw [line width=2pt] (0,0)-- (0,.7);

\end{tikzpicture}
\ENDPICTURE

\PICTURE ramd
\begin{tikzpicture}[line cap=round,line join=round,>=triangle 45,x=1cm,y=1cm,xscale=0.7,yscale=0.7]

\draw [-] (0,0) -- (-0.3,0.7);
\draw [-] (0,0) -- (0.3,0.7);
\draw [line width=2pt] (-0.3,0.7)-- (-0.3,2);
\draw [line width=2pt] (0.3,0.7)-- (0.3,2);

\end{tikzpicture}
\ENDPICTURE

\PICTURE ramp2
\begin{tikzpicture}[line cap=round,line join=round,>=triangle 45,x=1cm,y=1cm,xscale=0.7,yscale=0.7]

\draw [-] (0,0) -- (-0.3,0.7);
\draw [-] (0,0) -- (0.3,0.7);
\draw [-] (-0.3,0.7)-- (-0.3,2);
\draw [-] (-0.3,0.7)-- (-0.2,2);
\draw [-] (-0.3,0.7)-- (-0.1,2);
\draw [line width=2pt] (-0.3,0.7)-- (-0.5,2);
\draw [line width=2pt] (0.3,0.7)-- (0.5,2);
\draw [-] (0.3,0.7)-- (0.1,2);
\draw [-] (0.3,0.7)-- (0.2,2);
\draw [-] (0.3,0.7)-- (0.3,2);
\draw (-0.8,1.5) node {$2$};
\draw (0.8,1.5) node {$2$};
\end{tikzpicture}
\ENDPICTURE

\PICTURE ramp2d
\begin{tikzpicture}[line cap=round,line join=round,>=triangle 45,x=1cm,y=1cm,xscale=0.7,yscale=0.7]

\draw [-] (0,0) -- (-0.3,0.7);
\draw [-] (0,0) -- (0.3,0.7);
\draw [line width=2pt] (-0.3,0.7)-- (-0.3,2);
\draw [line width=2pt] (0.3,0.7)-- (0.5,2);
\draw [-] (0.3,0.7)-- (0.0,2);
\draw [-] (0.3,0.7)-- (0.2,2);
\draw (-0.8,1.5) node {$4$};
\draw (0.8,1.5) node {$2$};
\end{tikzpicture}
\ENDPICTURE

\PICTURE ramp3
\begin{tikzpicture}[line cap=round,line join=round,>=triangle 45,x=1cm,y=1cm,xscale=0.7,yscale=0.7]

\draw [-] (0,0) -- (-0.3,0.7);
\draw [-] (0,0) -- (0.3,0.7);
\draw [-] (-0.3,0.7)-- (-0.2,2);
\draw [line width=2pt] (-0.3,0.7)-- (-0.4,2);
\draw [line width=2pt] (0.3,0.7)-- (0.4,2);
\draw [-] (0.3,0.7)-- (0.2,2);
\draw (-0.6,1.5) node {$3$};
\draw (0.6,1.5) node {$3$};
\end{tikzpicture}
\ENDPICTURE

\PICTURE ramp4
\begin{tikzpicture}[line cap=round,line join=round,>=triangle 45,x=1cm,y=1cm,xscale=0.7,yscale=0.7]

\draw [-] (0,0) -- (-0.3,0.7);
\draw [-] (0,0) -- (0.3,0.7);
\draw [-] (-0.3,0.7)-- (-0.2,2);
\draw [line width=2pt] (-0.3,0.7)-- (-0.4,2);
\draw [line width=2pt] (0.3,0.7)-- (0.4,2);
\draw [-] (0.3,0.7)-- (0.2,2);
\draw (-0.6,1.5) node {$4$};
\draw (0.6,1.5) node {$4$};
\end{tikzpicture}
\ENDPICTURE

\PICTURE rambruin
\begin{tikzpicture}[line cap=round,line join=round,>=triangle 45,x=1cm,y=1cm,xscale=0.7,yscale=0.7]

\draw [-] (0,0) -- (-0.3,0.7);
\draw [line width=2pt] (0,0) -- (0.3,0.7);
\draw (0.6,0.4) node {$2$};
\draw [-] (-0.3,0.7)-- (-0.2,2);
\draw [-] (-0.3,0.7)-- (-0.4,2);
\draw [-] (0.3,0.7)-- (0.4,2);
\draw [-] (0.3,0.7)-- (0.2,2);
\end{tikzpicture}
\ENDPICTURE

\PICTURE ram211
\begin{tikzpicture}[line cap=round,line join=round,>=triangle 45,x=1cm,y=1cm,xscale=0.7,yscale=0.7]

\draw [-] (0,0) -- (-0.3,0.7);
\draw [-] (0,0) -- (0.3,0.7);
\draw [line width=2pt] (-0.3,0.7) -- (-0.3,2);
\draw [-] (0.3,0.7) -- (0.2,2);
\draw [-] (0.3,0.7) -- (0.4,2);
\end{tikzpicture}
\ENDPICTURE

\PICTURE ram311
\begin{tikzpicture}[line cap=round,line join=round,>=triangle 45,x=1cm,y=1cm,xscale=0.7,yscale=0.7]

\draw [-] (0,0) -- (-0.3,0.7);
\draw [-] (0,0) -- (0.3,0.7);
\draw [-] (-0.3,0.7)-- (-0.1,2);
\draw [-] (-0.3,0.7)-- (-0.2,2);
\draw [line width=2pt] (-0.3,0.7)-- (-0.4,2);
\draw [line width=2pt] (0.3,0.7)-- (0.4,2);
\draw [-] (0.3,0.7)-- (0.2,2);
\draw [-] (0.3,0.7)-- (0.1,2);
\draw (-0.6,1.5) node {$3$};
\draw (0.6,1.5) node {$3$};
\end{tikzpicture}
\ENDPICTURE

\PICTURE ram511111
\begin{tikzpicture}[line cap=round,line join=round,>=triangle 45,x=1cm,y=1cm,xscale=0.7,yscale=0.7]

\draw [-] (0,0) -- (-0.3,0.7);
\draw [-] (0,0) -- (0.3,0.7);
\draw [line width=2pt] (-0.3,0.7) -- (-0.3,2);
\draw [-] (0.3,0.7) -- (0.2,2);
\draw [-] (0.3,0.7) -- (0.3,2);
\draw [-] (0.3,0.7) -- (0.4,2);
\draw [-] (0.3,0.7) -- (0.5,2);
\draw [-] (0.3,0.7) -- (0.6,2);
\end{tikzpicture}
\ENDPICTURE
 \end{tabular}
      \begin{tabular}{c} $1$ \\ 
  \long\def\PICTURE #pictures.texramd {}%
  \PICTURE ram10
\begin{tikzpicture}[line cap=round,line join=round,>=triangle 45,x=1cm,y=1cm,xscale=0.7,yscale=0.7]

\draw [line width=2pt] (0,.7) -- (0,2);
\draw [line width=2pt] (0,0) -- (0,.7);

\end{tikzpicture}

\ENDPICTURE

\PICTURE ram22thin
\begin{tikzpicture}[line cap=round,line join=round,>=triangle 45,x=1cm,y=1cm,xscale=0.7,yscale=0.7]

\draw [-] (0,.7) -- (-0.3,2);
\draw [-] (0,.7) -- (0.3,2);
\draw [-] (0,0)-- (0,.7);

\end{tikzpicture}
\ENDPICTURE

\PICTURE ram22thick
\begin{tikzpicture}[line cap=round,line join=round,>=triangle 45,x=1cm,y=1cm,xscale=0.7,yscale=0.7]

\draw [-] (0,.7) -- (-0.3,2);
\draw [-] (0,.7) -- (0.3,2);
\draw [line width=2pt] (0,0)-- (0,.7);
\draw (0.6,0.3) node {$2$};

\end{tikzpicture}
\ENDPICTURE

\PICTURE ram22
\begin{tikzpicture}[line cap=round,line join=round,>=triangle 45,x=1cm,y=1cm,xscale=0.7,yscale=0.7]

\draw [-] (0,.7) -- (-0.3,2);
\draw [-] (0,.7) -- (0.3,2);
\draw [line width=2pt] (0,0)-- (0,.7);

\end{tikzpicture}
\ENDPICTURE

\PICTURE ram23
\begin{tikzpicture}[line cap=round,line join=round,>=triangle 45,x=1cm,y=1cm,xscale=0.7,yscale=0.7]

\draw [-] (0,.7) -- (-0.4,2);
\draw [-] (0,.7) -- (0,2);
\draw [-] (0,.7) -- (0.4,2);
\draw [line width=2pt] (0,0)-- (0,.7);

\end{tikzpicture}
\ENDPICTURE

\PICTURE ram24
\begin{tikzpicture}[line cap=round,line join=round,>=triangle 45,x=1cm,y=1cm,xscale=0.7,yscale=0.7]

\draw [-] (0,.7) -- (-0.6,2);
\draw [-] (0,.7) -- (-0.2,2);
\draw [-] (0,.7) -- (0.2,2);
\draw [-] (0,.7) -- (.6,2);
\draw [line width=2pt] (0,0)-- (0,.7);

\end{tikzpicture}
\ENDPICTURE

\PICTURE ram25
\begin{tikzpicture}[line cap=round,line join=round,>=triangle 45,x=1cm,y=1cm,xscale=0.7,yscale=0.7]

\draw [-] (0,.7) -- (-0.6,2);
\draw [-] (0,.7) -- (-0.3,2);
\draw [-] (0,.7) -- (0,2);
\draw [-] (0,.7) -- (0.3,2);
\draw [-] (0,.7) -- (.6,2);
\draw [line width=2pt] (0,0)-- (0,.7);

\end{tikzpicture}
\ENDPICTURE

\PICTURE ramd
\begin{tikzpicture}[line cap=round,line join=round,>=triangle 45,x=1cm,y=1cm,xscale=0.7,yscale=0.7]

\draw [-] (0,0) -- (-0.3,0.7);
\draw [-] (0,0) -- (0.3,0.7);
\draw [line width=2pt] (-0.3,0.7)-- (-0.3,2);
\draw [line width=2pt] (0.3,0.7)-- (0.3,2);

\end{tikzpicture}
\ENDPICTURE

\PICTURE ramp2
\begin{tikzpicture}[line cap=round,line join=round,>=triangle 45,x=1cm,y=1cm,xscale=0.7,yscale=0.7]

\draw [-] (0,0) -- (-0.3,0.7);
\draw [-] (0,0) -- (0.3,0.7);
\draw [-] (-0.3,0.7)-- (-0.3,2);
\draw [-] (-0.3,0.7)-- (-0.2,2);
\draw [-] (-0.3,0.7)-- (-0.1,2);
\draw [line width=2pt] (-0.3,0.7)-- (-0.5,2);
\draw [line width=2pt] (0.3,0.7)-- (0.5,2);
\draw [-] (0.3,0.7)-- (0.1,2);
\draw [-] (0.3,0.7)-- (0.2,2);
\draw [-] (0.3,0.7)-- (0.3,2);
\draw (-0.8,1.5) node {$2$};
\draw (0.8,1.5) node {$2$};
\end{tikzpicture}
\ENDPICTURE

\PICTURE ramp2d
\begin{tikzpicture}[line cap=round,line join=round,>=triangle 45,x=1cm,y=1cm,xscale=0.7,yscale=0.7]

\draw [-] (0,0) -- (-0.3,0.7);
\draw [-] (0,0) -- (0.3,0.7);
\draw [line width=2pt] (-0.3,0.7)-- (-0.3,2);
\draw [line width=2pt] (0.3,0.7)-- (0.5,2);
\draw [-] (0.3,0.7)-- (0.0,2);
\draw [-] (0.3,0.7)-- (0.2,2);
\draw (-0.8,1.5) node {$4$};
\draw (0.8,1.5) node {$2$};
\end{tikzpicture}
\ENDPICTURE

\PICTURE ramp3
\begin{tikzpicture}[line cap=round,line join=round,>=triangle 45,x=1cm,y=1cm,xscale=0.7,yscale=0.7]

\draw [-] (0,0) -- (-0.3,0.7);
\draw [-] (0,0) -- (0.3,0.7);
\draw [-] (-0.3,0.7)-- (-0.2,2);
\draw [line width=2pt] (-0.3,0.7)-- (-0.4,2);
\draw [line width=2pt] (0.3,0.7)-- (0.4,2);
\draw [-] (0.3,0.7)-- (0.2,2);
\draw (-0.6,1.5) node {$3$};
\draw (0.6,1.5) node {$3$};
\end{tikzpicture}
\ENDPICTURE

\PICTURE ramp4
\begin{tikzpicture}[line cap=round,line join=round,>=triangle 45,x=1cm,y=1cm,xscale=0.7,yscale=0.7]

\draw [-] (0,0) -- (-0.3,0.7);
\draw [-] (0,0) -- (0.3,0.7);
\draw [-] (-0.3,0.7)-- (-0.2,2);
\draw [line width=2pt] (-0.3,0.7)-- (-0.4,2);
\draw [line width=2pt] (0.3,0.7)-- (0.4,2);
\draw [-] (0.3,0.7)-- (0.2,2);
\draw (-0.6,1.5) node {$4$};
\draw (0.6,1.5) node {$4$};
\end{tikzpicture}
\ENDPICTURE

\PICTURE rambruin
\begin{tikzpicture}[line cap=round,line join=round,>=triangle 45,x=1cm,y=1cm,xscale=0.7,yscale=0.7]

\draw [-] (0,0) -- (-0.3,0.7);
\draw [line width=2pt] (0,0) -- (0.3,0.7);
\draw (0.6,0.4) node {$2$};
\draw [-] (-0.3,0.7)-- (-0.2,2);
\draw [-] (-0.3,0.7)-- (-0.4,2);
\draw [-] (0.3,0.7)-- (0.4,2);
\draw [-] (0.3,0.7)-- (0.2,2);
\end{tikzpicture}
\ENDPICTURE

\PICTURE ram211
\begin{tikzpicture}[line cap=round,line join=round,>=triangle 45,x=1cm,y=1cm,xscale=0.7,yscale=0.7]

\draw [-] (0,0) -- (-0.3,0.7);
\draw [-] (0,0) -- (0.3,0.7);
\draw [line width=2pt] (-0.3,0.7) -- (-0.3,2);
\draw [-] (0.3,0.7) -- (0.2,2);
\draw [-] (0.3,0.7) -- (0.4,2);
\end{tikzpicture}
\ENDPICTURE

\PICTURE ram311
\begin{tikzpicture}[line cap=round,line join=round,>=triangle 45,x=1cm,y=1cm,xscale=0.7,yscale=0.7]

\draw [-] (0,0) -- (-0.3,0.7);
\draw [-] (0,0) -- (0.3,0.7);
\draw [-] (-0.3,0.7)-- (-0.1,2);
\draw [-] (-0.3,0.7)-- (-0.2,2);
\draw [line width=2pt] (-0.3,0.7)-- (-0.4,2);
\draw [line width=2pt] (0.3,0.7)-- (0.4,2);
\draw [-] (0.3,0.7)-- (0.2,2);
\draw [-] (0.3,0.7)-- (0.1,2);
\draw (-0.6,1.5) node {$3$};
\draw (0.6,1.5) node {$3$};
\end{tikzpicture}
\ENDPICTURE

\PICTURE ram511111
\begin{tikzpicture}[line cap=round,line join=round,>=triangle 45,x=1cm,y=1cm,xscale=0.7,yscale=0.7]

\draw [-] (0,0) -- (-0.3,0.7);
\draw [-] (0,0) -- (0.3,0.7);
\draw [line width=2pt] (-0.3,0.7) -- (-0.3,2);
\draw [-] (0.3,0.7) -- (0.2,2);
\draw [-] (0.3,0.7) -- (0.3,2);
\draw [-] (0.3,0.7) -- (0.4,2);
\draw [-] (0.3,0.7) -- (0.5,2);
\draw [-] (0.3,0.7) -- (0.6,2);
\end{tikzpicture}
\ENDPICTURE
 \end{tabular} &
      $8,7$ &
      $[8,0],[0,0]$ &
      $[2]$ &
      $[2]$ \\
      \hline
    \end{tabular}
  \end{tiny}
  \caption{Recovering Ritzenthaler--Romagny}
  \label{tab:results-g3}
\end{table}

\begin{table}[htbp]
  \centering
  \begin{tiny}
    \begin{tabular}{|c|c|c|c|c|c|c|c|c|}
      \hline
      Case & $g_X,g_Y,d_X$ & Ramification & $\# G,g_Z$  & $X$ nhyp/hyp & Prym dims & $\deg Z \to C_i$ \\
      \hline
      \texttt{g2-2} & $2,1,2$ &
      \begin{tabular}{c} $4$ \\ 
  \long\def\PICTURE #pictures.texram22 {}%
  \PICTURE ram10
\begin{tikzpicture}[line cap=round,line join=round,>=triangle 45,x=1cm,y=1cm,xscale=0.7,yscale=0.7]

\draw [line width=2pt] (0,.7) -- (0,2);
\draw [line width=2pt] (0,0) -- (0,.7);

\end{tikzpicture}

\ENDPICTURE

\PICTURE ram22thin
\begin{tikzpicture}[line cap=round,line join=round,>=triangle 45,x=1cm,y=1cm,xscale=0.7,yscale=0.7]

\draw [-] (0,.7) -- (-0.3,2);
\draw [-] (0,.7) -- (0.3,2);
\draw [-] (0,0)-- (0,.7);

\end{tikzpicture}
\ENDPICTURE

\PICTURE ram22thick
\begin{tikzpicture}[line cap=round,line join=round,>=triangle 45,x=1cm,y=1cm,xscale=0.7,yscale=0.7]

\draw [-] (0,.7) -- (-0.3,2);
\draw [-] (0,.7) -- (0.3,2);
\draw [line width=2pt] (0,0)-- (0,.7);
\draw (0.6,0.3) node {$2$};

\end{tikzpicture}
\ENDPICTURE

\PICTURE ram22
\begin{tikzpicture}[line cap=round,line join=round,>=triangle 45,x=1cm,y=1cm,xscale=0.7,yscale=0.7]

\draw [-] (0,.7) -- (-0.3,2);
\draw [-] (0,.7) -- (0.3,2);
\draw [line width=2pt] (0,0)-- (0,.7);

\end{tikzpicture}
\ENDPICTURE

\PICTURE ram23
\begin{tikzpicture}[line cap=round,line join=round,>=triangle 45,x=1cm,y=1cm,xscale=0.7,yscale=0.7]

\draw [-] (0,.7) -- (-0.4,2);
\draw [-] (0,.7) -- (0,2);
\draw [-] (0,.7) -- (0.4,2);
\draw [line width=2pt] (0,0)-- (0,.7);

\end{tikzpicture}
\ENDPICTURE

\PICTURE ram24
\begin{tikzpicture}[line cap=round,line join=round,>=triangle 45,x=1cm,y=1cm,xscale=0.7,yscale=0.7]

\draw [-] (0,.7) -- (-0.6,2);
\draw [-] (0,.7) -- (-0.2,2);
\draw [-] (0,.7) -- (0.2,2);
\draw [-] (0,.7) -- (.6,2);
\draw [line width=2pt] (0,0)-- (0,.7);

\end{tikzpicture}
\ENDPICTURE

\PICTURE ram25
\begin{tikzpicture}[line cap=round,line join=round,>=triangle 45,x=1cm,y=1cm,xscale=0.7,yscale=0.7]

\draw [-] (0,.7) -- (-0.6,2);
\draw [-] (0,.7) -- (-0.3,2);
\draw [-] (0,.7) -- (0,2);
\draw [-] (0,.7) -- (0.3,2);
\draw [-] (0,.7) -- (.6,2);
\draw [line width=2pt] (0,0)-- (0,.7);

\end{tikzpicture}
\ENDPICTURE

\PICTURE ramd
\begin{tikzpicture}[line cap=round,line join=round,>=triangle 45,x=1cm,y=1cm,xscale=0.7,yscale=0.7]

\draw [-] (0,0) -- (-0.3,0.7);
\draw [-] (0,0) -- (0.3,0.7);
\draw [line width=2pt] (-0.3,0.7)-- (-0.3,2);
\draw [line width=2pt] (0.3,0.7)-- (0.3,2);

\end{tikzpicture}
\ENDPICTURE

\PICTURE ramp2
\begin{tikzpicture}[line cap=round,line join=round,>=triangle 45,x=1cm,y=1cm,xscale=0.7,yscale=0.7]

\draw [-] (0,0) -- (-0.3,0.7);
\draw [-] (0,0) -- (0.3,0.7);
\draw [-] (-0.3,0.7)-- (-0.3,2);
\draw [-] (-0.3,0.7)-- (-0.2,2);
\draw [-] (-0.3,0.7)-- (-0.1,2);
\draw [line width=2pt] (-0.3,0.7)-- (-0.5,2);
\draw [line width=2pt] (0.3,0.7)-- (0.5,2);
\draw [-] (0.3,0.7)-- (0.1,2);
\draw [-] (0.3,0.7)-- (0.2,2);
\draw [-] (0.3,0.7)-- (0.3,2);
\draw (-0.8,1.5) node {$2$};
\draw (0.8,1.5) node {$2$};
\end{tikzpicture}
\ENDPICTURE

\PICTURE ramp2d
\begin{tikzpicture}[line cap=round,line join=round,>=triangle 45,x=1cm,y=1cm,xscale=0.7,yscale=0.7]

\draw [-] (0,0) -- (-0.3,0.7);
\draw [-] (0,0) -- (0.3,0.7);
\draw [line width=2pt] (-0.3,0.7)-- (-0.3,2);
\draw [line width=2pt] (0.3,0.7)-- (0.5,2);
\draw [-] (0.3,0.7)-- (0.0,2);
\draw [-] (0.3,0.7)-- (0.2,2);
\draw (-0.8,1.5) node {$4$};
\draw (0.8,1.5) node {$2$};
\end{tikzpicture}
\ENDPICTURE

\PICTURE ramp3
\begin{tikzpicture}[line cap=round,line join=round,>=triangle 45,x=1cm,y=1cm,xscale=0.7,yscale=0.7]

\draw [-] (0,0) -- (-0.3,0.7);
\draw [-] (0,0) -- (0.3,0.7);
\draw [-] (-0.3,0.7)-- (-0.2,2);
\draw [line width=2pt] (-0.3,0.7)-- (-0.4,2);
\draw [line width=2pt] (0.3,0.7)-- (0.4,2);
\draw [-] (0.3,0.7)-- (0.2,2);
\draw (-0.6,1.5) node {$3$};
\draw (0.6,1.5) node {$3$};
\end{tikzpicture}
\ENDPICTURE

\PICTURE ramp4
\begin{tikzpicture}[line cap=round,line join=round,>=triangle 45,x=1cm,y=1cm,xscale=0.7,yscale=0.7]

\draw [-] (0,0) -- (-0.3,0.7);
\draw [-] (0,0) -- (0.3,0.7);
\draw [-] (-0.3,0.7)-- (-0.2,2);
\draw [line width=2pt] (-0.3,0.7)-- (-0.4,2);
\draw [line width=2pt] (0.3,0.7)-- (0.4,2);
\draw [-] (0.3,0.7)-- (0.2,2);
\draw (-0.6,1.5) node {$4$};
\draw (0.6,1.5) node {$4$};
\end{tikzpicture}
\ENDPICTURE

\PICTURE rambruin
\begin{tikzpicture}[line cap=round,line join=round,>=triangle 45,x=1cm,y=1cm,xscale=0.7,yscale=0.7]

\draw [-] (0,0) -- (-0.3,0.7);
\draw [line width=2pt] (0,0) -- (0.3,0.7);
\draw (0.6,0.4) node {$2$};
\draw [-] (-0.3,0.7)-- (-0.2,2);
\draw [-] (-0.3,0.7)-- (-0.4,2);
\draw [-] (0.3,0.7)-- (0.4,2);
\draw [-] (0.3,0.7)-- (0.2,2);
\end{tikzpicture}
\ENDPICTURE

\PICTURE ram211
\begin{tikzpicture}[line cap=round,line join=round,>=triangle 45,x=1cm,y=1cm,xscale=0.7,yscale=0.7]

\draw [-] (0,0) -- (-0.3,0.7);
\draw [-] (0,0) -- (0.3,0.7);
\draw [line width=2pt] (-0.3,0.7) -- (-0.3,2);
\draw [-] (0.3,0.7) -- (0.2,2);
\draw [-] (0.3,0.7) -- (0.4,2);
\end{tikzpicture}
\ENDPICTURE

\PICTURE ram311
\begin{tikzpicture}[line cap=round,line join=round,>=triangle 45,x=1cm,y=1cm,xscale=0.7,yscale=0.7]

\draw [-] (0,0) -- (-0.3,0.7);
\draw [-] (0,0) -- (0.3,0.7);
\draw [-] (-0.3,0.7)-- (-0.1,2);
\draw [-] (-0.3,0.7)-- (-0.2,2);
\draw [line width=2pt] (-0.3,0.7)-- (-0.4,2);
\draw [line width=2pt] (0.3,0.7)-- (0.4,2);
\draw [-] (0.3,0.7)-- (0.2,2);
\draw [-] (0.3,0.7)-- (0.1,2);
\draw (-0.6,1.5) node {$3$};
\draw (0.6,1.5) node {$3$};
\end{tikzpicture}
\ENDPICTURE

\PICTURE ram511111
\begin{tikzpicture}[line cap=round,line join=round,>=triangle 45,x=1cm,y=1cm,xscale=0.7,yscale=0.7]

\draw [-] (0,0) -- (-0.3,0.7);
\draw [-] (0,0) -- (0.3,0.7);
\draw [line width=2pt] (-0.3,0.7) -- (-0.3,2);
\draw [-] (0.3,0.7) -- (0.2,2);
\draw [-] (0.3,0.7) -- (0.3,2);
\draw [-] (0.3,0.7) -- (0.4,2);
\draw [-] (0.3,0.7) -- (0.5,2);
\draw [-] (0.3,0.7) -- (0.6,2);
\end{tikzpicture}
\ENDPICTURE
 \end{tabular}
      \begin{tabular}{c} $1$ \\ 
  \long\def\PICTURE #pictures.texramd {}%
  \PICTURE ram10
\begin{tikzpicture}[line cap=round,line join=round,>=triangle 45,x=1cm,y=1cm,xscale=0.7,yscale=0.7]

\draw [line width=2pt] (0,.7) -- (0,2);
\draw [line width=2pt] (0,0) -- (0,.7);

\end{tikzpicture}

\ENDPICTURE

\PICTURE ram22thin
\begin{tikzpicture}[line cap=round,line join=round,>=triangle 45,x=1cm,y=1cm,xscale=0.7,yscale=0.7]

\draw [-] (0,.7) -- (-0.3,2);
\draw [-] (0,.7) -- (0.3,2);
\draw [-] (0,0)-- (0,.7);

\end{tikzpicture}
\ENDPICTURE

\PICTURE ram22thick
\begin{tikzpicture}[line cap=round,line join=round,>=triangle 45,x=1cm,y=1cm,xscale=0.7,yscale=0.7]

\draw [-] (0,.7) -- (-0.3,2);
\draw [-] (0,.7) -- (0.3,2);
\draw [line width=2pt] (0,0)-- (0,.7);
\draw (0.6,0.3) node {$2$};

\end{tikzpicture}
\ENDPICTURE

\PICTURE ram22
\begin{tikzpicture}[line cap=round,line join=round,>=triangle 45,x=1cm,y=1cm,xscale=0.7,yscale=0.7]

\draw [-] (0,.7) -- (-0.3,2);
\draw [-] (0,.7) -- (0.3,2);
\draw [line width=2pt] (0,0)-- (0,.7);

\end{tikzpicture}
\ENDPICTURE

\PICTURE ram23
\begin{tikzpicture}[line cap=round,line join=round,>=triangle 45,x=1cm,y=1cm,xscale=0.7,yscale=0.7]

\draw [-] (0,.7) -- (-0.4,2);
\draw [-] (0,.7) -- (0,2);
\draw [-] (0,.7) -- (0.4,2);
\draw [line width=2pt] (0,0)-- (0,.7);

\end{tikzpicture}
\ENDPICTURE

\PICTURE ram24
\begin{tikzpicture}[line cap=round,line join=round,>=triangle 45,x=1cm,y=1cm,xscale=0.7,yscale=0.7]

\draw [-] (0,.7) -- (-0.6,2);
\draw [-] (0,.7) -- (-0.2,2);
\draw [-] (0,.7) -- (0.2,2);
\draw [-] (0,.7) -- (.6,2);
\draw [line width=2pt] (0,0)-- (0,.7);

\end{tikzpicture}
\ENDPICTURE

\PICTURE ram25
\begin{tikzpicture}[line cap=round,line join=round,>=triangle 45,x=1cm,y=1cm,xscale=0.7,yscale=0.7]

\draw [-] (0,.7) -- (-0.6,2);
\draw [-] (0,.7) -- (-0.3,2);
\draw [-] (0,.7) -- (0,2);
\draw [-] (0,.7) -- (0.3,2);
\draw [-] (0,.7) -- (.6,2);
\draw [line width=2pt] (0,0)-- (0,.7);

\end{tikzpicture}
\ENDPICTURE

\PICTURE ramd
\begin{tikzpicture}[line cap=round,line join=round,>=triangle 45,x=1cm,y=1cm,xscale=0.7,yscale=0.7]

\draw [-] (0,0) -- (-0.3,0.7);
\draw [-] (0,0) -- (0.3,0.7);
\draw [line width=2pt] (-0.3,0.7)-- (-0.3,2);
\draw [line width=2pt] (0.3,0.7)-- (0.3,2);

\end{tikzpicture}
\ENDPICTURE

\PICTURE ramp2
\begin{tikzpicture}[line cap=round,line join=round,>=triangle 45,x=1cm,y=1cm,xscale=0.7,yscale=0.7]

\draw [-] (0,0) -- (-0.3,0.7);
\draw [-] (0,0) -- (0.3,0.7);
\draw [-] (-0.3,0.7)-- (-0.3,2);
\draw [-] (-0.3,0.7)-- (-0.2,2);
\draw [-] (-0.3,0.7)-- (-0.1,2);
\draw [line width=2pt] (-0.3,0.7)-- (-0.5,2);
\draw [line width=2pt] (0.3,0.7)-- (0.5,2);
\draw [-] (0.3,0.7)-- (0.1,2);
\draw [-] (0.3,0.7)-- (0.2,2);
\draw [-] (0.3,0.7)-- (0.3,2);
\draw (-0.8,1.5) node {$2$};
\draw (0.8,1.5) node {$2$};
\end{tikzpicture}
\ENDPICTURE

\PICTURE ramp2d
\begin{tikzpicture}[line cap=round,line join=round,>=triangle 45,x=1cm,y=1cm,xscale=0.7,yscale=0.7]

\draw [-] (0,0) -- (-0.3,0.7);
\draw [-] (0,0) -- (0.3,0.7);
\draw [line width=2pt] (-0.3,0.7)-- (-0.3,2);
\draw [line width=2pt] (0.3,0.7)-- (0.5,2);
\draw [-] (0.3,0.7)-- (0.0,2);
\draw [-] (0.3,0.7)-- (0.2,2);
\draw (-0.8,1.5) node {$4$};
\draw (0.8,1.5) node {$2$};
\end{tikzpicture}
\ENDPICTURE

\PICTURE ramp3
\begin{tikzpicture}[line cap=round,line join=round,>=triangle 45,x=1cm,y=1cm,xscale=0.7,yscale=0.7]

\draw [-] (0,0) -- (-0.3,0.7);
\draw [-] (0,0) -- (0.3,0.7);
\draw [-] (-0.3,0.7)-- (-0.2,2);
\draw [line width=2pt] (-0.3,0.7)-- (-0.4,2);
\draw [line width=2pt] (0.3,0.7)-- (0.4,2);
\draw [-] (0.3,0.7)-- (0.2,2);
\draw (-0.6,1.5) node {$3$};
\draw (0.6,1.5) node {$3$};
\end{tikzpicture}
\ENDPICTURE

\PICTURE ramp4
\begin{tikzpicture}[line cap=round,line join=round,>=triangle 45,x=1cm,y=1cm,xscale=0.7,yscale=0.7]

\draw [-] (0,0) -- (-0.3,0.7);
\draw [-] (0,0) -- (0.3,0.7);
\draw [-] (-0.3,0.7)-- (-0.2,2);
\draw [line width=2pt] (-0.3,0.7)-- (-0.4,2);
\draw [line width=2pt] (0.3,0.7)-- (0.4,2);
\draw [-] (0.3,0.7)-- (0.2,2);
\draw (-0.6,1.5) node {$4$};
\draw (0.6,1.5) node {$4$};
\end{tikzpicture}
\ENDPICTURE

\PICTURE rambruin
\begin{tikzpicture}[line cap=round,line join=round,>=triangle 45,x=1cm,y=1cm,xscale=0.7,yscale=0.7]

\draw [-] (0,0) -- (-0.3,0.7);
\draw [line width=2pt] (0,0) -- (0.3,0.7);
\draw (0.6,0.4) node {$2$};
\draw [-] (-0.3,0.7)-- (-0.2,2);
\draw [-] (-0.3,0.7)-- (-0.4,2);
\draw [-] (0.3,0.7)-- (0.4,2);
\draw [-] (0.3,0.7)-- (0.2,2);
\end{tikzpicture}
\ENDPICTURE

\PICTURE ram211
\begin{tikzpicture}[line cap=round,line join=round,>=triangle 45,x=1cm,y=1cm,xscale=0.7,yscale=0.7]

\draw [-] (0,0) -- (-0.3,0.7);
\draw [-] (0,0) -- (0.3,0.7);
\draw [line width=2pt] (-0.3,0.7) -- (-0.3,2);
\draw [-] (0.3,0.7) -- (0.2,2);
\draw [-] (0.3,0.7) -- (0.4,2);
\end{tikzpicture}
\ENDPICTURE

\PICTURE ram311
\begin{tikzpicture}[line cap=round,line join=round,>=triangle 45,x=1cm,y=1cm,xscale=0.7,yscale=0.7]

\draw [-] (0,0) -- (-0.3,0.7);
\draw [-] (0,0) -- (0.3,0.7);
\draw [-] (-0.3,0.7)-- (-0.1,2);
\draw [-] (-0.3,0.7)-- (-0.2,2);
\draw [line width=2pt] (-0.3,0.7)-- (-0.4,2);
\draw [line width=2pt] (0.3,0.7)-- (0.4,2);
\draw [-] (0.3,0.7)-- (0.2,2);
\draw [-] (0.3,0.7)-- (0.1,2);
\draw (-0.6,1.5) node {$3$};
\draw (0.6,1.5) node {$3$};
\end{tikzpicture}
\ENDPICTURE

\PICTURE ram511111
\begin{tikzpicture}[line cap=round,line join=round,>=triangle 45,x=1cm,y=1cm,xscale=0.7,yscale=0.7]

\draw [-] (0,0) -- (-0.3,0.7);
\draw [-] (0,0) -- (0.3,0.7);
\draw [line width=2pt] (-0.3,0.7) -- (-0.3,2);
\draw [-] (0.3,0.7) -- (0.2,2);
\draw [-] (0.3,0.7) -- (0.3,2);
\draw [-] (0.3,0.7) -- (0.4,2);
\draw [-] (0.3,0.7) -- (0.5,2);
\draw [-] (0.3,0.7) -- (0.6,2);
\end{tikzpicture}
\ENDPICTURE
 \end{tabular} &
      $4,2$ &
      [0,0],[4,0] &
      [1] &
      [2] \\
      \hline
      \texttt{g2-3} & $2,1,3$ &
      \begin{tabular}{c} $4$ \\ 
  \long\def\PICTURE #pictures.texram22 {}%
  \PICTURE ram10
\begin{tikzpicture}[line cap=round,line join=round,>=triangle 45,x=1cm,y=1cm,xscale=0.7,yscale=0.7]

\draw [line width=2pt] (0,.7) -- (0,2);
\draw [line width=2pt] (0,0) -- (0,.7);

\end{tikzpicture}

\ENDPICTURE

\PICTURE ram22thin
\begin{tikzpicture}[line cap=round,line join=round,>=triangle 45,x=1cm,y=1cm,xscale=0.7,yscale=0.7]

\draw [-] (0,.7) -- (-0.3,2);
\draw [-] (0,.7) -- (0.3,2);
\draw [-] (0,0)-- (0,.7);

\end{tikzpicture}
\ENDPICTURE

\PICTURE ram22thick
\begin{tikzpicture}[line cap=round,line join=round,>=triangle 45,x=1cm,y=1cm,xscale=0.7,yscale=0.7]

\draw [-] (0,.7) -- (-0.3,2);
\draw [-] (0,.7) -- (0.3,2);
\draw [line width=2pt] (0,0)-- (0,.7);
\draw (0.6,0.3) node {$2$};

\end{tikzpicture}
\ENDPICTURE

\PICTURE ram22
\begin{tikzpicture}[line cap=round,line join=round,>=triangle 45,x=1cm,y=1cm,xscale=0.7,yscale=0.7]

\draw [-] (0,.7) -- (-0.3,2);
\draw [-] (0,.7) -- (0.3,2);
\draw [line width=2pt] (0,0)-- (0,.7);

\end{tikzpicture}
\ENDPICTURE

\PICTURE ram23
\begin{tikzpicture}[line cap=round,line join=round,>=triangle 45,x=1cm,y=1cm,xscale=0.7,yscale=0.7]

\draw [-] (0,.7) -- (-0.4,2);
\draw [-] (0,.7) -- (0,2);
\draw [-] (0,.7) -- (0.4,2);
\draw [line width=2pt] (0,0)-- (0,.7);

\end{tikzpicture}
\ENDPICTURE

\PICTURE ram24
\begin{tikzpicture}[line cap=round,line join=round,>=triangle 45,x=1cm,y=1cm,xscale=0.7,yscale=0.7]

\draw [-] (0,.7) -- (-0.6,2);
\draw [-] (0,.7) -- (-0.2,2);
\draw [-] (0,.7) -- (0.2,2);
\draw [-] (0,.7) -- (.6,2);
\draw [line width=2pt] (0,0)-- (0,.7);

\end{tikzpicture}
\ENDPICTURE

\PICTURE ram25
\begin{tikzpicture}[line cap=round,line join=round,>=triangle 45,x=1cm,y=1cm,xscale=0.7,yscale=0.7]

\draw [-] (0,.7) -- (-0.6,2);
\draw [-] (0,.7) -- (-0.3,2);
\draw [-] (0,.7) -- (0,2);
\draw [-] (0,.7) -- (0.3,2);
\draw [-] (0,.7) -- (.6,2);
\draw [line width=2pt] (0,0)-- (0,.7);

\end{tikzpicture}
\ENDPICTURE

\PICTURE ramd
\begin{tikzpicture}[line cap=round,line join=round,>=triangle 45,x=1cm,y=1cm,xscale=0.7,yscale=0.7]

\draw [-] (0,0) -- (-0.3,0.7);
\draw [-] (0,0) -- (0.3,0.7);
\draw [line width=2pt] (-0.3,0.7)-- (-0.3,2);
\draw [line width=2pt] (0.3,0.7)-- (0.3,2);

\end{tikzpicture}
\ENDPICTURE

\PICTURE ramp2
\begin{tikzpicture}[line cap=round,line join=round,>=triangle 45,x=1cm,y=1cm,xscale=0.7,yscale=0.7]

\draw [-] (0,0) -- (-0.3,0.7);
\draw [-] (0,0) -- (0.3,0.7);
\draw [-] (-0.3,0.7)-- (-0.3,2);
\draw [-] (-0.3,0.7)-- (-0.2,2);
\draw [-] (-0.3,0.7)-- (-0.1,2);
\draw [line width=2pt] (-0.3,0.7)-- (-0.5,2);
\draw [line width=2pt] (0.3,0.7)-- (0.5,2);
\draw [-] (0.3,0.7)-- (0.1,2);
\draw [-] (0.3,0.7)-- (0.2,2);
\draw [-] (0.3,0.7)-- (0.3,2);
\draw (-0.8,1.5) node {$2$};
\draw (0.8,1.5) node {$2$};
\end{tikzpicture}
\ENDPICTURE

\PICTURE ramp2d
\begin{tikzpicture}[line cap=round,line join=round,>=triangle 45,x=1cm,y=1cm,xscale=0.7,yscale=0.7]

\draw [-] (0,0) -- (-0.3,0.7);
\draw [-] (0,0) -- (0.3,0.7);
\draw [line width=2pt] (-0.3,0.7)-- (-0.3,2);
\draw [line width=2pt] (0.3,0.7)-- (0.5,2);
\draw [-] (0.3,0.7)-- (0.0,2);
\draw [-] (0.3,0.7)-- (0.2,2);
\draw (-0.8,1.5) node {$4$};
\draw (0.8,1.5) node {$2$};
\end{tikzpicture}
\ENDPICTURE

\PICTURE ramp3
\begin{tikzpicture}[line cap=round,line join=round,>=triangle 45,x=1cm,y=1cm,xscale=0.7,yscale=0.7]

\draw [-] (0,0) -- (-0.3,0.7);
\draw [-] (0,0) -- (0.3,0.7);
\draw [-] (-0.3,0.7)-- (-0.2,2);
\draw [line width=2pt] (-0.3,0.7)-- (-0.4,2);
\draw [line width=2pt] (0.3,0.7)-- (0.4,2);
\draw [-] (0.3,0.7)-- (0.2,2);
\draw (-0.6,1.5) node {$3$};
\draw (0.6,1.5) node {$3$};
\end{tikzpicture}
\ENDPICTURE

\PICTURE ramp4
\begin{tikzpicture}[line cap=round,line join=round,>=triangle 45,x=1cm,y=1cm,xscale=0.7,yscale=0.7]

\draw [-] (0,0) -- (-0.3,0.7);
\draw [-] (0,0) -- (0.3,0.7);
\draw [-] (-0.3,0.7)-- (-0.2,2);
\draw [line width=2pt] (-0.3,0.7)-- (-0.4,2);
\draw [line width=2pt] (0.3,0.7)-- (0.4,2);
\draw [-] (0.3,0.7)-- (0.2,2);
\draw (-0.6,1.5) node {$4$};
\draw (0.6,1.5) node {$4$};
\end{tikzpicture}
\ENDPICTURE

\PICTURE rambruin
\begin{tikzpicture}[line cap=round,line join=round,>=triangle 45,x=1cm,y=1cm,xscale=0.7,yscale=0.7]

\draw [-] (0,0) -- (-0.3,0.7);
\draw [line width=2pt] (0,0) -- (0.3,0.7);
\draw (0.6,0.4) node {$2$};
\draw [-] (-0.3,0.7)-- (-0.2,2);
\draw [-] (-0.3,0.7)-- (-0.4,2);
\draw [-] (0.3,0.7)-- (0.4,2);
\draw [-] (0.3,0.7)-- (0.2,2);
\end{tikzpicture}
\ENDPICTURE

\PICTURE ram211
\begin{tikzpicture}[line cap=round,line join=round,>=triangle 45,x=1cm,y=1cm,xscale=0.7,yscale=0.7]

\draw [-] (0,0) -- (-0.3,0.7);
\draw [-] (0,0) -- (0.3,0.7);
\draw [line width=2pt] (-0.3,0.7) -- (-0.3,2);
\draw [-] (0.3,0.7) -- (0.2,2);
\draw [-] (0.3,0.7) -- (0.4,2);
\end{tikzpicture}
\ENDPICTURE

\PICTURE ram311
\begin{tikzpicture}[line cap=round,line join=round,>=triangle 45,x=1cm,y=1cm,xscale=0.7,yscale=0.7]

\draw [-] (0,0) -- (-0.3,0.7);
\draw [-] (0,0) -- (0.3,0.7);
\draw [-] (-0.3,0.7)-- (-0.1,2);
\draw [-] (-0.3,0.7)-- (-0.2,2);
\draw [line width=2pt] (-0.3,0.7)-- (-0.4,2);
\draw [line width=2pt] (0.3,0.7)-- (0.4,2);
\draw [-] (0.3,0.7)-- (0.2,2);
\draw [-] (0.3,0.7)-- (0.1,2);
\draw (-0.6,1.5) node {$3$};
\draw (0.6,1.5) node {$3$};
\end{tikzpicture}
\ENDPICTURE

\PICTURE ram511111
\begin{tikzpicture}[line cap=round,line join=round,>=triangle 45,x=1cm,y=1cm,xscale=0.7,yscale=0.7]

\draw [-] (0,0) -- (-0.3,0.7);
\draw [-] (0,0) -- (0.3,0.7);
\draw [line width=2pt] (-0.3,0.7) -- (-0.3,2);
\draw [-] (0.3,0.7) -- (0.2,2);
\draw [-] (0.3,0.7) -- (0.3,2);
\draw [-] (0.3,0.7) -- (0.4,2);
\draw [-] (0.3,0.7) -- (0.5,2);
\draw [-] (0.3,0.7) -- (0.6,2);
\end{tikzpicture}
\ENDPICTURE
 \end{tabular}
      \begin{tabular}{c} $1$ \\ 
  \long\def\PICTURE #pictures.texramd {}%
  \PICTURE ram10
\begin{tikzpicture}[line cap=round,line join=round,>=triangle 45,x=1cm,y=1cm,xscale=0.7,yscale=0.7]

\draw [line width=2pt] (0,.7) -- (0,2);
\draw [line width=2pt] (0,0) -- (0,.7);

\end{tikzpicture}

\ENDPICTURE

\PICTURE ram22thin
\begin{tikzpicture}[line cap=round,line join=round,>=triangle 45,x=1cm,y=1cm,xscale=0.7,yscale=0.7]

\draw [-] (0,.7) -- (-0.3,2);
\draw [-] (0,.7) -- (0.3,2);
\draw [-] (0,0)-- (0,.7);

\end{tikzpicture}
\ENDPICTURE

\PICTURE ram22thick
\begin{tikzpicture}[line cap=round,line join=round,>=triangle 45,x=1cm,y=1cm,xscale=0.7,yscale=0.7]

\draw [-] (0,.7) -- (-0.3,2);
\draw [-] (0,.7) -- (0.3,2);
\draw [line width=2pt] (0,0)-- (0,.7);
\draw (0.6,0.3) node {$2$};

\end{tikzpicture}
\ENDPICTURE

\PICTURE ram22
\begin{tikzpicture}[line cap=round,line join=round,>=triangle 45,x=1cm,y=1cm,xscale=0.7,yscale=0.7]

\draw [-] (0,.7) -- (-0.3,2);
\draw [-] (0,.7) -- (0.3,2);
\draw [line width=2pt] (0,0)-- (0,.7);

\end{tikzpicture}
\ENDPICTURE

\PICTURE ram23
\begin{tikzpicture}[line cap=round,line join=round,>=triangle 45,x=1cm,y=1cm,xscale=0.7,yscale=0.7]

\draw [-] (0,.7) -- (-0.4,2);
\draw [-] (0,.7) -- (0,2);
\draw [-] (0,.7) -- (0.4,2);
\draw [line width=2pt] (0,0)-- (0,.7);

\end{tikzpicture}
\ENDPICTURE

\PICTURE ram24
\begin{tikzpicture}[line cap=round,line join=round,>=triangle 45,x=1cm,y=1cm,xscale=0.7,yscale=0.7]

\draw [-] (0,.7) -- (-0.6,2);
\draw [-] (0,.7) -- (-0.2,2);
\draw [-] (0,.7) -- (0.2,2);
\draw [-] (0,.7) -- (.6,2);
\draw [line width=2pt] (0,0)-- (0,.7);

\end{tikzpicture}
\ENDPICTURE

\PICTURE ram25
\begin{tikzpicture}[line cap=round,line join=round,>=triangle 45,x=1cm,y=1cm,xscale=0.7,yscale=0.7]

\draw [-] (0,.7) -- (-0.6,2);
\draw [-] (0,.7) -- (-0.3,2);
\draw [-] (0,.7) -- (0,2);
\draw [-] (0,.7) -- (0.3,2);
\draw [-] (0,.7) -- (.6,2);
\draw [line width=2pt] (0,0)-- (0,.7);

\end{tikzpicture}
\ENDPICTURE

\PICTURE ramd
\begin{tikzpicture}[line cap=round,line join=round,>=triangle 45,x=1cm,y=1cm,xscale=0.7,yscale=0.7]

\draw [-] (0,0) -- (-0.3,0.7);
\draw [-] (0,0) -- (0.3,0.7);
\draw [line width=2pt] (-0.3,0.7)-- (-0.3,2);
\draw [line width=2pt] (0.3,0.7)-- (0.3,2);

\end{tikzpicture}
\ENDPICTURE

\PICTURE ramp2
\begin{tikzpicture}[line cap=round,line join=round,>=triangle 45,x=1cm,y=1cm,xscale=0.7,yscale=0.7]

\draw [-] (0,0) -- (-0.3,0.7);
\draw [-] (0,0) -- (0.3,0.7);
\draw [-] (-0.3,0.7)-- (-0.3,2);
\draw [-] (-0.3,0.7)-- (-0.2,2);
\draw [-] (-0.3,0.7)-- (-0.1,2);
\draw [line width=2pt] (-0.3,0.7)-- (-0.5,2);
\draw [line width=2pt] (0.3,0.7)-- (0.5,2);
\draw [-] (0.3,0.7)-- (0.1,2);
\draw [-] (0.3,0.7)-- (0.2,2);
\draw [-] (0.3,0.7)-- (0.3,2);
\draw (-0.8,1.5) node {$2$};
\draw (0.8,1.5) node {$2$};
\end{tikzpicture}
\ENDPICTURE

\PICTURE ramp2d
\begin{tikzpicture}[line cap=round,line join=round,>=triangle 45,x=1cm,y=1cm,xscale=0.7,yscale=0.7]

\draw [-] (0,0) -- (-0.3,0.7);
\draw [-] (0,0) -- (0.3,0.7);
\draw [line width=2pt] (-0.3,0.7)-- (-0.3,2);
\draw [line width=2pt] (0.3,0.7)-- (0.5,2);
\draw [-] (0.3,0.7)-- (0.0,2);
\draw [-] (0.3,0.7)-- (0.2,2);
\draw (-0.8,1.5) node {$4$};
\draw (0.8,1.5) node {$2$};
\end{tikzpicture}
\ENDPICTURE

\PICTURE ramp3
\begin{tikzpicture}[line cap=round,line join=round,>=triangle 45,x=1cm,y=1cm,xscale=0.7,yscale=0.7]

\draw [-] (0,0) -- (-0.3,0.7);
\draw [-] (0,0) -- (0.3,0.7);
\draw [-] (-0.3,0.7)-- (-0.2,2);
\draw [line width=2pt] (-0.3,0.7)-- (-0.4,2);
\draw [line width=2pt] (0.3,0.7)-- (0.4,2);
\draw [-] (0.3,0.7)-- (0.2,2);
\draw (-0.6,1.5) node {$3$};
\draw (0.6,1.5) node {$3$};
\end{tikzpicture}
\ENDPICTURE

\PICTURE ramp4
\begin{tikzpicture}[line cap=round,line join=round,>=triangle 45,x=1cm,y=1cm,xscale=0.7,yscale=0.7]

\draw [-] (0,0) -- (-0.3,0.7);
\draw [-] (0,0) -- (0.3,0.7);
\draw [-] (-0.3,0.7)-- (-0.2,2);
\draw [line width=2pt] (-0.3,0.7)-- (-0.4,2);
\draw [line width=2pt] (0.3,0.7)-- (0.4,2);
\draw [-] (0.3,0.7)-- (0.2,2);
\draw (-0.6,1.5) node {$4$};
\draw (0.6,1.5) node {$4$};
\end{tikzpicture}
\ENDPICTURE

\PICTURE rambruin
\begin{tikzpicture}[line cap=round,line join=round,>=triangle 45,x=1cm,y=1cm,xscale=0.7,yscale=0.7]

\draw [-] (0,0) -- (-0.3,0.7);
\draw [line width=2pt] (0,0) -- (0.3,0.7);
\draw (0.6,0.4) node {$2$};
\draw [-] (-0.3,0.7)-- (-0.2,2);
\draw [-] (-0.3,0.7)-- (-0.4,2);
\draw [-] (0.3,0.7)-- (0.4,2);
\draw [-] (0.3,0.7)-- (0.2,2);
\end{tikzpicture}
\ENDPICTURE

\PICTURE ram211
\begin{tikzpicture}[line cap=round,line join=round,>=triangle 45,x=1cm,y=1cm,xscale=0.7,yscale=0.7]

\draw [-] (0,0) -- (-0.3,0.7);
\draw [-] (0,0) -- (0.3,0.7);
\draw [line width=2pt] (-0.3,0.7) -- (-0.3,2);
\draw [-] (0.3,0.7) -- (0.2,2);
\draw [-] (0.3,0.7) -- (0.4,2);
\end{tikzpicture}
\ENDPICTURE

\PICTURE ram311
\begin{tikzpicture}[line cap=round,line join=round,>=triangle 45,x=1cm,y=1cm,xscale=0.7,yscale=0.7]

\draw [-] (0,0) -- (-0.3,0.7);
\draw [-] (0,0) -- (0.3,0.7);
\draw [-] (-0.3,0.7)-- (-0.1,2);
\draw [-] (-0.3,0.7)-- (-0.2,2);
\draw [line width=2pt] (-0.3,0.7)-- (-0.4,2);
\draw [line width=2pt] (0.3,0.7)-- (0.4,2);
\draw [-] (0.3,0.7)-- (0.2,2);
\draw [-] (0.3,0.7)-- (0.1,2);
\draw (-0.6,1.5) node {$3$};
\draw (0.6,1.5) node {$3$};
\end{tikzpicture}
\ENDPICTURE

\PICTURE ram511111
\begin{tikzpicture}[line cap=round,line join=round,>=triangle 45,x=1cm,y=1cm,xscale=0.7,yscale=0.7]

\draw [-] (0,0) -- (-0.3,0.7);
\draw [-] (0,0) -- (0.3,0.7);
\draw [line width=2pt] (-0.3,0.7) -- (-0.3,2);
\draw [-] (0.3,0.7) -- (0.2,2);
\draw [-] (0.3,0.7) -- (0.3,2);
\draw [-] (0.3,0.7) -- (0.4,2);
\draw [-] (0.3,0.7) -- (0.5,2);
\draw [-] (0.3,0.7) -- (0.6,2);
\end{tikzpicture}
\ENDPICTURE
 \end{tabular} &
      $12,4$ &
      [0,0],[16,0] &
      [1] &
      [2] \\
      \hline
      \texttt{g2-4} & $2,1,4$ &
      \begin{tabular}{c} $4$ \\ 
  \long\def\PICTURE #pictures.texram22 {}%
  \PICTURE ram10
\begin{tikzpicture}[line cap=round,line join=round,>=triangle 45,x=1cm,y=1cm,xscale=0.7,yscale=0.7]

\draw [line width=2pt] (0,.7) -- (0,2);
\draw [line width=2pt] (0,0) -- (0,.7);

\end{tikzpicture}

\ENDPICTURE

\PICTURE ram22thin
\begin{tikzpicture}[line cap=round,line join=round,>=triangle 45,x=1cm,y=1cm,xscale=0.7,yscale=0.7]

\draw [-] (0,.7) -- (-0.3,2);
\draw [-] (0,.7) -- (0.3,2);
\draw [-] (0,0)-- (0,.7);

\end{tikzpicture}
\ENDPICTURE

\PICTURE ram22thick
\begin{tikzpicture}[line cap=round,line join=round,>=triangle 45,x=1cm,y=1cm,xscale=0.7,yscale=0.7]

\draw [-] (0,.7) -- (-0.3,2);
\draw [-] (0,.7) -- (0.3,2);
\draw [line width=2pt] (0,0)-- (0,.7);
\draw (0.6,0.3) node {$2$};

\end{tikzpicture}
\ENDPICTURE

\PICTURE ram22
\begin{tikzpicture}[line cap=round,line join=round,>=triangle 45,x=1cm,y=1cm,xscale=0.7,yscale=0.7]

\draw [-] (0,.7) -- (-0.3,2);
\draw [-] (0,.7) -- (0.3,2);
\draw [line width=2pt] (0,0)-- (0,.7);

\end{tikzpicture}
\ENDPICTURE

\PICTURE ram23
\begin{tikzpicture}[line cap=round,line join=round,>=triangle 45,x=1cm,y=1cm,xscale=0.7,yscale=0.7]

\draw [-] (0,.7) -- (-0.4,2);
\draw [-] (0,.7) -- (0,2);
\draw [-] (0,.7) -- (0.4,2);
\draw [line width=2pt] (0,0)-- (0,.7);

\end{tikzpicture}
\ENDPICTURE

\PICTURE ram24
\begin{tikzpicture}[line cap=round,line join=round,>=triangle 45,x=1cm,y=1cm,xscale=0.7,yscale=0.7]

\draw [-] (0,.7) -- (-0.6,2);
\draw [-] (0,.7) -- (-0.2,2);
\draw [-] (0,.7) -- (0.2,2);
\draw [-] (0,.7) -- (.6,2);
\draw [line width=2pt] (0,0)-- (0,.7);

\end{tikzpicture}
\ENDPICTURE

\PICTURE ram25
\begin{tikzpicture}[line cap=round,line join=round,>=triangle 45,x=1cm,y=1cm,xscale=0.7,yscale=0.7]

\draw [-] (0,.7) -- (-0.6,2);
\draw [-] (0,.7) -- (-0.3,2);
\draw [-] (0,.7) -- (0,2);
\draw [-] (0,.7) -- (0.3,2);
\draw [-] (0,.7) -- (.6,2);
\draw [line width=2pt] (0,0)-- (0,.7);

\end{tikzpicture}
\ENDPICTURE

\PICTURE ramd
\begin{tikzpicture}[line cap=round,line join=round,>=triangle 45,x=1cm,y=1cm,xscale=0.7,yscale=0.7]

\draw [-] (0,0) -- (-0.3,0.7);
\draw [-] (0,0) -- (0.3,0.7);
\draw [line width=2pt] (-0.3,0.7)-- (-0.3,2);
\draw [line width=2pt] (0.3,0.7)-- (0.3,2);

\end{tikzpicture}
\ENDPICTURE

\PICTURE ramp2
\begin{tikzpicture}[line cap=round,line join=round,>=triangle 45,x=1cm,y=1cm,xscale=0.7,yscale=0.7]

\draw [-] (0,0) -- (-0.3,0.7);
\draw [-] (0,0) -- (0.3,0.7);
\draw [-] (-0.3,0.7)-- (-0.3,2);
\draw [-] (-0.3,0.7)-- (-0.2,2);
\draw [-] (-0.3,0.7)-- (-0.1,2);
\draw [line width=2pt] (-0.3,0.7)-- (-0.5,2);
\draw [line width=2pt] (0.3,0.7)-- (0.5,2);
\draw [-] (0.3,0.7)-- (0.1,2);
\draw [-] (0.3,0.7)-- (0.2,2);
\draw [-] (0.3,0.7)-- (0.3,2);
\draw (-0.8,1.5) node {$2$};
\draw (0.8,1.5) node {$2$};
\end{tikzpicture}
\ENDPICTURE

\PICTURE ramp2d
\begin{tikzpicture}[line cap=round,line join=round,>=triangle 45,x=1cm,y=1cm,xscale=0.7,yscale=0.7]

\draw [-] (0,0) -- (-0.3,0.7);
\draw [-] (0,0) -- (0.3,0.7);
\draw [line width=2pt] (-0.3,0.7)-- (-0.3,2);
\draw [line width=2pt] (0.3,0.7)-- (0.5,2);
\draw [-] (0.3,0.7)-- (0.0,2);
\draw [-] (0.3,0.7)-- (0.2,2);
\draw (-0.8,1.5) node {$4$};
\draw (0.8,1.5) node {$2$};
\end{tikzpicture}
\ENDPICTURE

\PICTURE ramp3
\begin{tikzpicture}[line cap=round,line join=round,>=triangle 45,x=1cm,y=1cm,xscale=0.7,yscale=0.7]

\draw [-] (0,0) -- (-0.3,0.7);
\draw [-] (0,0) -- (0.3,0.7);
\draw [-] (-0.3,0.7)-- (-0.2,2);
\draw [line width=2pt] (-0.3,0.7)-- (-0.4,2);
\draw [line width=2pt] (0.3,0.7)-- (0.4,2);
\draw [-] (0.3,0.7)-- (0.2,2);
\draw (-0.6,1.5) node {$3$};
\draw (0.6,1.5) node {$3$};
\end{tikzpicture}
\ENDPICTURE

\PICTURE ramp4
\begin{tikzpicture}[line cap=round,line join=round,>=triangle 45,x=1cm,y=1cm,xscale=0.7,yscale=0.7]

\draw [-] (0,0) -- (-0.3,0.7);
\draw [-] (0,0) -- (0.3,0.7);
\draw [-] (-0.3,0.7)-- (-0.2,2);
\draw [line width=2pt] (-0.3,0.7)-- (-0.4,2);
\draw [line width=2pt] (0.3,0.7)-- (0.4,2);
\draw [-] (0.3,0.7)-- (0.2,2);
\draw (-0.6,1.5) node {$4$};
\draw (0.6,1.5) node {$4$};
\end{tikzpicture}
\ENDPICTURE

\PICTURE rambruin
\begin{tikzpicture}[line cap=round,line join=round,>=triangle 45,x=1cm,y=1cm,xscale=0.7,yscale=0.7]

\draw [-] (0,0) -- (-0.3,0.7);
\draw [line width=2pt] (0,0) -- (0.3,0.7);
\draw (0.6,0.4) node {$2$};
\draw [-] (-0.3,0.7)-- (-0.2,2);
\draw [-] (-0.3,0.7)-- (-0.4,2);
\draw [-] (0.3,0.7)-- (0.4,2);
\draw [-] (0.3,0.7)-- (0.2,2);
\end{tikzpicture}
\ENDPICTURE

\PICTURE ram211
\begin{tikzpicture}[line cap=round,line join=round,>=triangle 45,x=1cm,y=1cm,xscale=0.7,yscale=0.7]

\draw [-] (0,0) -- (-0.3,0.7);
\draw [-] (0,0) -- (0.3,0.7);
\draw [line width=2pt] (-0.3,0.7) -- (-0.3,2);
\draw [-] (0.3,0.7) -- (0.2,2);
\draw [-] (0.3,0.7) -- (0.4,2);
\end{tikzpicture}
\ENDPICTURE

\PICTURE ram311
\begin{tikzpicture}[line cap=round,line join=round,>=triangle 45,x=1cm,y=1cm,xscale=0.7,yscale=0.7]

\draw [-] (0,0) -- (-0.3,0.7);
\draw [-] (0,0) -- (0.3,0.7);
\draw [-] (-0.3,0.7)-- (-0.1,2);
\draw [-] (-0.3,0.7)-- (-0.2,2);
\draw [line width=2pt] (-0.3,0.7)-- (-0.4,2);
\draw [line width=2pt] (0.3,0.7)-- (0.4,2);
\draw [-] (0.3,0.7)-- (0.2,2);
\draw [-] (0.3,0.7)-- (0.1,2);
\draw (-0.6,1.5) node {$3$};
\draw (0.6,1.5) node {$3$};
\end{tikzpicture}
\ENDPICTURE

\PICTURE ram511111
\begin{tikzpicture}[line cap=round,line join=round,>=triangle 45,x=1cm,y=1cm,xscale=0.7,yscale=0.7]

\draw [-] (0,0) -- (-0.3,0.7);
\draw [-] (0,0) -- (0.3,0.7);
\draw [line width=2pt] (-0.3,0.7) -- (-0.3,2);
\draw [-] (0.3,0.7) -- (0.2,2);
\draw [-] (0.3,0.7) -- (0.3,2);
\draw [-] (0.3,0.7) -- (0.4,2);
\draw [-] (0.3,0.7) -- (0.5,2);
\draw [-] (0.3,0.7) -- (0.6,2);
\end{tikzpicture}
\ENDPICTURE
 \end{tabular}
      \begin{tabular}{c} $1$ \\ 
  \long\def\PICTURE #pictures.texramd {}%
  \PICTURE ram10
\begin{tikzpicture}[line cap=round,line join=round,>=triangle 45,x=1cm,y=1cm,xscale=0.7,yscale=0.7]

\draw [line width=2pt] (0,.7) -- (0,2);
\draw [line width=2pt] (0,0) -- (0,.7);

\end{tikzpicture}

\ENDPICTURE

\PICTURE ram22thin
\begin{tikzpicture}[line cap=round,line join=round,>=triangle 45,x=1cm,y=1cm,xscale=0.7,yscale=0.7]

\draw [-] (0,.7) -- (-0.3,2);
\draw [-] (0,.7) -- (0.3,2);
\draw [-] (0,0)-- (0,.7);

\end{tikzpicture}
\ENDPICTURE

\PICTURE ram22thick
\begin{tikzpicture}[line cap=round,line join=round,>=triangle 45,x=1cm,y=1cm,xscale=0.7,yscale=0.7]

\draw [-] (0,.7) -- (-0.3,2);
\draw [-] (0,.7) -- (0.3,2);
\draw [line width=2pt] (0,0)-- (0,.7);
\draw (0.6,0.3) node {$2$};

\end{tikzpicture}
\ENDPICTURE

\PICTURE ram22
\begin{tikzpicture}[line cap=round,line join=round,>=triangle 45,x=1cm,y=1cm,xscale=0.7,yscale=0.7]

\draw [-] (0,.7) -- (-0.3,2);
\draw [-] (0,.7) -- (0.3,2);
\draw [line width=2pt] (0,0)-- (0,.7);

\end{tikzpicture}
\ENDPICTURE

\PICTURE ram23
\begin{tikzpicture}[line cap=round,line join=round,>=triangle 45,x=1cm,y=1cm,xscale=0.7,yscale=0.7]

\draw [-] (0,.7) -- (-0.4,2);
\draw [-] (0,.7) -- (0,2);
\draw [-] (0,.7) -- (0.4,2);
\draw [line width=2pt] (0,0)-- (0,.7);

\end{tikzpicture}
\ENDPICTURE

\PICTURE ram24
\begin{tikzpicture}[line cap=round,line join=round,>=triangle 45,x=1cm,y=1cm,xscale=0.7,yscale=0.7]

\draw [-] (0,.7) -- (-0.6,2);
\draw [-] (0,.7) -- (-0.2,2);
\draw [-] (0,.7) -- (0.2,2);
\draw [-] (0,.7) -- (.6,2);
\draw [line width=2pt] (0,0)-- (0,.7);

\end{tikzpicture}
\ENDPICTURE

\PICTURE ram25
\begin{tikzpicture}[line cap=round,line join=round,>=triangle 45,x=1cm,y=1cm,xscale=0.7,yscale=0.7]

\draw [-] (0,.7) -- (-0.6,2);
\draw [-] (0,.7) -- (-0.3,2);
\draw [-] (0,.7) -- (0,2);
\draw [-] (0,.7) -- (0.3,2);
\draw [-] (0,.7) -- (.6,2);
\draw [line width=2pt] (0,0)-- (0,.7);

\end{tikzpicture}
\ENDPICTURE

\PICTURE ramd
\begin{tikzpicture}[line cap=round,line join=round,>=triangle 45,x=1cm,y=1cm,xscale=0.7,yscale=0.7]

\draw [-] (0,0) -- (-0.3,0.7);
\draw [-] (0,0) -- (0.3,0.7);
\draw [line width=2pt] (-0.3,0.7)-- (-0.3,2);
\draw [line width=2pt] (0.3,0.7)-- (0.3,2);

\end{tikzpicture}
\ENDPICTURE

\PICTURE ramp2
\begin{tikzpicture}[line cap=round,line join=round,>=triangle 45,x=1cm,y=1cm,xscale=0.7,yscale=0.7]

\draw [-] (0,0) -- (-0.3,0.7);
\draw [-] (0,0) -- (0.3,0.7);
\draw [-] (-0.3,0.7)-- (-0.3,2);
\draw [-] (-0.3,0.7)-- (-0.2,2);
\draw [-] (-0.3,0.7)-- (-0.1,2);
\draw [line width=2pt] (-0.3,0.7)-- (-0.5,2);
\draw [line width=2pt] (0.3,0.7)-- (0.5,2);
\draw [-] (0.3,0.7)-- (0.1,2);
\draw [-] (0.3,0.7)-- (0.2,2);
\draw [-] (0.3,0.7)-- (0.3,2);
\draw (-0.8,1.5) node {$2$};
\draw (0.8,1.5) node {$2$};
\end{tikzpicture}
\ENDPICTURE

\PICTURE ramp2d
\begin{tikzpicture}[line cap=round,line join=round,>=triangle 45,x=1cm,y=1cm,xscale=0.7,yscale=0.7]

\draw [-] (0,0) -- (-0.3,0.7);
\draw [-] (0,0) -- (0.3,0.7);
\draw [line width=2pt] (-0.3,0.7)-- (-0.3,2);
\draw [line width=2pt] (0.3,0.7)-- (0.5,2);
\draw [-] (0.3,0.7)-- (0.0,2);
\draw [-] (0.3,0.7)-- (0.2,2);
\draw (-0.8,1.5) node {$4$};
\draw (0.8,1.5) node {$2$};
\end{tikzpicture}
\ENDPICTURE

\PICTURE ramp3
\begin{tikzpicture}[line cap=round,line join=round,>=triangle 45,x=1cm,y=1cm,xscale=0.7,yscale=0.7]

\draw [-] (0,0) -- (-0.3,0.7);
\draw [-] (0,0) -- (0.3,0.7);
\draw [-] (-0.3,0.7)-- (-0.2,2);
\draw [line width=2pt] (-0.3,0.7)-- (-0.4,2);
\draw [line width=2pt] (0.3,0.7)-- (0.4,2);
\draw [-] (0.3,0.7)-- (0.2,2);
\draw (-0.6,1.5) node {$3$};
\draw (0.6,1.5) node {$3$};
\end{tikzpicture}
\ENDPICTURE

\PICTURE ramp4
\begin{tikzpicture}[line cap=round,line join=round,>=triangle 45,x=1cm,y=1cm,xscale=0.7,yscale=0.7]

\draw [-] (0,0) -- (-0.3,0.7);
\draw [-] (0,0) -- (0.3,0.7);
\draw [-] (-0.3,0.7)-- (-0.2,2);
\draw [line width=2pt] (-0.3,0.7)-- (-0.4,2);
\draw [line width=2pt] (0.3,0.7)-- (0.4,2);
\draw [-] (0.3,0.7)-- (0.2,2);
\draw (-0.6,1.5) node {$4$};
\draw (0.6,1.5) node {$4$};
\end{tikzpicture}
\ENDPICTURE

\PICTURE rambruin
\begin{tikzpicture}[line cap=round,line join=round,>=triangle 45,x=1cm,y=1cm,xscale=0.7,yscale=0.7]

\draw [-] (0,0) -- (-0.3,0.7);
\draw [line width=2pt] (0,0) -- (0.3,0.7);
\draw (0.6,0.4) node {$2$};
\draw [-] (-0.3,0.7)-- (-0.2,2);
\draw [-] (-0.3,0.7)-- (-0.4,2);
\draw [-] (0.3,0.7)-- (0.4,2);
\draw [-] (0.3,0.7)-- (0.2,2);
\end{tikzpicture}
\ENDPICTURE

\PICTURE ram211
\begin{tikzpicture}[line cap=round,line join=round,>=triangle 45,x=1cm,y=1cm,xscale=0.7,yscale=0.7]

\draw [-] (0,0) -- (-0.3,0.7);
\draw [-] (0,0) -- (0.3,0.7);
\draw [line width=2pt] (-0.3,0.7) -- (-0.3,2);
\draw [-] (0.3,0.7) -- (0.2,2);
\draw [-] (0.3,0.7) -- (0.4,2);
\end{tikzpicture}
\ENDPICTURE

\PICTURE ram311
\begin{tikzpicture}[line cap=round,line join=round,>=triangle 45,x=1cm,y=1cm,xscale=0.7,yscale=0.7]

\draw [-] (0,0) -- (-0.3,0.7);
\draw [-] (0,0) -- (0.3,0.7);
\draw [-] (-0.3,0.7)-- (-0.1,2);
\draw [-] (-0.3,0.7)-- (-0.2,2);
\draw [line width=2pt] (-0.3,0.7)-- (-0.4,2);
\draw [line width=2pt] (0.3,0.7)-- (0.4,2);
\draw [-] (0.3,0.7)-- (0.2,2);
\draw [-] (0.3,0.7)-- (0.1,2);
\draw (-0.6,1.5) node {$3$};
\draw (0.6,1.5) node {$3$};
\end{tikzpicture}
\ENDPICTURE

\PICTURE ram511111
\begin{tikzpicture}[line cap=round,line join=round,>=triangle 45,x=1cm,y=1cm,xscale=0.7,yscale=0.7]

\draw [-] (0,0) -- (-0.3,0.7);
\draw [-] (0,0) -- (0.3,0.7);
\draw [line width=2pt] (-0.3,0.7) -- (-0.3,2);
\draw [-] (0.3,0.7) -- (0.2,2);
\draw [-] (0.3,0.7) -- (0.3,2);
\draw [-] (0.3,0.7) -- (0.4,2);
\draw [-] (0.3,0.7) -- (0.5,2);
\draw [-] (0.3,0.7) -- (0.6,2);
\end{tikzpicture}
\ENDPICTURE
 \end{tabular} &
      $48,13$ &
      [0,0],[48,0] &
      [1] &
      [4] \\
      \hline
      \texttt{g2-5} & $2,1,5$ &
      \begin{tabular}{c} $4$ \\ 
  \long\def\PICTURE #pictures.texram22 {}%
  \PICTURE ram10
\begin{tikzpicture}[line cap=round,line join=round,>=triangle 45,x=1cm,y=1cm,xscale=0.7,yscale=0.7]

\draw [line width=2pt] (0,.7) -- (0,2);
\draw [line width=2pt] (0,0) -- (0,.7);

\end{tikzpicture}

\ENDPICTURE

\PICTURE ram22thin
\begin{tikzpicture}[line cap=round,line join=round,>=triangle 45,x=1cm,y=1cm,xscale=0.7,yscale=0.7]

\draw [-] (0,.7) -- (-0.3,2);
\draw [-] (0,.7) -- (0.3,2);
\draw [-] (0,0)-- (0,.7);

\end{tikzpicture}
\ENDPICTURE

\PICTURE ram22thick
\begin{tikzpicture}[line cap=round,line join=round,>=triangle 45,x=1cm,y=1cm,xscale=0.7,yscale=0.7]

\draw [-] (0,.7) -- (-0.3,2);
\draw [-] (0,.7) -- (0.3,2);
\draw [line width=2pt] (0,0)-- (0,.7);
\draw (0.6,0.3) node {$2$};

\end{tikzpicture}
\ENDPICTURE

\PICTURE ram22
\begin{tikzpicture}[line cap=round,line join=round,>=triangle 45,x=1cm,y=1cm,xscale=0.7,yscale=0.7]

\draw [-] (0,.7) -- (-0.3,2);
\draw [-] (0,.7) -- (0.3,2);
\draw [line width=2pt] (0,0)-- (0,.7);

\end{tikzpicture}
\ENDPICTURE

\PICTURE ram23
\begin{tikzpicture}[line cap=round,line join=round,>=triangle 45,x=1cm,y=1cm,xscale=0.7,yscale=0.7]

\draw [-] (0,.7) -- (-0.4,2);
\draw [-] (0,.7) -- (0,2);
\draw [-] (0,.7) -- (0.4,2);
\draw [line width=2pt] (0,0)-- (0,.7);

\end{tikzpicture}
\ENDPICTURE

\PICTURE ram24
\begin{tikzpicture}[line cap=round,line join=round,>=triangle 45,x=1cm,y=1cm,xscale=0.7,yscale=0.7]

\draw [-] (0,.7) -- (-0.6,2);
\draw [-] (0,.7) -- (-0.2,2);
\draw [-] (0,.7) -- (0.2,2);
\draw [-] (0,.7) -- (.6,2);
\draw [line width=2pt] (0,0)-- (0,.7);

\end{tikzpicture}
\ENDPICTURE

\PICTURE ram25
\begin{tikzpicture}[line cap=round,line join=round,>=triangle 45,x=1cm,y=1cm,xscale=0.7,yscale=0.7]

\draw [-] (0,.7) -- (-0.6,2);
\draw [-] (0,.7) -- (-0.3,2);
\draw [-] (0,.7) -- (0,2);
\draw [-] (0,.7) -- (0.3,2);
\draw [-] (0,.7) -- (.6,2);
\draw [line width=2pt] (0,0)-- (0,.7);

\end{tikzpicture}
\ENDPICTURE

\PICTURE ramd
\begin{tikzpicture}[line cap=round,line join=round,>=triangle 45,x=1cm,y=1cm,xscale=0.7,yscale=0.7]

\draw [-] (0,0) -- (-0.3,0.7);
\draw [-] (0,0) -- (0.3,0.7);
\draw [line width=2pt] (-0.3,0.7)-- (-0.3,2);
\draw [line width=2pt] (0.3,0.7)-- (0.3,2);

\end{tikzpicture}
\ENDPICTURE

\PICTURE ramp2
\begin{tikzpicture}[line cap=round,line join=round,>=triangle 45,x=1cm,y=1cm,xscale=0.7,yscale=0.7]

\draw [-] (0,0) -- (-0.3,0.7);
\draw [-] (0,0) -- (0.3,0.7);
\draw [-] (-0.3,0.7)-- (-0.3,2);
\draw [-] (-0.3,0.7)-- (-0.2,2);
\draw [-] (-0.3,0.7)-- (-0.1,2);
\draw [line width=2pt] (-0.3,0.7)-- (-0.5,2);
\draw [line width=2pt] (0.3,0.7)-- (0.5,2);
\draw [-] (0.3,0.7)-- (0.1,2);
\draw [-] (0.3,0.7)-- (0.2,2);
\draw [-] (0.3,0.7)-- (0.3,2);
\draw (-0.8,1.5) node {$2$};
\draw (0.8,1.5) node {$2$};
\end{tikzpicture}
\ENDPICTURE

\PICTURE ramp2d
\begin{tikzpicture}[line cap=round,line join=round,>=triangle 45,x=1cm,y=1cm,xscale=0.7,yscale=0.7]

\draw [-] (0,0) -- (-0.3,0.7);
\draw [-] (0,0) -- (0.3,0.7);
\draw [line width=2pt] (-0.3,0.7)-- (-0.3,2);
\draw [line width=2pt] (0.3,0.7)-- (0.5,2);
\draw [-] (0.3,0.7)-- (0.0,2);
\draw [-] (0.3,0.7)-- (0.2,2);
\draw (-0.8,1.5) node {$4$};
\draw (0.8,1.5) node {$2$};
\end{tikzpicture}
\ENDPICTURE

\PICTURE ramp3
\begin{tikzpicture}[line cap=round,line join=round,>=triangle 45,x=1cm,y=1cm,xscale=0.7,yscale=0.7]

\draw [-] (0,0) -- (-0.3,0.7);
\draw [-] (0,0) -- (0.3,0.7);
\draw [-] (-0.3,0.7)-- (-0.2,2);
\draw [line width=2pt] (-0.3,0.7)-- (-0.4,2);
\draw [line width=2pt] (0.3,0.7)-- (0.4,2);
\draw [-] (0.3,0.7)-- (0.2,2);
\draw (-0.6,1.5) node {$3$};
\draw (0.6,1.5) node {$3$};
\end{tikzpicture}
\ENDPICTURE

\PICTURE ramp4
\begin{tikzpicture}[line cap=round,line join=round,>=triangle 45,x=1cm,y=1cm,xscale=0.7,yscale=0.7]

\draw [-] (0,0) -- (-0.3,0.7);
\draw [-] (0,0) -- (0.3,0.7);
\draw [-] (-0.3,0.7)-- (-0.2,2);
\draw [line width=2pt] (-0.3,0.7)-- (-0.4,2);
\draw [line width=2pt] (0.3,0.7)-- (0.4,2);
\draw [-] (0.3,0.7)-- (0.2,2);
\draw (-0.6,1.5) node {$4$};
\draw (0.6,1.5) node {$4$};
\end{tikzpicture}
\ENDPICTURE

\PICTURE rambruin
\begin{tikzpicture}[line cap=round,line join=round,>=triangle 45,x=1cm,y=1cm,xscale=0.7,yscale=0.7]

\draw [-] (0,0) -- (-0.3,0.7);
\draw [line width=2pt] (0,0) -- (0.3,0.7);
\draw (0.6,0.4) node {$2$};
\draw [-] (-0.3,0.7)-- (-0.2,2);
\draw [-] (-0.3,0.7)-- (-0.4,2);
\draw [-] (0.3,0.7)-- (0.4,2);
\draw [-] (0.3,0.7)-- (0.2,2);
\end{tikzpicture}
\ENDPICTURE

\PICTURE ram211
\begin{tikzpicture}[line cap=round,line join=round,>=triangle 45,x=1cm,y=1cm,xscale=0.7,yscale=0.7]

\draw [-] (0,0) -- (-0.3,0.7);
\draw [-] (0,0) -- (0.3,0.7);
\draw [line width=2pt] (-0.3,0.7) -- (-0.3,2);
\draw [-] (0.3,0.7) -- (0.2,2);
\draw [-] (0.3,0.7) -- (0.4,2);
\end{tikzpicture}
\ENDPICTURE

\PICTURE ram311
\begin{tikzpicture}[line cap=round,line join=round,>=triangle 45,x=1cm,y=1cm,xscale=0.7,yscale=0.7]

\draw [-] (0,0) -- (-0.3,0.7);
\draw [-] (0,0) -- (0.3,0.7);
\draw [-] (-0.3,0.7)-- (-0.1,2);
\draw [-] (-0.3,0.7)-- (-0.2,2);
\draw [line width=2pt] (-0.3,0.7)-- (-0.4,2);
\draw [line width=2pt] (0.3,0.7)-- (0.4,2);
\draw [-] (0.3,0.7)-- (0.2,2);
\draw [-] (0.3,0.7)-- (0.1,2);
\draw (-0.6,1.5) node {$3$};
\draw (0.6,1.5) node {$3$};
\end{tikzpicture}
\ENDPICTURE

\PICTURE ram511111
\begin{tikzpicture}[line cap=round,line join=round,>=triangle 45,x=1cm,y=1cm,xscale=0.7,yscale=0.7]

\draw [-] (0,0) -- (-0.3,0.7);
\draw [-] (0,0) -- (0.3,0.7);
\draw [line width=2pt] (-0.3,0.7) -- (-0.3,2);
\draw [-] (0.3,0.7) -- (0.2,2);
\draw [-] (0.3,0.7) -- (0.3,2);
\draw [-] (0.3,0.7) -- (0.4,2);
\draw [-] (0.3,0.7) -- (0.5,2);
\draw [-] (0.3,0.7) -- (0.6,2);
\end{tikzpicture}
\ENDPICTURE
 \end{tabular}
      \begin{tabular}{c} $1$ \\ 
  \long\def\PICTURE #pictures.texramd {}%
  \PICTURE ram10
\begin{tikzpicture}[line cap=round,line join=round,>=triangle 45,x=1cm,y=1cm,xscale=0.7,yscale=0.7]

\draw [line width=2pt] (0,.7) -- (0,2);
\draw [line width=2pt] (0,0) -- (0,.7);

\end{tikzpicture}

\ENDPICTURE

\PICTURE ram22thin
\begin{tikzpicture}[line cap=round,line join=round,>=triangle 45,x=1cm,y=1cm,xscale=0.7,yscale=0.7]

\draw [-] (0,.7) -- (-0.3,2);
\draw [-] (0,.7) -- (0.3,2);
\draw [-] (0,0)-- (0,.7);

\end{tikzpicture}
\ENDPICTURE

\PICTURE ram22thick
\begin{tikzpicture}[line cap=round,line join=round,>=triangle 45,x=1cm,y=1cm,xscale=0.7,yscale=0.7]

\draw [-] (0,.7) -- (-0.3,2);
\draw [-] (0,.7) -- (0.3,2);
\draw [line width=2pt] (0,0)-- (0,.7);
\draw (0.6,0.3) node {$2$};

\end{tikzpicture}
\ENDPICTURE

\PICTURE ram22
\begin{tikzpicture}[line cap=round,line join=round,>=triangle 45,x=1cm,y=1cm,xscale=0.7,yscale=0.7]

\draw [-] (0,.7) -- (-0.3,2);
\draw [-] (0,.7) -- (0.3,2);
\draw [line width=2pt] (0,0)-- (0,.7);

\end{tikzpicture}
\ENDPICTURE

\PICTURE ram23
\begin{tikzpicture}[line cap=round,line join=round,>=triangle 45,x=1cm,y=1cm,xscale=0.7,yscale=0.7]

\draw [-] (0,.7) -- (-0.4,2);
\draw [-] (0,.7) -- (0,2);
\draw [-] (0,.7) -- (0.4,2);
\draw [line width=2pt] (0,0)-- (0,.7);

\end{tikzpicture}
\ENDPICTURE

\PICTURE ram24
\begin{tikzpicture}[line cap=round,line join=round,>=triangle 45,x=1cm,y=1cm,xscale=0.7,yscale=0.7]

\draw [-] (0,.7) -- (-0.6,2);
\draw [-] (0,.7) -- (-0.2,2);
\draw [-] (0,.7) -- (0.2,2);
\draw [-] (0,.7) -- (.6,2);
\draw [line width=2pt] (0,0)-- (0,.7);

\end{tikzpicture}
\ENDPICTURE

\PICTURE ram25
\begin{tikzpicture}[line cap=round,line join=round,>=triangle 45,x=1cm,y=1cm,xscale=0.7,yscale=0.7]

\draw [-] (0,.7) -- (-0.6,2);
\draw [-] (0,.7) -- (-0.3,2);
\draw [-] (0,.7) -- (0,2);
\draw [-] (0,.7) -- (0.3,2);
\draw [-] (0,.7) -- (.6,2);
\draw [line width=2pt] (0,0)-- (0,.7);

\end{tikzpicture}
\ENDPICTURE

\PICTURE ramd
\begin{tikzpicture}[line cap=round,line join=round,>=triangle 45,x=1cm,y=1cm,xscale=0.7,yscale=0.7]

\draw [-] (0,0) -- (-0.3,0.7);
\draw [-] (0,0) -- (0.3,0.7);
\draw [line width=2pt] (-0.3,0.7)-- (-0.3,2);
\draw [line width=2pt] (0.3,0.7)-- (0.3,2);

\end{tikzpicture}
\ENDPICTURE

\PICTURE ramp2
\begin{tikzpicture}[line cap=round,line join=round,>=triangle 45,x=1cm,y=1cm,xscale=0.7,yscale=0.7]

\draw [-] (0,0) -- (-0.3,0.7);
\draw [-] (0,0) -- (0.3,0.7);
\draw [-] (-0.3,0.7)-- (-0.3,2);
\draw [-] (-0.3,0.7)-- (-0.2,2);
\draw [-] (-0.3,0.7)-- (-0.1,2);
\draw [line width=2pt] (-0.3,0.7)-- (-0.5,2);
\draw [line width=2pt] (0.3,0.7)-- (0.5,2);
\draw [-] (0.3,0.7)-- (0.1,2);
\draw [-] (0.3,0.7)-- (0.2,2);
\draw [-] (0.3,0.7)-- (0.3,2);
\draw (-0.8,1.5) node {$2$};
\draw (0.8,1.5) node {$2$};
\end{tikzpicture}
\ENDPICTURE

\PICTURE ramp2d
\begin{tikzpicture}[line cap=round,line join=round,>=triangle 45,x=1cm,y=1cm,xscale=0.7,yscale=0.7]

\draw [-] (0,0) -- (-0.3,0.7);
\draw [-] (0,0) -- (0.3,0.7);
\draw [line width=2pt] (-0.3,0.7)-- (-0.3,2);
\draw [line width=2pt] (0.3,0.7)-- (0.5,2);
\draw [-] (0.3,0.7)-- (0.0,2);
\draw [-] (0.3,0.7)-- (0.2,2);
\draw (-0.8,1.5) node {$4$};
\draw (0.8,1.5) node {$2$};
\end{tikzpicture}
\ENDPICTURE

\PICTURE ramp3
\begin{tikzpicture}[line cap=round,line join=round,>=triangle 45,x=1cm,y=1cm,xscale=0.7,yscale=0.7]

\draw [-] (0,0) -- (-0.3,0.7);
\draw [-] (0,0) -- (0.3,0.7);
\draw [-] (-0.3,0.7)-- (-0.2,2);
\draw [line width=2pt] (-0.3,0.7)-- (-0.4,2);
\draw [line width=2pt] (0.3,0.7)-- (0.4,2);
\draw [-] (0.3,0.7)-- (0.2,2);
\draw (-0.6,1.5) node {$3$};
\draw (0.6,1.5) node {$3$};
\end{tikzpicture}
\ENDPICTURE

\PICTURE ramp4
\begin{tikzpicture}[line cap=round,line join=round,>=triangle 45,x=1cm,y=1cm,xscale=0.7,yscale=0.7]

\draw [-] (0,0) -- (-0.3,0.7);
\draw [-] (0,0) -- (0.3,0.7);
\draw [-] (-0.3,0.7)-- (-0.2,2);
\draw [line width=2pt] (-0.3,0.7)-- (-0.4,2);
\draw [line width=2pt] (0.3,0.7)-- (0.4,2);
\draw [-] (0.3,0.7)-- (0.2,2);
\draw (-0.6,1.5) node {$4$};
\draw (0.6,1.5) node {$4$};
\end{tikzpicture}
\ENDPICTURE

\PICTURE rambruin
\begin{tikzpicture}[line cap=round,line join=round,>=triangle 45,x=1cm,y=1cm,xscale=0.7,yscale=0.7]

\draw [-] (0,0) -- (-0.3,0.7);
\draw [line width=2pt] (0,0) -- (0.3,0.7);
\draw (0.6,0.4) node {$2$};
\draw [-] (-0.3,0.7)-- (-0.2,2);
\draw [-] (-0.3,0.7)-- (-0.4,2);
\draw [-] (0.3,0.7)-- (0.4,2);
\draw [-] (0.3,0.7)-- (0.2,2);
\end{tikzpicture}
\ENDPICTURE

\PICTURE ram211
\begin{tikzpicture}[line cap=round,line join=round,>=triangle 45,x=1cm,y=1cm,xscale=0.7,yscale=0.7]

\draw [-] (0,0) -- (-0.3,0.7);
\draw [-] (0,0) -- (0.3,0.7);
\draw [line width=2pt] (-0.3,0.7) -- (-0.3,2);
\draw [-] (0.3,0.7) -- (0.2,2);
\draw [-] (0.3,0.7) -- (0.4,2);
\end{tikzpicture}
\ENDPICTURE

\PICTURE ram311
\begin{tikzpicture}[line cap=round,line join=round,>=triangle 45,x=1cm,y=1cm,xscale=0.7,yscale=0.7]

\draw [-] (0,0) -- (-0.3,0.7);
\draw [-] (0,0) -- (0.3,0.7);
\draw [-] (-0.3,0.7)-- (-0.1,2);
\draw [-] (-0.3,0.7)-- (-0.2,2);
\draw [line width=2pt] (-0.3,0.7)-- (-0.4,2);
\draw [line width=2pt] (0.3,0.7)-- (0.4,2);
\draw [-] (0.3,0.7)-- (0.2,2);
\draw [-] (0.3,0.7)-- (0.1,2);
\draw (-0.6,1.5) node {$3$};
\draw (0.6,1.5) node {$3$};
\end{tikzpicture}
\ENDPICTURE

\PICTURE ram511111
\begin{tikzpicture}[line cap=round,line join=round,>=triangle 45,x=1cm,y=1cm,xscale=0.7,yscale=0.7]

\draw [-] (0,0) -- (-0.3,0.7);
\draw [-] (0,0) -- (0.3,0.7);
\draw [line width=2pt] (-0.3,0.7) -- (-0.3,2);
\draw [-] (0.3,0.7) -- (0.2,2);
\draw [-] (0.3,0.7) -- (0.3,2);
\draw [-] (0.3,0.7) -- (0.4,2);
\draw [-] (0.3,0.7) -- (0.5,2);
\draw [-] (0.3,0.7) -- (0.6,2);
\end{tikzpicture}
\ENDPICTURE
 \end{tabular} &
      $240,61$ &
      [0,0],[160,0] &
      [1] &
      [12] \\
      \hline
      \texttt{g2-6} & $2,1,6$ &
      \begin{tabular}{c} $4$ \\ 
  \long\def\PICTURE #pictures.texram22 {}%
  \PICTURE ram10
\begin{tikzpicture}[line cap=round,line join=round,>=triangle 45,x=1cm,y=1cm,xscale=0.7,yscale=0.7]

\draw [line width=2pt] (0,.7) -- (0,2);
\draw [line width=2pt] (0,0) -- (0,.7);

\end{tikzpicture}

\ENDPICTURE

\PICTURE ram22thin
\begin{tikzpicture}[line cap=round,line join=round,>=triangle 45,x=1cm,y=1cm,xscale=0.7,yscale=0.7]

\draw [-] (0,.7) -- (-0.3,2);
\draw [-] (0,.7) -- (0.3,2);
\draw [-] (0,0)-- (0,.7);

\end{tikzpicture}
\ENDPICTURE

\PICTURE ram22thick
\begin{tikzpicture}[line cap=round,line join=round,>=triangle 45,x=1cm,y=1cm,xscale=0.7,yscale=0.7]

\draw [-] (0,.7) -- (-0.3,2);
\draw [-] (0,.7) -- (0.3,2);
\draw [line width=2pt] (0,0)-- (0,.7);
\draw (0.6,0.3) node {$2$};

\end{tikzpicture}
\ENDPICTURE

\PICTURE ram22
\begin{tikzpicture}[line cap=round,line join=round,>=triangle 45,x=1cm,y=1cm,xscale=0.7,yscale=0.7]

\draw [-] (0,.7) -- (-0.3,2);
\draw [-] (0,.7) -- (0.3,2);
\draw [line width=2pt] (0,0)-- (0,.7);

\end{tikzpicture}
\ENDPICTURE

\PICTURE ram23
\begin{tikzpicture}[line cap=round,line join=round,>=triangle 45,x=1cm,y=1cm,xscale=0.7,yscale=0.7]

\draw [-] (0,.7) -- (-0.4,2);
\draw [-] (0,.7) -- (0,2);
\draw [-] (0,.7) -- (0.4,2);
\draw [line width=2pt] (0,0)-- (0,.7);

\end{tikzpicture}
\ENDPICTURE

\PICTURE ram24
\begin{tikzpicture}[line cap=round,line join=round,>=triangle 45,x=1cm,y=1cm,xscale=0.7,yscale=0.7]

\draw [-] (0,.7) -- (-0.6,2);
\draw [-] (0,.7) -- (-0.2,2);
\draw [-] (0,.7) -- (0.2,2);
\draw [-] (0,.7) -- (.6,2);
\draw [line width=2pt] (0,0)-- (0,.7);

\end{tikzpicture}
\ENDPICTURE

\PICTURE ram25
\begin{tikzpicture}[line cap=round,line join=round,>=triangle 45,x=1cm,y=1cm,xscale=0.7,yscale=0.7]

\draw [-] (0,.7) -- (-0.6,2);
\draw [-] (0,.7) -- (-0.3,2);
\draw [-] (0,.7) -- (0,2);
\draw [-] (0,.7) -- (0.3,2);
\draw [-] (0,.7) -- (.6,2);
\draw [line width=2pt] (0,0)-- (0,.7);

\end{tikzpicture}
\ENDPICTURE

\PICTURE ramd
\begin{tikzpicture}[line cap=round,line join=round,>=triangle 45,x=1cm,y=1cm,xscale=0.7,yscale=0.7]

\draw [-] (0,0) -- (-0.3,0.7);
\draw [-] (0,0) -- (0.3,0.7);
\draw [line width=2pt] (-0.3,0.7)-- (-0.3,2);
\draw [line width=2pt] (0.3,0.7)-- (0.3,2);

\end{tikzpicture}
\ENDPICTURE

\PICTURE ramp2
\begin{tikzpicture}[line cap=round,line join=round,>=triangle 45,x=1cm,y=1cm,xscale=0.7,yscale=0.7]

\draw [-] (0,0) -- (-0.3,0.7);
\draw [-] (0,0) -- (0.3,0.7);
\draw [-] (-0.3,0.7)-- (-0.3,2);
\draw [-] (-0.3,0.7)-- (-0.2,2);
\draw [-] (-0.3,0.7)-- (-0.1,2);
\draw [line width=2pt] (-0.3,0.7)-- (-0.5,2);
\draw [line width=2pt] (0.3,0.7)-- (0.5,2);
\draw [-] (0.3,0.7)-- (0.1,2);
\draw [-] (0.3,0.7)-- (0.2,2);
\draw [-] (0.3,0.7)-- (0.3,2);
\draw (-0.8,1.5) node {$2$};
\draw (0.8,1.5) node {$2$};
\end{tikzpicture}
\ENDPICTURE

\PICTURE ramp2d
\begin{tikzpicture}[line cap=round,line join=round,>=triangle 45,x=1cm,y=1cm,xscale=0.7,yscale=0.7]

\draw [-] (0,0) -- (-0.3,0.7);
\draw [-] (0,0) -- (0.3,0.7);
\draw [line width=2pt] (-0.3,0.7)-- (-0.3,2);
\draw [line width=2pt] (0.3,0.7)-- (0.5,2);
\draw [-] (0.3,0.7)-- (0.0,2);
\draw [-] (0.3,0.7)-- (0.2,2);
\draw (-0.8,1.5) node {$4$};
\draw (0.8,1.5) node {$2$};
\end{tikzpicture}
\ENDPICTURE

\PICTURE ramp3
\begin{tikzpicture}[line cap=round,line join=round,>=triangle 45,x=1cm,y=1cm,xscale=0.7,yscale=0.7]

\draw [-] (0,0) -- (-0.3,0.7);
\draw [-] (0,0) -- (0.3,0.7);
\draw [-] (-0.3,0.7)-- (-0.2,2);
\draw [line width=2pt] (-0.3,0.7)-- (-0.4,2);
\draw [line width=2pt] (0.3,0.7)-- (0.4,2);
\draw [-] (0.3,0.7)-- (0.2,2);
\draw (-0.6,1.5) node {$3$};
\draw (0.6,1.5) node {$3$};
\end{tikzpicture}
\ENDPICTURE

\PICTURE ramp4
\begin{tikzpicture}[line cap=round,line join=round,>=triangle 45,x=1cm,y=1cm,xscale=0.7,yscale=0.7]

\draw [-] (0,0) -- (-0.3,0.7);
\draw [-] (0,0) -- (0.3,0.7);
\draw [-] (-0.3,0.7)-- (-0.2,2);
\draw [line width=2pt] (-0.3,0.7)-- (-0.4,2);
\draw [line width=2pt] (0.3,0.7)-- (0.4,2);
\draw [-] (0.3,0.7)-- (0.2,2);
\draw (-0.6,1.5) node {$4$};
\draw (0.6,1.5) node {$4$};
\end{tikzpicture}
\ENDPICTURE

\PICTURE rambruin
\begin{tikzpicture}[line cap=round,line join=round,>=triangle 45,x=1cm,y=1cm,xscale=0.7,yscale=0.7]

\draw [-] (0,0) -- (-0.3,0.7);
\draw [line width=2pt] (0,0) -- (0.3,0.7);
\draw (0.6,0.4) node {$2$};
\draw [-] (-0.3,0.7)-- (-0.2,2);
\draw [-] (-0.3,0.7)-- (-0.4,2);
\draw [-] (0.3,0.7)-- (0.4,2);
\draw [-] (0.3,0.7)-- (0.2,2);
\end{tikzpicture}
\ENDPICTURE

\PICTURE ram211
\begin{tikzpicture}[line cap=round,line join=round,>=triangle 45,x=1cm,y=1cm,xscale=0.7,yscale=0.7]

\draw [-] (0,0) -- (-0.3,0.7);
\draw [-] (0,0) -- (0.3,0.7);
\draw [line width=2pt] (-0.3,0.7) -- (-0.3,2);
\draw [-] (0.3,0.7) -- (0.2,2);
\draw [-] (0.3,0.7) -- (0.4,2);
\end{tikzpicture}
\ENDPICTURE

\PICTURE ram311
\begin{tikzpicture}[line cap=round,line join=round,>=triangle 45,x=1cm,y=1cm,xscale=0.7,yscale=0.7]

\draw [-] (0,0) -- (-0.3,0.7);
\draw [-] (0,0) -- (0.3,0.7);
\draw [-] (-0.3,0.7)-- (-0.1,2);
\draw [-] (-0.3,0.7)-- (-0.2,2);
\draw [line width=2pt] (-0.3,0.7)-- (-0.4,2);
\draw [line width=2pt] (0.3,0.7)-- (0.4,2);
\draw [-] (0.3,0.7)-- (0.2,2);
\draw [-] (0.3,0.7)-- (0.1,2);
\draw (-0.6,1.5) node {$3$};
\draw (0.6,1.5) node {$3$};
\end{tikzpicture}
\ENDPICTURE

\PICTURE ram511111
\begin{tikzpicture}[line cap=round,line join=round,>=triangle 45,x=1cm,y=1cm,xscale=0.7,yscale=0.7]

\draw [-] (0,0) -- (-0.3,0.7);
\draw [-] (0,0) -- (0.3,0.7);
\draw [line width=2pt] (-0.3,0.7) -- (-0.3,2);
\draw [-] (0.3,0.7) -- (0.2,2);
\draw [-] (0.3,0.7) -- (0.3,2);
\draw [-] (0.3,0.7) -- (0.4,2);
\draw [-] (0.3,0.7) -- (0.5,2);
\draw [-] (0.3,0.7) -- (0.6,2);
\end{tikzpicture}
\ENDPICTURE
 \end{tabular}
      \begin{tabular}{c} $1$ \\ 
  \long\def\PICTURE #pictures.texramd {}%
  \PICTURE ram10
\begin{tikzpicture}[line cap=round,line join=round,>=triangle 45,x=1cm,y=1cm,xscale=0.7,yscale=0.7]

\draw [line width=2pt] (0,.7) -- (0,2);
\draw [line width=2pt] (0,0) -- (0,.7);

\end{tikzpicture}

\ENDPICTURE

\PICTURE ram22thin
\begin{tikzpicture}[line cap=round,line join=round,>=triangle 45,x=1cm,y=1cm,xscale=0.7,yscale=0.7]

\draw [-] (0,.7) -- (-0.3,2);
\draw [-] (0,.7) -- (0.3,2);
\draw [-] (0,0)-- (0,.7);

\end{tikzpicture}
\ENDPICTURE

\PICTURE ram22thick
\begin{tikzpicture}[line cap=round,line join=round,>=triangle 45,x=1cm,y=1cm,xscale=0.7,yscale=0.7]

\draw [-] (0,.7) -- (-0.3,2);
\draw [-] (0,.7) -- (0.3,2);
\draw [line width=2pt] (0,0)-- (0,.7);
\draw (0.6,0.3) node {$2$};

\end{tikzpicture}
\ENDPICTURE

\PICTURE ram22
\begin{tikzpicture}[line cap=round,line join=round,>=triangle 45,x=1cm,y=1cm,xscale=0.7,yscale=0.7]

\draw [-] (0,.7) -- (-0.3,2);
\draw [-] (0,.7) -- (0.3,2);
\draw [line width=2pt] (0,0)-- (0,.7);

\end{tikzpicture}
\ENDPICTURE

\PICTURE ram23
\begin{tikzpicture}[line cap=round,line join=round,>=triangle 45,x=1cm,y=1cm,xscale=0.7,yscale=0.7]

\draw [-] (0,.7) -- (-0.4,2);
\draw [-] (0,.7) -- (0,2);
\draw [-] (0,.7) -- (0.4,2);
\draw [line width=2pt] (0,0)-- (0,.7);

\end{tikzpicture}
\ENDPICTURE

\PICTURE ram24
\begin{tikzpicture}[line cap=round,line join=round,>=triangle 45,x=1cm,y=1cm,xscale=0.7,yscale=0.7]

\draw [-] (0,.7) -- (-0.6,2);
\draw [-] (0,.7) -- (-0.2,2);
\draw [-] (0,.7) -- (0.2,2);
\draw [-] (0,.7) -- (.6,2);
\draw [line width=2pt] (0,0)-- (0,.7);

\end{tikzpicture}
\ENDPICTURE

\PICTURE ram25
\begin{tikzpicture}[line cap=round,line join=round,>=triangle 45,x=1cm,y=1cm,xscale=0.7,yscale=0.7]

\draw [-] (0,.7) -- (-0.6,2);
\draw [-] (0,.7) -- (-0.3,2);
\draw [-] (0,.7) -- (0,2);
\draw [-] (0,.7) -- (0.3,2);
\draw [-] (0,.7) -- (.6,2);
\draw [line width=2pt] (0,0)-- (0,.7);

\end{tikzpicture}
\ENDPICTURE

\PICTURE ramd
\begin{tikzpicture}[line cap=round,line join=round,>=triangle 45,x=1cm,y=1cm,xscale=0.7,yscale=0.7]

\draw [-] (0,0) -- (-0.3,0.7);
\draw [-] (0,0) -- (0.3,0.7);
\draw [line width=2pt] (-0.3,0.7)-- (-0.3,2);
\draw [line width=2pt] (0.3,0.7)-- (0.3,2);

\end{tikzpicture}
\ENDPICTURE

\PICTURE ramp2
\begin{tikzpicture}[line cap=round,line join=round,>=triangle 45,x=1cm,y=1cm,xscale=0.7,yscale=0.7]

\draw [-] (0,0) -- (-0.3,0.7);
\draw [-] (0,0) -- (0.3,0.7);
\draw [-] (-0.3,0.7)-- (-0.3,2);
\draw [-] (-0.3,0.7)-- (-0.2,2);
\draw [-] (-0.3,0.7)-- (-0.1,2);
\draw [line width=2pt] (-0.3,0.7)-- (-0.5,2);
\draw [line width=2pt] (0.3,0.7)-- (0.5,2);
\draw [-] (0.3,0.7)-- (0.1,2);
\draw [-] (0.3,0.7)-- (0.2,2);
\draw [-] (0.3,0.7)-- (0.3,2);
\draw (-0.8,1.5) node {$2$};
\draw (0.8,1.5) node {$2$};
\end{tikzpicture}
\ENDPICTURE

\PICTURE ramp2d
\begin{tikzpicture}[line cap=round,line join=round,>=triangle 45,x=1cm,y=1cm,xscale=0.7,yscale=0.7]

\draw [-] (0,0) -- (-0.3,0.7);
\draw [-] (0,0) -- (0.3,0.7);
\draw [line width=2pt] (-0.3,0.7)-- (-0.3,2);
\draw [line width=2pt] (0.3,0.7)-- (0.5,2);
\draw [-] (0.3,0.7)-- (0.0,2);
\draw [-] (0.3,0.7)-- (0.2,2);
\draw (-0.8,1.5) node {$4$};
\draw (0.8,1.5) node {$2$};
\end{tikzpicture}
\ENDPICTURE

\PICTURE ramp3
\begin{tikzpicture}[line cap=round,line join=round,>=triangle 45,x=1cm,y=1cm,xscale=0.7,yscale=0.7]

\draw [-] (0,0) -- (-0.3,0.7);
\draw [-] (0,0) -- (0.3,0.7);
\draw [-] (-0.3,0.7)-- (-0.2,2);
\draw [line width=2pt] (-0.3,0.7)-- (-0.4,2);
\draw [line width=2pt] (0.3,0.7)-- (0.4,2);
\draw [-] (0.3,0.7)-- (0.2,2);
\draw (-0.6,1.5) node {$3$};
\draw (0.6,1.5) node {$3$};
\end{tikzpicture}
\ENDPICTURE

\PICTURE ramp4
\begin{tikzpicture}[line cap=round,line join=round,>=triangle 45,x=1cm,y=1cm,xscale=0.7,yscale=0.7]

\draw [-] (0,0) -- (-0.3,0.7);
\draw [-] (0,0) -- (0.3,0.7);
\draw [-] (-0.3,0.7)-- (-0.2,2);
\draw [line width=2pt] (-0.3,0.7)-- (-0.4,2);
\draw [line width=2pt] (0.3,0.7)-- (0.4,2);
\draw [-] (0.3,0.7)-- (0.2,2);
\draw (-0.6,1.5) node {$4$};
\draw (0.6,1.5) node {$4$};
\end{tikzpicture}
\ENDPICTURE

\PICTURE rambruin
\begin{tikzpicture}[line cap=round,line join=round,>=triangle 45,x=1cm,y=1cm,xscale=0.7,yscale=0.7]

\draw [-] (0,0) -- (-0.3,0.7);
\draw [line width=2pt] (0,0) -- (0.3,0.7);
\draw (0.6,0.4) node {$2$};
\draw [-] (-0.3,0.7)-- (-0.2,2);
\draw [-] (-0.3,0.7)-- (-0.4,2);
\draw [-] (0.3,0.7)-- (0.4,2);
\draw [-] (0.3,0.7)-- (0.2,2);
\end{tikzpicture}
\ENDPICTURE

\PICTURE ram211
\begin{tikzpicture}[line cap=round,line join=round,>=triangle 45,x=1cm,y=1cm,xscale=0.7,yscale=0.7]

\draw [-] (0,0) -- (-0.3,0.7);
\draw [-] (0,0) -- (0.3,0.7);
\draw [line width=2pt] (-0.3,0.7) -- (-0.3,2);
\draw [-] (0.3,0.7) -- (0.2,2);
\draw [-] (0.3,0.7) -- (0.4,2);
\end{tikzpicture}
\ENDPICTURE

\PICTURE ram311
\begin{tikzpicture}[line cap=round,line join=round,>=triangle 45,x=1cm,y=1cm,xscale=0.7,yscale=0.7]

\draw [-] (0,0) -- (-0.3,0.7);
\draw [-] (0,0) -- (0.3,0.7);
\draw [-] (-0.3,0.7)-- (-0.1,2);
\draw [-] (-0.3,0.7)-- (-0.2,2);
\draw [line width=2pt] (-0.3,0.7)-- (-0.4,2);
\draw [line width=2pt] (0.3,0.7)-- (0.4,2);
\draw [-] (0.3,0.7)-- (0.2,2);
\draw [-] (0.3,0.7)-- (0.1,2);
\draw (-0.6,1.5) node {$3$};
\draw (0.6,1.5) node {$3$};
\end{tikzpicture}
\ENDPICTURE

\PICTURE ram511111
\begin{tikzpicture}[line cap=round,line join=round,>=triangle 45,x=1cm,y=1cm,xscale=0.7,yscale=0.7]

\draw [-] (0,0) -- (-0.3,0.7);
\draw [-] (0,0) -- (0.3,0.7);
\draw [line width=2pt] (-0.3,0.7) -- (-0.3,2);
\draw [-] (0.3,0.7) -- (0.2,2);
\draw [-] (0.3,0.7) -- (0.3,2);
\draw [-] (0.3,0.7) -- (0.4,2);
\draw [-] (0.3,0.7) -- (0.5,2);
\draw [-] (0.3,0.7) -- (0.6,2);
\end{tikzpicture}
\ENDPICTURE
 \end{tabular} &
      $1440,361$ &
      [0,0],[240,0] &
      [1] &
      [48] \\
      \hline
      \texttt{g2-7} & $2,1,7$ &
      \begin{tabular}{c} $4$ \\ 
  \long\def\PICTURE #pictures.texram22 {}%
  \PICTURE ram10
\begin{tikzpicture}[line cap=round,line join=round,>=triangle 45,x=1cm,y=1cm,xscale=0.7,yscale=0.7]

\draw [line width=2pt] (0,.7) -- (0,2);
\draw [line width=2pt] (0,0) -- (0,.7);

\end{tikzpicture}

\ENDPICTURE

\PICTURE ram22thin
\begin{tikzpicture}[line cap=round,line join=round,>=triangle 45,x=1cm,y=1cm,xscale=0.7,yscale=0.7]

\draw [-] (0,.7) -- (-0.3,2);
\draw [-] (0,.7) -- (0.3,2);
\draw [-] (0,0)-- (0,.7);

\end{tikzpicture}
\ENDPICTURE

\PICTURE ram22thick
\begin{tikzpicture}[line cap=round,line join=round,>=triangle 45,x=1cm,y=1cm,xscale=0.7,yscale=0.7]

\draw [-] (0,.7) -- (-0.3,2);
\draw [-] (0,.7) -- (0.3,2);
\draw [line width=2pt] (0,0)-- (0,.7);
\draw (0.6,0.3) node {$2$};

\end{tikzpicture}
\ENDPICTURE

\PICTURE ram22
\begin{tikzpicture}[line cap=round,line join=round,>=triangle 45,x=1cm,y=1cm,xscale=0.7,yscale=0.7]

\draw [-] (0,.7) -- (-0.3,2);
\draw [-] (0,.7) -- (0.3,2);
\draw [line width=2pt] (0,0)-- (0,.7);

\end{tikzpicture}
\ENDPICTURE

\PICTURE ram23
\begin{tikzpicture}[line cap=round,line join=round,>=triangle 45,x=1cm,y=1cm,xscale=0.7,yscale=0.7]

\draw [-] (0,.7) -- (-0.4,2);
\draw [-] (0,.7) -- (0,2);
\draw [-] (0,.7) -- (0.4,2);
\draw [line width=2pt] (0,0)-- (0,.7);

\end{tikzpicture}
\ENDPICTURE

\PICTURE ram24
\begin{tikzpicture}[line cap=round,line join=round,>=triangle 45,x=1cm,y=1cm,xscale=0.7,yscale=0.7]

\draw [-] (0,.7) -- (-0.6,2);
\draw [-] (0,.7) -- (-0.2,2);
\draw [-] (0,.7) -- (0.2,2);
\draw [-] (0,.7) -- (.6,2);
\draw [line width=2pt] (0,0)-- (0,.7);

\end{tikzpicture}
\ENDPICTURE

\PICTURE ram25
\begin{tikzpicture}[line cap=round,line join=round,>=triangle 45,x=1cm,y=1cm,xscale=0.7,yscale=0.7]

\draw [-] (0,.7) -- (-0.6,2);
\draw [-] (0,.7) -- (-0.3,2);
\draw [-] (0,.7) -- (0,2);
\draw [-] (0,.7) -- (0.3,2);
\draw [-] (0,.7) -- (.6,2);
\draw [line width=2pt] (0,0)-- (0,.7);

\end{tikzpicture}
\ENDPICTURE

\PICTURE ramd
\begin{tikzpicture}[line cap=round,line join=round,>=triangle 45,x=1cm,y=1cm,xscale=0.7,yscale=0.7]

\draw [-] (0,0) -- (-0.3,0.7);
\draw [-] (0,0) -- (0.3,0.7);
\draw [line width=2pt] (-0.3,0.7)-- (-0.3,2);
\draw [line width=2pt] (0.3,0.7)-- (0.3,2);

\end{tikzpicture}
\ENDPICTURE

\PICTURE ramp2
\begin{tikzpicture}[line cap=round,line join=round,>=triangle 45,x=1cm,y=1cm,xscale=0.7,yscale=0.7]

\draw [-] (0,0) -- (-0.3,0.7);
\draw [-] (0,0) -- (0.3,0.7);
\draw [-] (-0.3,0.7)-- (-0.3,2);
\draw [-] (-0.3,0.7)-- (-0.2,2);
\draw [-] (-0.3,0.7)-- (-0.1,2);
\draw [line width=2pt] (-0.3,0.7)-- (-0.5,2);
\draw [line width=2pt] (0.3,0.7)-- (0.5,2);
\draw [-] (0.3,0.7)-- (0.1,2);
\draw [-] (0.3,0.7)-- (0.2,2);
\draw [-] (0.3,0.7)-- (0.3,2);
\draw (-0.8,1.5) node {$2$};
\draw (0.8,1.5) node {$2$};
\end{tikzpicture}
\ENDPICTURE

\PICTURE ramp2d
\begin{tikzpicture}[line cap=round,line join=round,>=triangle 45,x=1cm,y=1cm,xscale=0.7,yscale=0.7]

\draw [-] (0,0) -- (-0.3,0.7);
\draw [-] (0,0) -- (0.3,0.7);
\draw [line width=2pt] (-0.3,0.7)-- (-0.3,2);
\draw [line width=2pt] (0.3,0.7)-- (0.5,2);
\draw [-] (0.3,0.7)-- (0.0,2);
\draw [-] (0.3,0.7)-- (0.2,2);
\draw (-0.8,1.5) node {$4$};
\draw (0.8,1.5) node {$2$};
\end{tikzpicture}
\ENDPICTURE

\PICTURE ramp3
\begin{tikzpicture}[line cap=round,line join=round,>=triangle 45,x=1cm,y=1cm,xscale=0.7,yscale=0.7]

\draw [-] (0,0) -- (-0.3,0.7);
\draw [-] (0,0) -- (0.3,0.7);
\draw [-] (-0.3,0.7)-- (-0.2,2);
\draw [line width=2pt] (-0.3,0.7)-- (-0.4,2);
\draw [line width=2pt] (0.3,0.7)-- (0.4,2);
\draw [-] (0.3,0.7)-- (0.2,2);
\draw (-0.6,1.5) node {$3$};
\draw (0.6,1.5) node {$3$};
\end{tikzpicture}
\ENDPICTURE

\PICTURE ramp4
\begin{tikzpicture}[line cap=round,line join=round,>=triangle 45,x=1cm,y=1cm,xscale=0.7,yscale=0.7]

\draw [-] (0,0) -- (-0.3,0.7);
\draw [-] (0,0) -- (0.3,0.7);
\draw [-] (-0.3,0.7)-- (-0.2,2);
\draw [line width=2pt] (-0.3,0.7)-- (-0.4,2);
\draw [line width=2pt] (0.3,0.7)-- (0.4,2);
\draw [-] (0.3,0.7)-- (0.2,2);
\draw (-0.6,1.5) node {$4$};
\draw (0.6,1.5) node {$4$};
\end{tikzpicture}
\ENDPICTURE

\PICTURE rambruin
\begin{tikzpicture}[line cap=round,line join=round,>=triangle 45,x=1cm,y=1cm,xscale=0.7,yscale=0.7]

\draw [-] (0,0) -- (-0.3,0.7);
\draw [line width=2pt] (0,0) -- (0.3,0.7);
\draw (0.6,0.4) node {$2$};
\draw [-] (-0.3,0.7)-- (-0.2,2);
\draw [-] (-0.3,0.7)-- (-0.4,2);
\draw [-] (0.3,0.7)-- (0.4,2);
\draw [-] (0.3,0.7)-- (0.2,2);
\end{tikzpicture}
\ENDPICTURE

\PICTURE ram211
\begin{tikzpicture}[line cap=round,line join=round,>=triangle 45,x=1cm,y=1cm,xscale=0.7,yscale=0.7]

\draw [-] (0,0) -- (-0.3,0.7);
\draw [-] (0,0) -- (0.3,0.7);
\draw [line width=2pt] (-0.3,0.7) -- (-0.3,2);
\draw [-] (0.3,0.7) -- (0.2,2);
\draw [-] (0.3,0.7) -- (0.4,2);
\end{tikzpicture}
\ENDPICTURE

\PICTURE ram311
\begin{tikzpicture}[line cap=round,line join=round,>=triangle 45,x=1cm,y=1cm,xscale=0.7,yscale=0.7]

\draw [-] (0,0) -- (-0.3,0.7);
\draw [-] (0,0) -- (0.3,0.7);
\draw [-] (-0.3,0.7)-- (-0.1,2);
\draw [-] (-0.3,0.7)-- (-0.2,2);
\draw [line width=2pt] (-0.3,0.7)-- (-0.4,2);
\draw [line width=2pt] (0.3,0.7)-- (0.4,2);
\draw [-] (0.3,0.7)-- (0.2,2);
\draw [-] (0.3,0.7)-- (0.1,2);
\draw (-0.6,1.5) node {$3$};
\draw (0.6,1.5) node {$3$};
\end{tikzpicture}
\ENDPICTURE

\PICTURE ram511111
\begin{tikzpicture}[line cap=round,line join=round,>=triangle 45,x=1cm,y=1cm,xscale=0.7,yscale=0.7]

\draw [-] (0,0) -- (-0.3,0.7);
\draw [-] (0,0) -- (0.3,0.7);
\draw [line width=2pt] (-0.3,0.7) -- (-0.3,2);
\draw [-] (0.3,0.7) -- (0.2,2);
\draw [-] (0.3,0.7) -- (0.3,2);
\draw [-] (0.3,0.7) -- (0.4,2);
\draw [-] (0.3,0.7) -- (0.5,2);
\draw [-] (0.3,0.7) -- (0.6,2);
\end{tikzpicture}
\ENDPICTURE
 \end{tabular}
      \begin{tabular}{c} $1$ \\ 
  \long\def\PICTURE #pictures.texramd {}%
  \PICTURE ram10
\begin{tikzpicture}[line cap=round,line join=round,>=triangle 45,x=1cm,y=1cm,xscale=0.7,yscale=0.7]

\draw [line width=2pt] (0,.7) -- (0,2);
\draw [line width=2pt] (0,0) -- (0,.7);

\end{tikzpicture}

\ENDPICTURE

\PICTURE ram22thin
\begin{tikzpicture}[line cap=round,line join=round,>=triangle 45,x=1cm,y=1cm,xscale=0.7,yscale=0.7]

\draw [-] (0,.7) -- (-0.3,2);
\draw [-] (0,.7) -- (0.3,2);
\draw [-] (0,0)-- (0,.7);

\end{tikzpicture}
\ENDPICTURE

\PICTURE ram22thick
\begin{tikzpicture}[line cap=round,line join=round,>=triangle 45,x=1cm,y=1cm,xscale=0.7,yscale=0.7]

\draw [-] (0,.7) -- (-0.3,2);
\draw [-] (0,.7) -- (0.3,2);
\draw [line width=2pt] (0,0)-- (0,.7);
\draw (0.6,0.3) node {$2$};

\end{tikzpicture}
\ENDPICTURE

\PICTURE ram22
\begin{tikzpicture}[line cap=round,line join=round,>=triangle 45,x=1cm,y=1cm,xscale=0.7,yscale=0.7]

\draw [-] (0,.7) -- (-0.3,2);
\draw [-] (0,.7) -- (0.3,2);
\draw [line width=2pt] (0,0)-- (0,.7);

\end{tikzpicture}
\ENDPICTURE

\PICTURE ram23
\begin{tikzpicture}[line cap=round,line join=round,>=triangle 45,x=1cm,y=1cm,xscale=0.7,yscale=0.7]

\draw [-] (0,.7) -- (-0.4,2);
\draw [-] (0,.7) -- (0,2);
\draw [-] (0,.7) -- (0.4,2);
\draw [line width=2pt] (0,0)-- (0,.7);

\end{tikzpicture}
\ENDPICTURE

\PICTURE ram24
\begin{tikzpicture}[line cap=round,line join=round,>=triangle 45,x=1cm,y=1cm,xscale=0.7,yscale=0.7]

\draw [-] (0,.7) -- (-0.6,2);
\draw [-] (0,.7) -- (-0.2,2);
\draw [-] (0,.7) -- (0.2,2);
\draw [-] (0,.7) -- (.6,2);
\draw [line width=2pt] (0,0)-- (0,.7);

\end{tikzpicture}
\ENDPICTURE

\PICTURE ram25
\begin{tikzpicture}[line cap=round,line join=round,>=triangle 45,x=1cm,y=1cm,xscale=0.7,yscale=0.7]

\draw [-] (0,.7) -- (-0.6,2);
\draw [-] (0,.7) -- (-0.3,2);
\draw [-] (0,.7) -- (0,2);
\draw [-] (0,.7) -- (0.3,2);
\draw [-] (0,.7) -- (.6,2);
\draw [line width=2pt] (0,0)-- (0,.7);

\end{tikzpicture}
\ENDPICTURE

\PICTURE ramd
\begin{tikzpicture}[line cap=round,line join=round,>=triangle 45,x=1cm,y=1cm,xscale=0.7,yscale=0.7]

\draw [-] (0,0) -- (-0.3,0.7);
\draw [-] (0,0) -- (0.3,0.7);
\draw [line width=2pt] (-0.3,0.7)-- (-0.3,2);
\draw [line width=2pt] (0.3,0.7)-- (0.3,2);

\end{tikzpicture}
\ENDPICTURE

\PICTURE ramp2
\begin{tikzpicture}[line cap=round,line join=round,>=triangle 45,x=1cm,y=1cm,xscale=0.7,yscale=0.7]

\draw [-] (0,0) -- (-0.3,0.7);
\draw [-] (0,0) -- (0.3,0.7);
\draw [-] (-0.3,0.7)-- (-0.3,2);
\draw [-] (-0.3,0.7)-- (-0.2,2);
\draw [-] (-0.3,0.7)-- (-0.1,2);
\draw [line width=2pt] (-0.3,0.7)-- (-0.5,2);
\draw [line width=2pt] (0.3,0.7)-- (0.5,2);
\draw [-] (0.3,0.7)-- (0.1,2);
\draw [-] (0.3,0.7)-- (0.2,2);
\draw [-] (0.3,0.7)-- (0.3,2);
\draw (-0.8,1.5) node {$2$};
\draw (0.8,1.5) node {$2$};
\end{tikzpicture}
\ENDPICTURE

\PICTURE ramp2d
\begin{tikzpicture}[line cap=round,line join=round,>=triangle 45,x=1cm,y=1cm,xscale=0.7,yscale=0.7]

\draw [-] (0,0) -- (-0.3,0.7);
\draw [-] (0,0) -- (0.3,0.7);
\draw [line width=2pt] (-0.3,0.7)-- (-0.3,2);
\draw [line width=2pt] (0.3,0.7)-- (0.5,2);
\draw [-] (0.3,0.7)-- (0.0,2);
\draw [-] (0.3,0.7)-- (0.2,2);
\draw (-0.8,1.5) node {$4$};
\draw (0.8,1.5) node {$2$};
\end{tikzpicture}
\ENDPICTURE

\PICTURE ramp3
\begin{tikzpicture}[line cap=round,line join=round,>=triangle 45,x=1cm,y=1cm,xscale=0.7,yscale=0.7]

\draw [-] (0,0) -- (-0.3,0.7);
\draw [-] (0,0) -- (0.3,0.7);
\draw [-] (-0.3,0.7)-- (-0.2,2);
\draw [line width=2pt] (-0.3,0.7)-- (-0.4,2);
\draw [line width=2pt] (0.3,0.7)-- (0.4,2);
\draw [-] (0.3,0.7)-- (0.2,2);
\draw (-0.6,1.5) node {$3$};
\draw (0.6,1.5) node {$3$};
\end{tikzpicture}
\ENDPICTURE

\PICTURE ramp4
\begin{tikzpicture}[line cap=round,line join=round,>=triangle 45,x=1cm,y=1cm,xscale=0.7,yscale=0.7]

\draw [-] (0,0) -- (-0.3,0.7);
\draw [-] (0,0) -- (0.3,0.7);
\draw [-] (-0.3,0.7)-- (-0.2,2);
\draw [line width=2pt] (-0.3,0.7)-- (-0.4,2);
\draw [line width=2pt] (0.3,0.7)-- (0.4,2);
\draw [-] (0.3,0.7)-- (0.2,2);
\draw (-0.6,1.5) node {$4$};
\draw (0.6,1.5) node {$4$};
\end{tikzpicture}
\ENDPICTURE

\PICTURE rambruin
\begin{tikzpicture}[line cap=round,line join=round,>=triangle 45,x=1cm,y=1cm,xscale=0.7,yscale=0.7]

\draw [-] (0,0) -- (-0.3,0.7);
\draw [line width=2pt] (0,0) -- (0.3,0.7);
\draw (0.6,0.4) node {$2$};
\draw [-] (-0.3,0.7)-- (-0.2,2);
\draw [-] (-0.3,0.7)-- (-0.4,2);
\draw [-] (0.3,0.7)-- (0.4,2);
\draw [-] (0.3,0.7)-- (0.2,2);
\end{tikzpicture}
\ENDPICTURE

\PICTURE ram211
\begin{tikzpicture}[line cap=round,line join=round,>=triangle 45,x=1cm,y=1cm,xscale=0.7,yscale=0.7]

\draw [-] (0,0) -- (-0.3,0.7);
\draw [-] (0,0) -- (0.3,0.7);
\draw [line width=2pt] (-0.3,0.7) -- (-0.3,2);
\draw [-] (0.3,0.7) -- (0.2,2);
\draw [-] (0.3,0.7) -- (0.4,2);
\end{tikzpicture}
\ENDPICTURE

\PICTURE ram311
\begin{tikzpicture}[line cap=round,line join=round,>=triangle 45,x=1cm,y=1cm,xscale=0.7,yscale=0.7]

\draw [-] (0,0) -- (-0.3,0.7);
\draw [-] (0,0) -- (0.3,0.7);
\draw [-] (-0.3,0.7)-- (-0.1,2);
\draw [-] (-0.3,0.7)-- (-0.2,2);
\draw [line width=2pt] (-0.3,0.7)-- (-0.4,2);
\draw [line width=2pt] (0.3,0.7)-- (0.4,2);
\draw [-] (0.3,0.7)-- (0.2,2);
\draw [-] (0.3,0.7)-- (0.1,2);
\draw (-0.6,1.5) node {$3$};
\draw (0.6,1.5) node {$3$};
\end{tikzpicture}
\ENDPICTURE

\PICTURE ram511111
\begin{tikzpicture}[line cap=round,line join=round,>=triangle 45,x=1cm,y=1cm,xscale=0.7,yscale=0.7]

\draw [-] (0,0) -- (-0.3,0.7);
\draw [-] (0,0) -- (0.3,0.7);
\draw [line width=2pt] (-0.3,0.7) -- (-0.3,2);
\draw [-] (0.3,0.7) -- (0.2,2);
\draw [-] (0.3,0.7) -- (0.3,2);
\draw [-] (0.3,0.7) -- (0.4,2);
\draw [-] (0.3,0.7) -- (0.5,2);
\draw [-] (0.3,0.7) -- (0.6,2);
\end{tikzpicture}
\ENDPICTURE
 \end{tabular} &
      $10080,2521$ &
      [0,0],[672,0] &
      [1] &
      [240] \\
      \hline
    \end{tabular}
  \end{tiny}
  \caption{Genus $2$ to genus $1$}
  \label{tab:results-g2}
\end{table}

\begin{table}[htbp]
  \centering
  \begin{tiny}
    \begin{tabular}{|c|c|c|c|c|c|c|c|c|}
      \hline
      Case & $g_X,g_Y,d_X$ & Ramification & $\# G,g_Z$  & $X$ nhyp/hyp & Prym dims & $\deg Z \to C_i$ \\
      \hline
      \texttt{g2} & $4,2,2$ &
      \begin{tabular}{c} $6$ \\ 
  \long\def\PICTURE #pictures.texram22 {}%
  \PICTURE ram10
\begin{tikzpicture}[line cap=round,line join=round,>=triangle 45,x=1cm,y=1cm,xscale=0.7,yscale=0.7]

\draw [line width=2pt] (0,.7) -- (0,2);
\draw [line width=2pt] (0,0) -- (0,.7);

\end{tikzpicture}

\ENDPICTURE

\PICTURE ram22thin
\begin{tikzpicture}[line cap=round,line join=round,>=triangle 45,x=1cm,y=1cm,xscale=0.7,yscale=0.7]

\draw [-] (0,.7) -- (-0.3,2);
\draw [-] (0,.7) -- (0.3,2);
\draw [-] (0,0)-- (0,.7);

\end{tikzpicture}
\ENDPICTURE

\PICTURE ram22thick
\begin{tikzpicture}[line cap=round,line join=round,>=triangle 45,x=1cm,y=1cm,xscale=0.7,yscale=0.7]

\draw [-] (0,.7) -- (-0.3,2);
\draw [-] (0,.7) -- (0.3,2);
\draw [line width=2pt] (0,0)-- (0,.7);
\draw (0.6,0.3) node {$2$};

\end{tikzpicture}
\ENDPICTURE

\PICTURE ram22
\begin{tikzpicture}[line cap=round,line join=round,>=triangle 45,x=1cm,y=1cm,xscale=0.7,yscale=0.7]

\draw [-] (0,.7) -- (-0.3,2);
\draw [-] (0,.7) -- (0.3,2);
\draw [line width=2pt] (0,0)-- (0,.7);

\end{tikzpicture}
\ENDPICTURE

\PICTURE ram23
\begin{tikzpicture}[line cap=round,line join=round,>=triangle 45,x=1cm,y=1cm,xscale=0.7,yscale=0.7]

\draw [-] (0,.7) -- (-0.4,2);
\draw [-] (0,.7) -- (0,2);
\draw [-] (0,.7) -- (0.4,2);
\draw [line width=2pt] (0,0)-- (0,.7);

\end{tikzpicture}
\ENDPICTURE

\PICTURE ram24
\begin{tikzpicture}[line cap=round,line join=round,>=triangle 45,x=1cm,y=1cm,xscale=0.7,yscale=0.7]

\draw [-] (0,.7) -- (-0.6,2);
\draw [-] (0,.7) -- (-0.2,2);
\draw [-] (0,.7) -- (0.2,2);
\draw [-] (0,.7) -- (.6,2);
\draw [line width=2pt] (0,0)-- (0,.7);

\end{tikzpicture}
\ENDPICTURE

\PICTURE ram25
\begin{tikzpicture}[line cap=round,line join=round,>=triangle 45,x=1cm,y=1cm,xscale=0.7,yscale=0.7]

\draw [-] (0,.7) -- (-0.6,2);
\draw [-] (0,.7) -- (-0.3,2);
\draw [-] (0,.7) -- (0,2);
\draw [-] (0,.7) -- (0.3,2);
\draw [-] (0,.7) -- (.6,2);
\draw [line width=2pt] (0,0)-- (0,.7);

\end{tikzpicture}
\ENDPICTURE

\PICTURE ramd
\begin{tikzpicture}[line cap=round,line join=round,>=triangle 45,x=1cm,y=1cm,xscale=0.7,yscale=0.7]

\draw [-] (0,0) -- (-0.3,0.7);
\draw [-] (0,0) -- (0.3,0.7);
\draw [line width=2pt] (-0.3,0.7)-- (-0.3,2);
\draw [line width=2pt] (0.3,0.7)-- (0.3,2);

\end{tikzpicture}
\ENDPICTURE

\PICTURE ramp2
\begin{tikzpicture}[line cap=round,line join=round,>=triangle 45,x=1cm,y=1cm,xscale=0.7,yscale=0.7]

\draw [-] (0,0) -- (-0.3,0.7);
\draw [-] (0,0) -- (0.3,0.7);
\draw [-] (-0.3,0.7)-- (-0.3,2);
\draw [-] (-0.3,0.7)-- (-0.2,2);
\draw [-] (-0.3,0.7)-- (-0.1,2);
\draw [line width=2pt] (-0.3,0.7)-- (-0.5,2);
\draw [line width=2pt] (0.3,0.7)-- (0.5,2);
\draw [-] (0.3,0.7)-- (0.1,2);
\draw [-] (0.3,0.7)-- (0.2,2);
\draw [-] (0.3,0.7)-- (0.3,2);
\draw (-0.8,1.5) node {$2$};
\draw (0.8,1.5) node {$2$};
\end{tikzpicture}
\ENDPICTURE

\PICTURE ramp2d
\begin{tikzpicture}[line cap=round,line join=round,>=triangle 45,x=1cm,y=1cm,xscale=0.7,yscale=0.7]

\draw [-] (0,0) -- (-0.3,0.7);
\draw [-] (0,0) -- (0.3,0.7);
\draw [line width=2pt] (-0.3,0.7)-- (-0.3,2);
\draw [line width=2pt] (0.3,0.7)-- (0.5,2);
\draw [-] (0.3,0.7)-- (0.0,2);
\draw [-] (0.3,0.7)-- (0.2,2);
\draw (-0.8,1.5) node {$4$};
\draw (0.8,1.5) node {$2$};
\end{tikzpicture}
\ENDPICTURE

\PICTURE ramp3
\begin{tikzpicture}[line cap=round,line join=round,>=triangle 45,x=1cm,y=1cm,xscale=0.7,yscale=0.7]

\draw [-] (0,0) -- (-0.3,0.7);
\draw [-] (0,0) -- (0.3,0.7);
\draw [-] (-0.3,0.7)-- (-0.2,2);
\draw [line width=2pt] (-0.3,0.7)-- (-0.4,2);
\draw [line width=2pt] (0.3,0.7)-- (0.4,2);
\draw [-] (0.3,0.7)-- (0.2,2);
\draw (-0.6,1.5) node {$3$};
\draw (0.6,1.5) node {$3$};
\end{tikzpicture}
\ENDPICTURE

\PICTURE ramp4
\begin{tikzpicture}[line cap=round,line join=round,>=triangle 45,x=1cm,y=1cm,xscale=0.7,yscale=0.7]

\draw [-] (0,0) -- (-0.3,0.7);
\draw [-] (0,0) -- (0.3,0.7);
\draw [-] (-0.3,0.7)-- (-0.2,2);
\draw [line width=2pt] (-0.3,0.7)-- (-0.4,2);
\draw [line width=2pt] (0.3,0.7)-- (0.4,2);
\draw [-] (0.3,0.7)-- (0.2,2);
\draw (-0.6,1.5) node {$4$};
\draw (0.6,1.5) node {$4$};
\end{tikzpicture}
\ENDPICTURE

\PICTURE rambruin
\begin{tikzpicture}[line cap=round,line join=round,>=triangle 45,x=1cm,y=1cm,xscale=0.7,yscale=0.7]

\draw [-] (0,0) -- (-0.3,0.7);
\draw [line width=2pt] (0,0) -- (0.3,0.7);
\draw (0.6,0.4) node {$2$};
\draw [-] (-0.3,0.7)-- (-0.2,2);
\draw [-] (-0.3,0.7)-- (-0.4,2);
\draw [-] (0.3,0.7)-- (0.4,2);
\draw [-] (0.3,0.7)-- (0.2,2);
\end{tikzpicture}
\ENDPICTURE

\PICTURE ram211
\begin{tikzpicture}[line cap=round,line join=round,>=triangle 45,x=1cm,y=1cm,xscale=0.7,yscale=0.7]

\draw [-] (0,0) -- (-0.3,0.7);
\draw [-] (0,0) -- (0.3,0.7);
\draw [line width=2pt] (-0.3,0.7) -- (-0.3,2);
\draw [-] (0.3,0.7) -- (0.2,2);
\draw [-] (0.3,0.7) -- (0.4,2);
\end{tikzpicture}
\ENDPICTURE

\PICTURE ram311
\begin{tikzpicture}[line cap=round,line join=round,>=triangle 45,x=1cm,y=1cm,xscale=0.7,yscale=0.7]

\draw [-] (0,0) -- (-0.3,0.7);
\draw [-] (0,0) -- (0.3,0.7);
\draw [-] (-0.3,0.7)-- (-0.1,2);
\draw [-] (-0.3,0.7)-- (-0.2,2);
\draw [line width=2pt] (-0.3,0.7)-- (-0.4,2);
\draw [line width=2pt] (0.3,0.7)-- (0.4,2);
\draw [-] (0.3,0.7)-- (0.2,2);
\draw [-] (0.3,0.7)-- (0.1,2);
\draw (-0.6,1.5) node {$3$};
\draw (0.6,1.5) node {$3$};
\end{tikzpicture}
\ENDPICTURE

\PICTURE ram511111
\begin{tikzpicture}[line cap=round,line join=round,>=triangle 45,x=1cm,y=1cm,xscale=0.7,yscale=0.7]

\draw [-] (0,0) -- (-0.3,0.7);
\draw [-] (0,0) -- (0.3,0.7);
\draw [line width=2pt] (-0.3,0.7) -- (-0.3,2);
\draw [-] (0.3,0.7) -- (0.2,2);
\draw [-] (0.3,0.7) -- (0.3,2);
\draw [-] (0.3,0.7) -- (0.4,2);
\draw [-] (0.3,0.7) -- (0.5,2);
\draw [-] (0.3,0.7) -- (0.6,2);
\end{tikzpicture}
\ENDPICTURE
 \end{tabular}
      \begin{tabular}{c} $2$ \\ 
  \long\def\PICTURE #pictures.texram211 {}%
  \PICTURE ram10
\begin{tikzpicture}[line cap=round,line join=round,>=triangle 45,x=1cm,y=1cm,xscale=0.7,yscale=0.7]

\draw [line width=2pt] (0,.7) -- (0,2);
\draw [line width=2pt] (0,0) -- (0,.7);

\end{tikzpicture}

\ENDPICTURE

\PICTURE ram22thin
\begin{tikzpicture}[line cap=round,line join=round,>=triangle 45,x=1cm,y=1cm,xscale=0.7,yscale=0.7]

\draw [-] (0,.7) -- (-0.3,2);
\draw [-] (0,.7) -- (0.3,2);
\draw [-] (0,0)-- (0,.7);

\end{tikzpicture}
\ENDPICTURE

\PICTURE ram22thick
\begin{tikzpicture}[line cap=round,line join=round,>=triangle 45,x=1cm,y=1cm,xscale=0.7,yscale=0.7]

\draw [-] (0,.7) -- (-0.3,2);
\draw [-] (0,.7) -- (0.3,2);
\draw [line width=2pt] (0,0)-- (0,.7);
\draw (0.6,0.3) node {$2$};

\end{tikzpicture}
\ENDPICTURE

\PICTURE ram22
\begin{tikzpicture}[line cap=round,line join=round,>=triangle 45,x=1cm,y=1cm,xscale=0.7,yscale=0.7]

\draw [-] (0,.7) -- (-0.3,2);
\draw [-] (0,.7) -- (0.3,2);
\draw [line width=2pt] (0,0)-- (0,.7);

\end{tikzpicture}
\ENDPICTURE

\PICTURE ram23
\begin{tikzpicture}[line cap=round,line join=round,>=triangle 45,x=1cm,y=1cm,xscale=0.7,yscale=0.7]

\draw [-] (0,.7) -- (-0.4,2);
\draw [-] (0,.7) -- (0,2);
\draw [-] (0,.7) -- (0.4,2);
\draw [line width=2pt] (0,0)-- (0,.7);

\end{tikzpicture}
\ENDPICTURE

\PICTURE ram24
\begin{tikzpicture}[line cap=round,line join=round,>=triangle 45,x=1cm,y=1cm,xscale=0.7,yscale=0.7]

\draw [-] (0,.7) -- (-0.6,2);
\draw [-] (0,.7) -- (-0.2,2);
\draw [-] (0,.7) -- (0.2,2);
\draw [-] (0,.7) -- (.6,2);
\draw [line width=2pt] (0,0)-- (0,.7);

\end{tikzpicture}
\ENDPICTURE

\PICTURE ram25
\begin{tikzpicture}[line cap=round,line join=round,>=triangle 45,x=1cm,y=1cm,xscale=0.7,yscale=0.7]

\draw [-] (0,.7) -- (-0.6,2);
\draw [-] (0,.7) -- (-0.3,2);
\draw [-] (0,.7) -- (0,2);
\draw [-] (0,.7) -- (0.3,2);
\draw [-] (0,.7) -- (.6,2);
\draw [line width=2pt] (0,0)-- (0,.7);

\end{tikzpicture}
\ENDPICTURE

\PICTURE ramd
\begin{tikzpicture}[line cap=round,line join=round,>=triangle 45,x=1cm,y=1cm,xscale=0.7,yscale=0.7]

\draw [-] (0,0) -- (-0.3,0.7);
\draw [-] (0,0) -- (0.3,0.7);
\draw [line width=2pt] (-0.3,0.7)-- (-0.3,2);
\draw [line width=2pt] (0.3,0.7)-- (0.3,2);

\end{tikzpicture}
\ENDPICTURE

\PICTURE ramp2
\begin{tikzpicture}[line cap=round,line join=round,>=triangle 45,x=1cm,y=1cm,xscale=0.7,yscale=0.7]

\draw [-] (0,0) -- (-0.3,0.7);
\draw [-] (0,0) -- (0.3,0.7);
\draw [-] (-0.3,0.7)-- (-0.3,2);
\draw [-] (-0.3,0.7)-- (-0.2,2);
\draw [-] (-0.3,0.7)-- (-0.1,2);
\draw [line width=2pt] (-0.3,0.7)-- (-0.5,2);
\draw [line width=2pt] (0.3,0.7)-- (0.5,2);
\draw [-] (0.3,0.7)-- (0.1,2);
\draw [-] (0.3,0.7)-- (0.2,2);
\draw [-] (0.3,0.7)-- (0.3,2);
\draw (-0.8,1.5) node {$2$};
\draw (0.8,1.5) node {$2$};
\end{tikzpicture}
\ENDPICTURE

\PICTURE ramp2d
\begin{tikzpicture}[line cap=round,line join=round,>=triangle 45,x=1cm,y=1cm,xscale=0.7,yscale=0.7]

\draw [-] (0,0) -- (-0.3,0.7);
\draw [-] (0,0) -- (0.3,0.7);
\draw [line width=2pt] (-0.3,0.7)-- (-0.3,2);
\draw [line width=2pt] (0.3,0.7)-- (0.5,2);
\draw [-] (0.3,0.7)-- (0.0,2);
\draw [-] (0.3,0.7)-- (0.2,2);
\draw (-0.8,1.5) node {$4$};
\draw (0.8,1.5) node {$2$};
\end{tikzpicture}
\ENDPICTURE

\PICTURE ramp3
\begin{tikzpicture}[line cap=round,line join=round,>=triangle 45,x=1cm,y=1cm,xscale=0.7,yscale=0.7]

\draw [-] (0,0) -- (-0.3,0.7);
\draw [-] (0,0) -- (0.3,0.7);
\draw [-] (-0.3,0.7)-- (-0.2,2);
\draw [line width=2pt] (-0.3,0.7)-- (-0.4,2);
\draw [line width=2pt] (0.3,0.7)-- (0.4,2);
\draw [-] (0.3,0.7)-- (0.2,2);
\draw (-0.6,1.5) node {$3$};
\draw (0.6,1.5) node {$3$};
\end{tikzpicture}
\ENDPICTURE

\PICTURE ramp4
\begin{tikzpicture}[line cap=round,line join=round,>=triangle 45,x=1cm,y=1cm,xscale=0.7,yscale=0.7]

\draw [-] (0,0) -- (-0.3,0.7);
\draw [-] (0,0) -- (0.3,0.7);
\draw [-] (-0.3,0.7)-- (-0.2,2);
\draw [line width=2pt] (-0.3,0.7)-- (-0.4,2);
\draw [line width=2pt] (0.3,0.7)-- (0.4,2);
\draw [-] (0.3,0.7)-- (0.2,2);
\draw (-0.6,1.5) node {$4$};
\draw (0.6,1.5) node {$4$};
\end{tikzpicture}
\ENDPICTURE

\PICTURE rambruin
\begin{tikzpicture}[line cap=round,line join=round,>=triangle 45,x=1cm,y=1cm,xscale=0.7,yscale=0.7]

\draw [-] (0,0) -- (-0.3,0.7);
\draw [line width=2pt] (0,0) -- (0.3,0.7);
\draw (0.6,0.4) node {$2$};
\draw [-] (-0.3,0.7)-- (-0.2,2);
\draw [-] (-0.3,0.7)-- (-0.4,2);
\draw [-] (0.3,0.7)-- (0.4,2);
\draw [-] (0.3,0.7)-- (0.2,2);
\end{tikzpicture}
\ENDPICTURE

\PICTURE ram211
\begin{tikzpicture}[line cap=round,line join=round,>=triangle 45,x=1cm,y=1cm,xscale=0.7,yscale=0.7]

\draw [-] (0,0) -- (-0.3,0.7);
\draw [-] (0,0) -- (0.3,0.7);
\draw [line width=2pt] (-0.3,0.7) -- (-0.3,2);
\draw [-] (0.3,0.7) -- (0.2,2);
\draw [-] (0.3,0.7) -- (0.4,2);
\end{tikzpicture}
\ENDPICTURE

\PICTURE ram311
\begin{tikzpicture}[line cap=round,line join=round,>=triangle 45,x=1cm,y=1cm,xscale=0.7,yscale=0.7]

\draw [-] (0,0) -- (-0.3,0.7);
\draw [-] (0,0) -- (0.3,0.7);
\draw [-] (-0.3,0.7)-- (-0.1,2);
\draw [-] (-0.3,0.7)-- (-0.2,2);
\draw [line width=2pt] (-0.3,0.7)-- (-0.4,2);
\draw [line width=2pt] (0.3,0.7)-- (0.4,2);
\draw [-] (0.3,0.7)-- (0.2,2);
\draw [-] (0.3,0.7)-- (0.1,2);
\draw (-0.6,1.5) node {$3$};
\draw (0.6,1.5) node {$3$};
\end{tikzpicture}
\ENDPICTURE

\PICTURE ram511111
\begin{tikzpicture}[line cap=round,line join=round,>=triangle 45,x=1cm,y=1cm,xscale=0.7,yscale=0.7]

\draw [-] (0,0) -- (-0.3,0.7);
\draw [-] (0,0) -- (0.3,0.7);
\draw [line width=2pt] (-0.3,0.7) -- (-0.3,2);
\draw [-] (0.3,0.7) -- (0.2,2);
\draw [-] (0.3,0.7) -- (0.3,2);
\draw [-] (0.3,0.7) -- (0.4,2);
\draw [-] (0.3,0.7) -- (0.5,2);
\draw [-] (0.3,0.7) -- (0.6,2);
\end{tikzpicture}
\ENDPICTURE
 \end{tabular} &
      $8,9$ &
      $[32,0],[0,0]$ &
      $[2]$ &
      $[2]$ \\
      \hline
      \texttt{g3} & $6,3,2$ &
      \begin{tabular}{c} $8$ \\ 
  \long\def\PICTURE #pictures.texram22 {}%
  \PICTURE ram10
\begin{tikzpicture}[line cap=round,line join=round,>=triangle 45,x=1cm,y=1cm,xscale=0.7,yscale=0.7]

\draw [line width=2pt] (0,.7) -- (0,2);
\draw [line width=2pt] (0,0) -- (0,.7);

\end{tikzpicture}

\ENDPICTURE

\PICTURE ram22thin
\begin{tikzpicture}[line cap=round,line join=round,>=triangle 45,x=1cm,y=1cm,xscale=0.7,yscale=0.7]

\draw [-] (0,.7) -- (-0.3,2);
\draw [-] (0,.7) -- (0.3,2);
\draw [-] (0,0)-- (0,.7);

\end{tikzpicture}
\ENDPICTURE

\PICTURE ram22thick
\begin{tikzpicture}[line cap=round,line join=round,>=triangle 45,x=1cm,y=1cm,xscale=0.7,yscale=0.7]

\draw [-] (0,.7) -- (-0.3,2);
\draw [-] (0,.7) -- (0.3,2);
\draw [line width=2pt] (0,0)-- (0,.7);
\draw (0.6,0.3) node {$2$};

\end{tikzpicture}
\ENDPICTURE

\PICTURE ram22
\begin{tikzpicture}[line cap=round,line join=round,>=triangle 45,x=1cm,y=1cm,xscale=0.7,yscale=0.7]

\draw [-] (0,.7) -- (-0.3,2);
\draw [-] (0,.7) -- (0.3,2);
\draw [line width=2pt] (0,0)-- (0,.7);

\end{tikzpicture}
\ENDPICTURE

\PICTURE ram23
\begin{tikzpicture}[line cap=round,line join=round,>=triangle 45,x=1cm,y=1cm,xscale=0.7,yscale=0.7]

\draw [-] (0,.7) -- (-0.4,2);
\draw [-] (0,.7) -- (0,2);
\draw [-] (0,.7) -- (0.4,2);
\draw [line width=2pt] (0,0)-- (0,.7);

\end{tikzpicture}
\ENDPICTURE

\PICTURE ram24
\begin{tikzpicture}[line cap=round,line join=round,>=triangle 45,x=1cm,y=1cm,xscale=0.7,yscale=0.7]

\draw [-] (0,.7) -- (-0.6,2);
\draw [-] (0,.7) -- (-0.2,2);
\draw [-] (0,.7) -- (0.2,2);
\draw [-] (0,.7) -- (.6,2);
\draw [line width=2pt] (0,0)-- (0,.7);

\end{tikzpicture}
\ENDPICTURE

\PICTURE ram25
\begin{tikzpicture}[line cap=round,line join=round,>=triangle 45,x=1cm,y=1cm,xscale=0.7,yscale=0.7]

\draw [-] (0,.7) -- (-0.6,2);
\draw [-] (0,.7) -- (-0.3,2);
\draw [-] (0,.7) -- (0,2);
\draw [-] (0,.7) -- (0.3,2);
\draw [-] (0,.7) -- (.6,2);
\draw [line width=2pt] (0,0)-- (0,.7);

\end{tikzpicture}
\ENDPICTURE

\PICTURE ramd
\begin{tikzpicture}[line cap=round,line join=round,>=triangle 45,x=1cm,y=1cm,xscale=0.7,yscale=0.7]

\draw [-] (0,0) -- (-0.3,0.7);
\draw [-] (0,0) -- (0.3,0.7);
\draw [line width=2pt] (-0.3,0.7)-- (-0.3,2);
\draw [line width=2pt] (0.3,0.7)-- (0.3,2);

\end{tikzpicture}
\ENDPICTURE

\PICTURE ramp2
\begin{tikzpicture}[line cap=round,line join=round,>=triangle 45,x=1cm,y=1cm,xscale=0.7,yscale=0.7]

\draw [-] (0,0) -- (-0.3,0.7);
\draw [-] (0,0) -- (0.3,0.7);
\draw [-] (-0.3,0.7)-- (-0.3,2);
\draw [-] (-0.3,0.7)-- (-0.2,2);
\draw [-] (-0.3,0.7)-- (-0.1,2);
\draw [line width=2pt] (-0.3,0.7)-- (-0.5,2);
\draw [line width=2pt] (0.3,0.7)-- (0.5,2);
\draw [-] (0.3,0.7)-- (0.1,2);
\draw [-] (0.3,0.7)-- (0.2,2);
\draw [-] (0.3,0.7)-- (0.3,2);
\draw (-0.8,1.5) node {$2$};
\draw (0.8,1.5) node {$2$};
\end{tikzpicture}
\ENDPICTURE

\PICTURE ramp2d
\begin{tikzpicture}[line cap=round,line join=round,>=triangle 45,x=1cm,y=1cm,xscale=0.7,yscale=0.7]

\draw [-] (0,0) -- (-0.3,0.7);
\draw [-] (0,0) -- (0.3,0.7);
\draw [line width=2pt] (-0.3,0.7)-- (-0.3,2);
\draw [line width=2pt] (0.3,0.7)-- (0.5,2);
\draw [-] (0.3,0.7)-- (0.0,2);
\draw [-] (0.3,0.7)-- (0.2,2);
\draw (-0.8,1.5) node {$4$};
\draw (0.8,1.5) node {$2$};
\end{tikzpicture}
\ENDPICTURE

\PICTURE ramp3
\begin{tikzpicture}[line cap=round,line join=round,>=triangle 45,x=1cm,y=1cm,xscale=0.7,yscale=0.7]

\draw [-] (0,0) -- (-0.3,0.7);
\draw [-] (0,0) -- (0.3,0.7);
\draw [-] (-0.3,0.7)-- (-0.2,2);
\draw [line width=2pt] (-0.3,0.7)-- (-0.4,2);
\draw [line width=2pt] (0.3,0.7)-- (0.4,2);
\draw [-] (0.3,0.7)-- (0.2,2);
\draw (-0.6,1.5) node {$3$};
\draw (0.6,1.5) node {$3$};
\end{tikzpicture}
\ENDPICTURE

\PICTURE ramp4
\begin{tikzpicture}[line cap=round,line join=round,>=triangle 45,x=1cm,y=1cm,xscale=0.7,yscale=0.7]

\draw [-] (0,0) -- (-0.3,0.7);
\draw [-] (0,0) -- (0.3,0.7);
\draw [-] (-0.3,0.7)-- (-0.2,2);
\draw [line width=2pt] (-0.3,0.7)-- (-0.4,2);
\draw [line width=2pt] (0.3,0.7)-- (0.4,2);
\draw [-] (0.3,0.7)-- (0.2,2);
\draw (-0.6,1.5) node {$4$};
\draw (0.6,1.5) node {$4$};
\end{tikzpicture}
\ENDPICTURE

\PICTURE rambruin
\begin{tikzpicture}[line cap=round,line join=round,>=triangle 45,x=1cm,y=1cm,xscale=0.7,yscale=0.7]

\draw [-] (0,0) -- (-0.3,0.7);
\draw [line width=2pt] (0,0) -- (0.3,0.7);
\draw (0.6,0.4) node {$2$};
\draw [-] (-0.3,0.7)-- (-0.2,2);
\draw [-] (-0.3,0.7)-- (-0.4,2);
\draw [-] (0.3,0.7)-- (0.4,2);
\draw [-] (0.3,0.7)-- (0.2,2);
\end{tikzpicture}
\ENDPICTURE

\PICTURE ram211
\begin{tikzpicture}[line cap=round,line join=round,>=triangle 45,x=1cm,y=1cm,xscale=0.7,yscale=0.7]

\draw [-] (0,0) -- (-0.3,0.7);
\draw [-] (0,0) -- (0.3,0.7);
\draw [line width=2pt] (-0.3,0.7) -- (-0.3,2);
\draw [-] (0.3,0.7) -- (0.2,2);
\draw [-] (0.3,0.7) -- (0.4,2);
\end{tikzpicture}
\ENDPICTURE

\PICTURE ram311
\begin{tikzpicture}[line cap=round,line join=round,>=triangle 45,x=1cm,y=1cm,xscale=0.7,yscale=0.7]

\draw [-] (0,0) -- (-0.3,0.7);
\draw [-] (0,0) -- (0.3,0.7);
\draw [-] (-0.3,0.7)-- (-0.1,2);
\draw [-] (-0.3,0.7)-- (-0.2,2);
\draw [line width=2pt] (-0.3,0.7)-- (-0.4,2);
\draw [line width=2pt] (0.3,0.7)-- (0.4,2);
\draw [-] (0.3,0.7)-- (0.2,2);
\draw [-] (0.3,0.7)-- (0.1,2);
\draw (-0.6,1.5) node {$3$};
\draw (0.6,1.5) node {$3$};
\end{tikzpicture}
\ENDPICTURE

\PICTURE ram511111
\begin{tikzpicture}[line cap=round,line join=round,>=triangle 45,x=1cm,y=1cm,xscale=0.7,yscale=0.7]

\draw [-] (0,0) -- (-0.3,0.7);
\draw [-] (0,0) -- (0.3,0.7);
\draw [line width=2pt] (-0.3,0.7) -- (-0.3,2);
\draw [-] (0.3,0.7) -- (0.2,2);
\draw [-] (0.3,0.7) -- (0.3,2);
\draw [-] (0.3,0.7) -- (0.4,2);
\draw [-] (0.3,0.7) -- (0.5,2);
\draw [-] (0.3,0.7) -- (0.6,2);
\end{tikzpicture}
\ENDPICTURE
 \end{tabular}
      \begin{tabular}{c} $2$ \\ 
  \long\def\PICTURE #pictures.texram211 {}%
  \PICTURE ram10
\begin{tikzpicture}[line cap=round,line join=round,>=triangle 45,x=1cm,y=1cm,xscale=0.7,yscale=0.7]

\draw [line width=2pt] (0,.7) -- (0,2);
\draw [line width=2pt] (0,0) -- (0,.7);

\end{tikzpicture}

\ENDPICTURE

\PICTURE ram22thin
\begin{tikzpicture}[line cap=round,line join=round,>=triangle 45,x=1cm,y=1cm,xscale=0.7,yscale=0.7]

\draw [-] (0,.7) -- (-0.3,2);
\draw [-] (0,.7) -- (0.3,2);
\draw [-] (0,0)-- (0,.7);

\end{tikzpicture}
\ENDPICTURE

\PICTURE ram22thick
\begin{tikzpicture}[line cap=round,line join=round,>=triangle 45,x=1cm,y=1cm,xscale=0.7,yscale=0.7]

\draw [-] (0,.7) -- (-0.3,2);
\draw [-] (0,.7) -- (0.3,2);
\draw [line width=2pt] (0,0)-- (0,.7);
\draw (0.6,0.3) node {$2$};

\end{tikzpicture}
\ENDPICTURE

\PICTURE ram22
\begin{tikzpicture}[line cap=round,line join=round,>=triangle 45,x=1cm,y=1cm,xscale=0.7,yscale=0.7]

\draw [-] (0,.7) -- (-0.3,2);
\draw [-] (0,.7) -- (0.3,2);
\draw [line width=2pt] (0,0)-- (0,.7);

\end{tikzpicture}
\ENDPICTURE

\PICTURE ram23
\begin{tikzpicture}[line cap=round,line join=round,>=triangle 45,x=1cm,y=1cm,xscale=0.7,yscale=0.7]

\draw [-] (0,.7) -- (-0.4,2);
\draw [-] (0,.7) -- (0,2);
\draw [-] (0,.7) -- (0.4,2);
\draw [line width=2pt] (0,0)-- (0,.7);

\end{tikzpicture}
\ENDPICTURE

\PICTURE ram24
\begin{tikzpicture}[line cap=round,line join=round,>=triangle 45,x=1cm,y=1cm,xscale=0.7,yscale=0.7]

\draw [-] (0,.7) -- (-0.6,2);
\draw [-] (0,.7) -- (-0.2,2);
\draw [-] (0,.7) -- (0.2,2);
\draw [-] (0,.7) -- (.6,2);
\draw [line width=2pt] (0,0)-- (0,.7);

\end{tikzpicture}
\ENDPICTURE

\PICTURE ram25
\begin{tikzpicture}[line cap=round,line join=round,>=triangle 45,x=1cm,y=1cm,xscale=0.7,yscale=0.7]

\draw [-] (0,.7) -- (-0.6,2);
\draw [-] (0,.7) -- (-0.3,2);
\draw [-] (0,.7) -- (0,2);
\draw [-] (0,.7) -- (0.3,2);
\draw [-] (0,.7) -- (.6,2);
\draw [line width=2pt] (0,0)-- (0,.7);

\end{tikzpicture}
\ENDPICTURE

\PICTURE ramd
\begin{tikzpicture}[line cap=round,line join=round,>=triangle 45,x=1cm,y=1cm,xscale=0.7,yscale=0.7]

\draw [-] (0,0) -- (-0.3,0.7);
\draw [-] (0,0) -- (0.3,0.7);
\draw [line width=2pt] (-0.3,0.7)-- (-0.3,2);
\draw [line width=2pt] (0.3,0.7)-- (0.3,2);

\end{tikzpicture}
\ENDPICTURE

\PICTURE ramp2
\begin{tikzpicture}[line cap=round,line join=round,>=triangle 45,x=1cm,y=1cm,xscale=0.7,yscale=0.7]

\draw [-] (0,0) -- (-0.3,0.7);
\draw [-] (0,0) -- (0.3,0.7);
\draw [-] (-0.3,0.7)-- (-0.3,2);
\draw [-] (-0.3,0.7)-- (-0.2,2);
\draw [-] (-0.3,0.7)-- (-0.1,2);
\draw [line width=2pt] (-0.3,0.7)-- (-0.5,2);
\draw [line width=2pt] (0.3,0.7)-- (0.5,2);
\draw [-] (0.3,0.7)-- (0.1,2);
\draw [-] (0.3,0.7)-- (0.2,2);
\draw [-] (0.3,0.7)-- (0.3,2);
\draw (-0.8,1.5) node {$2$};
\draw (0.8,1.5) node {$2$};
\end{tikzpicture}
\ENDPICTURE

\PICTURE ramp2d
\begin{tikzpicture}[line cap=round,line join=round,>=triangle 45,x=1cm,y=1cm,xscale=0.7,yscale=0.7]

\draw [-] (0,0) -- (-0.3,0.7);
\draw [-] (0,0) -- (0.3,0.7);
\draw [line width=2pt] (-0.3,0.7)-- (-0.3,2);
\draw [line width=2pt] (0.3,0.7)-- (0.5,2);
\draw [-] (0.3,0.7)-- (0.0,2);
\draw [-] (0.3,0.7)-- (0.2,2);
\draw (-0.8,1.5) node {$4$};
\draw (0.8,1.5) node {$2$};
\end{tikzpicture}
\ENDPICTURE

\PICTURE ramp3
\begin{tikzpicture}[line cap=round,line join=round,>=triangle 45,x=1cm,y=1cm,xscale=0.7,yscale=0.7]

\draw [-] (0,0) -- (-0.3,0.7);
\draw [-] (0,0) -- (0.3,0.7);
\draw [-] (-0.3,0.7)-- (-0.2,2);
\draw [line width=2pt] (-0.3,0.7)-- (-0.4,2);
\draw [line width=2pt] (0.3,0.7)-- (0.4,2);
\draw [-] (0.3,0.7)-- (0.2,2);
\draw (-0.6,1.5) node {$3$};
\draw (0.6,1.5) node {$3$};
\end{tikzpicture}
\ENDPICTURE

\PICTURE ramp4
\begin{tikzpicture}[line cap=round,line join=round,>=triangle 45,x=1cm,y=1cm,xscale=0.7,yscale=0.7]

\draw [-] (0,0) -- (-0.3,0.7);
\draw [-] (0,0) -- (0.3,0.7);
\draw [-] (-0.3,0.7)-- (-0.2,2);
\draw [line width=2pt] (-0.3,0.7)-- (-0.4,2);
\draw [line width=2pt] (0.3,0.7)-- (0.4,2);
\draw [-] (0.3,0.7)-- (0.2,2);
\draw (-0.6,1.5) node {$4$};
\draw (0.6,1.5) node {$4$};
\end{tikzpicture}
\ENDPICTURE

\PICTURE rambruin
\begin{tikzpicture}[line cap=round,line join=round,>=triangle 45,x=1cm,y=1cm,xscale=0.7,yscale=0.7]

\draw [-] (0,0) -- (-0.3,0.7);
\draw [line width=2pt] (0,0) -- (0.3,0.7);
\draw (0.6,0.4) node {$2$};
\draw [-] (-0.3,0.7)-- (-0.2,2);
\draw [-] (-0.3,0.7)-- (-0.4,2);
\draw [-] (0.3,0.7)-- (0.4,2);
\draw [-] (0.3,0.7)-- (0.2,2);
\end{tikzpicture}
\ENDPICTURE

\PICTURE ram211
\begin{tikzpicture}[line cap=round,line join=round,>=triangle 45,x=1cm,y=1cm,xscale=0.7,yscale=0.7]

\draw [-] (0,0) -- (-0.3,0.7);
\draw [-] (0,0) -- (0.3,0.7);
\draw [line width=2pt] (-0.3,0.7) -- (-0.3,2);
\draw [-] (0.3,0.7) -- (0.2,2);
\draw [-] (0.3,0.7) -- (0.4,2);
\end{tikzpicture}
\ENDPICTURE

\PICTURE ram311
\begin{tikzpicture}[line cap=round,line join=round,>=triangle 45,x=1cm,y=1cm,xscale=0.7,yscale=0.7]

\draw [-] (0,0) -- (-0.3,0.7);
\draw [-] (0,0) -- (0.3,0.7);
\draw [-] (-0.3,0.7)-- (-0.1,2);
\draw [-] (-0.3,0.7)-- (-0.2,2);
\draw [line width=2pt] (-0.3,0.7)-- (-0.4,2);
\draw [line width=2pt] (0.3,0.7)-- (0.4,2);
\draw [-] (0.3,0.7)-- (0.2,2);
\draw [-] (0.3,0.7)-- (0.1,2);
\draw (-0.6,1.5) node {$3$};
\draw (0.6,1.5) node {$3$};
\end{tikzpicture}
\ENDPICTURE

\PICTURE ram511111
\begin{tikzpicture}[line cap=round,line join=round,>=triangle 45,x=1cm,y=1cm,xscale=0.7,yscale=0.7]

\draw [-] (0,0) -- (-0.3,0.7);
\draw [-] (0,0) -- (0.3,0.7);
\draw [line width=2pt] (-0.3,0.7) -- (-0.3,2);
\draw [-] (0.3,0.7) -- (0.2,2);
\draw [-] (0.3,0.7) -- (0.3,2);
\draw [-] (0.3,0.7) -- (0.4,2);
\draw [-] (0.3,0.7) -- (0.5,2);
\draw [-] (0.3,0.7) -- (0.6,2);
\end{tikzpicture}
\ENDPICTURE
 \end{tabular} &
      $8,13$ &
      $[128,0],[0,0]$ &
      $[3]$ &
      $[2]$ \\
      \hline
      \texttt{g4} & $8,4,2$ &
      \begin{tabular}{c} $10$ \\ 
  \long\def\PICTURE #pictures.texram22 {}%
  \PICTURE ram10
\begin{tikzpicture}[line cap=round,line join=round,>=triangle 45,x=1cm,y=1cm,xscale=0.7,yscale=0.7]

\draw [line width=2pt] (0,.7) -- (0,2);
\draw [line width=2pt] (0,0) -- (0,.7);

\end{tikzpicture}

\ENDPICTURE

\PICTURE ram22thin
\begin{tikzpicture}[line cap=round,line join=round,>=triangle 45,x=1cm,y=1cm,xscale=0.7,yscale=0.7]

\draw [-] (0,.7) -- (-0.3,2);
\draw [-] (0,.7) -- (0.3,2);
\draw [-] (0,0)-- (0,.7);

\end{tikzpicture}
\ENDPICTURE

\PICTURE ram22thick
\begin{tikzpicture}[line cap=round,line join=round,>=triangle 45,x=1cm,y=1cm,xscale=0.7,yscale=0.7]

\draw [-] (0,.7) -- (-0.3,2);
\draw [-] (0,.7) -- (0.3,2);
\draw [line width=2pt] (0,0)-- (0,.7);
\draw (0.6,0.3) node {$2$};

\end{tikzpicture}
\ENDPICTURE

\PICTURE ram22
\begin{tikzpicture}[line cap=round,line join=round,>=triangle 45,x=1cm,y=1cm,xscale=0.7,yscale=0.7]

\draw [-] (0,.7) -- (-0.3,2);
\draw [-] (0,.7) -- (0.3,2);
\draw [line width=2pt] (0,0)-- (0,.7);

\end{tikzpicture}
\ENDPICTURE

\PICTURE ram23
\begin{tikzpicture}[line cap=round,line join=round,>=triangle 45,x=1cm,y=1cm,xscale=0.7,yscale=0.7]

\draw [-] (0,.7) -- (-0.4,2);
\draw [-] (0,.7) -- (0,2);
\draw [-] (0,.7) -- (0.4,2);
\draw [line width=2pt] (0,0)-- (0,.7);

\end{tikzpicture}
\ENDPICTURE

\PICTURE ram24
\begin{tikzpicture}[line cap=round,line join=round,>=triangle 45,x=1cm,y=1cm,xscale=0.7,yscale=0.7]

\draw [-] (0,.7) -- (-0.6,2);
\draw [-] (0,.7) -- (-0.2,2);
\draw [-] (0,.7) -- (0.2,2);
\draw [-] (0,.7) -- (.6,2);
\draw [line width=2pt] (0,0)-- (0,.7);

\end{tikzpicture}
\ENDPICTURE

\PICTURE ram25
\begin{tikzpicture}[line cap=round,line join=round,>=triangle 45,x=1cm,y=1cm,xscale=0.7,yscale=0.7]

\draw [-] (0,.7) -- (-0.6,2);
\draw [-] (0,.7) -- (-0.3,2);
\draw [-] (0,.7) -- (0,2);
\draw [-] (0,.7) -- (0.3,2);
\draw [-] (0,.7) -- (.6,2);
\draw [line width=2pt] (0,0)-- (0,.7);

\end{tikzpicture}
\ENDPICTURE

\PICTURE ramd
\begin{tikzpicture}[line cap=round,line join=round,>=triangle 45,x=1cm,y=1cm,xscale=0.7,yscale=0.7]

\draw [-] (0,0) -- (-0.3,0.7);
\draw [-] (0,0) -- (0.3,0.7);
\draw [line width=2pt] (-0.3,0.7)-- (-0.3,2);
\draw [line width=2pt] (0.3,0.7)-- (0.3,2);

\end{tikzpicture}
\ENDPICTURE

\PICTURE ramp2
\begin{tikzpicture}[line cap=round,line join=round,>=triangle 45,x=1cm,y=1cm,xscale=0.7,yscale=0.7]

\draw [-] (0,0) -- (-0.3,0.7);
\draw [-] (0,0) -- (0.3,0.7);
\draw [-] (-0.3,0.7)-- (-0.3,2);
\draw [-] (-0.3,0.7)-- (-0.2,2);
\draw [-] (-0.3,0.7)-- (-0.1,2);
\draw [line width=2pt] (-0.3,0.7)-- (-0.5,2);
\draw [line width=2pt] (0.3,0.7)-- (0.5,2);
\draw [-] (0.3,0.7)-- (0.1,2);
\draw [-] (0.3,0.7)-- (0.2,2);
\draw [-] (0.3,0.7)-- (0.3,2);
\draw (-0.8,1.5) node {$2$};
\draw (0.8,1.5) node {$2$};
\end{tikzpicture}
\ENDPICTURE

\PICTURE ramp2d
\begin{tikzpicture}[line cap=round,line join=round,>=triangle 45,x=1cm,y=1cm,xscale=0.7,yscale=0.7]

\draw [-] (0,0) -- (-0.3,0.7);
\draw [-] (0,0) -- (0.3,0.7);
\draw [line width=2pt] (-0.3,0.7)-- (-0.3,2);
\draw [line width=2pt] (0.3,0.7)-- (0.5,2);
\draw [-] (0.3,0.7)-- (0.0,2);
\draw [-] (0.3,0.7)-- (0.2,2);
\draw (-0.8,1.5) node {$4$};
\draw (0.8,1.5) node {$2$};
\end{tikzpicture}
\ENDPICTURE

\PICTURE ramp3
\begin{tikzpicture}[line cap=round,line join=round,>=triangle 45,x=1cm,y=1cm,xscale=0.7,yscale=0.7]

\draw [-] (0,0) -- (-0.3,0.7);
\draw [-] (0,0) -- (0.3,0.7);
\draw [-] (-0.3,0.7)-- (-0.2,2);
\draw [line width=2pt] (-0.3,0.7)-- (-0.4,2);
\draw [line width=2pt] (0.3,0.7)-- (0.4,2);
\draw [-] (0.3,0.7)-- (0.2,2);
\draw (-0.6,1.5) node {$3$};
\draw (0.6,1.5) node {$3$};
\end{tikzpicture}
\ENDPICTURE

\PICTURE ramp4
\begin{tikzpicture}[line cap=round,line join=round,>=triangle 45,x=1cm,y=1cm,xscale=0.7,yscale=0.7]

\draw [-] (0,0) -- (-0.3,0.7);
\draw [-] (0,0) -- (0.3,0.7);
\draw [-] (-0.3,0.7)-- (-0.2,2);
\draw [line width=2pt] (-0.3,0.7)-- (-0.4,2);
\draw [line width=2pt] (0.3,0.7)-- (0.4,2);
\draw [-] (0.3,0.7)-- (0.2,2);
\draw (-0.6,1.5) node {$4$};
\draw (0.6,1.5) node {$4$};
\end{tikzpicture}
\ENDPICTURE

\PICTURE rambruin
\begin{tikzpicture}[line cap=round,line join=round,>=triangle 45,x=1cm,y=1cm,xscale=0.7,yscale=0.7]

\draw [-] (0,0) -- (-0.3,0.7);
\draw [line width=2pt] (0,0) -- (0.3,0.7);
\draw (0.6,0.4) node {$2$};
\draw [-] (-0.3,0.7)-- (-0.2,2);
\draw [-] (-0.3,0.7)-- (-0.4,2);
\draw [-] (0.3,0.7)-- (0.4,2);
\draw [-] (0.3,0.7)-- (0.2,2);
\end{tikzpicture}
\ENDPICTURE

\PICTURE ram211
\begin{tikzpicture}[line cap=round,line join=round,>=triangle 45,x=1cm,y=1cm,xscale=0.7,yscale=0.7]

\draw [-] (0,0) -- (-0.3,0.7);
\draw [-] (0,0) -- (0.3,0.7);
\draw [line width=2pt] (-0.3,0.7) -- (-0.3,2);
\draw [-] (0.3,0.7) -- (0.2,2);
\draw [-] (0.3,0.7) -- (0.4,2);
\end{tikzpicture}
\ENDPICTURE

\PICTURE ram311
\begin{tikzpicture}[line cap=round,line join=round,>=triangle 45,x=1cm,y=1cm,xscale=0.7,yscale=0.7]

\draw [-] (0,0) -- (-0.3,0.7);
\draw [-] (0,0) -- (0.3,0.7);
\draw [-] (-0.3,0.7)-- (-0.1,2);
\draw [-] (-0.3,0.7)-- (-0.2,2);
\draw [line width=2pt] (-0.3,0.7)-- (-0.4,2);
\draw [line width=2pt] (0.3,0.7)-- (0.4,2);
\draw [-] (0.3,0.7)-- (0.2,2);
\draw [-] (0.3,0.7)-- (0.1,2);
\draw (-0.6,1.5) node {$3$};
\draw (0.6,1.5) node {$3$};
\end{tikzpicture}
\ENDPICTURE

\PICTURE ram511111
\begin{tikzpicture}[line cap=round,line join=round,>=triangle 45,x=1cm,y=1cm,xscale=0.7,yscale=0.7]

\draw [-] (0,0) -- (-0.3,0.7);
\draw [-] (0,0) -- (0.3,0.7);
\draw [line width=2pt] (-0.3,0.7) -- (-0.3,2);
\draw [-] (0.3,0.7) -- (0.2,2);
\draw [-] (0.3,0.7) -- (0.3,2);
\draw [-] (0.3,0.7) -- (0.4,2);
\draw [-] (0.3,0.7) -- (0.5,2);
\draw [-] (0.3,0.7) -- (0.6,2);
\end{tikzpicture}
\ENDPICTURE
 \end{tabular}
      \begin{tabular}{c} $2$ \\ 
  \long\def\PICTURE #pictures.texram211 {}%
  \PICTURE ram10
\begin{tikzpicture}[line cap=round,line join=round,>=triangle 45,x=1cm,y=1cm,xscale=0.7,yscale=0.7]

\draw [line width=2pt] (0,.7) -- (0,2);
\draw [line width=2pt] (0,0) -- (0,.7);

\end{tikzpicture}

\ENDPICTURE

\PICTURE ram22thin
\begin{tikzpicture}[line cap=round,line join=round,>=triangle 45,x=1cm,y=1cm,xscale=0.7,yscale=0.7]

\draw [-] (0,.7) -- (-0.3,2);
\draw [-] (0,.7) -- (0.3,2);
\draw [-] (0,0)-- (0,.7);

\end{tikzpicture}
\ENDPICTURE

\PICTURE ram22thick
\begin{tikzpicture}[line cap=round,line join=round,>=triangle 45,x=1cm,y=1cm,xscale=0.7,yscale=0.7]

\draw [-] (0,.7) -- (-0.3,2);
\draw [-] (0,.7) -- (0.3,2);
\draw [line width=2pt] (0,0)-- (0,.7);
\draw (0.6,0.3) node {$2$};

\end{tikzpicture}
\ENDPICTURE

\PICTURE ram22
\begin{tikzpicture}[line cap=round,line join=round,>=triangle 45,x=1cm,y=1cm,xscale=0.7,yscale=0.7]

\draw [-] (0,.7) -- (-0.3,2);
\draw [-] (0,.7) -- (0.3,2);
\draw [line width=2pt] (0,0)-- (0,.7);

\end{tikzpicture}
\ENDPICTURE

\PICTURE ram23
\begin{tikzpicture}[line cap=round,line join=round,>=triangle 45,x=1cm,y=1cm,xscale=0.7,yscale=0.7]

\draw [-] (0,.7) -- (-0.4,2);
\draw [-] (0,.7) -- (0,2);
\draw [-] (0,.7) -- (0.4,2);
\draw [line width=2pt] (0,0)-- (0,.7);

\end{tikzpicture}
\ENDPICTURE

\PICTURE ram24
\begin{tikzpicture}[line cap=round,line join=round,>=triangle 45,x=1cm,y=1cm,xscale=0.7,yscale=0.7]

\draw [-] (0,.7) -- (-0.6,2);
\draw [-] (0,.7) -- (-0.2,2);
\draw [-] (0,.7) -- (0.2,2);
\draw [-] (0,.7) -- (.6,2);
\draw [line width=2pt] (0,0)-- (0,.7);

\end{tikzpicture}
\ENDPICTURE

\PICTURE ram25
\begin{tikzpicture}[line cap=round,line join=round,>=triangle 45,x=1cm,y=1cm,xscale=0.7,yscale=0.7]

\draw [-] (0,.7) -- (-0.6,2);
\draw [-] (0,.7) -- (-0.3,2);
\draw [-] (0,.7) -- (0,2);
\draw [-] (0,.7) -- (0.3,2);
\draw [-] (0,.7) -- (.6,2);
\draw [line width=2pt] (0,0)-- (0,.7);

\end{tikzpicture}
\ENDPICTURE

\PICTURE ramd
\begin{tikzpicture}[line cap=round,line join=round,>=triangle 45,x=1cm,y=1cm,xscale=0.7,yscale=0.7]

\draw [-] (0,0) -- (-0.3,0.7);
\draw [-] (0,0) -- (0.3,0.7);
\draw [line width=2pt] (-0.3,0.7)-- (-0.3,2);
\draw [line width=2pt] (0.3,0.7)-- (0.3,2);

\end{tikzpicture}
\ENDPICTURE

\PICTURE ramp2
\begin{tikzpicture}[line cap=round,line join=round,>=triangle 45,x=1cm,y=1cm,xscale=0.7,yscale=0.7]

\draw [-] (0,0) -- (-0.3,0.7);
\draw [-] (0,0) -- (0.3,0.7);
\draw [-] (-0.3,0.7)-- (-0.3,2);
\draw [-] (-0.3,0.7)-- (-0.2,2);
\draw [-] (-0.3,0.7)-- (-0.1,2);
\draw [line width=2pt] (-0.3,0.7)-- (-0.5,2);
\draw [line width=2pt] (0.3,0.7)-- (0.5,2);
\draw [-] (0.3,0.7)-- (0.1,2);
\draw [-] (0.3,0.7)-- (0.2,2);
\draw [-] (0.3,0.7)-- (0.3,2);
\draw (-0.8,1.5) node {$2$};
\draw (0.8,1.5) node {$2$};
\end{tikzpicture}
\ENDPICTURE

\PICTURE ramp2d
\begin{tikzpicture}[line cap=round,line join=round,>=triangle 45,x=1cm,y=1cm,xscale=0.7,yscale=0.7]

\draw [-] (0,0) -- (-0.3,0.7);
\draw [-] (0,0) -- (0.3,0.7);
\draw [line width=2pt] (-0.3,0.7)-- (-0.3,2);
\draw [line width=2pt] (0.3,0.7)-- (0.5,2);
\draw [-] (0.3,0.7)-- (0.0,2);
\draw [-] (0.3,0.7)-- (0.2,2);
\draw (-0.8,1.5) node {$4$};
\draw (0.8,1.5) node {$2$};
\end{tikzpicture}
\ENDPICTURE

\PICTURE ramp3
\begin{tikzpicture}[line cap=round,line join=round,>=triangle 45,x=1cm,y=1cm,xscale=0.7,yscale=0.7]

\draw [-] (0,0) -- (-0.3,0.7);
\draw [-] (0,0) -- (0.3,0.7);
\draw [-] (-0.3,0.7)-- (-0.2,2);
\draw [line width=2pt] (-0.3,0.7)-- (-0.4,2);
\draw [line width=2pt] (0.3,0.7)-- (0.4,2);
\draw [-] (0.3,0.7)-- (0.2,2);
\draw (-0.6,1.5) node {$3$};
\draw (0.6,1.5) node {$3$};
\end{tikzpicture}
\ENDPICTURE

\PICTURE ramp4
\begin{tikzpicture}[line cap=round,line join=round,>=triangle 45,x=1cm,y=1cm,xscale=0.7,yscale=0.7]

\draw [-] (0,0) -- (-0.3,0.7);
\draw [-] (0,0) -- (0.3,0.7);
\draw [-] (-0.3,0.7)-- (-0.2,2);
\draw [line width=2pt] (-0.3,0.7)-- (-0.4,2);
\draw [line width=2pt] (0.3,0.7)-- (0.4,2);
\draw [-] (0.3,0.7)-- (0.2,2);
\draw (-0.6,1.5) node {$4$};
\draw (0.6,1.5) node {$4$};
\end{tikzpicture}
\ENDPICTURE

\PICTURE rambruin
\begin{tikzpicture}[line cap=round,line join=round,>=triangle 45,x=1cm,y=1cm,xscale=0.7,yscale=0.7]

\draw [-] (0,0) -- (-0.3,0.7);
\draw [line width=2pt] (0,0) -- (0.3,0.7);
\draw (0.6,0.4) node {$2$};
\draw [-] (-0.3,0.7)-- (-0.2,2);
\draw [-] (-0.3,0.7)-- (-0.4,2);
\draw [-] (0.3,0.7)-- (0.4,2);
\draw [-] (0.3,0.7)-- (0.2,2);
\end{tikzpicture}
\ENDPICTURE

\PICTURE ram211
\begin{tikzpicture}[line cap=round,line join=round,>=triangle 45,x=1cm,y=1cm,xscale=0.7,yscale=0.7]

\draw [-] (0,0) -- (-0.3,0.7);
\draw [-] (0,0) -- (0.3,0.7);
\draw [line width=2pt] (-0.3,0.7) -- (-0.3,2);
\draw [-] (0.3,0.7) -- (0.2,2);
\draw [-] (0.3,0.7) -- (0.4,2);
\end{tikzpicture}
\ENDPICTURE

\PICTURE ram311
\begin{tikzpicture}[line cap=round,line join=round,>=triangle 45,x=1cm,y=1cm,xscale=0.7,yscale=0.7]

\draw [-] (0,0) -- (-0.3,0.7);
\draw [-] (0,0) -- (0.3,0.7);
\draw [-] (-0.3,0.7)-- (-0.1,2);
\draw [-] (-0.3,0.7)-- (-0.2,2);
\draw [line width=2pt] (-0.3,0.7)-- (-0.4,2);
\draw [line width=2pt] (0.3,0.7)-- (0.4,2);
\draw [-] (0.3,0.7)-- (0.2,2);
\draw [-] (0.3,0.7)-- (0.1,2);
\draw (-0.6,1.5) node {$3$};
\draw (0.6,1.5) node {$3$};
\end{tikzpicture}
\ENDPICTURE

\PICTURE ram511111
\begin{tikzpicture}[line cap=round,line join=round,>=triangle 45,x=1cm,y=1cm,xscale=0.7,yscale=0.7]

\draw [-] (0,0) -- (-0.3,0.7);
\draw [-] (0,0) -- (0.3,0.7);
\draw [line width=2pt] (-0.3,0.7) -- (-0.3,2);
\draw [-] (0.3,0.7) -- (0.2,2);
\draw [-] (0.3,0.7) -- (0.3,2);
\draw [-] (0.3,0.7) -- (0.4,2);
\draw [-] (0.3,0.7) -- (0.5,2);
\draw [-] (0.3,0.7) -- (0.6,2);
\end{tikzpicture}
\ENDPICTURE
 \end{tabular} &
      $8,17$ &
      $[512,0],[0,0]$ &
      $[4]$ &
      $[2]$ \\
      \hline
    \end{tabular}
  \end{tiny}
  \caption{Testing Dalaljan's results}
  \label{tab:results-dalaljan}
\end{table}

\begin{table}[htbp]
  \centering
  \begin{tiny}
    \begin{tabular}{|c|c|c|c|c|c|c|c|c|}
      \hline
      Case & $g_X,g_Y,d_X$ & Ramification & $\# G,g_Z$  & $X$ nhyp/hyp & Prym dims & $\deg Z \to C_i$ \\
      \hline
      \texttt{3-orig} & $5,3,2$ &
      \begin{tabular}{c} $10$ \\ 
  \long\def\PICTURE #pictures.texrambruin {}%
  \PICTURE ram10
\begin{tikzpicture}[line cap=round,line join=round,>=triangle 45,x=1cm,y=1cm,xscale=0.7,yscale=0.7]

\draw [line width=2pt] (0,.7) -- (0,2);
\draw [line width=2pt] (0,0) -- (0,.7);

\end{tikzpicture}

\ENDPICTURE

\PICTURE ram22thin
\begin{tikzpicture}[line cap=round,line join=round,>=triangle 45,x=1cm,y=1cm,xscale=0.7,yscale=0.7]

\draw [-] (0,.7) -- (-0.3,2);
\draw [-] (0,.7) -- (0.3,2);
\draw [-] (0,0)-- (0,.7);

\end{tikzpicture}
\ENDPICTURE

\PICTURE ram22thick
\begin{tikzpicture}[line cap=round,line join=round,>=triangle 45,x=1cm,y=1cm,xscale=0.7,yscale=0.7]

\draw [-] (0,.7) -- (-0.3,2);
\draw [-] (0,.7) -- (0.3,2);
\draw [line width=2pt] (0,0)-- (0,.7);
\draw (0.6,0.3) node {$2$};

\end{tikzpicture}
\ENDPICTURE

\PICTURE ram22
\begin{tikzpicture}[line cap=round,line join=round,>=triangle 45,x=1cm,y=1cm,xscale=0.7,yscale=0.7]

\draw [-] (0,.7) -- (-0.3,2);
\draw [-] (0,.7) -- (0.3,2);
\draw [line width=2pt] (0,0)-- (0,.7);

\end{tikzpicture}
\ENDPICTURE

\PICTURE ram23
\begin{tikzpicture}[line cap=round,line join=round,>=triangle 45,x=1cm,y=1cm,xscale=0.7,yscale=0.7]

\draw [-] (0,.7) -- (-0.4,2);
\draw [-] (0,.7) -- (0,2);
\draw [-] (0,.7) -- (0.4,2);
\draw [line width=2pt] (0,0)-- (0,.7);

\end{tikzpicture}
\ENDPICTURE

\PICTURE ram24
\begin{tikzpicture}[line cap=round,line join=round,>=triangle 45,x=1cm,y=1cm,xscale=0.7,yscale=0.7]

\draw [-] (0,.7) -- (-0.6,2);
\draw [-] (0,.7) -- (-0.2,2);
\draw [-] (0,.7) -- (0.2,2);
\draw [-] (0,.7) -- (.6,2);
\draw [line width=2pt] (0,0)-- (0,.7);

\end{tikzpicture}
\ENDPICTURE

\PICTURE ram25
\begin{tikzpicture}[line cap=round,line join=round,>=triangle 45,x=1cm,y=1cm,xscale=0.7,yscale=0.7]

\draw [-] (0,.7) -- (-0.6,2);
\draw [-] (0,.7) -- (-0.3,2);
\draw [-] (0,.7) -- (0,2);
\draw [-] (0,.7) -- (0.3,2);
\draw [-] (0,.7) -- (.6,2);
\draw [line width=2pt] (0,0)-- (0,.7);

\end{tikzpicture}
\ENDPICTURE

\PICTURE ramd
\begin{tikzpicture}[line cap=round,line join=round,>=triangle 45,x=1cm,y=1cm,xscale=0.7,yscale=0.7]

\draw [-] (0,0) -- (-0.3,0.7);
\draw [-] (0,0) -- (0.3,0.7);
\draw [line width=2pt] (-0.3,0.7)-- (-0.3,2);
\draw [line width=2pt] (0.3,0.7)-- (0.3,2);

\end{tikzpicture}
\ENDPICTURE

\PICTURE ramp2
\begin{tikzpicture}[line cap=round,line join=round,>=triangle 45,x=1cm,y=1cm,xscale=0.7,yscale=0.7]

\draw [-] (0,0) -- (-0.3,0.7);
\draw [-] (0,0) -- (0.3,0.7);
\draw [-] (-0.3,0.7)-- (-0.3,2);
\draw [-] (-0.3,0.7)-- (-0.2,2);
\draw [-] (-0.3,0.7)-- (-0.1,2);
\draw [line width=2pt] (-0.3,0.7)-- (-0.5,2);
\draw [line width=2pt] (0.3,0.7)-- (0.5,2);
\draw [-] (0.3,0.7)-- (0.1,2);
\draw [-] (0.3,0.7)-- (0.2,2);
\draw [-] (0.3,0.7)-- (0.3,2);
\draw (-0.8,1.5) node {$2$};
\draw (0.8,1.5) node {$2$};
\end{tikzpicture}
\ENDPICTURE

\PICTURE ramp2d
\begin{tikzpicture}[line cap=round,line join=round,>=triangle 45,x=1cm,y=1cm,xscale=0.7,yscale=0.7]

\draw [-] (0,0) -- (-0.3,0.7);
\draw [-] (0,0) -- (0.3,0.7);
\draw [line width=2pt] (-0.3,0.7)-- (-0.3,2);
\draw [line width=2pt] (0.3,0.7)-- (0.5,2);
\draw [-] (0.3,0.7)-- (0.0,2);
\draw [-] (0.3,0.7)-- (0.2,2);
\draw (-0.8,1.5) node {$4$};
\draw (0.8,1.5) node {$2$};
\end{tikzpicture}
\ENDPICTURE

\PICTURE ramp3
\begin{tikzpicture}[line cap=round,line join=round,>=triangle 45,x=1cm,y=1cm,xscale=0.7,yscale=0.7]

\draw [-] (0,0) -- (-0.3,0.7);
\draw [-] (0,0) -- (0.3,0.7);
\draw [-] (-0.3,0.7)-- (-0.2,2);
\draw [line width=2pt] (-0.3,0.7)-- (-0.4,2);
\draw [line width=2pt] (0.3,0.7)-- (0.4,2);
\draw [-] (0.3,0.7)-- (0.2,2);
\draw (-0.6,1.5) node {$3$};
\draw (0.6,1.5) node {$3$};
\end{tikzpicture}
\ENDPICTURE

\PICTURE ramp4
\begin{tikzpicture}[line cap=round,line join=round,>=triangle 45,x=1cm,y=1cm,xscale=0.7,yscale=0.7]

\draw [-] (0,0) -- (-0.3,0.7);
\draw [-] (0,0) -- (0.3,0.7);
\draw [-] (-0.3,0.7)-- (-0.2,2);
\draw [line width=2pt] (-0.3,0.7)-- (-0.4,2);
\draw [line width=2pt] (0.3,0.7)-- (0.4,2);
\draw [-] (0.3,0.7)-- (0.2,2);
\draw (-0.6,1.5) node {$4$};
\draw (0.6,1.5) node {$4$};
\end{tikzpicture}
\ENDPICTURE

\PICTURE rambruin
\begin{tikzpicture}[line cap=round,line join=round,>=triangle 45,x=1cm,y=1cm,xscale=0.7,yscale=0.7]

\draw [-] (0,0) -- (-0.3,0.7);
\draw [line width=2pt] (0,0) -- (0.3,0.7);
\draw (0.6,0.4) node {$2$};
\draw [-] (-0.3,0.7)-- (-0.2,2);
\draw [-] (-0.3,0.7)-- (-0.4,2);
\draw [-] (0.3,0.7)-- (0.4,2);
\draw [-] (0.3,0.7)-- (0.2,2);
\end{tikzpicture}
\ENDPICTURE

\PICTURE ram211
\begin{tikzpicture}[line cap=round,line join=round,>=triangle 45,x=1cm,y=1cm,xscale=0.7,yscale=0.7]

\draw [-] (0,0) -- (-0.3,0.7);
\draw [-] (0,0) -- (0.3,0.7);
\draw [line width=2pt] (-0.3,0.7) -- (-0.3,2);
\draw [-] (0.3,0.7) -- (0.2,2);
\draw [-] (0.3,0.7) -- (0.4,2);
\end{tikzpicture}
\ENDPICTURE

\PICTURE ram311
\begin{tikzpicture}[line cap=round,line join=round,>=triangle 45,x=1cm,y=1cm,xscale=0.7,yscale=0.7]

\draw [-] (0,0) -- (-0.3,0.7);
\draw [-] (0,0) -- (0.3,0.7);
\draw [-] (-0.3,0.7)-- (-0.1,2);
\draw [-] (-0.3,0.7)-- (-0.2,2);
\draw [line width=2pt] (-0.3,0.7)-- (-0.4,2);
\draw [line width=2pt] (0.3,0.7)-- (0.4,2);
\draw [-] (0.3,0.7)-- (0.2,2);
\draw [-] (0.3,0.7)-- (0.1,2);
\draw (-0.6,1.5) node {$3$};
\draw (0.6,1.5) node {$3$};
\end{tikzpicture}
\ENDPICTURE

\PICTURE ram511111
\begin{tikzpicture}[line cap=round,line join=round,>=triangle 45,x=1cm,y=1cm,xscale=0.7,yscale=0.7]

\draw [-] (0,0) -- (-0.3,0.7);
\draw [-] (0,0) -- (0.3,0.7);
\draw [line width=2pt] (-0.3,0.7) -- (-0.3,2);
\draw [-] (0.3,0.7) -- (0.2,2);
\draw [-] (0.3,0.7) -- (0.3,2);
\draw [-] (0.3,0.7) -- (0.4,2);
\draw [-] (0.3,0.7) -- (0.5,2);
\draw [-] (0.3,0.7) -- (0.6,2);
\end{tikzpicture}
\ENDPICTURE
 \end{tabular} &
      $24,37$ &
      $[?,0],[0,0]$ &
      $[2]$ &
      $[6]$ \\
      \hline
      \texttt{3-g7} & $7,4,2$ &
      \begin{tabular}{c} $12$ \\ 
  \long\def\PICTURE #pictures.texrambruin {}%
  \PICTURE ram10
\begin{tikzpicture}[line cap=round,line join=round,>=triangle 45,x=1cm,y=1cm,xscale=0.7,yscale=0.7]

\draw [line width=2pt] (0,.7) -- (0,2);
\draw [line width=2pt] (0,0) -- (0,.7);

\end{tikzpicture}

\ENDPICTURE

\PICTURE ram22thin
\begin{tikzpicture}[line cap=round,line join=round,>=triangle 45,x=1cm,y=1cm,xscale=0.7,yscale=0.7]

\draw [-] (0,.7) -- (-0.3,2);
\draw [-] (0,.7) -- (0.3,2);
\draw [-] (0,0)-- (0,.7);

\end{tikzpicture}
\ENDPICTURE

\PICTURE ram22thick
\begin{tikzpicture}[line cap=round,line join=round,>=triangle 45,x=1cm,y=1cm,xscale=0.7,yscale=0.7]

\draw [-] (0,.7) -- (-0.3,2);
\draw [-] (0,.7) -- (0.3,2);
\draw [line width=2pt] (0,0)-- (0,.7);
\draw (0.6,0.3) node {$2$};

\end{tikzpicture}
\ENDPICTURE

\PICTURE ram22
\begin{tikzpicture}[line cap=round,line join=round,>=triangle 45,x=1cm,y=1cm,xscale=0.7,yscale=0.7]

\draw [-] (0,.7) -- (-0.3,2);
\draw [-] (0,.7) -- (0.3,2);
\draw [line width=2pt] (0,0)-- (0,.7);

\end{tikzpicture}
\ENDPICTURE

\PICTURE ram23
\begin{tikzpicture}[line cap=round,line join=round,>=triangle 45,x=1cm,y=1cm,xscale=0.7,yscale=0.7]

\draw [-] (0,.7) -- (-0.4,2);
\draw [-] (0,.7) -- (0,2);
\draw [-] (0,.7) -- (0.4,2);
\draw [line width=2pt] (0,0)-- (0,.7);

\end{tikzpicture}
\ENDPICTURE

\PICTURE ram24
\begin{tikzpicture}[line cap=round,line join=round,>=triangle 45,x=1cm,y=1cm,xscale=0.7,yscale=0.7]

\draw [-] (0,.7) -- (-0.6,2);
\draw [-] (0,.7) -- (-0.2,2);
\draw [-] (0,.7) -- (0.2,2);
\draw [-] (0,.7) -- (.6,2);
\draw [line width=2pt] (0,0)-- (0,.7);

\end{tikzpicture}
\ENDPICTURE

\PICTURE ram25
\begin{tikzpicture}[line cap=round,line join=round,>=triangle 45,x=1cm,y=1cm,xscale=0.7,yscale=0.7]

\draw [-] (0,.7) -- (-0.6,2);
\draw [-] (0,.7) -- (-0.3,2);
\draw [-] (0,.7) -- (0,2);
\draw [-] (0,.7) -- (0.3,2);
\draw [-] (0,.7) -- (.6,2);
\draw [line width=2pt] (0,0)-- (0,.7);

\end{tikzpicture}
\ENDPICTURE

\PICTURE ramd
\begin{tikzpicture}[line cap=round,line join=round,>=triangle 45,x=1cm,y=1cm,xscale=0.7,yscale=0.7]

\draw [-] (0,0) -- (-0.3,0.7);
\draw [-] (0,0) -- (0.3,0.7);
\draw [line width=2pt] (-0.3,0.7)-- (-0.3,2);
\draw [line width=2pt] (0.3,0.7)-- (0.3,2);

\end{tikzpicture}
\ENDPICTURE

\PICTURE ramp2
\begin{tikzpicture}[line cap=round,line join=round,>=triangle 45,x=1cm,y=1cm,xscale=0.7,yscale=0.7]

\draw [-] (0,0) -- (-0.3,0.7);
\draw [-] (0,0) -- (0.3,0.7);
\draw [-] (-0.3,0.7)-- (-0.3,2);
\draw [-] (-0.3,0.7)-- (-0.2,2);
\draw [-] (-0.3,0.7)-- (-0.1,2);
\draw [line width=2pt] (-0.3,0.7)-- (-0.5,2);
\draw [line width=2pt] (0.3,0.7)-- (0.5,2);
\draw [-] (0.3,0.7)-- (0.1,2);
\draw [-] (0.3,0.7)-- (0.2,2);
\draw [-] (0.3,0.7)-- (0.3,2);
\draw (-0.8,1.5) node {$2$};
\draw (0.8,1.5) node {$2$};
\end{tikzpicture}
\ENDPICTURE

\PICTURE ramp2d
\begin{tikzpicture}[line cap=round,line join=round,>=triangle 45,x=1cm,y=1cm,xscale=0.7,yscale=0.7]

\draw [-] (0,0) -- (-0.3,0.7);
\draw [-] (0,0) -- (0.3,0.7);
\draw [line width=2pt] (-0.3,0.7)-- (-0.3,2);
\draw [line width=2pt] (0.3,0.7)-- (0.5,2);
\draw [-] (0.3,0.7)-- (0.0,2);
\draw [-] (0.3,0.7)-- (0.2,2);
\draw (-0.8,1.5) node {$4$};
\draw (0.8,1.5) node {$2$};
\end{tikzpicture}
\ENDPICTURE

\PICTURE ramp3
\begin{tikzpicture}[line cap=round,line join=round,>=triangle 45,x=1cm,y=1cm,xscale=0.7,yscale=0.7]

\draw [-] (0,0) -- (-0.3,0.7);
\draw [-] (0,0) -- (0.3,0.7);
\draw [-] (-0.3,0.7)-- (-0.2,2);
\draw [line width=2pt] (-0.3,0.7)-- (-0.4,2);
\draw [line width=2pt] (0.3,0.7)-- (0.4,2);
\draw [-] (0.3,0.7)-- (0.2,2);
\draw (-0.6,1.5) node {$3$};
\draw (0.6,1.5) node {$3$};
\end{tikzpicture}
\ENDPICTURE

\PICTURE ramp4
\begin{tikzpicture}[line cap=round,line join=round,>=triangle 45,x=1cm,y=1cm,xscale=0.7,yscale=0.7]

\draw [-] (0,0) -- (-0.3,0.7);
\draw [-] (0,0) -- (0.3,0.7);
\draw [-] (-0.3,0.7)-- (-0.2,2);
\draw [line width=2pt] (-0.3,0.7)-- (-0.4,2);
\draw [line width=2pt] (0.3,0.7)-- (0.4,2);
\draw [-] (0.3,0.7)-- (0.2,2);
\draw (-0.6,1.5) node {$4$};
\draw (0.6,1.5) node {$4$};
\end{tikzpicture}
\ENDPICTURE

\PICTURE rambruin
\begin{tikzpicture}[line cap=round,line join=round,>=triangle 45,x=1cm,y=1cm,xscale=0.7,yscale=0.7]

\draw [-] (0,0) -- (-0.3,0.7);
\draw [line width=2pt] (0,0) -- (0.3,0.7);
\draw (0.6,0.4) node {$2$};
\draw [-] (-0.3,0.7)-- (-0.2,2);
\draw [-] (-0.3,0.7)-- (-0.4,2);
\draw [-] (0.3,0.7)-- (0.4,2);
\draw [-] (0.3,0.7)-- (0.2,2);
\end{tikzpicture}
\ENDPICTURE

\PICTURE ram211
\begin{tikzpicture}[line cap=round,line join=round,>=triangle 45,x=1cm,y=1cm,xscale=0.7,yscale=0.7]

\draw [-] (0,0) -- (-0.3,0.7);
\draw [-] (0,0) -- (0.3,0.7);
\draw [line width=2pt] (-0.3,0.7) -- (-0.3,2);
\draw [-] (0.3,0.7) -- (0.2,2);
\draw [-] (0.3,0.7) -- (0.4,2);
\end{tikzpicture}
\ENDPICTURE

\PICTURE ram311
\begin{tikzpicture}[line cap=round,line join=round,>=triangle 45,x=1cm,y=1cm,xscale=0.7,yscale=0.7]

\draw [-] (0,0) -- (-0.3,0.7);
\draw [-] (0,0) -- (0.3,0.7);
\draw [-] (-0.3,0.7)-- (-0.1,2);
\draw [-] (-0.3,0.7)-- (-0.2,2);
\draw [line width=2pt] (-0.3,0.7)-- (-0.4,2);
\draw [line width=2pt] (0.3,0.7)-- (0.4,2);
\draw [-] (0.3,0.7)-- (0.2,2);
\draw [-] (0.3,0.7)-- (0.1,2);
\draw (-0.6,1.5) node {$3$};
\draw (0.6,1.5) node {$3$};
\end{tikzpicture}
\ENDPICTURE

\PICTURE ram511111
\begin{tikzpicture}[line cap=round,line join=round,>=triangle 45,x=1cm,y=1cm,xscale=0.7,yscale=0.7]

\draw [-] (0,0) -- (-0.3,0.7);
\draw [-] (0,0) -- (0.3,0.7);
\draw [line width=2pt] (-0.3,0.7) -- (-0.3,2);
\draw [-] (0.3,0.7) -- (0.2,2);
\draw [-] (0.3,0.7) -- (0.3,2);
\draw [-] (0.3,0.7) -- (0.4,2);
\draw [-] (0.3,0.7) -- (0.5,2);
\draw [-] (0.3,0.7) -- (0.6,2);
\end{tikzpicture}
\ENDPICTURE
 \end{tabular} &
      $24,49$ &
      $[?,0],[0,0]$ &
      $[2]$ &
      $[6]$ \\
      \hline
      \texttt{3-g9} & $9,5,2$ &
      \begin{tabular}{c} $14$ \\ 
  \long\def\PICTURE #pictures.texrambruin {}%
  \PICTURE ram10
\begin{tikzpicture}[line cap=round,line join=round,>=triangle 45,x=1cm,y=1cm,xscale=0.7,yscale=0.7]

\draw [line width=2pt] (0,.7) -- (0,2);
\draw [line width=2pt] (0,0) -- (0,.7);

\end{tikzpicture}

\ENDPICTURE

\PICTURE ram22thin
\begin{tikzpicture}[line cap=round,line join=round,>=triangle 45,x=1cm,y=1cm,xscale=0.7,yscale=0.7]

\draw [-] (0,.7) -- (-0.3,2);
\draw [-] (0,.7) -- (0.3,2);
\draw [-] (0,0)-- (0,.7);

\end{tikzpicture}
\ENDPICTURE

\PICTURE ram22thick
\begin{tikzpicture}[line cap=round,line join=round,>=triangle 45,x=1cm,y=1cm,xscale=0.7,yscale=0.7]

\draw [-] (0,.7) -- (-0.3,2);
\draw [-] (0,.7) -- (0.3,2);
\draw [line width=2pt] (0,0)-- (0,.7);
\draw (0.6,0.3) node {$2$};

\end{tikzpicture}
\ENDPICTURE

\PICTURE ram22
\begin{tikzpicture}[line cap=round,line join=round,>=triangle 45,x=1cm,y=1cm,xscale=0.7,yscale=0.7]

\draw [-] (0,.7) -- (-0.3,2);
\draw [-] (0,.7) -- (0.3,2);
\draw [line width=2pt] (0,0)-- (0,.7);

\end{tikzpicture}
\ENDPICTURE

\PICTURE ram23
\begin{tikzpicture}[line cap=round,line join=round,>=triangle 45,x=1cm,y=1cm,xscale=0.7,yscale=0.7]

\draw [-] (0,.7) -- (-0.4,2);
\draw [-] (0,.7) -- (0,2);
\draw [-] (0,.7) -- (0.4,2);
\draw [line width=2pt] (0,0)-- (0,.7);

\end{tikzpicture}
\ENDPICTURE

\PICTURE ram24
\begin{tikzpicture}[line cap=round,line join=round,>=triangle 45,x=1cm,y=1cm,xscale=0.7,yscale=0.7]

\draw [-] (0,.7) -- (-0.6,2);
\draw [-] (0,.7) -- (-0.2,2);
\draw [-] (0,.7) -- (0.2,2);
\draw [-] (0,.7) -- (.6,2);
\draw [line width=2pt] (0,0)-- (0,.7);

\end{tikzpicture}
\ENDPICTURE

\PICTURE ram25
\begin{tikzpicture}[line cap=round,line join=round,>=triangle 45,x=1cm,y=1cm,xscale=0.7,yscale=0.7]

\draw [-] (0,.7) -- (-0.6,2);
\draw [-] (0,.7) -- (-0.3,2);
\draw [-] (0,.7) -- (0,2);
\draw [-] (0,.7) -- (0.3,2);
\draw [-] (0,.7) -- (.6,2);
\draw [line width=2pt] (0,0)-- (0,.7);

\end{tikzpicture}
\ENDPICTURE

\PICTURE ramd
\begin{tikzpicture}[line cap=round,line join=round,>=triangle 45,x=1cm,y=1cm,xscale=0.7,yscale=0.7]

\draw [-] (0,0) -- (-0.3,0.7);
\draw [-] (0,0) -- (0.3,0.7);
\draw [line width=2pt] (-0.3,0.7)-- (-0.3,2);
\draw [line width=2pt] (0.3,0.7)-- (0.3,2);

\end{tikzpicture}
\ENDPICTURE

\PICTURE ramp2
\begin{tikzpicture}[line cap=round,line join=round,>=triangle 45,x=1cm,y=1cm,xscale=0.7,yscale=0.7]

\draw [-] (0,0) -- (-0.3,0.7);
\draw [-] (0,0) -- (0.3,0.7);
\draw [-] (-0.3,0.7)-- (-0.3,2);
\draw [-] (-0.3,0.7)-- (-0.2,2);
\draw [-] (-0.3,0.7)-- (-0.1,2);
\draw [line width=2pt] (-0.3,0.7)-- (-0.5,2);
\draw [line width=2pt] (0.3,0.7)-- (0.5,2);
\draw [-] (0.3,0.7)-- (0.1,2);
\draw [-] (0.3,0.7)-- (0.2,2);
\draw [-] (0.3,0.7)-- (0.3,2);
\draw (-0.8,1.5) node {$2$};
\draw (0.8,1.5) node {$2$};
\end{tikzpicture}
\ENDPICTURE

\PICTURE ramp2d
\begin{tikzpicture}[line cap=round,line join=round,>=triangle 45,x=1cm,y=1cm,xscale=0.7,yscale=0.7]

\draw [-] (0,0) -- (-0.3,0.7);
\draw [-] (0,0) -- (0.3,0.7);
\draw [line width=2pt] (-0.3,0.7)-- (-0.3,2);
\draw [line width=2pt] (0.3,0.7)-- (0.5,2);
\draw [-] (0.3,0.7)-- (0.0,2);
\draw [-] (0.3,0.7)-- (0.2,2);
\draw (-0.8,1.5) node {$4$};
\draw (0.8,1.5) node {$2$};
\end{tikzpicture}
\ENDPICTURE

\PICTURE ramp3
\begin{tikzpicture}[line cap=round,line join=round,>=triangle 45,x=1cm,y=1cm,xscale=0.7,yscale=0.7]

\draw [-] (0,0) -- (-0.3,0.7);
\draw [-] (0,0) -- (0.3,0.7);
\draw [-] (-0.3,0.7)-- (-0.2,2);
\draw [line width=2pt] (-0.3,0.7)-- (-0.4,2);
\draw [line width=2pt] (0.3,0.7)-- (0.4,2);
\draw [-] (0.3,0.7)-- (0.2,2);
\draw (-0.6,1.5) node {$3$};
\draw (0.6,1.5) node {$3$};
\end{tikzpicture}
\ENDPICTURE

\PICTURE ramp4
\begin{tikzpicture}[line cap=round,line join=round,>=triangle 45,x=1cm,y=1cm,xscale=0.7,yscale=0.7]

\draw [-] (0,0) -- (-0.3,0.7);
\draw [-] (0,0) -- (0.3,0.7);
\draw [-] (-0.3,0.7)-- (-0.2,2);
\draw [line width=2pt] (-0.3,0.7)-- (-0.4,2);
\draw [line width=2pt] (0.3,0.7)-- (0.4,2);
\draw [-] (0.3,0.7)-- (0.2,2);
\draw (-0.6,1.5) node {$4$};
\draw (0.6,1.5) node {$4$};
\end{tikzpicture}
\ENDPICTURE

\PICTURE rambruin
\begin{tikzpicture}[line cap=round,line join=round,>=triangle 45,x=1cm,y=1cm,xscale=0.7,yscale=0.7]

\draw [-] (0,0) -- (-0.3,0.7);
\draw [line width=2pt] (0,0) -- (0.3,0.7);
\draw (0.6,0.4) node {$2$};
\draw [-] (-0.3,0.7)-- (-0.2,2);
\draw [-] (-0.3,0.7)-- (-0.4,2);
\draw [-] (0.3,0.7)-- (0.4,2);
\draw [-] (0.3,0.7)-- (0.2,2);
\end{tikzpicture}
\ENDPICTURE

\PICTURE ram211
\begin{tikzpicture}[line cap=round,line join=round,>=triangle 45,x=1cm,y=1cm,xscale=0.7,yscale=0.7]

\draw [-] (0,0) -- (-0.3,0.7);
\draw [-] (0,0) -- (0.3,0.7);
\draw [line width=2pt] (-0.3,0.7) -- (-0.3,2);
\draw [-] (0.3,0.7) -- (0.2,2);
\draw [-] (0.3,0.7) -- (0.4,2);
\end{tikzpicture}
\ENDPICTURE

\PICTURE ram311
\begin{tikzpicture}[line cap=round,line join=round,>=triangle 45,x=1cm,y=1cm,xscale=0.7,yscale=0.7]

\draw [-] (0,0) -- (-0.3,0.7);
\draw [-] (0,0) -- (0.3,0.7);
\draw [-] (-0.3,0.7)-- (-0.1,2);
\draw [-] (-0.3,0.7)-- (-0.2,2);
\draw [line width=2pt] (-0.3,0.7)-- (-0.4,2);
\draw [line width=2pt] (0.3,0.7)-- (0.4,2);
\draw [-] (0.3,0.7)-- (0.2,2);
\draw [-] (0.3,0.7)-- (0.1,2);
\draw (-0.6,1.5) node {$3$};
\draw (0.6,1.5) node {$3$};
\end{tikzpicture}
\ENDPICTURE

\PICTURE ram511111
\begin{tikzpicture}[line cap=round,line join=round,>=triangle 45,x=1cm,y=1cm,xscale=0.7,yscale=0.7]

\draw [-] (0,0) -- (-0.3,0.7);
\draw [-] (0,0) -- (0.3,0.7);
\draw [line width=2pt] (-0.3,0.7) -- (-0.3,2);
\draw [-] (0.3,0.7) -- (0.2,2);
\draw [-] (0.3,0.7) -- (0.3,2);
\draw [-] (0.3,0.7) -- (0.4,2);
\draw [-] (0.3,0.7) -- (0.5,2);
\draw [-] (0.3,0.7) -- (0.6,2);
\end{tikzpicture}
\ENDPICTURE
 \end{tabular} &
      $24,61$ &
      $[?,0],[0,0]$ &
      $[2]$ &
      $[6]$ \\
      \hline
      \texttt{3-g11} & $11,6,2$ &
      \begin{tabular}{c} $16$ \\ 
  \long\def\PICTURE #pictures.texrambruin {}%
  \PICTURE ram10
\begin{tikzpicture}[line cap=round,line join=round,>=triangle 45,x=1cm,y=1cm,xscale=0.7,yscale=0.7]

\draw [line width=2pt] (0,.7) -- (0,2);
\draw [line width=2pt] (0,0) -- (0,.7);

\end{tikzpicture}

\ENDPICTURE

\PICTURE ram22thin
\begin{tikzpicture}[line cap=round,line join=round,>=triangle 45,x=1cm,y=1cm,xscale=0.7,yscale=0.7]

\draw [-] (0,.7) -- (-0.3,2);
\draw [-] (0,.7) -- (0.3,2);
\draw [-] (0,0)-- (0,.7);

\end{tikzpicture}
\ENDPICTURE

\PICTURE ram22thick
\begin{tikzpicture}[line cap=round,line join=round,>=triangle 45,x=1cm,y=1cm,xscale=0.7,yscale=0.7]

\draw [-] (0,.7) -- (-0.3,2);
\draw [-] (0,.7) -- (0.3,2);
\draw [line width=2pt] (0,0)-- (0,.7);
\draw (0.6,0.3) node {$2$};

\end{tikzpicture}
\ENDPICTURE

\PICTURE ram22
\begin{tikzpicture}[line cap=round,line join=round,>=triangle 45,x=1cm,y=1cm,xscale=0.7,yscale=0.7]

\draw [-] (0,.7) -- (-0.3,2);
\draw [-] (0,.7) -- (0.3,2);
\draw [line width=2pt] (0,0)-- (0,.7);

\end{tikzpicture}
\ENDPICTURE

\PICTURE ram23
\begin{tikzpicture}[line cap=round,line join=round,>=triangle 45,x=1cm,y=1cm,xscale=0.7,yscale=0.7]

\draw [-] (0,.7) -- (-0.4,2);
\draw [-] (0,.7) -- (0,2);
\draw [-] (0,.7) -- (0.4,2);
\draw [line width=2pt] (0,0)-- (0,.7);

\end{tikzpicture}
\ENDPICTURE

\PICTURE ram24
\begin{tikzpicture}[line cap=round,line join=round,>=triangle 45,x=1cm,y=1cm,xscale=0.7,yscale=0.7]

\draw [-] (0,.7) -- (-0.6,2);
\draw [-] (0,.7) -- (-0.2,2);
\draw [-] (0,.7) -- (0.2,2);
\draw [-] (0,.7) -- (.6,2);
\draw [line width=2pt] (0,0)-- (0,.7);

\end{tikzpicture}
\ENDPICTURE

\PICTURE ram25
\begin{tikzpicture}[line cap=round,line join=round,>=triangle 45,x=1cm,y=1cm,xscale=0.7,yscale=0.7]

\draw [-] (0,.7) -- (-0.6,2);
\draw [-] (0,.7) -- (-0.3,2);
\draw [-] (0,.7) -- (0,2);
\draw [-] (0,.7) -- (0.3,2);
\draw [-] (0,.7) -- (.6,2);
\draw [line width=2pt] (0,0)-- (0,.7);

\end{tikzpicture}
\ENDPICTURE

\PICTURE ramd
\begin{tikzpicture}[line cap=round,line join=round,>=triangle 45,x=1cm,y=1cm,xscale=0.7,yscale=0.7]

\draw [-] (0,0) -- (-0.3,0.7);
\draw [-] (0,0) -- (0.3,0.7);
\draw [line width=2pt] (-0.3,0.7)-- (-0.3,2);
\draw [line width=2pt] (0.3,0.7)-- (0.3,2);

\end{tikzpicture}
\ENDPICTURE

\PICTURE ramp2
\begin{tikzpicture}[line cap=round,line join=round,>=triangle 45,x=1cm,y=1cm,xscale=0.7,yscale=0.7]

\draw [-] (0,0) -- (-0.3,0.7);
\draw [-] (0,0) -- (0.3,0.7);
\draw [-] (-0.3,0.7)-- (-0.3,2);
\draw [-] (-0.3,0.7)-- (-0.2,2);
\draw [-] (-0.3,0.7)-- (-0.1,2);
\draw [line width=2pt] (-0.3,0.7)-- (-0.5,2);
\draw [line width=2pt] (0.3,0.7)-- (0.5,2);
\draw [-] (0.3,0.7)-- (0.1,2);
\draw [-] (0.3,0.7)-- (0.2,2);
\draw [-] (0.3,0.7)-- (0.3,2);
\draw (-0.8,1.5) node {$2$};
\draw (0.8,1.5) node {$2$};
\end{tikzpicture}
\ENDPICTURE

\PICTURE ramp2d
\begin{tikzpicture}[line cap=round,line join=round,>=triangle 45,x=1cm,y=1cm,xscale=0.7,yscale=0.7]

\draw [-] (0,0) -- (-0.3,0.7);
\draw [-] (0,0) -- (0.3,0.7);
\draw [line width=2pt] (-0.3,0.7)-- (-0.3,2);
\draw [line width=2pt] (0.3,0.7)-- (0.5,2);
\draw [-] (0.3,0.7)-- (0.0,2);
\draw [-] (0.3,0.7)-- (0.2,2);
\draw (-0.8,1.5) node {$4$};
\draw (0.8,1.5) node {$2$};
\end{tikzpicture}
\ENDPICTURE

\PICTURE ramp3
\begin{tikzpicture}[line cap=round,line join=round,>=triangle 45,x=1cm,y=1cm,xscale=0.7,yscale=0.7]

\draw [-] (0,0) -- (-0.3,0.7);
\draw [-] (0,0) -- (0.3,0.7);
\draw [-] (-0.3,0.7)-- (-0.2,2);
\draw [line width=2pt] (-0.3,0.7)-- (-0.4,2);
\draw [line width=2pt] (0.3,0.7)-- (0.4,2);
\draw [-] (0.3,0.7)-- (0.2,2);
\draw (-0.6,1.5) node {$3$};
\draw (0.6,1.5) node {$3$};
\end{tikzpicture}
\ENDPICTURE

\PICTURE ramp4
\begin{tikzpicture}[line cap=round,line join=round,>=triangle 45,x=1cm,y=1cm,xscale=0.7,yscale=0.7]

\draw [-] (0,0) -- (-0.3,0.7);
\draw [-] (0,0) -- (0.3,0.7);
\draw [-] (-0.3,0.7)-- (-0.2,2);
\draw [line width=2pt] (-0.3,0.7)-- (-0.4,2);
\draw [line width=2pt] (0.3,0.7)-- (0.4,2);
\draw [-] (0.3,0.7)-- (0.2,2);
\draw (-0.6,1.5) node {$4$};
\draw (0.6,1.5) node {$4$};
\end{tikzpicture}
\ENDPICTURE

\PICTURE rambruin
\begin{tikzpicture}[line cap=round,line join=round,>=triangle 45,x=1cm,y=1cm,xscale=0.7,yscale=0.7]

\draw [-] (0,0) -- (-0.3,0.7);
\draw [line width=2pt] (0,0) -- (0.3,0.7);
\draw (0.6,0.4) node {$2$};
\draw [-] (-0.3,0.7)-- (-0.2,2);
\draw [-] (-0.3,0.7)-- (-0.4,2);
\draw [-] (0.3,0.7)-- (0.4,2);
\draw [-] (0.3,0.7)-- (0.2,2);
\end{tikzpicture}
\ENDPICTURE

\PICTURE ram211
\begin{tikzpicture}[line cap=round,line join=round,>=triangle 45,x=1cm,y=1cm,xscale=0.7,yscale=0.7]

\draw [-] (0,0) -- (-0.3,0.7);
\draw [-] (0,0) -- (0.3,0.7);
\draw [line width=2pt] (-0.3,0.7) -- (-0.3,2);
\draw [-] (0.3,0.7) -- (0.2,2);
\draw [-] (0.3,0.7) -- (0.4,2);
\end{tikzpicture}
\ENDPICTURE

\PICTURE ram311
\begin{tikzpicture}[line cap=round,line join=round,>=triangle 45,x=1cm,y=1cm,xscale=0.7,yscale=0.7]

\draw [-] (0,0) -- (-0.3,0.7);
\draw [-] (0,0) -- (0.3,0.7);
\draw [-] (-0.3,0.7)-- (-0.1,2);
\draw [-] (-0.3,0.7)-- (-0.2,2);
\draw [line width=2pt] (-0.3,0.7)-- (-0.4,2);
\draw [line width=2pt] (0.3,0.7)-- (0.4,2);
\draw [-] (0.3,0.7)-- (0.2,2);
\draw [-] (0.3,0.7)-- (0.1,2);
\draw (-0.6,1.5) node {$3$};
\draw (0.6,1.5) node {$3$};
\end{tikzpicture}
\ENDPICTURE

\PICTURE ram511111
\begin{tikzpicture}[line cap=round,line join=round,>=triangle 45,x=1cm,y=1cm,xscale=0.7,yscale=0.7]

\draw [-] (0,0) -- (-0.3,0.7);
\draw [-] (0,0) -- (0.3,0.7);
\draw [line width=2pt] (-0.3,0.7) -- (-0.3,2);
\draw [-] (0.3,0.7) -- (0.2,2);
\draw [-] (0.3,0.7) -- (0.3,2);
\draw [-] (0.3,0.7) -- (0.4,2);
\draw [-] (0.3,0.7) -- (0.5,2);
\draw [-] (0.3,0.7) -- (0.6,2);
\end{tikzpicture}
\ENDPICTURE
 \end{tabular} &
      $24,73$ &
      $[?,0],[0,0]$ &
      $[2]$ &
      $[6]$ \\
      \hline
    \end{tabular}
  \end{tiny}
  \caption{Generalizing Bruin's results}
  \label{tab:results-bruin}
\end{table}

\begin{table}[htbp]
  \centering
  \begin{tiny}
    \begin{tabular}{|c|c|c|c|c|c|c|c|c|}
      \hline
      Case & $g_X,g_Y,d_X$ & Ramification & $\# G,g_Z$  & $X$ nhyp/hyp & Prym dims & $\deg Z \to C_i$ \\
      \hline
      \texttt{etale-3} & $4,2,3$ &
      \begin{tabular}{c} $6$ \\ 
  \long\def\PICTURE #pictures.texram23 {}%
  \PICTURE ram10
\begin{tikzpicture}[line cap=round,line join=round,>=triangle 45,x=1cm,y=1cm,xscale=0.7,yscale=0.7]

\draw [line width=2pt] (0,.7) -- (0,2);
\draw [line width=2pt] (0,0) -- (0,.7);

\end{tikzpicture}

\ENDPICTURE

\PICTURE ram22thin
\begin{tikzpicture}[line cap=round,line join=round,>=triangle 45,x=1cm,y=1cm,xscale=0.7,yscale=0.7]

\draw [-] (0,.7) -- (-0.3,2);
\draw [-] (0,.7) -- (0.3,2);
\draw [-] (0,0)-- (0,.7);

\end{tikzpicture}
\ENDPICTURE

\PICTURE ram22thick
\begin{tikzpicture}[line cap=round,line join=round,>=triangle 45,x=1cm,y=1cm,xscale=0.7,yscale=0.7]

\draw [-] (0,.7) -- (-0.3,2);
\draw [-] (0,.7) -- (0.3,2);
\draw [line width=2pt] (0,0)-- (0,.7);
\draw (0.6,0.3) node {$2$};

\end{tikzpicture}
\ENDPICTURE

\PICTURE ram22
\begin{tikzpicture}[line cap=round,line join=round,>=triangle 45,x=1cm,y=1cm,xscale=0.7,yscale=0.7]

\draw [-] (0,.7) -- (-0.3,2);
\draw [-] (0,.7) -- (0.3,2);
\draw [line width=2pt] (0,0)-- (0,.7);

\end{tikzpicture}
\ENDPICTURE

\PICTURE ram23
\begin{tikzpicture}[line cap=round,line join=round,>=triangle 45,x=1cm,y=1cm,xscale=0.7,yscale=0.7]

\draw [-] (0,.7) -- (-0.4,2);
\draw [-] (0,.7) -- (0,2);
\draw [-] (0,.7) -- (0.4,2);
\draw [line width=2pt] (0,0)-- (0,.7);

\end{tikzpicture}
\ENDPICTURE

\PICTURE ram24
\begin{tikzpicture}[line cap=round,line join=round,>=triangle 45,x=1cm,y=1cm,xscale=0.7,yscale=0.7]

\draw [-] (0,.7) -- (-0.6,2);
\draw [-] (0,.7) -- (-0.2,2);
\draw [-] (0,.7) -- (0.2,2);
\draw [-] (0,.7) -- (.6,2);
\draw [line width=2pt] (0,0)-- (0,.7);

\end{tikzpicture}
\ENDPICTURE

\PICTURE ram25
\begin{tikzpicture}[line cap=round,line join=round,>=triangle 45,x=1cm,y=1cm,xscale=0.7,yscale=0.7]

\draw [-] (0,.7) -- (-0.6,2);
\draw [-] (0,.7) -- (-0.3,2);
\draw [-] (0,.7) -- (0,2);
\draw [-] (0,.7) -- (0.3,2);
\draw [-] (0,.7) -- (.6,2);
\draw [line width=2pt] (0,0)-- (0,.7);

\end{tikzpicture}
\ENDPICTURE

\PICTURE ramd
\begin{tikzpicture}[line cap=round,line join=round,>=triangle 45,x=1cm,y=1cm,xscale=0.7,yscale=0.7]

\draw [-] (0,0) -- (-0.3,0.7);
\draw [-] (0,0) -- (0.3,0.7);
\draw [line width=2pt] (-0.3,0.7)-- (-0.3,2);
\draw [line width=2pt] (0.3,0.7)-- (0.3,2);

\end{tikzpicture}
\ENDPICTURE

\PICTURE ramp2
\begin{tikzpicture}[line cap=round,line join=round,>=triangle 45,x=1cm,y=1cm,xscale=0.7,yscale=0.7]

\draw [-] (0,0) -- (-0.3,0.7);
\draw [-] (0,0) -- (0.3,0.7);
\draw [-] (-0.3,0.7)-- (-0.3,2);
\draw [-] (-0.3,0.7)-- (-0.2,2);
\draw [-] (-0.3,0.7)-- (-0.1,2);
\draw [line width=2pt] (-0.3,0.7)-- (-0.5,2);
\draw [line width=2pt] (0.3,0.7)-- (0.5,2);
\draw [-] (0.3,0.7)-- (0.1,2);
\draw [-] (0.3,0.7)-- (0.2,2);
\draw [-] (0.3,0.7)-- (0.3,2);
\draw (-0.8,1.5) node {$2$};
\draw (0.8,1.5) node {$2$};
\end{tikzpicture}
\ENDPICTURE

\PICTURE ramp2d
\begin{tikzpicture}[line cap=round,line join=round,>=triangle 45,x=1cm,y=1cm,xscale=0.7,yscale=0.7]

\draw [-] (0,0) -- (-0.3,0.7);
\draw [-] (0,0) -- (0.3,0.7);
\draw [line width=2pt] (-0.3,0.7)-- (-0.3,2);
\draw [line width=2pt] (0.3,0.7)-- (0.5,2);
\draw [-] (0.3,0.7)-- (0.0,2);
\draw [-] (0.3,0.7)-- (0.2,2);
\draw (-0.8,1.5) node {$4$};
\draw (0.8,1.5) node {$2$};
\end{tikzpicture}
\ENDPICTURE

\PICTURE ramp3
\begin{tikzpicture}[line cap=round,line join=round,>=triangle 45,x=1cm,y=1cm,xscale=0.7,yscale=0.7]

\draw [-] (0,0) -- (-0.3,0.7);
\draw [-] (0,0) -- (0.3,0.7);
\draw [-] (-0.3,0.7)-- (-0.2,2);
\draw [line width=2pt] (-0.3,0.7)-- (-0.4,2);
\draw [line width=2pt] (0.3,0.7)-- (0.4,2);
\draw [-] (0.3,0.7)-- (0.2,2);
\draw (-0.6,1.5) node {$3$};
\draw (0.6,1.5) node {$3$};
\end{tikzpicture}
\ENDPICTURE

\PICTURE ramp4
\begin{tikzpicture}[line cap=round,line join=round,>=triangle 45,x=1cm,y=1cm,xscale=0.7,yscale=0.7]

\draw [-] (0,0) -- (-0.3,0.7);
\draw [-] (0,0) -- (0.3,0.7);
\draw [-] (-0.3,0.7)-- (-0.2,2);
\draw [line width=2pt] (-0.3,0.7)-- (-0.4,2);
\draw [line width=2pt] (0.3,0.7)-- (0.4,2);
\draw [-] (0.3,0.7)-- (0.2,2);
\draw (-0.6,1.5) node {$4$};
\draw (0.6,1.5) node {$4$};
\end{tikzpicture}
\ENDPICTURE

\PICTURE rambruin
\begin{tikzpicture}[line cap=round,line join=round,>=triangle 45,x=1cm,y=1cm,xscale=0.7,yscale=0.7]

\draw [-] (0,0) -- (-0.3,0.7);
\draw [line width=2pt] (0,0) -- (0.3,0.7);
\draw (0.6,0.4) node {$2$};
\draw [-] (-0.3,0.7)-- (-0.2,2);
\draw [-] (-0.3,0.7)-- (-0.4,2);
\draw [-] (0.3,0.7)-- (0.4,2);
\draw [-] (0.3,0.7)-- (0.2,2);
\end{tikzpicture}
\ENDPICTURE

\PICTURE ram211
\begin{tikzpicture}[line cap=round,line join=round,>=triangle 45,x=1cm,y=1cm,xscale=0.7,yscale=0.7]

\draw [-] (0,0) -- (-0.3,0.7);
\draw [-] (0,0) -- (0.3,0.7);
\draw [line width=2pt] (-0.3,0.7) -- (-0.3,2);
\draw [-] (0.3,0.7) -- (0.2,2);
\draw [-] (0.3,0.7) -- (0.4,2);
\end{tikzpicture}
\ENDPICTURE

\PICTURE ram311
\begin{tikzpicture}[line cap=round,line join=round,>=triangle 45,x=1cm,y=1cm,xscale=0.7,yscale=0.7]

\draw [-] (0,0) -- (-0.3,0.7);
\draw [-] (0,0) -- (0.3,0.7);
\draw [-] (-0.3,0.7)-- (-0.1,2);
\draw [-] (-0.3,0.7)-- (-0.2,2);
\draw [line width=2pt] (-0.3,0.7)-- (-0.4,2);
\draw [line width=2pt] (0.3,0.7)-- (0.4,2);
\draw [-] (0.3,0.7)-- (0.2,2);
\draw [-] (0.3,0.7)-- (0.1,2);
\draw (-0.6,1.5) node {$3$};
\draw (0.6,1.5) node {$3$};
\end{tikzpicture}
\ENDPICTURE

\PICTURE ram511111
\begin{tikzpicture}[line cap=round,line join=round,>=triangle 45,x=1cm,y=1cm,xscale=0.7,yscale=0.7]

\draw [-] (0,0) -- (-0.3,0.7);
\draw [-] (0,0) -- (0.3,0.7);
\draw [line width=2pt] (-0.3,0.7) -- (-0.3,2);
\draw [-] (0.3,0.7) -- (0.2,2);
\draw [-] (0.3,0.7) -- (0.3,2);
\draw [-] (0.3,0.7) -- (0.4,2);
\draw [-] (0.3,0.7) -- (0.5,2);
\draw [-] (0.3,0.7) -- (0.6,2);
\end{tikzpicture}
\ENDPICTURE
 \end{tabular} &
      \begin{tabular}{c} $6,4$ \\ $12, 5$ \end{tabular} &
      \begin{tabular}{c} $[40,0],[0,0]$ \\ $[0,0],[60,0]$ \end{tabular} &
      \begin{tabular}{c} $[1,1]$ \\ $[2]$ \end{tabular} &
      \begin{tabular}{c} $[2,2]$ \\ $[2]$   \end{tabular} \\
      \hline
      \texttt{etale-4} & $5,2,4$ &
      \begin{tabular}{c} $6$ \\ 
  \long\def\PICTURE #pictures.texram24 {}%
  \PICTURE ram10
\begin{tikzpicture}[line cap=round,line join=round,>=triangle 45,x=1cm,y=1cm,xscale=0.7,yscale=0.7]

\draw [line width=2pt] (0,.7) -- (0,2);
\draw [line width=2pt] (0,0) -- (0,.7);

\end{tikzpicture}

\ENDPICTURE

\PICTURE ram22thin
\begin{tikzpicture}[line cap=round,line join=round,>=triangle 45,x=1cm,y=1cm,xscale=0.7,yscale=0.7]

\draw [-] (0,.7) -- (-0.3,2);
\draw [-] (0,.7) -- (0.3,2);
\draw [-] (0,0)-- (0,.7);

\end{tikzpicture}
\ENDPICTURE

\PICTURE ram22thick
\begin{tikzpicture}[line cap=round,line join=round,>=triangle 45,x=1cm,y=1cm,xscale=0.7,yscale=0.7]

\draw [-] (0,.7) -- (-0.3,2);
\draw [-] (0,.7) -- (0.3,2);
\draw [line width=2pt] (0,0)-- (0,.7);
\draw (0.6,0.3) node {$2$};

\end{tikzpicture}
\ENDPICTURE

\PICTURE ram22
\begin{tikzpicture}[line cap=round,line join=round,>=triangle 45,x=1cm,y=1cm,xscale=0.7,yscale=0.7]

\draw [-] (0,.7) -- (-0.3,2);
\draw [-] (0,.7) -- (0.3,2);
\draw [line width=2pt] (0,0)-- (0,.7);

\end{tikzpicture}
\ENDPICTURE

\PICTURE ram23
\begin{tikzpicture}[line cap=round,line join=round,>=triangle 45,x=1cm,y=1cm,xscale=0.7,yscale=0.7]

\draw [-] (0,.7) -- (-0.4,2);
\draw [-] (0,.7) -- (0,2);
\draw [-] (0,.7) -- (0.4,2);
\draw [line width=2pt] (0,0)-- (0,.7);

\end{tikzpicture}
\ENDPICTURE

\PICTURE ram24
\begin{tikzpicture}[line cap=round,line join=round,>=triangle 45,x=1cm,y=1cm,xscale=0.7,yscale=0.7]

\draw [-] (0,.7) -- (-0.6,2);
\draw [-] (0,.7) -- (-0.2,2);
\draw [-] (0,.7) -- (0.2,2);
\draw [-] (0,.7) -- (.6,2);
\draw [line width=2pt] (0,0)-- (0,.7);

\end{tikzpicture}
\ENDPICTURE

\PICTURE ram25
\begin{tikzpicture}[line cap=round,line join=round,>=triangle 45,x=1cm,y=1cm,xscale=0.7,yscale=0.7]

\draw [-] (0,.7) -- (-0.6,2);
\draw [-] (0,.7) -- (-0.3,2);
\draw [-] (0,.7) -- (0,2);
\draw [-] (0,.7) -- (0.3,2);
\draw [-] (0,.7) -- (.6,2);
\draw [line width=2pt] (0,0)-- (0,.7);

\end{tikzpicture}
\ENDPICTURE

\PICTURE ramd
\begin{tikzpicture}[line cap=round,line join=round,>=triangle 45,x=1cm,y=1cm,xscale=0.7,yscale=0.7]

\draw [-] (0,0) -- (-0.3,0.7);
\draw [-] (0,0) -- (0.3,0.7);
\draw [line width=2pt] (-0.3,0.7)-- (-0.3,2);
\draw [line width=2pt] (0.3,0.7)-- (0.3,2);

\end{tikzpicture}
\ENDPICTURE

\PICTURE ramp2
\begin{tikzpicture}[line cap=round,line join=round,>=triangle 45,x=1cm,y=1cm,xscale=0.7,yscale=0.7]

\draw [-] (0,0) -- (-0.3,0.7);
\draw [-] (0,0) -- (0.3,0.7);
\draw [-] (-0.3,0.7)-- (-0.3,2);
\draw [-] (-0.3,0.7)-- (-0.2,2);
\draw [-] (-0.3,0.7)-- (-0.1,2);
\draw [line width=2pt] (-0.3,0.7)-- (-0.5,2);
\draw [line width=2pt] (0.3,0.7)-- (0.5,2);
\draw [-] (0.3,0.7)-- (0.1,2);
\draw [-] (0.3,0.7)-- (0.2,2);
\draw [-] (0.3,0.7)-- (0.3,2);
\draw (-0.8,1.5) node {$2$};
\draw (0.8,1.5) node {$2$};
\end{tikzpicture}
\ENDPICTURE

\PICTURE ramp2d
\begin{tikzpicture}[line cap=round,line join=round,>=triangle 45,x=1cm,y=1cm,xscale=0.7,yscale=0.7]

\draw [-] (0,0) -- (-0.3,0.7);
\draw [-] (0,0) -- (0.3,0.7);
\draw [line width=2pt] (-0.3,0.7)-- (-0.3,2);
\draw [line width=2pt] (0.3,0.7)-- (0.5,2);
\draw [-] (0.3,0.7)-- (0.0,2);
\draw [-] (0.3,0.7)-- (0.2,2);
\draw (-0.8,1.5) node {$4$};
\draw (0.8,1.5) node {$2$};
\end{tikzpicture}
\ENDPICTURE

\PICTURE ramp3
\begin{tikzpicture}[line cap=round,line join=round,>=triangle 45,x=1cm,y=1cm,xscale=0.7,yscale=0.7]

\draw [-] (0,0) -- (-0.3,0.7);
\draw [-] (0,0) -- (0.3,0.7);
\draw [-] (-0.3,0.7)-- (-0.2,2);
\draw [line width=2pt] (-0.3,0.7)-- (-0.4,2);
\draw [line width=2pt] (0.3,0.7)-- (0.4,2);
\draw [-] (0.3,0.7)-- (0.2,2);
\draw (-0.6,1.5) node {$3$};
\draw (0.6,1.5) node {$3$};
\end{tikzpicture}
\ENDPICTURE

\PICTURE ramp4
\begin{tikzpicture}[line cap=round,line join=round,>=triangle 45,x=1cm,y=1cm,xscale=0.7,yscale=0.7]

\draw [-] (0,0) -- (-0.3,0.7);
\draw [-] (0,0) -- (0.3,0.7);
\draw [-] (-0.3,0.7)-- (-0.2,2);
\draw [line width=2pt] (-0.3,0.7)-- (-0.4,2);
\draw [line width=2pt] (0.3,0.7)-- (0.4,2);
\draw [-] (0.3,0.7)-- (0.2,2);
\draw (-0.6,1.5) node {$4$};
\draw (0.6,1.5) node {$4$};
\end{tikzpicture}
\ENDPICTURE

\PICTURE rambruin
\begin{tikzpicture}[line cap=round,line join=round,>=triangle 45,x=1cm,y=1cm,xscale=0.7,yscale=0.7]

\draw [-] (0,0) -- (-0.3,0.7);
\draw [line width=2pt] (0,0) -- (0.3,0.7);
\draw (0.6,0.4) node {$2$};
\draw [-] (-0.3,0.7)-- (-0.2,2);
\draw [-] (-0.3,0.7)-- (-0.4,2);
\draw [-] (0.3,0.7)-- (0.4,2);
\draw [-] (0.3,0.7)-- (0.2,2);
\end{tikzpicture}
\ENDPICTURE

\PICTURE ram211
\begin{tikzpicture}[line cap=round,line join=round,>=triangle 45,x=1cm,y=1cm,xscale=0.7,yscale=0.7]

\draw [-] (0,0) -- (-0.3,0.7);
\draw [-] (0,0) -- (0.3,0.7);
\draw [line width=2pt] (-0.3,0.7) -- (-0.3,2);
\draw [-] (0.3,0.7) -- (0.2,2);
\draw [-] (0.3,0.7) -- (0.4,2);
\end{tikzpicture}
\ENDPICTURE

\PICTURE ram311
\begin{tikzpicture}[line cap=round,line join=round,>=triangle 45,x=1cm,y=1cm,xscale=0.7,yscale=0.7]

\draw [-] (0,0) -- (-0.3,0.7);
\draw [-] (0,0) -- (0.3,0.7);
\draw [-] (-0.3,0.7)-- (-0.1,2);
\draw [-] (-0.3,0.7)-- (-0.2,2);
\draw [line width=2pt] (-0.3,0.7)-- (-0.4,2);
\draw [line width=2pt] (0.3,0.7)-- (0.4,2);
\draw [-] (0.3,0.7)-- (0.2,2);
\draw [-] (0.3,0.7)-- (0.1,2);
\draw (-0.6,1.5) node {$3$};
\draw (0.6,1.5) node {$3$};
\end{tikzpicture}
\ENDPICTURE

\PICTURE ram511111
\begin{tikzpicture}[line cap=round,line join=round,>=triangle 45,x=1cm,y=1cm,xscale=0.7,yscale=0.7]

\draw [-] (0,0) -- (-0.3,0.7);
\draw [-] (0,0) -- (0.3,0.7);
\draw [line width=2pt] (-0.3,0.7) -- (-0.3,2);
\draw [-] (0.3,0.7) -- (0.2,2);
\draw [-] (0.3,0.7) -- (0.3,2);
\draw [-] (0.3,0.7) -- (0.4,2);
\draw [-] (0.3,0.7) -- (0.5,2);
\draw [-] (0.3,0.7) -- (0.6,2);
\end{tikzpicture}
\ENDPICTURE
 \end{tabular} &
      \begin{tabular}{c} $24,13$ \\ $48, 23$ \\ $96, 49$ \end{tabular} &
      \begin{tabular}{c} $[0,0],[0,120]$ \\ $[540,0],[360,0]$ \\ $[240,0],[0,0]$ \end{tabular} &
      \begin{tabular}{c} \phantom{$0$} \\ $[3], [3]$ \\ $[3]$ \end{tabular} &
      \begin{tabular}{c} \phantom{$0$} \\ $[8], [4]$ \\ $[6]$ \end{tabular} \\
      \hline
      \texttt{etale-5} & $6,2,5$ &
      \begin{tabular}{c} $6$ \\ 
  \long\def\PICTURE #pictures.texram25 {}%
  \PICTURE ram10
\begin{tikzpicture}[line cap=round,line join=round,>=triangle 45,x=1cm,y=1cm,xscale=0.7,yscale=0.7]

\draw [line width=2pt] (0,.7) -- (0,2);
\draw [line width=2pt] (0,0) -- (0,.7);

\end{tikzpicture}

\ENDPICTURE

\PICTURE ram22thin
\begin{tikzpicture}[line cap=round,line join=round,>=triangle 45,x=1cm,y=1cm,xscale=0.7,yscale=0.7]

\draw [-] (0,.7) -- (-0.3,2);
\draw [-] (0,.7) -- (0.3,2);
\draw [-] (0,0)-- (0,.7);

\end{tikzpicture}
\ENDPICTURE

\PICTURE ram22thick
\begin{tikzpicture}[line cap=round,line join=round,>=triangle 45,x=1cm,y=1cm,xscale=0.7,yscale=0.7]

\draw [-] (0,.7) -- (-0.3,2);
\draw [-] (0,.7) -- (0.3,2);
\draw [line width=2pt] (0,0)-- (0,.7);
\draw (0.6,0.3) node {$2$};

\end{tikzpicture}
\ENDPICTURE

\PICTURE ram22
\begin{tikzpicture}[line cap=round,line join=round,>=triangle 45,x=1cm,y=1cm,xscale=0.7,yscale=0.7]

\draw [-] (0,.7) -- (-0.3,2);
\draw [-] (0,.7) -- (0.3,2);
\draw [line width=2pt] (0,0)-- (0,.7);

\end{tikzpicture}
\ENDPICTURE

\PICTURE ram23
\begin{tikzpicture}[line cap=round,line join=round,>=triangle 45,x=1cm,y=1cm,xscale=0.7,yscale=0.7]

\draw [-] (0,.7) -- (-0.4,2);
\draw [-] (0,.7) -- (0,2);
\draw [-] (0,.7) -- (0.4,2);
\draw [line width=2pt] (0,0)-- (0,.7);

\end{tikzpicture}
\ENDPICTURE

\PICTURE ram24
\begin{tikzpicture}[line cap=round,line join=round,>=triangle 45,x=1cm,y=1cm,xscale=0.7,yscale=0.7]

\draw [-] (0,.7) -- (-0.6,2);
\draw [-] (0,.7) -- (-0.2,2);
\draw [-] (0,.7) -- (0.2,2);
\draw [-] (0,.7) -- (.6,2);
\draw [line width=2pt] (0,0)-- (0,.7);

\end{tikzpicture}
\ENDPICTURE

\PICTURE ram25
\begin{tikzpicture}[line cap=round,line join=round,>=triangle 45,x=1cm,y=1cm,xscale=0.7,yscale=0.7]

\draw [-] (0,.7) -- (-0.6,2);
\draw [-] (0,.7) -- (-0.3,2);
\draw [-] (0,.7) -- (0,2);
\draw [-] (0,.7) -- (0.3,2);
\draw [-] (0,.7) -- (.6,2);
\draw [line width=2pt] (0,0)-- (0,.7);

\end{tikzpicture}
\ENDPICTURE

\PICTURE ramd
\begin{tikzpicture}[line cap=round,line join=round,>=triangle 45,x=1cm,y=1cm,xscale=0.7,yscale=0.7]

\draw [-] (0,0) -- (-0.3,0.7);
\draw [-] (0,0) -- (0.3,0.7);
\draw [line width=2pt] (-0.3,0.7)-- (-0.3,2);
\draw [line width=2pt] (0.3,0.7)-- (0.3,2);

\end{tikzpicture}
\ENDPICTURE

\PICTURE ramp2
\begin{tikzpicture}[line cap=round,line join=round,>=triangle 45,x=1cm,y=1cm,xscale=0.7,yscale=0.7]

\draw [-] (0,0) -- (-0.3,0.7);
\draw [-] (0,0) -- (0.3,0.7);
\draw [-] (-0.3,0.7)-- (-0.3,2);
\draw [-] (-0.3,0.7)-- (-0.2,2);
\draw [-] (-0.3,0.7)-- (-0.1,2);
\draw [line width=2pt] (-0.3,0.7)-- (-0.5,2);
\draw [line width=2pt] (0.3,0.7)-- (0.5,2);
\draw [-] (0.3,0.7)-- (0.1,2);
\draw [-] (0.3,0.7)-- (0.2,2);
\draw [-] (0.3,0.7)-- (0.3,2);
\draw (-0.8,1.5) node {$2$};
\draw (0.8,1.5) node {$2$};
\end{tikzpicture}
\ENDPICTURE

\PICTURE ramp2d
\begin{tikzpicture}[line cap=round,line join=round,>=triangle 45,x=1cm,y=1cm,xscale=0.7,yscale=0.7]

\draw [-] (0,0) -- (-0.3,0.7);
\draw [-] (0,0) -- (0.3,0.7);
\draw [line width=2pt] (-0.3,0.7)-- (-0.3,2);
\draw [line width=2pt] (0.3,0.7)-- (0.5,2);
\draw [-] (0.3,0.7)-- (0.0,2);
\draw [-] (0.3,0.7)-- (0.2,2);
\draw (-0.8,1.5) node {$4$};
\draw (0.8,1.5) node {$2$};
\end{tikzpicture}
\ENDPICTURE

\PICTURE ramp3
\begin{tikzpicture}[line cap=round,line join=round,>=triangle 45,x=1cm,y=1cm,xscale=0.7,yscale=0.7]

\draw [-] (0,0) -- (-0.3,0.7);
\draw [-] (0,0) -- (0.3,0.7);
\draw [-] (-0.3,0.7)-- (-0.2,2);
\draw [line width=2pt] (-0.3,0.7)-- (-0.4,2);
\draw [line width=2pt] (0.3,0.7)-- (0.4,2);
\draw [-] (0.3,0.7)-- (0.2,2);
\draw (-0.6,1.5) node {$3$};
\draw (0.6,1.5) node {$3$};
\end{tikzpicture}
\ENDPICTURE

\PICTURE ramp4
\begin{tikzpicture}[line cap=round,line join=round,>=triangle 45,x=1cm,y=1cm,xscale=0.7,yscale=0.7]

\draw [-] (0,0) -- (-0.3,0.7);
\draw [-] (0,0) -- (0.3,0.7);
\draw [-] (-0.3,0.7)-- (-0.2,2);
\draw [line width=2pt] (-0.3,0.7)-- (-0.4,2);
\draw [line width=2pt] (0.3,0.7)-- (0.4,2);
\draw [-] (0.3,0.7)-- (0.2,2);
\draw (-0.6,1.5) node {$4$};
\draw (0.6,1.5) node {$4$};
\end{tikzpicture}
\ENDPICTURE

\PICTURE rambruin
\begin{tikzpicture}[line cap=round,line join=round,>=triangle 45,x=1cm,y=1cm,xscale=0.7,yscale=0.7]

\draw [-] (0,0) -- (-0.3,0.7);
\draw [line width=2pt] (0,0) -- (0.3,0.7);
\draw (0.6,0.4) node {$2$};
\draw [-] (-0.3,0.7)-- (-0.2,2);
\draw [-] (-0.3,0.7)-- (-0.4,2);
\draw [-] (0.3,0.7)-- (0.4,2);
\draw [-] (0.3,0.7)-- (0.2,2);
\end{tikzpicture}
\ENDPICTURE

\PICTURE ram211
\begin{tikzpicture}[line cap=round,line join=round,>=triangle 45,x=1cm,y=1cm,xscale=0.7,yscale=0.7]

\draw [-] (0,0) -- (-0.3,0.7);
\draw [-] (0,0) -- (0.3,0.7);
\draw [line width=2pt] (-0.3,0.7) -- (-0.3,2);
\draw [-] (0.3,0.7) -- (0.2,2);
\draw [-] (0.3,0.7) -- (0.4,2);
\end{tikzpicture}
\ENDPICTURE

\PICTURE ram311
\begin{tikzpicture}[line cap=round,line join=round,>=triangle 45,x=1cm,y=1cm,xscale=0.7,yscale=0.7]

\draw [-] (0,0) -- (-0.3,0.7);
\draw [-] (0,0) -- (0.3,0.7);
\draw [-] (-0.3,0.7)-- (-0.1,2);
\draw [-] (-0.3,0.7)-- (-0.2,2);
\draw [line width=2pt] (-0.3,0.7)-- (-0.4,2);
\draw [line width=2pt] (0.3,0.7)-- (0.4,2);
\draw [-] (0.3,0.7)-- (0.2,2);
\draw [-] (0.3,0.7)-- (0.1,2);
\draw (-0.6,1.5) node {$3$};
\draw (0.6,1.5) node {$3$};
\end{tikzpicture}
\ENDPICTURE

\PICTURE ram511111
\begin{tikzpicture}[line cap=round,line join=round,>=triangle 45,x=1cm,y=1cm,xscale=0.7,yscale=0.7]

\draw [-] (0,0) -- (-0.3,0.7);
\draw [-] (0,0) -- (0.3,0.7);
\draw [line width=2pt] (-0.3,0.7) -- (-0.3,2);
\draw [-] (0.3,0.7) -- (0.2,2);
\draw [-] (0.3,0.7) -- (0.3,2);
\draw [-] (0.3,0.7) -- (0.4,2);
\draw [-] (0.3,0.7) -- (0.5,2);
\draw [-] (0.3,0.7) -- (0.6,2);
\end{tikzpicture}
\ENDPICTURE
 \end{tabular} &
      \begin{tabular}{c} $10,6$ \\ $20,11$ \\ $120,61$ \\ $200,101$ \\ $240,121$ \\ $14400,7201$ \end{tabular} &
      \begin{tabular}{c} $[156,0],[0,0]$ \\ $[0,0],[180,0]$ \\ $[4032,0],[0,0]$ \\ $[0,2785],[0,0]$ \\ $[0,10080],[0,6000]$ \\ no good \end{tabular} &
      \begin{tabular}{c}  $[2,2]$ \\ $[4]$ \\ $[2]$ or $[1]$ \\ \phantom{$0$} \\ \phantom{$0$} \\ \end{tabular} &
      \begin{tabular}{c}  $[2,2]$ \\ $[2]$ \\ $[24]$ or $[8]$ \\ \phantom{$0$} \\ \phantom{$0$} \\ \end{tabular} \\
      \hline
    \end{tabular}
  \end{tiny}
  \caption{Postcomposing étale covers of curves of genus $2$}
  \label{tab:results-etg2}
\end{table}

\begin{table}[htbp]
  \centering
  \begin{tiny}
    \begin{tabular}{|c|c|c|c|c|c|c|c|c|}
      \hline
      Case & $g_X,g_Y,d_X$ & Ramification & $\# G,g_Z$  & $X$ nhyp/hyp & Prym dims & $\deg Z \to C_i$ \\
      \hline
      \texttt{total-3} & $3,1,3$ & \begin{tabular}{cc} 4 & 1 \\ 
  \long\def\PICTURE #pictures.texram23 {}%
  \PICTURE ram10
\begin{tikzpicture}[line cap=round,line join=round,>=triangle 45,x=1cm,y=1cm,xscale=0.7,yscale=0.7]

\draw [line width=2pt] (0,.7) -- (0,2);
\draw [line width=2pt] (0,0) -- (0,.7);

\end{tikzpicture}

\ENDPICTURE

\PICTURE ram22thin
\begin{tikzpicture}[line cap=round,line join=round,>=triangle 45,x=1cm,y=1cm,xscale=0.7,yscale=0.7]

\draw [-] (0,.7) -- (-0.3,2);
\draw [-] (0,.7) -- (0.3,2);
\draw [-] (0,0)-- (0,.7);

\end{tikzpicture}
\ENDPICTURE

\PICTURE ram22thick
\begin{tikzpicture}[line cap=round,line join=round,>=triangle 45,x=1cm,y=1cm,xscale=0.7,yscale=0.7]

\draw [-] (0,.7) -- (-0.3,2);
\draw [-] (0,.7) -- (0.3,2);
\draw [line width=2pt] (0,0)-- (0,.7);
\draw (0.6,0.3) node {$2$};

\end{tikzpicture}
\ENDPICTURE

\PICTURE ram22
\begin{tikzpicture}[line cap=round,line join=round,>=triangle 45,x=1cm,y=1cm,xscale=0.7,yscale=0.7]

\draw [-] (0,.7) -- (-0.3,2);
\draw [-] (0,.7) -- (0.3,2);
\draw [line width=2pt] (0,0)-- (0,.7);

\end{tikzpicture}
\ENDPICTURE

\PICTURE ram23
\begin{tikzpicture}[line cap=round,line join=round,>=triangle 45,x=1cm,y=1cm,xscale=0.7,yscale=0.7]

\draw [-] (0,.7) -- (-0.4,2);
\draw [-] (0,.7) -- (0,2);
\draw [-] (0,.7) -- (0.4,2);
\draw [line width=2pt] (0,0)-- (0,.7);

\end{tikzpicture}
\ENDPICTURE

\PICTURE ram24
\begin{tikzpicture}[line cap=round,line join=round,>=triangle 45,x=1cm,y=1cm,xscale=0.7,yscale=0.7]

\draw [-] (0,.7) -- (-0.6,2);
\draw [-] (0,.7) -- (-0.2,2);
\draw [-] (0,.7) -- (0.2,2);
\draw [-] (0,.7) -- (.6,2);
\draw [line width=2pt] (0,0)-- (0,.7);

\end{tikzpicture}
\ENDPICTURE

\PICTURE ram25
\begin{tikzpicture}[line cap=round,line join=round,>=triangle 45,x=1cm,y=1cm,xscale=0.7,yscale=0.7]

\draw [-] (0,.7) -- (-0.6,2);
\draw [-] (0,.7) -- (-0.3,2);
\draw [-] (0,.7) -- (0,2);
\draw [-] (0,.7) -- (0.3,2);
\draw [-] (0,.7) -- (.6,2);
\draw [line width=2pt] (0,0)-- (0,.7);

\end{tikzpicture}
\ENDPICTURE

\PICTURE ramd
\begin{tikzpicture}[line cap=round,line join=round,>=triangle 45,x=1cm,y=1cm,xscale=0.7,yscale=0.7]

\draw [-] (0,0) -- (-0.3,0.7);
\draw [-] (0,0) -- (0.3,0.7);
\draw [line width=2pt] (-0.3,0.7)-- (-0.3,2);
\draw [line width=2pt] (0.3,0.7)-- (0.3,2);

\end{tikzpicture}
\ENDPICTURE

\PICTURE ramp2
\begin{tikzpicture}[line cap=round,line join=round,>=triangle 45,x=1cm,y=1cm,xscale=0.7,yscale=0.7]

\draw [-] (0,0) -- (-0.3,0.7);
\draw [-] (0,0) -- (0.3,0.7);
\draw [-] (-0.3,0.7)-- (-0.3,2);
\draw [-] (-0.3,0.7)-- (-0.2,2);
\draw [-] (-0.3,0.7)-- (-0.1,2);
\draw [line width=2pt] (-0.3,0.7)-- (-0.5,2);
\draw [line width=2pt] (0.3,0.7)-- (0.5,2);
\draw [-] (0.3,0.7)-- (0.1,2);
\draw [-] (0.3,0.7)-- (0.2,2);
\draw [-] (0.3,0.7)-- (0.3,2);
\draw (-0.8,1.5) node {$2$};
\draw (0.8,1.5) node {$2$};
\end{tikzpicture}
\ENDPICTURE

\PICTURE ramp2d
\begin{tikzpicture}[line cap=round,line join=round,>=triangle 45,x=1cm,y=1cm,xscale=0.7,yscale=0.7]

\draw [-] (0,0) -- (-0.3,0.7);
\draw [-] (0,0) -- (0.3,0.7);
\draw [line width=2pt] (-0.3,0.7)-- (-0.3,2);
\draw [line width=2pt] (0.3,0.7)-- (0.5,2);
\draw [-] (0.3,0.7)-- (0.0,2);
\draw [-] (0.3,0.7)-- (0.2,2);
\draw (-0.8,1.5) node {$4$};
\draw (0.8,1.5) node {$2$};
\end{tikzpicture}
\ENDPICTURE

\PICTURE ramp3
\begin{tikzpicture}[line cap=round,line join=round,>=triangle 45,x=1cm,y=1cm,xscale=0.7,yscale=0.7]

\draw [-] (0,0) -- (-0.3,0.7);
\draw [-] (0,0) -- (0.3,0.7);
\draw [-] (-0.3,0.7)-- (-0.2,2);
\draw [line width=2pt] (-0.3,0.7)-- (-0.4,2);
\draw [line width=2pt] (0.3,0.7)-- (0.4,2);
\draw [-] (0.3,0.7)-- (0.2,2);
\draw (-0.6,1.5) node {$3$};
\draw (0.6,1.5) node {$3$};
\end{tikzpicture}
\ENDPICTURE

\PICTURE ramp4
\begin{tikzpicture}[line cap=round,line join=round,>=triangle 45,x=1cm,y=1cm,xscale=0.7,yscale=0.7]

\draw [-] (0,0) -- (-0.3,0.7);
\draw [-] (0,0) -- (0.3,0.7);
\draw [-] (-0.3,0.7)-- (-0.2,2);
\draw [line width=2pt] (-0.3,0.7)-- (-0.4,2);
\draw [line width=2pt] (0.3,0.7)-- (0.4,2);
\draw [-] (0.3,0.7)-- (0.2,2);
\draw (-0.6,1.5) node {$4$};
\draw (0.6,1.5) node {$4$};
\end{tikzpicture}
\ENDPICTURE

\PICTURE rambruin
\begin{tikzpicture}[line cap=round,line join=round,>=triangle 45,x=1cm,y=1cm,xscale=0.7,yscale=0.7]

\draw [-] (0,0) -- (-0.3,0.7);
\draw [line width=2pt] (0,0) -- (0.3,0.7);
\draw (0.6,0.4) node {$2$};
\draw [-] (-0.3,0.7)-- (-0.2,2);
\draw [-] (-0.3,0.7)-- (-0.4,2);
\draw [-] (0.3,0.7)-- (0.4,2);
\draw [-] (0.3,0.7)-- (0.2,2);
\end{tikzpicture}
\ENDPICTURE

\PICTURE ram211
\begin{tikzpicture}[line cap=round,line join=round,>=triangle 45,x=1cm,y=1cm,xscale=0.7,yscale=0.7]

\draw [-] (0,0) -- (-0.3,0.7);
\draw [-] (0,0) -- (0.3,0.7);
\draw [line width=2pt] (-0.3,0.7) -- (-0.3,2);
\draw [-] (0.3,0.7) -- (0.2,2);
\draw [-] (0.3,0.7) -- (0.4,2);
\end{tikzpicture}
\ENDPICTURE

\PICTURE ram311
\begin{tikzpicture}[line cap=round,line join=round,>=triangle 45,x=1cm,y=1cm,xscale=0.7,yscale=0.7]

\draw [-] (0,0) -- (-0.3,0.7);
\draw [-] (0,0) -- (0.3,0.7);
\draw [-] (-0.3,0.7)-- (-0.1,2);
\draw [-] (-0.3,0.7)-- (-0.2,2);
\draw [line width=2pt] (-0.3,0.7)-- (-0.4,2);
\draw [line width=2pt] (0.3,0.7)-- (0.4,2);
\draw [-] (0.3,0.7)-- (0.2,2);
\draw [-] (0.3,0.7)-- (0.1,2);
\draw (-0.6,1.5) node {$3$};
\draw (0.6,1.5) node {$3$};
\end{tikzpicture}
\ENDPICTURE

\PICTURE ram511111
\begin{tikzpicture}[line cap=round,line join=round,>=triangle 45,x=1cm,y=1cm,xscale=0.7,yscale=0.7]

\draw [-] (0,0) -- (-0.3,0.7);
\draw [-] (0,0) -- (0.3,0.7);
\draw [line width=2pt] (-0.3,0.7) -- (-0.3,2);
\draw [-] (0.3,0.7) -- (0.2,2);
\draw [-] (0.3,0.7) -- (0.3,2);
\draw [-] (0.3,0.7) -- (0.4,2);
\draw [-] (0.3,0.7) -- (0.5,2);
\draw [-] (0.3,0.7) -- (0.6,2);
\end{tikzpicture}
\ENDPICTURE
 & 
  \long\def\PICTURE #pictures.texramd {}%
  \PICTURE ram10
\begin{tikzpicture}[line cap=round,line join=round,>=triangle 45,x=1cm,y=1cm,xscale=0.7,yscale=0.7]

\draw [line width=2pt] (0,.7) -- (0,2);
\draw [line width=2pt] (0,0) -- (0,.7);

\end{tikzpicture}

\ENDPICTURE

\PICTURE ram22thin
\begin{tikzpicture}[line cap=round,line join=round,>=triangle 45,x=1cm,y=1cm,xscale=0.7,yscale=0.7]

\draw [-] (0,.7) -- (-0.3,2);
\draw [-] (0,.7) -- (0.3,2);
\draw [-] (0,0)-- (0,.7);

\end{tikzpicture}
\ENDPICTURE

\PICTURE ram22thick
\begin{tikzpicture}[line cap=round,line join=round,>=triangle 45,x=1cm,y=1cm,xscale=0.7,yscale=0.7]

\draw [-] (0,.7) -- (-0.3,2);
\draw [-] (0,.7) -- (0.3,2);
\draw [line width=2pt] (0,0)-- (0,.7);
\draw (0.6,0.3) node {$2$};

\end{tikzpicture}
\ENDPICTURE

\PICTURE ram22
\begin{tikzpicture}[line cap=round,line join=round,>=triangle 45,x=1cm,y=1cm,xscale=0.7,yscale=0.7]

\draw [-] (0,.7) -- (-0.3,2);
\draw [-] (0,.7) -- (0.3,2);
\draw [line width=2pt] (0,0)-- (0,.7);

\end{tikzpicture}
\ENDPICTURE

\PICTURE ram23
\begin{tikzpicture}[line cap=round,line join=round,>=triangle 45,x=1cm,y=1cm,xscale=0.7,yscale=0.7]

\draw [-] (0,.7) -- (-0.4,2);
\draw [-] (0,.7) -- (0,2);
\draw [-] (0,.7) -- (0.4,2);
\draw [line width=2pt] (0,0)-- (0,.7);

\end{tikzpicture}
\ENDPICTURE

\PICTURE ram24
\begin{tikzpicture}[line cap=round,line join=round,>=triangle 45,x=1cm,y=1cm,xscale=0.7,yscale=0.7]

\draw [-] (0,.7) -- (-0.6,2);
\draw [-] (0,.7) -- (-0.2,2);
\draw [-] (0,.7) -- (0.2,2);
\draw [-] (0,.7) -- (.6,2);
\draw [line width=2pt] (0,0)-- (0,.7);

\end{tikzpicture}
\ENDPICTURE

\PICTURE ram25
\begin{tikzpicture}[line cap=round,line join=round,>=triangle 45,x=1cm,y=1cm,xscale=0.7,yscale=0.7]

\draw [-] (0,.7) -- (-0.6,2);
\draw [-] (0,.7) -- (-0.3,2);
\draw [-] (0,.7) -- (0,2);
\draw [-] (0,.7) -- (0.3,2);
\draw [-] (0,.7) -- (.6,2);
\draw [line width=2pt] (0,0)-- (0,.7);

\end{tikzpicture}
\ENDPICTURE

\PICTURE ramd
\begin{tikzpicture}[line cap=round,line join=round,>=triangle 45,x=1cm,y=1cm,xscale=0.7,yscale=0.7]

\draw [-] (0,0) -- (-0.3,0.7);
\draw [-] (0,0) -- (0.3,0.7);
\draw [line width=2pt] (-0.3,0.7)-- (-0.3,2);
\draw [line width=2pt] (0.3,0.7)-- (0.3,2);

\end{tikzpicture}
\ENDPICTURE

\PICTURE ramp2
\begin{tikzpicture}[line cap=round,line join=round,>=triangle 45,x=1cm,y=1cm,xscale=0.7,yscale=0.7]

\draw [-] (0,0) -- (-0.3,0.7);
\draw [-] (0,0) -- (0.3,0.7);
\draw [-] (-0.3,0.7)-- (-0.3,2);
\draw [-] (-0.3,0.7)-- (-0.2,2);
\draw [-] (-0.3,0.7)-- (-0.1,2);
\draw [line width=2pt] (-0.3,0.7)-- (-0.5,2);
\draw [line width=2pt] (0.3,0.7)-- (0.5,2);
\draw [-] (0.3,0.7)-- (0.1,2);
\draw [-] (0.3,0.7)-- (0.2,2);
\draw [-] (0.3,0.7)-- (0.3,2);
\draw (-0.8,1.5) node {$2$};
\draw (0.8,1.5) node {$2$};
\end{tikzpicture}
\ENDPICTURE

\PICTURE ramp2d
\begin{tikzpicture}[line cap=round,line join=round,>=triangle 45,x=1cm,y=1cm,xscale=0.7,yscale=0.7]

\draw [-] (0,0) -- (-0.3,0.7);
\draw [-] (0,0) -- (0.3,0.7);
\draw [line width=2pt] (-0.3,0.7)-- (-0.3,2);
\draw [line width=2pt] (0.3,0.7)-- (0.5,2);
\draw [-] (0.3,0.7)-- (0.0,2);
\draw [-] (0.3,0.7)-- (0.2,2);
\draw (-0.8,1.5) node {$4$};
\draw (0.8,1.5) node {$2$};
\end{tikzpicture}
\ENDPICTURE

\PICTURE ramp3
\begin{tikzpicture}[line cap=round,line join=round,>=triangle 45,x=1cm,y=1cm,xscale=0.7,yscale=0.7]

\draw [-] (0,0) -- (-0.3,0.7);
\draw [-] (0,0) -- (0.3,0.7);
\draw [-] (-0.3,0.7)-- (-0.2,2);
\draw [line width=2pt] (-0.3,0.7)-- (-0.4,2);
\draw [line width=2pt] (0.3,0.7)-- (0.4,2);
\draw [-] (0.3,0.7)-- (0.2,2);
\draw (-0.6,1.5) node {$3$};
\draw (0.6,1.5) node {$3$};
\end{tikzpicture}
\ENDPICTURE

\PICTURE ramp4
\begin{tikzpicture}[line cap=round,line join=round,>=triangle 45,x=1cm,y=1cm,xscale=0.7,yscale=0.7]

\draw [-] (0,0) -- (-0.3,0.7);
\draw [-] (0,0) -- (0.3,0.7);
\draw [-] (-0.3,0.7)-- (-0.2,2);
\draw [line width=2pt] (-0.3,0.7)-- (-0.4,2);
\draw [line width=2pt] (0.3,0.7)-- (0.4,2);
\draw [-] (0.3,0.7)-- (0.2,2);
\draw (-0.6,1.5) node {$4$};
\draw (0.6,1.5) node {$4$};
\end{tikzpicture}
\ENDPICTURE

\PICTURE rambruin
\begin{tikzpicture}[line cap=round,line join=round,>=triangle 45,x=1cm,y=1cm,xscale=0.7,yscale=0.7]

\draw [-] (0,0) -- (-0.3,0.7);
\draw [line width=2pt] (0,0) -- (0.3,0.7);
\draw (0.6,0.4) node {$2$};
\draw [-] (-0.3,0.7)-- (-0.2,2);
\draw [-] (-0.3,0.7)-- (-0.4,2);
\draw [-] (0.3,0.7)-- (0.4,2);
\draw [-] (0.3,0.7)-- (0.2,2);
\end{tikzpicture}
\ENDPICTURE

\PICTURE ram211
\begin{tikzpicture}[line cap=round,line join=round,>=triangle 45,x=1cm,y=1cm,xscale=0.7,yscale=0.7]

\draw [-] (0,0) -- (-0.3,0.7);
\draw [-] (0,0) -- (0.3,0.7);
\draw [line width=2pt] (-0.3,0.7) -- (-0.3,2);
\draw [-] (0.3,0.7) -- (0.2,2);
\draw [-] (0.3,0.7) -- (0.4,2);
\end{tikzpicture}
\ENDPICTURE

\PICTURE ram311
\begin{tikzpicture}[line cap=round,line join=round,>=triangle 45,x=1cm,y=1cm,xscale=0.7,yscale=0.7]

\draw [-] (0,0) -- (-0.3,0.7);
\draw [-] (0,0) -- (0.3,0.7);
\draw [-] (-0.3,0.7)-- (-0.1,2);
\draw [-] (-0.3,0.7)-- (-0.2,2);
\draw [line width=2pt] (-0.3,0.7)-- (-0.4,2);
\draw [line width=2pt] (0.3,0.7)-- (0.4,2);
\draw [-] (0.3,0.7)-- (0.2,2);
\draw [-] (0.3,0.7)-- (0.1,2);
\draw (-0.6,1.5) node {$3$};
\draw (0.6,1.5) node {$3$};
\end{tikzpicture}
\ENDPICTURE

\PICTURE ram511111
\begin{tikzpicture}[line cap=round,line join=round,>=triangle 45,x=1cm,y=1cm,xscale=0.7,yscale=0.7]

\draw [-] (0,0) -- (-0.3,0.7);
\draw [-] (0,0) -- (0.3,0.7);
\draw [line width=2pt] (-0.3,0.7) -- (-0.3,2);
\draw [-] (0.3,0.7) -- (0.2,2);
\draw [-] (0.3,0.7) -- (0.3,2);
\draw [-] (0.3,0.7) -- (0.4,2);
\draw [-] (0.3,0.7) -- (0.5,2);
\draw [-] (0.3,0.7) -- (0.6,2);
\end{tikzpicture}
\ENDPICTURE
 \end{tabular} & \begin{tabular}{c} $6,3$ \\ $12,5$ \end{tabular} & \begin{tabular}{c} $[9,0],[0,0]$ \\ $[0,0],[6,0]$ \end{tabular} & \begin{tabular}{c} $[2]$ \\ $[2]$ \end{tabular} & \begin{tabular}{c} $[2]$ \\ $[2]$ \end{tabular} \\
      \hline
      \texttt{total-4} & $4,1,4$ & \begin{tabular}{cc} 4 & 1 \\ 
  \long\def\PICTURE #pictures.texram24 {}%
  \PICTURE ram10
\begin{tikzpicture}[line cap=round,line join=round,>=triangle 45,x=1cm,y=1cm,xscale=0.7,yscale=0.7]

\draw [line width=2pt] (0,.7) -- (0,2);
\draw [line width=2pt] (0,0) -- (0,.7);

\end{tikzpicture}

\ENDPICTURE

\PICTURE ram22thin
\begin{tikzpicture}[line cap=round,line join=round,>=triangle 45,x=1cm,y=1cm,xscale=0.7,yscale=0.7]

\draw [-] (0,.7) -- (-0.3,2);
\draw [-] (0,.7) -- (0.3,2);
\draw [-] (0,0)-- (0,.7);

\end{tikzpicture}
\ENDPICTURE

\PICTURE ram22thick
\begin{tikzpicture}[line cap=round,line join=round,>=triangle 45,x=1cm,y=1cm,xscale=0.7,yscale=0.7]

\draw [-] (0,.7) -- (-0.3,2);
\draw [-] (0,.7) -- (0.3,2);
\draw [line width=2pt] (0,0)-- (0,.7);
\draw (0.6,0.3) node {$2$};

\end{tikzpicture}
\ENDPICTURE

\PICTURE ram22
\begin{tikzpicture}[line cap=round,line join=round,>=triangle 45,x=1cm,y=1cm,xscale=0.7,yscale=0.7]

\draw [-] (0,.7) -- (-0.3,2);
\draw [-] (0,.7) -- (0.3,2);
\draw [line width=2pt] (0,0)-- (0,.7);

\end{tikzpicture}
\ENDPICTURE

\PICTURE ram23
\begin{tikzpicture}[line cap=round,line join=round,>=triangle 45,x=1cm,y=1cm,xscale=0.7,yscale=0.7]

\draw [-] (0,.7) -- (-0.4,2);
\draw [-] (0,.7) -- (0,2);
\draw [-] (0,.7) -- (0.4,2);
\draw [line width=2pt] (0,0)-- (0,.7);

\end{tikzpicture}
\ENDPICTURE

\PICTURE ram24
\begin{tikzpicture}[line cap=round,line join=round,>=triangle 45,x=1cm,y=1cm,xscale=0.7,yscale=0.7]

\draw [-] (0,.7) -- (-0.6,2);
\draw [-] (0,.7) -- (-0.2,2);
\draw [-] (0,.7) -- (0.2,2);
\draw [-] (0,.7) -- (.6,2);
\draw [line width=2pt] (0,0)-- (0,.7);

\end{tikzpicture}
\ENDPICTURE

\PICTURE ram25
\begin{tikzpicture}[line cap=round,line join=round,>=triangle 45,x=1cm,y=1cm,xscale=0.7,yscale=0.7]

\draw [-] (0,.7) -- (-0.6,2);
\draw [-] (0,.7) -- (-0.3,2);
\draw [-] (0,.7) -- (0,2);
\draw [-] (0,.7) -- (0.3,2);
\draw [-] (0,.7) -- (.6,2);
\draw [line width=2pt] (0,0)-- (0,.7);

\end{tikzpicture}
\ENDPICTURE

\PICTURE ramd
\begin{tikzpicture}[line cap=round,line join=round,>=triangle 45,x=1cm,y=1cm,xscale=0.7,yscale=0.7]

\draw [-] (0,0) -- (-0.3,0.7);
\draw [-] (0,0) -- (0.3,0.7);
\draw [line width=2pt] (-0.3,0.7)-- (-0.3,2);
\draw [line width=2pt] (0.3,0.7)-- (0.3,2);

\end{tikzpicture}
\ENDPICTURE

\PICTURE ramp2
\begin{tikzpicture}[line cap=round,line join=round,>=triangle 45,x=1cm,y=1cm,xscale=0.7,yscale=0.7]

\draw [-] (0,0) -- (-0.3,0.7);
\draw [-] (0,0) -- (0.3,0.7);
\draw [-] (-0.3,0.7)-- (-0.3,2);
\draw [-] (-0.3,0.7)-- (-0.2,2);
\draw [-] (-0.3,0.7)-- (-0.1,2);
\draw [line width=2pt] (-0.3,0.7)-- (-0.5,2);
\draw [line width=2pt] (0.3,0.7)-- (0.5,2);
\draw [-] (0.3,0.7)-- (0.1,2);
\draw [-] (0.3,0.7)-- (0.2,2);
\draw [-] (0.3,0.7)-- (0.3,2);
\draw (-0.8,1.5) node {$2$};
\draw (0.8,1.5) node {$2$};
\end{tikzpicture}
\ENDPICTURE

\PICTURE ramp2d
\begin{tikzpicture}[line cap=round,line join=round,>=triangle 45,x=1cm,y=1cm,xscale=0.7,yscale=0.7]

\draw [-] (0,0) -- (-0.3,0.7);
\draw [-] (0,0) -- (0.3,0.7);
\draw [line width=2pt] (-0.3,0.7)-- (-0.3,2);
\draw [line width=2pt] (0.3,0.7)-- (0.5,2);
\draw [-] (0.3,0.7)-- (0.0,2);
\draw [-] (0.3,0.7)-- (0.2,2);
\draw (-0.8,1.5) node {$4$};
\draw (0.8,1.5) node {$2$};
\end{tikzpicture}
\ENDPICTURE

\PICTURE ramp3
\begin{tikzpicture}[line cap=round,line join=round,>=triangle 45,x=1cm,y=1cm,xscale=0.7,yscale=0.7]

\draw [-] (0,0) -- (-0.3,0.7);
\draw [-] (0,0) -- (0.3,0.7);
\draw [-] (-0.3,0.7)-- (-0.2,2);
\draw [line width=2pt] (-0.3,0.7)-- (-0.4,2);
\draw [line width=2pt] (0.3,0.7)-- (0.4,2);
\draw [-] (0.3,0.7)-- (0.2,2);
\draw (-0.6,1.5) node {$3$};
\draw (0.6,1.5) node {$3$};
\end{tikzpicture}
\ENDPICTURE

\PICTURE ramp4
\begin{tikzpicture}[line cap=round,line join=round,>=triangle 45,x=1cm,y=1cm,xscale=0.7,yscale=0.7]

\draw [-] (0,0) -- (-0.3,0.7);
\draw [-] (0,0) -- (0.3,0.7);
\draw [-] (-0.3,0.7)-- (-0.2,2);
\draw [line width=2pt] (-0.3,0.7)-- (-0.4,2);
\draw [line width=2pt] (0.3,0.7)-- (0.4,2);
\draw [-] (0.3,0.7)-- (0.2,2);
\draw (-0.6,1.5) node {$4$};
\draw (0.6,1.5) node {$4$};
\end{tikzpicture}
\ENDPICTURE

\PICTURE rambruin
\begin{tikzpicture}[line cap=round,line join=round,>=triangle 45,x=1cm,y=1cm,xscale=0.7,yscale=0.7]

\draw [-] (0,0) -- (-0.3,0.7);
\draw [line width=2pt] (0,0) -- (0.3,0.7);
\draw (0.6,0.4) node {$2$};
\draw [-] (-0.3,0.7)-- (-0.2,2);
\draw [-] (-0.3,0.7)-- (-0.4,2);
\draw [-] (0.3,0.7)-- (0.4,2);
\draw [-] (0.3,0.7)-- (0.2,2);
\end{tikzpicture}
\ENDPICTURE

\PICTURE ram211
\begin{tikzpicture}[line cap=round,line join=round,>=triangle 45,x=1cm,y=1cm,xscale=0.7,yscale=0.7]

\draw [-] (0,0) -- (-0.3,0.7);
\draw [-] (0,0) -- (0.3,0.7);
\draw [line width=2pt] (-0.3,0.7) -- (-0.3,2);
\draw [-] (0.3,0.7) -- (0.2,2);
\draw [-] (0.3,0.7) -- (0.4,2);
\end{tikzpicture}
\ENDPICTURE

\PICTURE ram311
\begin{tikzpicture}[line cap=round,line join=round,>=triangle 45,x=1cm,y=1cm,xscale=0.7,yscale=0.7]

\draw [-] (0,0) -- (-0.3,0.7);
\draw [-] (0,0) -- (0.3,0.7);
\draw [-] (-0.3,0.7)-- (-0.1,2);
\draw [-] (-0.3,0.7)-- (-0.2,2);
\draw [line width=2pt] (-0.3,0.7)-- (-0.4,2);
\draw [line width=2pt] (0.3,0.7)-- (0.4,2);
\draw [-] (0.3,0.7)-- (0.2,2);
\draw [-] (0.3,0.7)-- (0.1,2);
\draw (-0.6,1.5) node {$3$};
\draw (0.6,1.5) node {$3$};
\end{tikzpicture}
\ENDPICTURE

\PICTURE ram511111
\begin{tikzpicture}[line cap=round,line join=round,>=triangle 45,x=1cm,y=1cm,xscale=0.7,yscale=0.7]

\draw [-] (0,0) -- (-0.3,0.7);
\draw [-] (0,0) -- (0.3,0.7);
\draw [line width=2pt] (-0.3,0.7) -- (-0.3,2);
\draw [-] (0.3,0.7) -- (0.2,2);
\draw [-] (0.3,0.7) -- (0.3,2);
\draw [-] (0.3,0.7) -- (0.4,2);
\draw [-] (0.3,0.7) -- (0.5,2);
\draw [-] (0.3,0.7) -- (0.6,2);
\end{tikzpicture}
\ENDPICTURE
 & 
  \long\def\PICTURE #pictures.texramd {}%
  \PICTURE ram10
\begin{tikzpicture}[line cap=round,line join=round,>=triangle 45,x=1cm,y=1cm,xscale=0.7,yscale=0.7]

\draw [line width=2pt] (0,.7) -- (0,2);
\draw [line width=2pt] (0,0) -- (0,.7);

\end{tikzpicture}

\ENDPICTURE

\PICTURE ram22thin
\begin{tikzpicture}[line cap=round,line join=round,>=triangle 45,x=1cm,y=1cm,xscale=0.7,yscale=0.7]

\draw [-] (0,.7) -- (-0.3,2);
\draw [-] (0,.7) -- (0.3,2);
\draw [-] (0,0)-- (0,.7);

\end{tikzpicture}
\ENDPICTURE

\PICTURE ram22thick
\begin{tikzpicture}[line cap=round,line join=round,>=triangle 45,x=1cm,y=1cm,xscale=0.7,yscale=0.7]

\draw [-] (0,.7) -- (-0.3,2);
\draw [-] (0,.7) -- (0.3,2);
\draw [line width=2pt] (0,0)-- (0,.7);
\draw (0.6,0.3) node {$2$};

\end{tikzpicture}
\ENDPICTURE

\PICTURE ram22
\begin{tikzpicture}[line cap=round,line join=round,>=triangle 45,x=1cm,y=1cm,xscale=0.7,yscale=0.7]

\draw [-] (0,.7) -- (-0.3,2);
\draw [-] (0,.7) -- (0.3,2);
\draw [line width=2pt] (0,0)-- (0,.7);

\end{tikzpicture}
\ENDPICTURE

\PICTURE ram23
\begin{tikzpicture}[line cap=round,line join=round,>=triangle 45,x=1cm,y=1cm,xscale=0.7,yscale=0.7]

\draw [-] (0,.7) -- (-0.4,2);
\draw [-] (0,.7) -- (0,2);
\draw [-] (0,.7) -- (0.4,2);
\draw [line width=2pt] (0,0)-- (0,.7);

\end{tikzpicture}
\ENDPICTURE

\PICTURE ram24
\begin{tikzpicture}[line cap=round,line join=round,>=triangle 45,x=1cm,y=1cm,xscale=0.7,yscale=0.7]

\draw [-] (0,.7) -- (-0.6,2);
\draw [-] (0,.7) -- (-0.2,2);
\draw [-] (0,.7) -- (0.2,2);
\draw [-] (0,.7) -- (.6,2);
\draw [line width=2pt] (0,0)-- (0,.7);

\end{tikzpicture}
\ENDPICTURE

\PICTURE ram25
\begin{tikzpicture}[line cap=round,line join=round,>=triangle 45,x=1cm,y=1cm,xscale=0.7,yscale=0.7]

\draw [-] (0,.7) -- (-0.6,2);
\draw [-] (0,.7) -- (-0.3,2);
\draw [-] (0,.7) -- (0,2);
\draw [-] (0,.7) -- (0.3,2);
\draw [-] (0,.7) -- (.6,2);
\draw [line width=2pt] (0,0)-- (0,.7);

\end{tikzpicture}
\ENDPICTURE

\PICTURE ramd
\begin{tikzpicture}[line cap=round,line join=round,>=triangle 45,x=1cm,y=1cm,xscale=0.7,yscale=0.7]

\draw [-] (0,0) -- (-0.3,0.7);
\draw [-] (0,0) -- (0.3,0.7);
\draw [line width=2pt] (-0.3,0.7)-- (-0.3,2);
\draw [line width=2pt] (0.3,0.7)-- (0.3,2);

\end{tikzpicture}
\ENDPICTURE

\PICTURE ramp2
\begin{tikzpicture}[line cap=round,line join=round,>=triangle 45,x=1cm,y=1cm,xscale=0.7,yscale=0.7]

\draw [-] (0,0) -- (-0.3,0.7);
\draw [-] (0,0) -- (0.3,0.7);
\draw [-] (-0.3,0.7)-- (-0.3,2);
\draw [-] (-0.3,0.7)-- (-0.2,2);
\draw [-] (-0.3,0.7)-- (-0.1,2);
\draw [line width=2pt] (-0.3,0.7)-- (-0.5,2);
\draw [line width=2pt] (0.3,0.7)-- (0.5,2);
\draw [-] (0.3,0.7)-- (0.1,2);
\draw [-] (0.3,0.7)-- (0.2,2);
\draw [-] (0.3,0.7)-- (0.3,2);
\draw (-0.8,1.5) node {$2$};
\draw (0.8,1.5) node {$2$};
\end{tikzpicture}
\ENDPICTURE

\PICTURE ramp2d
\begin{tikzpicture}[line cap=round,line join=round,>=triangle 45,x=1cm,y=1cm,xscale=0.7,yscale=0.7]

\draw [-] (0,0) -- (-0.3,0.7);
\draw [-] (0,0) -- (0.3,0.7);
\draw [line width=2pt] (-0.3,0.7)-- (-0.3,2);
\draw [line width=2pt] (0.3,0.7)-- (0.5,2);
\draw [-] (0.3,0.7)-- (0.0,2);
\draw [-] (0.3,0.7)-- (0.2,2);
\draw (-0.8,1.5) node {$4$};
\draw (0.8,1.5) node {$2$};
\end{tikzpicture}
\ENDPICTURE

\PICTURE ramp3
\begin{tikzpicture}[line cap=round,line join=round,>=triangle 45,x=1cm,y=1cm,xscale=0.7,yscale=0.7]

\draw [-] (0,0) -- (-0.3,0.7);
\draw [-] (0,0) -- (0.3,0.7);
\draw [-] (-0.3,0.7)-- (-0.2,2);
\draw [line width=2pt] (-0.3,0.7)-- (-0.4,2);
\draw [line width=2pt] (0.3,0.7)-- (0.4,2);
\draw [-] (0.3,0.7)-- (0.2,2);
\draw (-0.6,1.5) node {$3$};
\draw (0.6,1.5) node {$3$};
\end{tikzpicture}
\ENDPICTURE

\PICTURE ramp4
\begin{tikzpicture}[line cap=round,line join=round,>=triangle 45,x=1cm,y=1cm,xscale=0.7,yscale=0.7]

\draw [-] (0,0) -- (-0.3,0.7);
\draw [-] (0,0) -- (0.3,0.7);
\draw [-] (-0.3,0.7)-- (-0.2,2);
\draw [line width=2pt] (-0.3,0.7)-- (-0.4,2);
\draw [line width=2pt] (0.3,0.7)-- (0.4,2);
\draw [-] (0.3,0.7)-- (0.2,2);
\draw (-0.6,1.5) node {$4$};
\draw (0.6,1.5) node {$4$};
\end{tikzpicture}
\ENDPICTURE

\PICTURE rambruin
\begin{tikzpicture}[line cap=round,line join=round,>=triangle 45,x=1cm,y=1cm,xscale=0.7,yscale=0.7]

\draw [-] (0,0) -- (-0.3,0.7);
\draw [line width=2pt] (0,0) -- (0.3,0.7);
\draw (0.6,0.4) node {$2$};
\draw [-] (-0.3,0.7)-- (-0.2,2);
\draw [-] (-0.3,0.7)-- (-0.4,2);
\draw [-] (0.3,0.7)-- (0.4,2);
\draw [-] (0.3,0.7)-- (0.2,2);
\end{tikzpicture}
\ENDPICTURE

\PICTURE ram211
\begin{tikzpicture}[line cap=round,line join=round,>=triangle 45,x=1cm,y=1cm,xscale=0.7,yscale=0.7]

\draw [-] (0,0) -- (-0.3,0.7);
\draw [-] (0,0) -- (0.3,0.7);
\draw [line width=2pt] (-0.3,0.7) -- (-0.3,2);
\draw [-] (0.3,0.7) -- (0.2,2);
\draw [-] (0.3,0.7) -- (0.4,2);
\end{tikzpicture}
\ENDPICTURE

\PICTURE ram311
\begin{tikzpicture}[line cap=round,line join=round,>=triangle 45,x=1cm,y=1cm,xscale=0.7,yscale=0.7]

\draw [-] (0,0) -- (-0.3,0.7);
\draw [-] (0,0) -- (0.3,0.7);
\draw [-] (-0.3,0.7)-- (-0.1,2);
\draw [-] (-0.3,0.7)-- (-0.2,2);
\draw [line width=2pt] (-0.3,0.7)-- (-0.4,2);
\draw [line width=2pt] (0.3,0.7)-- (0.4,2);
\draw [-] (0.3,0.7)-- (0.2,2);
\draw [-] (0.3,0.7)-- (0.1,2);
\draw (-0.6,1.5) node {$3$};
\draw (0.6,1.5) node {$3$};
\end{tikzpicture}
\ENDPICTURE

\PICTURE ram511111
\begin{tikzpicture}[line cap=round,line join=round,>=triangle 45,x=1cm,y=1cm,xscale=0.7,yscale=0.7]

\draw [-] (0,0) -- (-0.3,0.7);
\draw [-] (0,0) -- (0.3,0.7);
\draw [line width=2pt] (-0.3,0.7) -- (-0.3,2);
\draw [-] (0.3,0.7) -- (0.2,2);
\draw [-] (0.3,0.7) -- (0.3,2);
\draw [-] (0.3,0.7) -- (0.4,2);
\draw [-] (0.3,0.7) -- (0.5,2);
\draw [-] (0.3,0.7) -- (0.6,2);
\end{tikzpicture}
\ENDPICTURE
 \end{tabular} & $48,19$  & $[48,0],[16,0]$ & $[3],[3]$ & $[8],[4]$ \\
      \hline
      \texttt{total-5} & $5,1,5$ & \begin{tabular}{cc} 4 & 1 \\ 
  \long\def\PICTURE #pictures.texram25 {}%
  \PICTURE ram10
\begin{tikzpicture}[line cap=round,line join=round,>=triangle 45,x=1cm,y=1cm,xscale=0.7,yscale=0.7]

\draw [line width=2pt] (0,.7) -- (0,2);
\draw [line width=2pt] (0,0) -- (0,.7);

\end{tikzpicture}

\ENDPICTURE

\PICTURE ram22thin
\begin{tikzpicture}[line cap=round,line join=round,>=triangle 45,x=1cm,y=1cm,xscale=0.7,yscale=0.7]

\draw [-] (0,.7) -- (-0.3,2);
\draw [-] (0,.7) -- (0.3,2);
\draw [-] (0,0)-- (0,.7);

\end{tikzpicture}
\ENDPICTURE

\PICTURE ram22thick
\begin{tikzpicture}[line cap=round,line join=round,>=triangle 45,x=1cm,y=1cm,xscale=0.7,yscale=0.7]

\draw [-] (0,.7) -- (-0.3,2);
\draw [-] (0,.7) -- (0.3,2);
\draw [line width=2pt] (0,0)-- (0,.7);
\draw (0.6,0.3) node {$2$};

\end{tikzpicture}
\ENDPICTURE

\PICTURE ram22
\begin{tikzpicture}[line cap=round,line join=round,>=triangle 45,x=1cm,y=1cm,xscale=0.7,yscale=0.7]

\draw [-] (0,.7) -- (-0.3,2);
\draw [-] (0,.7) -- (0.3,2);
\draw [line width=2pt] (0,0)-- (0,.7);

\end{tikzpicture}
\ENDPICTURE

\PICTURE ram23
\begin{tikzpicture}[line cap=round,line join=round,>=triangle 45,x=1cm,y=1cm,xscale=0.7,yscale=0.7]

\draw [-] (0,.7) -- (-0.4,2);
\draw [-] (0,.7) -- (0,2);
\draw [-] (0,.7) -- (0.4,2);
\draw [line width=2pt] (0,0)-- (0,.7);

\end{tikzpicture}
\ENDPICTURE

\PICTURE ram24
\begin{tikzpicture}[line cap=round,line join=round,>=triangle 45,x=1cm,y=1cm,xscale=0.7,yscale=0.7]

\draw [-] (0,.7) -- (-0.6,2);
\draw [-] (0,.7) -- (-0.2,2);
\draw [-] (0,.7) -- (0.2,2);
\draw [-] (0,.7) -- (.6,2);
\draw [line width=2pt] (0,0)-- (0,.7);

\end{tikzpicture}
\ENDPICTURE

\PICTURE ram25
\begin{tikzpicture}[line cap=round,line join=round,>=triangle 45,x=1cm,y=1cm,xscale=0.7,yscale=0.7]

\draw [-] (0,.7) -- (-0.6,2);
\draw [-] (0,.7) -- (-0.3,2);
\draw [-] (0,.7) -- (0,2);
\draw [-] (0,.7) -- (0.3,2);
\draw [-] (0,.7) -- (.6,2);
\draw [line width=2pt] (0,0)-- (0,.7);

\end{tikzpicture}
\ENDPICTURE

\PICTURE ramd
\begin{tikzpicture}[line cap=round,line join=round,>=triangle 45,x=1cm,y=1cm,xscale=0.7,yscale=0.7]

\draw [-] (0,0) -- (-0.3,0.7);
\draw [-] (0,0) -- (0.3,0.7);
\draw [line width=2pt] (-0.3,0.7)-- (-0.3,2);
\draw [line width=2pt] (0.3,0.7)-- (0.3,2);

\end{tikzpicture}
\ENDPICTURE

\PICTURE ramp2
\begin{tikzpicture}[line cap=round,line join=round,>=triangle 45,x=1cm,y=1cm,xscale=0.7,yscale=0.7]

\draw [-] (0,0) -- (-0.3,0.7);
\draw [-] (0,0) -- (0.3,0.7);
\draw [-] (-0.3,0.7)-- (-0.3,2);
\draw [-] (-0.3,0.7)-- (-0.2,2);
\draw [-] (-0.3,0.7)-- (-0.1,2);
\draw [line width=2pt] (-0.3,0.7)-- (-0.5,2);
\draw [line width=2pt] (0.3,0.7)-- (0.5,2);
\draw [-] (0.3,0.7)-- (0.1,2);
\draw [-] (0.3,0.7)-- (0.2,2);
\draw [-] (0.3,0.7)-- (0.3,2);
\draw (-0.8,1.5) node {$2$};
\draw (0.8,1.5) node {$2$};
\end{tikzpicture}
\ENDPICTURE

\PICTURE ramp2d
\begin{tikzpicture}[line cap=round,line join=round,>=triangle 45,x=1cm,y=1cm,xscale=0.7,yscale=0.7]

\draw [-] (0,0) -- (-0.3,0.7);
\draw [-] (0,0) -- (0.3,0.7);
\draw [line width=2pt] (-0.3,0.7)-- (-0.3,2);
\draw [line width=2pt] (0.3,0.7)-- (0.5,2);
\draw [-] (0.3,0.7)-- (0.0,2);
\draw [-] (0.3,0.7)-- (0.2,2);
\draw (-0.8,1.5) node {$4$};
\draw (0.8,1.5) node {$2$};
\end{tikzpicture}
\ENDPICTURE

\PICTURE ramp3
\begin{tikzpicture}[line cap=round,line join=round,>=triangle 45,x=1cm,y=1cm,xscale=0.7,yscale=0.7]

\draw [-] (0,0) -- (-0.3,0.7);
\draw [-] (0,0) -- (0.3,0.7);
\draw [-] (-0.3,0.7)-- (-0.2,2);
\draw [line width=2pt] (-0.3,0.7)-- (-0.4,2);
\draw [line width=2pt] (0.3,0.7)-- (0.4,2);
\draw [-] (0.3,0.7)-- (0.2,2);
\draw (-0.6,1.5) node {$3$};
\draw (0.6,1.5) node {$3$};
\end{tikzpicture}
\ENDPICTURE

\PICTURE ramp4
\begin{tikzpicture}[line cap=round,line join=round,>=triangle 45,x=1cm,y=1cm,xscale=0.7,yscale=0.7]

\draw [-] (0,0) -- (-0.3,0.7);
\draw [-] (0,0) -- (0.3,0.7);
\draw [-] (-0.3,0.7)-- (-0.2,2);
\draw [line width=2pt] (-0.3,0.7)-- (-0.4,2);
\draw [line width=2pt] (0.3,0.7)-- (0.4,2);
\draw [-] (0.3,0.7)-- (0.2,2);
\draw (-0.6,1.5) node {$4$};
\draw (0.6,1.5) node {$4$};
\end{tikzpicture}
\ENDPICTURE

\PICTURE rambruin
\begin{tikzpicture}[line cap=round,line join=round,>=triangle 45,x=1cm,y=1cm,xscale=0.7,yscale=0.7]

\draw [-] (0,0) -- (-0.3,0.7);
\draw [line width=2pt] (0,0) -- (0.3,0.7);
\draw (0.6,0.4) node {$2$};
\draw [-] (-0.3,0.7)-- (-0.2,2);
\draw [-] (-0.3,0.7)-- (-0.4,2);
\draw [-] (0.3,0.7)-- (0.4,2);
\draw [-] (0.3,0.7)-- (0.2,2);
\end{tikzpicture}
\ENDPICTURE

\PICTURE ram211
\begin{tikzpicture}[line cap=round,line join=round,>=triangle 45,x=1cm,y=1cm,xscale=0.7,yscale=0.7]

\draw [-] (0,0) -- (-0.3,0.7);
\draw [-] (0,0) -- (0.3,0.7);
\draw [line width=2pt] (-0.3,0.7) -- (-0.3,2);
\draw [-] (0.3,0.7) -- (0.2,2);
\draw [-] (0.3,0.7) -- (0.4,2);
\end{tikzpicture}
\ENDPICTURE

\PICTURE ram311
\begin{tikzpicture}[line cap=round,line join=round,>=triangle 45,x=1cm,y=1cm,xscale=0.7,yscale=0.7]

\draw [-] (0,0) -- (-0.3,0.7);
\draw [-] (0,0) -- (0.3,0.7);
\draw [-] (-0.3,0.7)-- (-0.1,2);
\draw [-] (-0.3,0.7)-- (-0.2,2);
\draw [line width=2pt] (-0.3,0.7)-- (-0.4,2);
\draw [line width=2pt] (0.3,0.7)-- (0.4,2);
\draw [-] (0.3,0.7)-- (0.2,2);
\draw [-] (0.3,0.7)-- (0.1,2);
\draw (-0.6,1.5) node {$3$};
\draw (0.6,1.5) node {$3$};
\end{tikzpicture}
\ENDPICTURE

\PICTURE ram511111
\begin{tikzpicture}[line cap=round,line join=round,>=triangle 45,x=1cm,y=1cm,xscale=0.7,yscale=0.7]

\draw [-] (0,0) -- (-0.3,0.7);
\draw [-] (0,0) -- (0.3,0.7);
\draw [line width=2pt] (-0.3,0.7) -- (-0.3,2);
\draw [-] (0.3,0.7) -- (0.2,2);
\draw [-] (0.3,0.7) -- (0.3,2);
\draw [-] (0.3,0.7) -- (0.4,2);
\draw [-] (0.3,0.7) -- (0.5,2);
\draw [-] (0.3,0.7) -- (0.6,2);
\end{tikzpicture}
\ENDPICTURE
 & 
  \long\def\PICTURE #pictures.texramd {}%
  \PICTURE ram10
\begin{tikzpicture}[line cap=round,line join=round,>=triangle 45,x=1cm,y=1cm,xscale=0.7,yscale=0.7]

\draw [line width=2pt] (0,.7) -- (0,2);
\draw [line width=2pt] (0,0) -- (0,.7);

\end{tikzpicture}

\ENDPICTURE

\PICTURE ram22thin
\begin{tikzpicture}[line cap=round,line join=round,>=triangle 45,x=1cm,y=1cm,xscale=0.7,yscale=0.7]

\draw [-] (0,.7) -- (-0.3,2);
\draw [-] (0,.7) -- (0.3,2);
\draw [-] (0,0)-- (0,.7);

\end{tikzpicture}
\ENDPICTURE

\PICTURE ram22thick
\begin{tikzpicture}[line cap=round,line join=round,>=triangle 45,x=1cm,y=1cm,xscale=0.7,yscale=0.7]

\draw [-] (0,.7) -- (-0.3,2);
\draw [-] (0,.7) -- (0.3,2);
\draw [line width=2pt] (0,0)-- (0,.7);
\draw (0.6,0.3) node {$2$};

\end{tikzpicture}
\ENDPICTURE

\PICTURE ram22
\begin{tikzpicture}[line cap=round,line join=round,>=triangle 45,x=1cm,y=1cm,xscale=0.7,yscale=0.7]

\draw [-] (0,.7) -- (-0.3,2);
\draw [-] (0,.7) -- (0.3,2);
\draw [line width=2pt] (0,0)-- (0,.7);

\end{tikzpicture}
\ENDPICTURE

\PICTURE ram23
\begin{tikzpicture}[line cap=round,line join=round,>=triangle 45,x=1cm,y=1cm,xscale=0.7,yscale=0.7]

\draw [-] (0,.7) -- (-0.4,2);
\draw [-] (0,.7) -- (0,2);
\draw [-] (0,.7) -- (0.4,2);
\draw [line width=2pt] (0,0)-- (0,.7);

\end{tikzpicture}
\ENDPICTURE

\PICTURE ram24
\begin{tikzpicture}[line cap=round,line join=round,>=triangle 45,x=1cm,y=1cm,xscale=0.7,yscale=0.7]

\draw [-] (0,.7) -- (-0.6,2);
\draw [-] (0,.7) -- (-0.2,2);
\draw [-] (0,.7) -- (0.2,2);
\draw [-] (0,.7) -- (.6,2);
\draw [line width=2pt] (0,0)-- (0,.7);

\end{tikzpicture}
\ENDPICTURE

\PICTURE ram25
\begin{tikzpicture}[line cap=round,line join=round,>=triangle 45,x=1cm,y=1cm,xscale=0.7,yscale=0.7]

\draw [-] (0,.7) -- (-0.6,2);
\draw [-] (0,.7) -- (-0.3,2);
\draw [-] (0,.7) -- (0,2);
\draw [-] (0,.7) -- (0.3,2);
\draw [-] (0,.7) -- (.6,2);
\draw [line width=2pt] (0,0)-- (0,.7);

\end{tikzpicture}
\ENDPICTURE

\PICTURE ramd
\begin{tikzpicture}[line cap=round,line join=round,>=triangle 45,x=1cm,y=1cm,xscale=0.7,yscale=0.7]

\draw [-] (0,0) -- (-0.3,0.7);
\draw [-] (0,0) -- (0.3,0.7);
\draw [line width=2pt] (-0.3,0.7)-- (-0.3,2);
\draw [line width=2pt] (0.3,0.7)-- (0.3,2);

\end{tikzpicture}
\ENDPICTURE

\PICTURE ramp2
\begin{tikzpicture}[line cap=round,line join=round,>=triangle 45,x=1cm,y=1cm,xscale=0.7,yscale=0.7]

\draw [-] (0,0) -- (-0.3,0.7);
\draw [-] (0,0) -- (0.3,0.7);
\draw [-] (-0.3,0.7)-- (-0.3,2);
\draw [-] (-0.3,0.7)-- (-0.2,2);
\draw [-] (-0.3,0.7)-- (-0.1,2);
\draw [line width=2pt] (-0.3,0.7)-- (-0.5,2);
\draw [line width=2pt] (0.3,0.7)-- (0.5,2);
\draw [-] (0.3,0.7)-- (0.1,2);
\draw [-] (0.3,0.7)-- (0.2,2);
\draw [-] (0.3,0.7)-- (0.3,2);
\draw (-0.8,1.5) node {$2$};
\draw (0.8,1.5) node {$2$};
\end{tikzpicture}
\ENDPICTURE

\PICTURE ramp2d
\begin{tikzpicture}[line cap=round,line join=round,>=triangle 45,x=1cm,y=1cm,xscale=0.7,yscale=0.7]

\draw [-] (0,0) -- (-0.3,0.7);
\draw [-] (0,0) -- (0.3,0.7);
\draw [line width=2pt] (-0.3,0.7)-- (-0.3,2);
\draw [line width=2pt] (0.3,0.7)-- (0.5,2);
\draw [-] (0.3,0.7)-- (0.0,2);
\draw [-] (0.3,0.7)-- (0.2,2);
\draw (-0.8,1.5) node {$4$};
\draw (0.8,1.5) node {$2$};
\end{tikzpicture}
\ENDPICTURE

\PICTURE ramp3
\begin{tikzpicture}[line cap=round,line join=round,>=triangle 45,x=1cm,y=1cm,xscale=0.7,yscale=0.7]

\draw [-] (0,0) -- (-0.3,0.7);
\draw [-] (0,0) -- (0.3,0.7);
\draw [-] (-0.3,0.7)-- (-0.2,2);
\draw [line width=2pt] (-0.3,0.7)-- (-0.4,2);
\draw [line width=2pt] (0.3,0.7)-- (0.4,2);
\draw [-] (0.3,0.7)-- (0.2,2);
\draw (-0.6,1.5) node {$3$};
\draw (0.6,1.5) node {$3$};
\end{tikzpicture}
\ENDPICTURE

\PICTURE ramp4
\begin{tikzpicture}[line cap=round,line join=round,>=triangle 45,x=1cm,y=1cm,xscale=0.7,yscale=0.7]

\draw [-] (0,0) -- (-0.3,0.7);
\draw [-] (0,0) -- (0.3,0.7);
\draw [-] (-0.3,0.7)-- (-0.2,2);
\draw [line width=2pt] (-0.3,0.7)-- (-0.4,2);
\draw [line width=2pt] (0.3,0.7)-- (0.4,2);
\draw [-] (0.3,0.7)-- (0.2,2);
\draw (-0.6,1.5) node {$4$};
\draw (0.6,1.5) node {$4$};
\end{tikzpicture}
\ENDPICTURE

\PICTURE rambruin
\begin{tikzpicture}[line cap=round,line join=round,>=triangle 45,x=1cm,y=1cm,xscale=0.7,yscale=0.7]

\draw [-] (0,0) -- (-0.3,0.7);
\draw [line width=2pt] (0,0) -- (0.3,0.7);
\draw (0.6,0.4) node {$2$};
\draw [-] (-0.3,0.7)-- (-0.2,2);
\draw [-] (-0.3,0.7)-- (-0.4,2);
\draw [-] (0.3,0.7)-- (0.4,2);
\draw [-] (0.3,0.7)-- (0.2,2);
\end{tikzpicture}
\ENDPICTURE

\PICTURE ram211
\begin{tikzpicture}[line cap=round,line join=round,>=triangle 45,x=1cm,y=1cm,xscale=0.7,yscale=0.7]

\draw [-] (0,0) -- (-0.3,0.7);
\draw [-] (0,0) -- (0.3,0.7);
\draw [line width=2pt] (-0.3,0.7) -- (-0.3,2);
\draw [-] (0.3,0.7) -- (0.2,2);
\draw [-] (0.3,0.7) -- (0.4,2);
\end{tikzpicture}
\ENDPICTURE

\PICTURE ram311
\begin{tikzpicture}[line cap=round,line join=round,>=triangle 45,x=1cm,y=1cm,xscale=0.7,yscale=0.7]

\draw [-] (0,0) -- (-0.3,0.7);
\draw [-] (0,0) -- (0.3,0.7);
\draw [-] (-0.3,0.7)-- (-0.1,2);
\draw [-] (-0.3,0.7)-- (-0.2,2);
\draw [line width=2pt] (-0.3,0.7)-- (-0.4,2);
\draw [line width=2pt] (0.3,0.7)-- (0.4,2);
\draw [-] (0.3,0.7)-- (0.2,2);
\draw [-] (0.3,0.7)-- (0.1,2);
\draw (-0.6,1.5) node {$3$};
\draw (0.6,1.5) node {$3$};
\end{tikzpicture}
\ENDPICTURE

\PICTURE ram511111
\begin{tikzpicture}[line cap=round,line join=round,>=triangle 45,x=1cm,y=1cm,xscale=0.7,yscale=0.7]

\draw [-] (0,0) -- (-0.3,0.7);
\draw [-] (0,0) -- (0.3,0.7);
\draw [line width=2pt] (-0.3,0.7) -- (-0.3,2);
\draw [-] (0.3,0.7) -- (0.2,2);
\draw [-] (0.3,0.7) -- (0.3,2);
\draw [-] (0.3,0.7) -- (0.4,2);
\draw [-] (0.3,0.7) -- (0.5,2);
\draw [-] (0.3,0.7) -- (0.6,2);
\end{tikzpicture}
\ENDPICTURE
 \end{tabular} &
      \begin{tabular}{c} $10,5$ \\ $20, 9$ \\ $100, 41$ \\ $120, 49$ \\ $120, 49$ \\ $200, 81$ \\ $240, 97$ \\ $7200, 2881$ \\ $14400, 5761$ \end{tabular} &
      \begin{tabular}{c} $[25,0],[0,0]$ \\ $[0,0],[12,0]$ \\ $[0,12],[0,0]$ \\ $[380,0],[0,0]$ \\ $[0,0],[0,25]$ \\ $[0,96],[0,0]$ \\ $[0,450],[0,60]$ \\ $[0,90],[0,0]$ \\ $[0,360],[0,0]$ \end{tabular} &
      \begin{tabular}{c} $[4]$ \\ $[2]$ \\ \phantom{$0$} \\ $[2]$ or $[1]$  \\ \phantom{$0$} \\ \phantom{$0$} \\ \phantom{$0$} \\ \phantom{$0$} \\ \phantom{$0$} \end{tabular} &
      \begin{tabular}{c} $[2]$ \\ $[2]$ \\ \phantom{$0$} \\ $[24]$ or $[8]$ \\ \phantom{$0$} \\ \phantom{$0$} \\ \phantom{$0$} \\ \phantom{$0$} \\ \phantom{$0$} \end{tabular} \\
      \hline
    \end{tabular}
  \end{tiny}
  \caption{Merging total ramification above two points}
  \label{tab:results-total}
\end{table}

\begin{table}[htbp]
  \centering
  \begin{tiny}
    \begin{tabular}{|c|c|c|c|c|c|c|c|c|}
      \hline
      Case & $g_X,g_Y,d_X$ & Ramification & $\# G,g_Z$  & $X$ nhyp/hyp & Prym dims & $\deg Z \to C_i$ \\
      \hline
      \texttt{3131} & $3,1,4$ &
      \begin{tabular}{c} $4$ \\ 
  \long\def\PICTURE #pictures.texram24 {}%
  \PICTURE ram10
\begin{tikzpicture}[line cap=round,line join=round,>=triangle 45,x=1cm,y=1cm,xscale=0.7,yscale=0.7]

\draw [line width=2pt] (0,.7) -- (0,2);
\draw [line width=2pt] (0,0) -- (0,.7);

\end{tikzpicture}

\ENDPICTURE

\PICTURE ram22thin
\begin{tikzpicture}[line cap=round,line join=round,>=triangle 45,x=1cm,y=1cm,xscale=0.7,yscale=0.7]

\draw [-] (0,.7) -- (-0.3,2);
\draw [-] (0,.7) -- (0.3,2);
\draw [-] (0,0)-- (0,.7);

\end{tikzpicture}
\ENDPICTURE

\PICTURE ram22thick
\begin{tikzpicture}[line cap=round,line join=round,>=triangle 45,x=1cm,y=1cm,xscale=0.7,yscale=0.7]

\draw [-] (0,.7) -- (-0.3,2);
\draw [-] (0,.7) -- (0.3,2);
\draw [line width=2pt] (0,0)-- (0,.7);
\draw (0.6,0.3) node {$2$};

\end{tikzpicture}
\ENDPICTURE

\PICTURE ram22
\begin{tikzpicture}[line cap=round,line join=round,>=triangle 45,x=1cm,y=1cm,xscale=0.7,yscale=0.7]

\draw [-] (0,.7) -- (-0.3,2);
\draw [-] (0,.7) -- (0.3,2);
\draw [line width=2pt] (0,0)-- (0,.7);

\end{tikzpicture}
\ENDPICTURE

\PICTURE ram23
\begin{tikzpicture}[line cap=round,line join=round,>=triangle 45,x=1cm,y=1cm,xscale=0.7,yscale=0.7]

\draw [-] (0,.7) -- (-0.4,2);
\draw [-] (0,.7) -- (0,2);
\draw [-] (0,.7) -- (0.4,2);
\draw [line width=2pt] (0,0)-- (0,.7);

\end{tikzpicture}
\ENDPICTURE

\PICTURE ram24
\begin{tikzpicture}[line cap=round,line join=round,>=triangle 45,x=1cm,y=1cm,xscale=0.7,yscale=0.7]

\draw [-] (0,.7) -- (-0.6,2);
\draw [-] (0,.7) -- (-0.2,2);
\draw [-] (0,.7) -- (0.2,2);
\draw [-] (0,.7) -- (.6,2);
\draw [line width=2pt] (0,0)-- (0,.7);

\end{tikzpicture}
\ENDPICTURE

\PICTURE ram25
\begin{tikzpicture}[line cap=round,line join=round,>=triangle 45,x=1cm,y=1cm,xscale=0.7,yscale=0.7]

\draw [-] (0,.7) -- (-0.6,2);
\draw [-] (0,.7) -- (-0.3,2);
\draw [-] (0,.7) -- (0,2);
\draw [-] (0,.7) -- (0.3,2);
\draw [-] (0,.7) -- (.6,2);
\draw [line width=2pt] (0,0)-- (0,.7);

\end{tikzpicture}
\ENDPICTURE

\PICTURE ramd
\begin{tikzpicture}[line cap=round,line join=round,>=triangle 45,x=1cm,y=1cm,xscale=0.7,yscale=0.7]

\draw [-] (0,0) -- (-0.3,0.7);
\draw [-] (0,0) -- (0.3,0.7);
\draw [line width=2pt] (-0.3,0.7)-- (-0.3,2);
\draw [line width=2pt] (0.3,0.7)-- (0.3,2);

\end{tikzpicture}
\ENDPICTURE

\PICTURE ramp2
\begin{tikzpicture}[line cap=round,line join=round,>=triangle 45,x=1cm,y=1cm,xscale=0.7,yscale=0.7]

\draw [-] (0,0) -- (-0.3,0.7);
\draw [-] (0,0) -- (0.3,0.7);
\draw [-] (-0.3,0.7)-- (-0.3,2);
\draw [-] (-0.3,0.7)-- (-0.2,2);
\draw [-] (-0.3,0.7)-- (-0.1,2);
\draw [line width=2pt] (-0.3,0.7)-- (-0.5,2);
\draw [line width=2pt] (0.3,0.7)-- (0.5,2);
\draw [-] (0.3,0.7)-- (0.1,2);
\draw [-] (0.3,0.7)-- (0.2,2);
\draw [-] (0.3,0.7)-- (0.3,2);
\draw (-0.8,1.5) node {$2$};
\draw (0.8,1.5) node {$2$};
\end{tikzpicture}
\ENDPICTURE

\PICTURE ramp2d
\begin{tikzpicture}[line cap=round,line join=round,>=triangle 45,x=1cm,y=1cm,xscale=0.7,yscale=0.7]

\draw [-] (0,0) -- (-0.3,0.7);
\draw [-] (0,0) -- (0.3,0.7);
\draw [line width=2pt] (-0.3,0.7)-- (-0.3,2);
\draw [line width=2pt] (0.3,0.7)-- (0.5,2);
\draw [-] (0.3,0.7)-- (0.0,2);
\draw [-] (0.3,0.7)-- (0.2,2);
\draw (-0.8,1.5) node {$4$};
\draw (0.8,1.5) node {$2$};
\end{tikzpicture}
\ENDPICTURE

\PICTURE ramp3
\begin{tikzpicture}[line cap=round,line join=round,>=triangle 45,x=1cm,y=1cm,xscale=0.7,yscale=0.7]

\draw [-] (0,0) -- (-0.3,0.7);
\draw [-] (0,0) -- (0.3,0.7);
\draw [-] (-0.3,0.7)-- (-0.2,2);
\draw [line width=2pt] (-0.3,0.7)-- (-0.4,2);
\draw [line width=2pt] (0.3,0.7)-- (0.4,2);
\draw [-] (0.3,0.7)-- (0.2,2);
\draw (-0.6,1.5) node {$3$};
\draw (0.6,1.5) node {$3$};
\end{tikzpicture}
\ENDPICTURE

\PICTURE ramp4
\begin{tikzpicture}[line cap=round,line join=round,>=triangle 45,x=1cm,y=1cm,xscale=0.7,yscale=0.7]

\draw [-] (0,0) -- (-0.3,0.7);
\draw [-] (0,0) -- (0.3,0.7);
\draw [-] (-0.3,0.7)-- (-0.2,2);
\draw [line width=2pt] (-0.3,0.7)-- (-0.4,2);
\draw [line width=2pt] (0.3,0.7)-- (0.4,2);
\draw [-] (0.3,0.7)-- (0.2,2);
\draw (-0.6,1.5) node {$4$};
\draw (0.6,1.5) node {$4$};
\end{tikzpicture}
\ENDPICTURE

\PICTURE rambruin
\begin{tikzpicture}[line cap=round,line join=round,>=triangle 45,x=1cm,y=1cm,xscale=0.7,yscale=0.7]

\draw [-] (0,0) -- (-0.3,0.7);
\draw [line width=2pt] (0,0) -- (0.3,0.7);
\draw (0.6,0.4) node {$2$};
\draw [-] (-0.3,0.7)-- (-0.2,2);
\draw [-] (-0.3,0.7)-- (-0.4,2);
\draw [-] (0.3,0.7)-- (0.4,2);
\draw [-] (0.3,0.7)-- (0.2,2);
\end{tikzpicture}
\ENDPICTURE

\PICTURE ram211
\begin{tikzpicture}[line cap=round,line join=round,>=triangle 45,x=1cm,y=1cm,xscale=0.7,yscale=0.7]

\draw [-] (0,0) -- (-0.3,0.7);
\draw [-] (0,0) -- (0.3,0.7);
\draw [line width=2pt] (-0.3,0.7) -- (-0.3,2);
\draw [-] (0.3,0.7) -- (0.2,2);
\draw [-] (0.3,0.7) -- (0.4,2);
\end{tikzpicture}
\ENDPICTURE

\PICTURE ram311
\begin{tikzpicture}[line cap=round,line join=round,>=triangle 45,x=1cm,y=1cm,xscale=0.7,yscale=0.7]

\draw [-] (0,0) -- (-0.3,0.7);
\draw [-] (0,0) -- (0.3,0.7);
\draw [-] (-0.3,0.7)-- (-0.1,2);
\draw [-] (-0.3,0.7)-- (-0.2,2);
\draw [line width=2pt] (-0.3,0.7)-- (-0.4,2);
\draw [line width=2pt] (0.3,0.7)-- (0.4,2);
\draw [-] (0.3,0.7)-- (0.2,2);
\draw [-] (0.3,0.7)-- (0.1,2);
\draw (-0.6,1.5) node {$3$};
\draw (0.6,1.5) node {$3$};
\end{tikzpicture}
\ENDPICTURE

\PICTURE ram511111
\begin{tikzpicture}[line cap=round,line join=round,>=triangle 45,x=1cm,y=1cm,xscale=0.7,yscale=0.7]

\draw [-] (0,0) -- (-0.3,0.7);
\draw [-] (0,0) -- (0.3,0.7);
\draw [line width=2pt] (-0.3,0.7) -- (-0.3,2);
\draw [-] (0.3,0.7) -- (0.2,2);
\draw [-] (0.3,0.7) -- (0.3,2);
\draw [-] (0.3,0.7) -- (0.4,2);
\draw [-] (0.3,0.7) -- (0.5,2);
\draw [-] (0.3,0.7) -- (0.6,2);
\end{tikzpicture}
\ENDPICTURE
 \end{tabular}
      \begin{tabular}{c} $1$ \\ 
  \long\def\PICTURE #pictures.texramp3 {}%
  \PICTURE ram10
\begin{tikzpicture}[line cap=round,line join=round,>=triangle 45,x=1cm,y=1cm,xscale=0.7,yscale=0.7]

\draw [line width=2pt] (0,.7) -- (0,2);
\draw [line width=2pt] (0,0) -- (0,.7);

\end{tikzpicture}

\ENDPICTURE

\PICTURE ram22thin
\begin{tikzpicture}[line cap=round,line join=round,>=triangle 45,x=1cm,y=1cm,xscale=0.7,yscale=0.7]

\draw [-] (0,.7) -- (-0.3,2);
\draw [-] (0,.7) -- (0.3,2);
\draw [-] (0,0)-- (0,.7);

\end{tikzpicture}
\ENDPICTURE

\PICTURE ram22thick
\begin{tikzpicture}[line cap=round,line join=round,>=triangle 45,x=1cm,y=1cm,xscale=0.7,yscale=0.7]

\draw [-] (0,.7) -- (-0.3,2);
\draw [-] (0,.7) -- (0.3,2);
\draw [line width=2pt] (0,0)-- (0,.7);
\draw (0.6,0.3) node {$2$};

\end{tikzpicture}
\ENDPICTURE

\PICTURE ram22
\begin{tikzpicture}[line cap=round,line join=round,>=triangle 45,x=1cm,y=1cm,xscale=0.7,yscale=0.7]

\draw [-] (0,.7) -- (-0.3,2);
\draw [-] (0,.7) -- (0.3,2);
\draw [line width=2pt] (0,0)-- (0,.7);

\end{tikzpicture}
\ENDPICTURE

\PICTURE ram23
\begin{tikzpicture}[line cap=round,line join=round,>=triangle 45,x=1cm,y=1cm,xscale=0.7,yscale=0.7]

\draw [-] (0,.7) -- (-0.4,2);
\draw [-] (0,.7) -- (0,2);
\draw [-] (0,.7) -- (0.4,2);
\draw [line width=2pt] (0,0)-- (0,.7);

\end{tikzpicture}
\ENDPICTURE

\PICTURE ram24
\begin{tikzpicture}[line cap=round,line join=round,>=triangle 45,x=1cm,y=1cm,xscale=0.7,yscale=0.7]

\draw [-] (0,.7) -- (-0.6,2);
\draw [-] (0,.7) -- (-0.2,2);
\draw [-] (0,.7) -- (0.2,2);
\draw [-] (0,.7) -- (.6,2);
\draw [line width=2pt] (0,0)-- (0,.7);

\end{tikzpicture}
\ENDPICTURE

\PICTURE ram25
\begin{tikzpicture}[line cap=round,line join=round,>=triangle 45,x=1cm,y=1cm,xscale=0.7,yscale=0.7]

\draw [-] (0,.7) -- (-0.6,2);
\draw [-] (0,.7) -- (-0.3,2);
\draw [-] (0,.7) -- (0,2);
\draw [-] (0,.7) -- (0.3,2);
\draw [-] (0,.7) -- (.6,2);
\draw [line width=2pt] (0,0)-- (0,.7);

\end{tikzpicture}
\ENDPICTURE

\PICTURE ramd
\begin{tikzpicture}[line cap=round,line join=round,>=triangle 45,x=1cm,y=1cm,xscale=0.7,yscale=0.7]

\draw [-] (0,0) -- (-0.3,0.7);
\draw [-] (0,0) -- (0.3,0.7);
\draw [line width=2pt] (-0.3,0.7)-- (-0.3,2);
\draw [line width=2pt] (0.3,0.7)-- (0.3,2);

\end{tikzpicture}
\ENDPICTURE

\PICTURE ramp2
\begin{tikzpicture}[line cap=round,line join=round,>=triangle 45,x=1cm,y=1cm,xscale=0.7,yscale=0.7]

\draw [-] (0,0) -- (-0.3,0.7);
\draw [-] (0,0) -- (0.3,0.7);
\draw [-] (-0.3,0.7)-- (-0.3,2);
\draw [-] (-0.3,0.7)-- (-0.2,2);
\draw [-] (-0.3,0.7)-- (-0.1,2);
\draw [line width=2pt] (-0.3,0.7)-- (-0.5,2);
\draw [line width=2pt] (0.3,0.7)-- (0.5,2);
\draw [-] (0.3,0.7)-- (0.1,2);
\draw [-] (0.3,0.7)-- (0.2,2);
\draw [-] (0.3,0.7)-- (0.3,2);
\draw (-0.8,1.5) node {$2$};
\draw (0.8,1.5) node {$2$};
\end{tikzpicture}
\ENDPICTURE

\PICTURE ramp2d
\begin{tikzpicture}[line cap=round,line join=round,>=triangle 45,x=1cm,y=1cm,xscale=0.7,yscale=0.7]

\draw [-] (0,0) -- (-0.3,0.7);
\draw [-] (0,0) -- (0.3,0.7);
\draw [line width=2pt] (-0.3,0.7)-- (-0.3,2);
\draw [line width=2pt] (0.3,0.7)-- (0.5,2);
\draw [-] (0.3,0.7)-- (0.0,2);
\draw [-] (0.3,0.7)-- (0.2,2);
\draw (-0.8,1.5) node {$4$};
\draw (0.8,1.5) node {$2$};
\end{tikzpicture}
\ENDPICTURE

\PICTURE ramp3
\begin{tikzpicture}[line cap=round,line join=round,>=triangle 45,x=1cm,y=1cm,xscale=0.7,yscale=0.7]

\draw [-] (0,0) -- (-0.3,0.7);
\draw [-] (0,0) -- (0.3,0.7);
\draw [-] (-0.3,0.7)-- (-0.2,2);
\draw [line width=2pt] (-0.3,0.7)-- (-0.4,2);
\draw [line width=2pt] (0.3,0.7)-- (0.4,2);
\draw [-] (0.3,0.7)-- (0.2,2);
\draw (-0.6,1.5) node {$3$};
\draw (0.6,1.5) node {$3$};
\end{tikzpicture}
\ENDPICTURE

\PICTURE ramp4
\begin{tikzpicture}[line cap=round,line join=round,>=triangle 45,x=1cm,y=1cm,xscale=0.7,yscale=0.7]

\draw [-] (0,0) -- (-0.3,0.7);
\draw [-] (0,0) -- (0.3,0.7);
\draw [-] (-0.3,0.7)-- (-0.2,2);
\draw [line width=2pt] (-0.3,0.7)-- (-0.4,2);
\draw [line width=2pt] (0.3,0.7)-- (0.4,2);
\draw [-] (0.3,0.7)-- (0.2,2);
\draw (-0.6,1.5) node {$4$};
\draw (0.6,1.5) node {$4$};
\end{tikzpicture}
\ENDPICTURE

\PICTURE rambruin
\begin{tikzpicture}[line cap=round,line join=round,>=triangle 45,x=1cm,y=1cm,xscale=0.7,yscale=0.7]

\draw [-] (0,0) -- (-0.3,0.7);
\draw [line width=2pt] (0,0) -- (0.3,0.7);
\draw (0.6,0.4) node {$2$};
\draw [-] (-0.3,0.7)-- (-0.2,2);
\draw [-] (-0.3,0.7)-- (-0.4,2);
\draw [-] (0.3,0.7)-- (0.4,2);
\draw [-] (0.3,0.7)-- (0.2,2);
\end{tikzpicture}
\ENDPICTURE

\PICTURE ram211
\begin{tikzpicture}[line cap=round,line join=round,>=triangle 45,x=1cm,y=1cm,xscale=0.7,yscale=0.7]

\draw [-] (0,0) -- (-0.3,0.7);
\draw [-] (0,0) -- (0.3,0.7);
\draw [line width=2pt] (-0.3,0.7) -- (-0.3,2);
\draw [-] (0.3,0.7) -- (0.2,2);
\draw [-] (0.3,0.7) -- (0.4,2);
\end{tikzpicture}
\ENDPICTURE

\PICTURE ram311
\begin{tikzpicture}[line cap=round,line join=round,>=triangle 45,x=1cm,y=1cm,xscale=0.7,yscale=0.7]

\draw [-] (0,0) -- (-0.3,0.7);
\draw [-] (0,0) -- (0.3,0.7);
\draw [-] (-0.3,0.7)-- (-0.1,2);
\draw [-] (-0.3,0.7)-- (-0.2,2);
\draw [line width=2pt] (-0.3,0.7)-- (-0.4,2);
\draw [line width=2pt] (0.3,0.7)-- (0.4,2);
\draw [-] (0.3,0.7)-- (0.2,2);
\draw [-] (0.3,0.7)-- (0.1,2);
\draw (-0.6,1.5) node {$3$};
\draw (0.6,1.5) node {$3$};
\end{tikzpicture}
\ENDPICTURE

\PICTURE ram511111
\begin{tikzpicture}[line cap=round,line join=round,>=triangle 45,x=1cm,y=1cm,xscale=0.7,yscale=0.7]

\draw [-] (0,0) -- (-0.3,0.7);
\draw [-] (0,0) -- (0.3,0.7);
\draw [line width=2pt] (-0.3,0.7) -- (-0.3,2);
\draw [-] (0.3,0.7) -- (0.2,2);
\draw [-] (0.3,0.7) -- (0.3,2);
\draw [-] (0.3,0.7) -- (0.4,2);
\draw [-] (0.3,0.7) -- (0.5,2);
\draw [-] (0.3,0.7) -- (0.6,2);
\end{tikzpicture}
\ENDPICTURE
 \end{tabular} &
      \begin{tabular}{c} $24,9$ \\ $48,17$ \\ $96,33$ \end{tabular} &
      \begin{tabular}{c} $[0,0],[0,27]$ \\ $[54,0],[36,0]$  \\ $[54,0],[0,0]$ \end{tabular} &
      \begin{tabular}{c} \phantom{$0$} \\ $[2],[1,1]$ \\ $[2]$ \end{tabular} &
      \begin{tabular}{c} \phantom{$0$} \\ $[8],[4,6]$ \\ $[6]$ \end{tabular} \\
      \hline
      \texttt{4211} & $3,1,4$ &
      \begin{tabular}{c} $4$ \\ 
  \long\def\PICTURE #pictures.texram24 {}%
  \PICTURE ram10
\begin{tikzpicture}[line cap=round,line join=round,>=triangle 45,x=1cm,y=1cm,xscale=0.7,yscale=0.7]

\draw [line width=2pt] (0,.7) -- (0,2);
\draw [line width=2pt] (0,0) -- (0,.7);

\end{tikzpicture}

\ENDPICTURE

\PICTURE ram22thin
\begin{tikzpicture}[line cap=round,line join=round,>=triangle 45,x=1cm,y=1cm,xscale=0.7,yscale=0.7]

\draw [-] (0,.7) -- (-0.3,2);
\draw [-] (0,.7) -- (0.3,2);
\draw [-] (0,0)-- (0,.7);

\end{tikzpicture}
\ENDPICTURE

\PICTURE ram22thick
\begin{tikzpicture}[line cap=round,line join=round,>=triangle 45,x=1cm,y=1cm,xscale=0.7,yscale=0.7]

\draw [-] (0,.7) -- (-0.3,2);
\draw [-] (0,.7) -- (0.3,2);
\draw [line width=2pt] (0,0)-- (0,.7);
\draw (0.6,0.3) node {$2$};

\end{tikzpicture}
\ENDPICTURE

\PICTURE ram22
\begin{tikzpicture}[line cap=round,line join=round,>=triangle 45,x=1cm,y=1cm,xscale=0.7,yscale=0.7]

\draw [-] (0,.7) -- (-0.3,2);
\draw [-] (0,.7) -- (0.3,2);
\draw [line width=2pt] (0,0)-- (0,.7);

\end{tikzpicture}
\ENDPICTURE

\PICTURE ram23
\begin{tikzpicture}[line cap=round,line join=round,>=triangle 45,x=1cm,y=1cm,xscale=0.7,yscale=0.7]

\draw [-] (0,.7) -- (-0.4,2);
\draw [-] (0,.7) -- (0,2);
\draw [-] (0,.7) -- (0.4,2);
\draw [line width=2pt] (0,0)-- (0,.7);

\end{tikzpicture}
\ENDPICTURE

\PICTURE ram24
\begin{tikzpicture}[line cap=round,line join=round,>=triangle 45,x=1cm,y=1cm,xscale=0.7,yscale=0.7]

\draw [-] (0,.7) -- (-0.6,2);
\draw [-] (0,.7) -- (-0.2,2);
\draw [-] (0,.7) -- (0.2,2);
\draw [-] (0,.7) -- (.6,2);
\draw [line width=2pt] (0,0)-- (0,.7);

\end{tikzpicture}
\ENDPICTURE

\PICTURE ram25
\begin{tikzpicture}[line cap=round,line join=round,>=triangle 45,x=1cm,y=1cm,xscale=0.7,yscale=0.7]

\draw [-] (0,.7) -- (-0.6,2);
\draw [-] (0,.7) -- (-0.3,2);
\draw [-] (0,.7) -- (0,2);
\draw [-] (0,.7) -- (0.3,2);
\draw [-] (0,.7) -- (.6,2);
\draw [line width=2pt] (0,0)-- (0,.7);

\end{tikzpicture}
\ENDPICTURE

\PICTURE ramd
\begin{tikzpicture}[line cap=round,line join=round,>=triangle 45,x=1cm,y=1cm,xscale=0.7,yscale=0.7]

\draw [-] (0,0) -- (-0.3,0.7);
\draw [-] (0,0) -- (0.3,0.7);
\draw [line width=2pt] (-0.3,0.7)-- (-0.3,2);
\draw [line width=2pt] (0.3,0.7)-- (0.3,2);

\end{tikzpicture}
\ENDPICTURE

\PICTURE ramp2
\begin{tikzpicture}[line cap=round,line join=round,>=triangle 45,x=1cm,y=1cm,xscale=0.7,yscale=0.7]

\draw [-] (0,0) -- (-0.3,0.7);
\draw [-] (0,0) -- (0.3,0.7);
\draw [-] (-0.3,0.7)-- (-0.3,2);
\draw [-] (-0.3,0.7)-- (-0.2,2);
\draw [-] (-0.3,0.7)-- (-0.1,2);
\draw [line width=2pt] (-0.3,0.7)-- (-0.5,2);
\draw [line width=2pt] (0.3,0.7)-- (0.5,2);
\draw [-] (0.3,0.7)-- (0.1,2);
\draw [-] (0.3,0.7)-- (0.2,2);
\draw [-] (0.3,0.7)-- (0.3,2);
\draw (-0.8,1.5) node {$2$};
\draw (0.8,1.5) node {$2$};
\end{tikzpicture}
\ENDPICTURE

\PICTURE ramp2d
\begin{tikzpicture}[line cap=round,line join=round,>=triangle 45,x=1cm,y=1cm,xscale=0.7,yscale=0.7]

\draw [-] (0,0) -- (-0.3,0.7);
\draw [-] (0,0) -- (0.3,0.7);
\draw [line width=2pt] (-0.3,0.7)-- (-0.3,2);
\draw [line width=2pt] (0.3,0.7)-- (0.5,2);
\draw [-] (0.3,0.7)-- (0.0,2);
\draw [-] (0.3,0.7)-- (0.2,2);
\draw (-0.8,1.5) node {$4$};
\draw (0.8,1.5) node {$2$};
\end{tikzpicture}
\ENDPICTURE

\PICTURE ramp3
\begin{tikzpicture}[line cap=round,line join=round,>=triangle 45,x=1cm,y=1cm,xscale=0.7,yscale=0.7]

\draw [-] (0,0) -- (-0.3,0.7);
\draw [-] (0,0) -- (0.3,0.7);
\draw [-] (-0.3,0.7)-- (-0.2,2);
\draw [line width=2pt] (-0.3,0.7)-- (-0.4,2);
\draw [line width=2pt] (0.3,0.7)-- (0.4,2);
\draw [-] (0.3,0.7)-- (0.2,2);
\draw (-0.6,1.5) node {$3$};
\draw (0.6,1.5) node {$3$};
\end{tikzpicture}
\ENDPICTURE

\PICTURE ramp4
\begin{tikzpicture}[line cap=round,line join=round,>=triangle 45,x=1cm,y=1cm,xscale=0.7,yscale=0.7]

\draw [-] (0,0) -- (-0.3,0.7);
\draw [-] (0,0) -- (0.3,0.7);
\draw [-] (-0.3,0.7)-- (-0.2,2);
\draw [line width=2pt] (-0.3,0.7)-- (-0.4,2);
\draw [line width=2pt] (0.3,0.7)-- (0.4,2);
\draw [-] (0.3,0.7)-- (0.2,2);
\draw (-0.6,1.5) node {$4$};
\draw (0.6,1.5) node {$4$};
\end{tikzpicture}
\ENDPICTURE

\PICTURE rambruin
\begin{tikzpicture}[line cap=round,line join=round,>=triangle 45,x=1cm,y=1cm,xscale=0.7,yscale=0.7]

\draw [-] (0,0) -- (-0.3,0.7);
\draw [line width=2pt] (0,0) -- (0.3,0.7);
\draw (0.6,0.4) node {$2$};
\draw [-] (-0.3,0.7)-- (-0.2,2);
\draw [-] (-0.3,0.7)-- (-0.4,2);
\draw [-] (0.3,0.7)-- (0.4,2);
\draw [-] (0.3,0.7)-- (0.2,2);
\end{tikzpicture}
\ENDPICTURE

\PICTURE ram211
\begin{tikzpicture}[line cap=round,line join=round,>=triangle 45,x=1cm,y=1cm,xscale=0.7,yscale=0.7]

\draw [-] (0,0) -- (-0.3,0.7);
\draw [-] (0,0) -- (0.3,0.7);
\draw [line width=2pt] (-0.3,0.7) -- (-0.3,2);
\draw [-] (0.3,0.7) -- (0.2,2);
\draw [-] (0.3,0.7) -- (0.4,2);
\end{tikzpicture}
\ENDPICTURE

\PICTURE ram311
\begin{tikzpicture}[line cap=round,line join=round,>=triangle 45,x=1cm,y=1cm,xscale=0.7,yscale=0.7]

\draw [-] (0,0) -- (-0.3,0.7);
\draw [-] (0,0) -- (0.3,0.7);
\draw [-] (-0.3,0.7)-- (-0.1,2);
\draw [-] (-0.3,0.7)-- (-0.2,2);
\draw [line width=2pt] (-0.3,0.7)-- (-0.4,2);
\draw [line width=2pt] (0.3,0.7)-- (0.4,2);
\draw [-] (0.3,0.7)-- (0.2,2);
\draw [-] (0.3,0.7)-- (0.1,2);
\draw (-0.6,1.5) node {$3$};
\draw (0.6,1.5) node {$3$};
\end{tikzpicture}
\ENDPICTURE

\PICTURE ram511111
\begin{tikzpicture}[line cap=round,line join=round,>=triangle 45,x=1cm,y=1cm,xscale=0.7,yscale=0.7]

\draw [-] (0,0) -- (-0.3,0.7);
\draw [-] (0,0) -- (0.3,0.7);
\draw [line width=2pt] (-0.3,0.7) -- (-0.3,2);
\draw [-] (0.3,0.7) -- (0.2,2);
\draw [-] (0.3,0.7) -- (0.3,2);
\draw [-] (0.3,0.7) -- (0.4,2);
\draw [-] (0.3,0.7) -- (0.5,2);
\draw [-] (0.3,0.7) -- (0.6,2);
\end{tikzpicture}
\ENDPICTURE
 \end{tabular}
      \begin{tabular}{c} $1$ \\ 
  \long\def\PICTURE #pictures.texramp2d {}%
  \PICTURE ram10
\begin{tikzpicture}[line cap=round,line join=round,>=triangle 45,x=1cm,y=1cm,xscale=0.7,yscale=0.7]

\draw [line width=2pt] (0,.7) -- (0,2);
\draw [line width=2pt] (0,0) -- (0,.7);

\end{tikzpicture}

\ENDPICTURE

\PICTURE ram22thin
\begin{tikzpicture}[line cap=round,line join=round,>=triangle 45,x=1cm,y=1cm,xscale=0.7,yscale=0.7]

\draw [-] (0,.7) -- (-0.3,2);
\draw [-] (0,.7) -- (0.3,2);
\draw [-] (0,0)-- (0,.7);

\end{tikzpicture}
\ENDPICTURE

\PICTURE ram22thick
\begin{tikzpicture}[line cap=round,line join=round,>=triangle 45,x=1cm,y=1cm,xscale=0.7,yscale=0.7]

\draw [-] (0,.7) -- (-0.3,2);
\draw [-] (0,.7) -- (0.3,2);
\draw [line width=2pt] (0,0)-- (0,.7);
\draw (0.6,0.3) node {$2$};

\end{tikzpicture}
\ENDPICTURE

\PICTURE ram22
\begin{tikzpicture}[line cap=round,line join=round,>=triangle 45,x=1cm,y=1cm,xscale=0.7,yscale=0.7]

\draw [-] (0,.7) -- (-0.3,2);
\draw [-] (0,.7) -- (0.3,2);
\draw [line width=2pt] (0,0)-- (0,.7);

\end{tikzpicture}
\ENDPICTURE

\PICTURE ram23
\begin{tikzpicture}[line cap=round,line join=round,>=triangle 45,x=1cm,y=1cm,xscale=0.7,yscale=0.7]

\draw [-] (0,.7) -- (-0.4,2);
\draw [-] (0,.7) -- (0,2);
\draw [-] (0,.7) -- (0.4,2);
\draw [line width=2pt] (0,0)-- (0,.7);

\end{tikzpicture}
\ENDPICTURE

\PICTURE ram24
\begin{tikzpicture}[line cap=round,line join=round,>=triangle 45,x=1cm,y=1cm,xscale=0.7,yscale=0.7]

\draw [-] (0,.7) -- (-0.6,2);
\draw [-] (0,.7) -- (-0.2,2);
\draw [-] (0,.7) -- (0.2,2);
\draw [-] (0,.7) -- (.6,2);
\draw [line width=2pt] (0,0)-- (0,.7);

\end{tikzpicture}
\ENDPICTURE

\PICTURE ram25
\begin{tikzpicture}[line cap=round,line join=round,>=triangle 45,x=1cm,y=1cm,xscale=0.7,yscale=0.7]

\draw [-] (0,.7) -- (-0.6,2);
\draw [-] (0,.7) -- (-0.3,2);
\draw [-] (0,.7) -- (0,2);
\draw [-] (0,.7) -- (0.3,2);
\draw [-] (0,.7) -- (.6,2);
\draw [line width=2pt] (0,0)-- (0,.7);

\end{tikzpicture}
\ENDPICTURE

\PICTURE ramd
\begin{tikzpicture}[line cap=round,line join=round,>=triangle 45,x=1cm,y=1cm,xscale=0.7,yscale=0.7]

\draw [-] (0,0) -- (-0.3,0.7);
\draw [-] (0,0) -- (0.3,0.7);
\draw [line width=2pt] (-0.3,0.7)-- (-0.3,2);
\draw [line width=2pt] (0.3,0.7)-- (0.3,2);

\end{tikzpicture}
\ENDPICTURE

\PICTURE ramp2
\begin{tikzpicture}[line cap=round,line join=round,>=triangle 45,x=1cm,y=1cm,xscale=0.7,yscale=0.7]

\draw [-] (0,0) -- (-0.3,0.7);
\draw [-] (0,0) -- (0.3,0.7);
\draw [-] (-0.3,0.7)-- (-0.3,2);
\draw [-] (-0.3,0.7)-- (-0.2,2);
\draw [-] (-0.3,0.7)-- (-0.1,2);
\draw [line width=2pt] (-0.3,0.7)-- (-0.5,2);
\draw [line width=2pt] (0.3,0.7)-- (0.5,2);
\draw [-] (0.3,0.7)-- (0.1,2);
\draw [-] (0.3,0.7)-- (0.2,2);
\draw [-] (0.3,0.7)-- (0.3,2);
\draw (-0.8,1.5) node {$2$};
\draw (0.8,1.5) node {$2$};
\end{tikzpicture}
\ENDPICTURE

\PICTURE ramp2d
\begin{tikzpicture}[line cap=round,line join=round,>=triangle 45,x=1cm,y=1cm,xscale=0.7,yscale=0.7]

\draw [-] (0,0) -- (-0.3,0.7);
\draw [-] (0,0) -- (0.3,0.7);
\draw [line width=2pt] (-0.3,0.7)-- (-0.3,2);
\draw [line width=2pt] (0.3,0.7)-- (0.5,2);
\draw [-] (0.3,0.7)-- (0.0,2);
\draw [-] (0.3,0.7)-- (0.2,2);
\draw (-0.8,1.5) node {$4$};
\draw (0.8,1.5) node {$2$};
\end{tikzpicture}
\ENDPICTURE

\PICTURE ramp3
\begin{tikzpicture}[line cap=round,line join=round,>=triangle 45,x=1cm,y=1cm,xscale=0.7,yscale=0.7]

\draw [-] (0,0) -- (-0.3,0.7);
\draw [-] (0,0) -- (0.3,0.7);
\draw [-] (-0.3,0.7)-- (-0.2,2);
\draw [line width=2pt] (-0.3,0.7)-- (-0.4,2);
\draw [line width=2pt] (0.3,0.7)-- (0.4,2);
\draw [-] (0.3,0.7)-- (0.2,2);
\draw (-0.6,1.5) node {$3$};
\draw (0.6,1.5) node {$3$};
\end{tikzpicture}
\ENDPICTURE

\PICTURE ramp4
\begin{tikzpicture}[line cap=round,line join=round,>=triangle 45,x=1cm,y=1cm,xscale=0.7,yscale=0.7]

\draw [-] (0,0) -- (-0.3,0.7);
\draw [-] (0,0) -- (0.3,0.7);
\draw [-] (-0.3,0.7)-- (-0.2,2);
\draw [line width=2pt] (-0.3,0.7)-- (-0.4,2);
\draw [line width=2pt] (0.3,0.7)-- (0.4,2);
\draw [-] (0.3,0.7)-- (0.2,2);
\draw (-0.6,1.5) node {$4$};
\draw (0.6,1.5) node {$4$};
\end{tikzpicture}
\ENDPICTURE

\PICTURE rambruin
\begin{tikzpicture}[line cap=round,line join=round,>=triangle 45,x=1cm,y=1cm,xscale=0.7,yscale=0.7]

\draw [-] (0,0) -- (-0.3,0.7);
\draw [line width=2pt] (0,0) -- (0.3,0.7);
\draw (0.6,0.4) node {$2$};
\draw [-] (-0.3,0.7)-- (-0.2,2);
\draw [-] (-0.3,0.7)-- (-0.4,2);
\draw [-] (0.3,0.7)-- (0.4,2);
\draw [-] (0.3,0.7)-- (0.2,2);
\end{tikzpicture}
\ENDPICTURE

\PICTURE ram211
\begin{tikzpicture}[line cap=round,line join=round,>=triangle 45,x=1cm,y=1cm,xscale=0.7,yscale=0.7]

\draw [-] (0,0) -- (-0.3,0.7);
\draw [-] (0,0) -- (0.3,0.7);
\draw [line width=2pt] (-0.3,0.7) -- (-0.3,2);
\draw [-] (0.3,0.7) -- (0.2,2);
\draw [-] (0.3,0.7) -- (0.4,2);
\end{tikzpicture}
\ENDPICTURE

\PICTURE ram311
\begin{tikzpicture}[line cap=round,line join=round,>=triangle 45,x=1cm,y=1cm,xscale=0.7,yscale=0.7]

\draw [-] (0,0) -- (-0.3,0.7);
\draw [-] (0,0) -- (0.3,0.7);
\draw [-] (-0.3,0.7)-- (-0.1,2);
\draw [-] (-0.3,0.7)-- (-0.2,2);
\draw [line width=2pt] (-0.3,0.7)-- (-0.4,2);
\draw [line width=2pt] (0.3,0.7)-- (0.4,2);
\draw [-] (0.3,0.7)-- (0.2,2);
\draw [-] (0.3,0.7)-- (0.1,2);
\draw (-0.6,1.5) node {$3$};
\draw (0.6,1.5) node {$3$};
\end{tikzpicture}
\ENDPICTURE

\PICTURE ram511111
\begin{tikzpicture}[line cap=round,line join=round,>=triangle 45,x=1cm,y=1cm,xscale=0.7,yscale=0.7]

\draw [-] (0,0) -- (-0.3,0.7);
\draw [-] (0,0) -- (0.3,0.7);
\draw [line width=2pt] (-0.3,0.7) -- (-0.3,2);
\draw [-] (0.3,0.7) -- (0.2,2);
\draw [-] (0.3,0.7) -- (0.3,2);
\draw [-] (0.3,0.7) -- (0.4,2);
\draw [-] (0.3,0.7) -- (0.5,2);
\draw [-] (0.3,0.7) -- (0.6,2);
\end{tikzpicture}
\ENDPICTURE
 \end{tabular} &
      $192,73$ &
      $[64,0],[0,0]$ &
      $[2]$ &
      $[16]$ \\
      \hline
    \end{tabular}
  \end{tiny}
  \caption{Genus $3$ to genus $1$, degree $4$}
  \label{tab:results-g3deg4}
\end{table}

\end{appendix}

\bibliographystyle{alphaabbr}
\bibliography{synthbib}

\end{document}